\tikzset{
  vertex/.style={circle,color=black,draw=black,fill=black,inner sep=0cm,minimum size=.2cm},
  edge/.style={thick, black},
}
\let\cal=\mathcal
\newtheorem{theorem}{Theorem}[section]
\newtheorem{definition}[theorem]{Definition}
\newtheorem{prop}[theorem]{Proposition}
\newtheorem{fact}[theorem]{Fact}
\newtheorem{cor}[theorem]{Corollary}
\newtheorem{lemma}[theorem]{Lemma}
\newtheorem{claim}[theorem]{Claim}
\newtheorem{remark}[theorem]{Remark}
\newtheorem{obs}[theorem]{Observation}
\def\epsilon{\varepsilon}
\def\eps{\varepsilon}
\def\high{{\rm high}}
\def\good{{\rm good}} 
\def\bad{{\rm bad}} 
\def\typ{{\rm typ}} 
\def\spec{{\rm spec}}
\begin{document}

\title[Even Rainbow Cycles; Nontriangular Directed Cycles]{On Even Rainbow or Nontriangular 
Directed Cycles}  

\subjclass[2000]{05C20, 05C35, 05C38}
\keywords{edge-colored, rainbow subgraph, directed graphs}
\date{\today}

\author[A.~Czygrinow]{Andrzej Czygrinow}\thanks{The first author was partially supported
by Simons Foundation Grant $\#$521777.}    
\address{School of Mathematical and Statistical Sciences, 
Arizona State University, Tempe, AZ 85281, USA}
\email{aczygri@asu.edu}  

\author[T.~Molla]{Theodore Molla}\thanks{The second author was partially supported by 
NSF Grants DMS~1500121
and DMS~1800761.}  
\address{Department of Mathematics and Statistics, University of 
South Florida, Tampa, FL 33620, USA}
\email{molla@usf.edu}  

\author[B.~Nagle]{Brendan Nagle}\thanks{The third author was partially supported
by NSF Grant DMS~1700280.}  
\address{Department of Mathematics and Statistics, University of 
South Florida, Tampa, FL 33620, USA}  
\email{bnagle@usf.edu}  

\author[R.~Oursler]{Roy Oursler}
\address{School of Mathematical and Statistical Sciences, 
Arizona State University, Tempe, AZ 85281, USA}  
\email{Roy.Oursler@asu.edu}


\begin{abstract}
Let $G = (V, E)$ be an $n$-vertex edge-colored graph.  In~2013, H.~Li proved 
that 
if every vertex $v \in V$ is incident to at least $(n+1)/2$ distinctly colored edges, 
then $G$ admits a rainbow triangle.  
We establish a corresponding 
result 
for fixed even rainbow $\ell$-cycles $C_{\ell}$:  
if every vertex $v \in V$ 
is incident to at least 
$(n+5)/3$ distinctly colored edges, where $n \geq n_0(\ell)$ is sufficiently large, 
then $G$ admits an even rainbow $\ell$-cycle $C_{\ell}$.   
This result is best possible whenever $\ell \not\equiv 0$ (mod 3).   
Correspondingly, 
we also show that 
for a fixed (even or odd) integer $\ell \geq 4$, 
every large $n$-vertex 
oriented graph $\vec{G} = (V, \vec{E})$ with minimum outdegree at least $(n+1)/3$
admits a (consistently) directed
$\ell$-cycle $\vec{C}_{\ell}$.  
Our latter 
result
relates to one of Kelly, K\"uhn, and Osthus, who proved a similar statement 
for oriented graphs with large semi-degree.  
Our proofs are based on the stability method.  
\end{abstract}

\maketitle


\section{Introduction}

An {\it edge-colored graph} is a pair $(G, c)$, where $G = (V, E)$ is a graph and 
$c : E \to P$ is a function mapping edges to some palette of colors $P$.  
A subgraph $H \subseteq G$ is a {\it rainbow} subgraph if the edges of $H$ are distinctly
colored by $c$.   
We consider degree conditions ensuring the existence
of rainbow cycles $C_{\ell}$ in $(G, c)$ of fixed even length $\ell \geq 4$.   
To that end, 
a vertex $v \in V$  
in an edge-colored graph $(G, c)$ 
has {\it $c$-degree} $\deg_G^c(v)$ 
given by the number of distinct colors assigned by $c$ to the edges $\{v, w\} \in E$, where we   
set $\delta^c(G) = \min_{v \in V} \deg^c_G(v)$.
The following result of H.~Li~\cite{li2013rainbow}
motivates the main results of our paper. 
\begin{theorem}[H.~Li, 2013]
\label{thm:Li}  
Let $(G, c)$ be an $n$-vertex edge-colored graph.  If $\delta^c(G) \geq (n+1)/2$, then 
$(G,c)$ admits a rainbow 3-cycle $C_3$.  
\end{theorem}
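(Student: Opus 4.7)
The plan is to argue by contradiction; suppose $(G,c)$ has no rainbow triangle. First, I would fix any vertex $v$ and use $\deg^c(v) \geq (n+1)/2$ to pick $k = \deg^c(v)$ neighbors $w_1, \ldots, w_k$ of $v$ such that the edges $vw_i$ carry pairwise distinct colors $\alpha_i$. Write $A = \{w_1, \ldots, w_k\}$ and $B = V \setminus (\{v\} \cup A)$, so $|B| \leq (n-3)/2$. The no-rainbow-triangle assumption forces every edge $w_iw_j \in E(G[A])$ to satisfy $c(w_iw_j) \in \{\alpha_i, \alpha_j\}$, since otherwise $v w_i w_j$ would be rainbow; I canonically orient $G[A]$ by the rule $w_i \to w_j$ iff $c(w_iw_j) = \alpha_j$. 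The crucial observation is that any directed triangle $w_a \to w_b \to w_c \to w_a$ in this orientation is automatically a rainbow triangle, since its edge colors $\alpha_b, \alpha_c, \alpha_a$ are pairwise distinct; thus the task reduces to producing a directed 3-cycle.

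I next show that every $w_i$ has positive out-degree. At $w_i$, colors on edges to $B$ contribute at most $|B| \leq (n-3)/2$ distinct colors, while colors on edges to $\{v\} \cup (A \setminus \{w_i\})$ all lie in $\{\alpha_1, \ldots, \alpha_k\}$; since $\deg^c(w_i) \geq (n+1)/2$, at least two $\alpha_j$'s must appear on the latter. One such color is $\alpha_i$ (via $w_iv$), so some $\alpha_j$ with $j \neq i$ arises on an out-edge $w_i \to w_j$. Since 2-cycles in the orientation would force $\alpha_i = \alpha_j$, iterating out-edges produces a directed cycle of length $\geq 3$; if this cycle has length $3$ we are done. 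When some vertex has $c$-degree strictly exceeding $(n+1)/2$, the same count promotes the relevant out-degree and in-cluster-degree bounds to $\geq 2$, so any shortest directed cycle of length $\geq 4$ must admit a chord which, once oriented under the above rule, closes into a directed triangle.

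The principal obstacle is the extremal regime in which $\deg^c(x) = (n+1)/2$ holds with equality for every $x$. Here the orientation on $A$ becomes a near-permutation $f$ whose functional-digraph cycles all have length $\geq 4$, and a refined color count forces each $w_i$ to be adjacent to essentially all of $B$, with the $|B|$ edges $\{w_ib : b \in B\}$ receiving pairwise distinct colors disjoint from $\{\alpha_i, \alpha_{f(i)}\}$. Examining the triangle $w_iw_{f(i)}b$ for $b \in B$ then forces $c(w_ib) = c(w_{f(i)}b)$, so these colors are constant along each $f$-orbit; denote the common value on a cycle $C$ by $\gamma_b^{(C)}$. Applying the $c$-degree condition at a vertex $b \in B$ shows that $f$ has $\geq 3$ cycles and $G[B]$ is nearly complete. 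For any edge $bb' \in E(G[B])$, the triangles $bb'w_i$ taken over $w_i$ in distinct $f$-cycles force $c(bb') \in \bigcap_{C} \{\gamma_b^{(C)}, \gamma_{b'}^{(C)}\}$, an intersection that is empty once $\geq 3$ cycles are involved and the $\gamma$-values are distinct across them, yielding the contradiction modulo bookkeeping for the finite list of coincidence subcases.
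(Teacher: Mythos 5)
First, a point of reference: the paper does not prove Theorem~\ref{thm:Li} at all --- it is quoted from H.~Li's paper \cite{li2013rainbow} as motivation --- so there is no in-paper proof to compare against. Your opening moves are correct and are in the spirit of the digraph correspondence the paper records as Fact~\ref{fact:Li}: fixing $v$ with distinctly colored edges $vw_1,\dots,vw_k$ of colors $\alpha_1,\dots,\alpha_k$, the no-rainbow-triangle hypothesis does force $c(w_iw_j)\in\{\alpha_i,\alpha_j\}$, the orientation $w_i\to w_j$ iff $c(w_iw_j)=\alpha_j$ is well defined and digon-free, a directed triangle is automatically rainbow, and the count $\deg^c(w_i)\le |N(w_i)\cap B|+1+d^+(w_i)$ with $|B|\le (n-3)/2$ does give $d^+(w_i)\ge 1$. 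The extremal structure you extract when $d^+(w_i)=1$ for all $i$ (full, rainbow attachment of each $w_i$ to $B$, and $c(w_ib)=c(w_{f(i)}b)$ via the triangle $w_iw_{f(i)}b$) is also correct.

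The gaps are in how you close. (i) The non-extremal case: a shortest directed cycle of length at least $4$ need not admit a chord, and minimum out-degree $2$ in an oriented graph does not produce one (this is exactly the regime of the Caccetta--H\"aggkvist problem, which the paper explicitly says is beyond its methods); moreover your dichotomy ``some vertex has $c$-degree strictly exceeding $(n+1)/2$'' is not available under the stated hypothesis and does not align with the dichotomy you actually need, namely whether some $w_i$ has $d^+(w_i)\ge 2$. (ii) The extremal finale: the color count at $b\in B$ gives at most (number of components of the functional digraph of $f$) colors into $A$, plus at most $1+(|B|-1)=|B|=(n-3)/2$ colors into $\{v\}\cup B$, hence only $t\ge 2$ cycles, not the $t\ge 3$ your intersection argument requires; with $t=2$ the set $\{\gamma_b^{(1)},\gamma_{b'}^{(1)}\}\cap\{\gamma_b^{(2)},\gamma_{b'}^{(2)}\}$ can be nonempty. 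You also assume without proof that the $\gamma$-values are distinct across cycles, and the triangle $bb'w_i$ yields no constraint at all when $\gamma_b^{(C)}=\gamma_{b'}^{(C)}$. As written, both halves of the endgame fail, so the proposal is a promising reduction with a genuinely incomplete conclusion rather than a proof.
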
  
\noindent  A rainbow 
$K_{\lfloor n/2 \rfloor, \lceil n/2 \rceil}$ shows that  
Theorem~\ref{thm:Li} is best possible.  

Our first result 
ensures 
rainbow cycles of fixed even length.

\begin{theorem}
\label{thm:even}
There exists an absolute constant $\alpha > 0$ so that, 
for every even integer $\ell \geq 4$,  
every 
edge-colored graph
$(G, c)$ 
on $n \geq n_0(\ell)$ many vertices 
satisfying 
\begin{equation}
\label{eqn:degcond}
\delta^c(G) \geq 
\left\{
\begin{array}{cc}  
\big(\tfrac{1}{3} - \alpha\big) & \text{{\rm if $\ell \equiv 0$ (mod 3)}}, \\
\tfrac{n+5}{3}          & \text{{\rm if $\ell \not\equiv 0$ (mod 3)}}, 
\end{array}
\right.  
\end{equation}  
admits a rainbow $\ell$-cycle $C_{\ell}$.  
\end{theorem}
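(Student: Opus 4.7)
My plan is to prove Theorem~\ref{thm:even} via the stability method in three steps. The mod-3 dichotomy in \eqref{eqn:degcond} together with the sharpness of the $(n+5)/3$ threshold suggest that the extremal edge-colored graphs blocking rainbow even $C_{\ell}$ (for $\ell \not\equiv 0 \pmod 3$) have a 3-partite structure with a coloring in which every rainbow cycle compatible with the partition has length divisible by $3$. I would first identify and verify this family of extremal configurations $(G^{*}, c^{*})$ with $\delta^{c^{*}}(G^{*}) = (n+2)/3$ that contain no rainbow even $C_{\ell}$ for $\ell \not\equiv 0 \pmod 3$; these serve as the structural targets for the stability step and explain why only constant slack suffices for $\ell \equiv 0 \pmod 3$ (the extremal configuration for such $\ell$ is structurally different and has strictly smaller $c$-degree).

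Second, I would establish a stability statement: if $(G, c)$ has $\delta^{c}(G) \geq (1/3 - \alpha)n$ and contains no rainbow even $C_{\ell}$, then the coloring $(G,c)$ lies within small edit distance of one of the extremal configurations. The argument would proceed by showing that the large minimum $c$-degree forces an abundance of short rainbow paths at each vertex. If the coloring deviates substantially from an extremal template, one stitches two such rainbow paths together into a rainbow even $\ell$-cycle via a path-extension argument, where the deviation from the template supplies the color-distinct ``bridge'' edges needed to close the cycle. Since $\ell$ is fixed, the extension can be performed in a bounded number of steps and controlled by a supersaturation-type estimate on the number of short rainbow paths.

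Third, I would handle the exact case for $\ell \not\equiv 0 \pmod 3$: given that $(G, c)$ is near an extremal configuration but $\delta^{c}(G) \geq (n+5)/3$ strictly exceeds the extremal threshold $(n+2)/3$, the surplus colored edges at each vertex lie outside the extremal template. A local substitution/rotation argument — replacing one or two edges of a near-rainbow $\ell$-cycle found inside the extremal structure with surplus edges carrying fresh colors — should produce a genuine rainbow $C_{\ell}$. For $\ell \equiv 0 \pmod 3$, the $\alpha n$ slack in \eqref{eqn:degcond} already pushes $(G,c)$ out of the extremal regime at the stability step, so no exact argument is needed.

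The main obstacle will be the stability step: the interaction between the color function $c$ and the underlying graph $G$ is not captured by simple structural parameters, so jointly controlling rainbowness and connectivity in the path-extension arguments is delicate. In particular, the extension must produce cycles of precisely the correct parity and length, and avoid inadvertently producing rainbow cycles only of length $\ell \pm 2$; this likely forces a careful case analysis based on the residue of $\ell$ modulo $6$ (since both parity and divisibility by $3$ matter) and an inductive construction of the rainbow path of the correct length before closing.
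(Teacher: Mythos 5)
Your overall architecture --- identify the extremal (near-$3$-partite, canonically colored) configurations, prove a stability lemma, and finish with an exact argument exploiting the $O(1)$ degree surplus --- matches the paper's, so the plan is sound in outline. But one claim is backwards and the two deferred lemmas are where essentially all the difficulty lives. The backwards claim: for $\ell \equiv 0 \pmod 3$ the relaxed hypothesis $\delta^c(G) \ge (1/3-\alpha)n$ does \emph{not} push $(G,c)$ out of the extremal regime --- a graph satisfying it can be exactly the canonical coloring of a balanced complete $3$-partite graph, which is squarely extremal. What saves this case is that the extremal configuration itself contains $\Omega(n^{\ell})$ rainbow $\ell$-cycles once $3 \mid \ell$ (those winding consistently around the three parts); this still has to be verified and is the content of the paper's Fact~\ref{fact:10}. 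So an extremal-case argument is needed for $3\mid\ell$ too; it is merely easy rather than absent.

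More seriously, your stability mechanism does not engage with the actual obstruction. An ``abundance of short rainbow paths at each vertex'' is not a consequence of large color-degree alone: extending a rainbow (or even properly colored) path through $v$ requires the outgoing color to differ from the incoming one, which is precisely why the paper encodes $(G,c)$ as a directed graph with one arc per color class at each vertex (so $\deg^+_{\vec G}(v)=\deg^c_G(v)$), reduces to finding closed directed $\ell$-walks in a digraph of minimum out-degree about $n/3$ (Lemma~\ref{lem:non-ext}), and converts directed $\ell$-cycles back to rainbow ones by the counting argument of Fact~\ref{prop:easyprop}; a separate step (Proposition~\ref{prop:Lem17}) is needed just to gain control of in-degrees. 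Without some such device your ``supersaturation of rainbow paths'' has no starting point. For the exact step, the surplus over the extremal configurations is only \emph{one} color per vertex (and your stated extremal threshold $(n+2)/3$ holds only for $n\equiv 1 \pmod 3$; it is $(n+3)/3$ or $(n+4)/3$ for the other residues), so the ``local substitution'' must be manufactured from a single extra color at each vertex whose location you do not control; this is why the paper's Lemma~\ref{lem:ext} requires the long nearly-canonical-structure analysis of Sections~\ref{sec:ext}--\ref{sec:ex3}, with the case split governed by $\ell \bmod 3$ rather than $\ell \bmod 6$ (parity plays no further role once the non-extremal case is settled).
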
  

\noindent  Theorem~\ref{thm:even} is best
possible for $\ell \not\equiv 0$ (mod 3), which we verify at the end of the Introduction.    
We prove 
Theorem~\ref{thm:even}
in Section~\ref{sec:proofevenor} using 
the stability method.

\begin{remark}
\rm 
In a related paper \cite{oddcyc}, we establish an analogue of Theorem~\ref{thm:even} for fixed odd integers
$\ell \geq 3$.  In particular, we show that for large integers $n \geq n_0(\ell)$, 
H.~Li's condition $\delta^c(G) \geq (n+1)/2$ ensures rainbow $\ell$-cycles
$C_{\ell}$ in $(G, c)$,   
which is again best possible by a rainbow 
$K_{\lfloor n/2 \rfloor, \lceil n/2 \rceil}$.  
\hfill $\Box$  
\end{remark}  

We also consider an analogue of Theorem~\ref{thm:even} 
for {\it oriented graphs} $\vec{G} = (V, \vec{E})$,
i.e., those for which 
$\vec{E} \subset V \times V$ satisfies the rule that $(u, v) \in \vec{E}$ forbids $(v, u) \in \vec{E}$.  
Here, we seek a {\it directed} or consistently oriented $\ell$-cycle $\vec{C}_{\ell}$,
whose vertices $V(\vec{C}_{\ell})$
may be ordered $(v_0, \dots, v_{\ell-1})$ so that $(v_i, v_{i+1}) \in \vec{E}$ for all $i \in 
\mathbb{Z}_{\ell}$.  
In this context, we may take $\ell \geq 4$ to be even or odd.  

\begin{theorem}
\label{thm:or}  
For every fixed integer $\ell \geq 4$, whether even or odd, every 
oriented graph $\vec{G} = (V, \vec{E})$
on $n \geq n_0(\ell)$ many vertices 
with minimum out-degree $\delta^+(\vec{G}) \geq (n+1)/3$ admits a directed $\ell$-cycle  
$\vec{C}_{\ell}$.  
\end{theorem}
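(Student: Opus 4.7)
My plan is to use the stability method, with the balanced blow-up of $\vec{C}_3$ as the extremal configuration: partition $V = V_0 \cup V_1 \cup V_2$ with $|V_i| \approx n/3$ and orient all edges from $V_i$ to $V_{i+1}$ (indices mod~$3$). This has $\delta^+ = \lfloor n/3 \rfloor$ yet contains $\vec{C}_\ell$ only when $\ell \equiv 0 \pmod 3$, so the strict inequality $(n+1)/3 > \lfloor n/3 \rfloor$ is precisely what rules it out under our hypothesis. Fix a small $\eta > 0$ and dichotomize according to whether $\vec{G}$ admits a partition $V = V_0 \cup V_1 \cup V_2$ with $|V_i| = (1/3 \pm o(1)) n$ under which all but at most $\eta n^2$ edges follow the $V_i \to V_{i+1}$ pattern.

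In the extremal case (near-blow-up), each $v \in V_i$ satisfies $\deg^+(v) \geq (n+1)/3 > \lfloor n/3 \rfloor \geq |V_{i+1}|$, so $v$ has an \emph{irregular} out-neighbor either within $V_i$ (type F) or backward in $V_{i-1}$ (type B). Directed paths through the regular edges advance along $V_0 \to V_1 \to V_2 \to V_0 \to \cdots$, so every path of length divisible by $3$ returns to its starting class. Splicing one type-F edge into such a returning path yields a cycle of length $\equiv 1 \pmod 3$; one type-B edge yields $\equiv 2 \pmod 3$; and two like-type edges yield the complementary residue. Since the degree condition forces at least one irregular out-edge at every vertex, for the target length $\ell$ one selects the appropriate irregular edge(s) and greedily completes the remaining path of prescribed length across the dense regular pairs $(V_i, V_{i+1})$.

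In the non-extremal case ($\vec{G}$ is $\eta$-far from every $\vec{C}_3$-blow-up), I would apply the directed Szemer\'edi regularity lemma to obtain a reduced digraph $R$ on $k$ clusters; then $R$ inherits $\delta^+(R) \gtrsim k/3$ and remains far from tripartite. The aim is a stability statement asserting that such an $R$ contains a directed $\ell$-cycle, which then lifts to $\vec{C}_\ell$ in $\vec{G}$ via the regular pairs. This stability step is the main obstacle: because the Caccetta--H\"aggkvist conjecture is open at threshold $k/3$, one cannot merely cite the existence of a short cycle, but must exploit the ``far from extremal'' hypothesis to locate a short non-tripartite structure (e.g., a $\vec{C}_4$ or a strongly-connected subtournament) and splice it into long directed paths of length divisible by $3$ to shift residues and achieve the exact target length $\ell$. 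Matching the stability parameter $\eta$ against the regularity parameters, and ensuring the greedy lifting hits $\ell$ exactly rather than approximately, are the remaining technical hurdles.
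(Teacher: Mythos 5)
Your skeleton (stability method with the balanced $\vec{C}_3$-blow-up as the extremal configuration) matches the paper's, but both halves of your dichotomy have genuine gaps. In the extremal case, your key counting step fails as stated: you write $\deg^+(v)\geq (n+1)/3>\lfloor n/3\rfloor\geq |V_{i+1}|$, but in an \emph{approximate} blow-up the classes need not be balanced to within one vertex, so $|V_{i+1}|$ may well exceed $\lfloor n/3\rfloor$ and a given vertex need have no irregular out-edge at all. Even after rebalancing, the irregular edges could be few, concentrated at atypical vertices, or incident to vertices from which no long ``regular'' path of the prescribed length returns, so the splicing step is not automatic. This is exactly where the paper spends most of its effort: in the proof of Theorem~\ref{thm:or} the extremal case is handled by passing to a maximal induced subgraph $\vec H$ with $\delta^-(\vec H)\geq 1$, converting to the determined edge-coloring (so $\deg^c_H(u)=1+\deg^+_{\vec H}(u)\geq (n+4)/3$), and invoking Statement~(2) of Lemma~\ref{lem:ext} together with Fact~\ref{fact:Li}; the proof of Lemma~\ref{lem:ext} (Sections~\ref{sec:ext}--\ref{sec:ex3}) is precisely the machinery --- nearly canonical structure, strong cycles, the amenable index $j$ and the deficiencies $\Delta_j$ --- needed to extract and splice the ``special'' edges that your one-line argument takes for granted. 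The sharpness constructions in Section~\ref{sharpness} show how little slack there is: the whole argument turns on the $+1$ in $(n+1)/3$, and an argument that does not track this additive constant carefully cannot close the extremal case.

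In the non-extremal case you explicitly defer the main point (``this stability step is the main obstacle''), so nothing is proved there. For the record, the paper's resolution is Lemma~\ref{lem:non-ext} via Propositions~\ref{prop:Lem17} and~\ref{prop:Lem19}: first one upgrades the out-degree hypothesis to a semi-degree hypothesis on a large induced subgraph (this step, Claim~\ref{clm:11.1.2018}, is essential before applying regularity, since the reduced digraph of an oriented graph with only out-degree control can behave badly), and then one shows that a closed-$\ell$-walk-free oriented graph with $\delta_0\geq(1/3-\beta)m$ is $\lambda$-extremal, using the Ji--Wu--Song theorem (Fact~\ref{fact:JiWuSong}) to produce a directed triangle and a sequence of neighborhood-intersection facts (Facts~\ref{fact:Clm19.1}--\ref{prop:2.3}) to build the tripartition; the case $\ell=5$ needs a separate argument since a closed $5$-walk is already a $5$-cycle. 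Identifying the obstacle is not the same as overcoming it, and as written the proposal does not yield a proof of either case.
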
  

\noindent  We prove Theorem~\ref{thm:or} in Section~\ref{sec:proofevenor} using ideas similar to that
of Theorem~\ref{thm:even}.      
Note that Theorem~\ref{thm:or} is best possible for every $\ell \not\equiv 0$
(mod 3), as seen by the {\it blow-up} $\vec{G} = (V, \vec{E})$ of a directed triangle:  
$$
\text{let }  
V = V_0 \cup V_1 \cup V_2 
\text{ be a partition, and let }  
\vec{E} = (V_0 \times V_1) \cup (V_1 \times V_2) \cup (V_2 \times V_0),   
$$
where $|V_2| \leq |V_1| \leq |V_0|
\leq |V_2| + 1$.
Here, $\delta^+(\vec{G}) = |V_2|
\geq ((n+1)/3) - 1$.  

Note that Theorem~\ref{thm:or} omits the case $\ell = 3$,  
which is the triangular case
of the Caccetta-H\"aggkvist conjecture (cf.~\cite{caccetta1978minimal, 
hladky2017counting}) 
and is beyond the reach of our methods.  
We also mention that 
Theorem~\ref{thm:or} relates to the following 
result of Kelly, K\"uhn, and Osthus~\cite{kelly2010cycles}.  

\begin{theorem}[Kelly, K\"uhn, Osthus, 2010]  
\label{thm:KKO}  
For every integer $\ell \geq 4$ and for every integer $n \geq 10^{10}\ell$, every $n$-vertex
oriented graph $\vec{G} = (V, E)$ with $\delta_0(\vec{G}) 
= \min\{ \delta^+(\vec{G}), \delta^-(\vec{G})\} 
\geq (n+1)/3$ contains a directed
$\ell$-cycle $\vec{C}_{\ell}$.  Moreover, every vertex $v \in V$ belongs to a directed $\ell$-cycle  
$\vec{C}_{\ell}$.  
\end{theorem}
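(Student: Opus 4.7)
The plan is to deploy the stability method, paralleling the strategies sketched for Theorems~\ref{thm:even} and~\ref{thm:or}. The key observation is that the strict bound $\delta_0(\vec{G}) \geq (n+1)/3$ just rules out the balanced blow-up of a directed triangle, which is the natural obstruction to containing $\vec{C}_\ell$ for $\ell \not\equiv 0 \pmod{3}$. The proof will split into an extremal and a non-extremal regime with respect to closeness to such a blow-up.

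Fix a small $\epsilon > 0$ and call $\vec{G}$ \emph{$\epsilon$-extremal} if there is a partition $V = V_0 \cup V_1 \cup V_2$ with parts of size $(1/3 \pm \epsilon)n$ such that all but at most $\epsilon n^2$ arcs lie in $(V_0\times V_1) \cup (V_1 \times V_2) \cup (V_2 \times V_0)$. In the \emph{non-extremal case}, I would apply a directed regularity lemma to produce a reduced oriented digraph $R$ of bounded order inheriting minimum semi-degree approximately $|V(R)|/3$. Since $\vec{G}$ is not $\epsilon$-extremal, $R$ cannot itself be close to a triangle blow-up, and a short case analysis forces $R$ to contain a directed closed walk of length exactly $\ell$ traversing distinct clusters; a standard embedding lemma in dense regular pairs then realizes it as a $\vec{C}_\ell$ in $\vec{G}$.

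In the \emph{extremal case}, fix the partition $V_0 \cup V_1 \cup V_2$ and write $\ell = 3q + r$ with $r \in \{0,1,2\}$. When $r = 0$, the bulk $V_i \to V_{i+1}$ structure directly yields a $\vec{C}_\ell$. When $r \in \{1,2\}$, the strict inequality $\delta_0 \geq (n+1)/3 > n/3$ combined with $|V_i| \leq n/3 + \epsilon n$ forces essentially every vertex to be incident to at least one \emph{anomalous} arc: an intra-part arc inside some $V_i$ (contributing a length-shift of $-1 \pmod 3$), or a backward arc $V_{i+1} \to V_i$ (contributing $-2 \pmod 3$). A pigeonhole or matching argument on these anomalous arcs then yields a short directed subpath whose total shift is exactly $r \pmod 3$, and we complete it to length $\ell$ along $V_i \to V_{i+1}$ arcs while avoiding the few exceptional vertices. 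For the \emph{moreover} clause we rerun the argument with a prescribed vertex $v$: in the non-extremal regime the embedding lemma can prescribe one cluster representative, while in the extremal regime the bulk $V_i \to V_{i+1}$ structure lets us rotate $v$ into any already constructed $\vec{C}_\ell$ through its many out- and in-neighbors in the adjacent parts.

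The \textbf{main obstacle} is the extremal case with $r \neq 0$: the anomalous arcs must be concatenated into a subpath realizing the exact shift $r \pmod 3$, while remaining vertex-disjoint from the bulk portion of the cycle and, for the moreover clause, incorporating the prescribed vertex $v$. This will require classifying the vertices of $V_0, V_1, V_2$ by their anomalous-arc profiles, then pairing arcs of complementary shifts via a matching-type argument; the strict gap between $\delta_0 \geq (n+1)/3$ and $n/3$ is exactly what makes such a pairing possible, and carefully quantifying this gap against the error $\epsilon n$ of the extremal partition is where the delicate counting will occur.
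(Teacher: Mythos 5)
First, note that Theorem~\ref{thm:KKO} is not proved in this paper at all: it is quoted from Kelly, K\"uhn, and Osthus~\cite{kelly2010cycles} and used as a black box (for instance in the proof of Fact~\ref{fact:Cor18}). There is therefore no in-paper proof to compare against, and your proposal has to be judged on its own merits as a proof of the stated result.

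As such, it has two genuine gaps. The first is quantitative: any argument that routes the non-extremal case through a directed regularity lemma can only yield a threshold $n_0(\ell)$ of tower type, whereas the theorem asserts the explicit linear bound $n \geq 10^{10}\ell$; that explicit bound is a signal that the original proof must avoid regularity altogether. The second, and more serious, gap is in the extremal case. Your key claim --- that $\delta_0(\vec{G}) \geq (n+1)/3$ together with $|V_{i+1}| \leq n/3 + \epsilon n$ forces essentially every vertex of $V_i$ to send out an anomalous arc --- fails at this level of precision: the slack $\epsilon n$ in the part sizes is enormous compared with the $+1/3$ gain in the degree hypothesis, so a vertex of $V_i$ can comfortably place its entire out-neighbourhood inside $V_{i+1}$ with no anomalous arcs at all. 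To run the intended parity/pigeonhole argument one must first bootstrap from the coarse $\epsilon$-extremal structure to a near-exact one (parts of size $n/3+O(1)$, all but a bounded number of arcs pointing forward), and even then the case $n \equiv 2 \pmod 3$ sits exactly on the boundary --- this is the regime in which the sharpness constructions of Section~\ref{sharpness} live, and it is where the real work of any such proof is concentrated (compare the three sections this paper devotes to the analogous extremal Lemma~\ref{lem:ext}). As written, your sketch correctly identifies this obstacle in its final paragraph but does not overcome it.
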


The remainder of this paper is organized as follows.  
In Section~\ref{sec:proofevenor}, we prove both Theorems~\ref{thm:even} and~\ref{thm:or}.
For these proofs, we need upcoming 
Lemmas~\ref{lem:non-ext} and~\ref{lem:ext}, which (in a sense made precise later) 
distinguish whether or not 
a given context is {\it extremal}.
We prove Lemma~\ref{lem:non-ext} 
in Sections~\ref{sec:non-ext}--\ref{sec:prooffacts}  
where we also prove 
supplemental results needed along the way.    
We prove Lemma~\ref{lem:ext} 
in Sections~\ref{sec:ext}--\ref{sec:ex3}, where 
again we prove supplemental results needed along the way.  
We conclude this Introduction by 
verifying the sharpness
of Theorem~\ref{thm:even} when $\ell \not\equiv 0$ (mod 3).

\subsection{Theorem~\ref{thm:even} is sharp for $\boldsymbol{\ell \not\equiv 0}$ (mod 3)} \label{sharpness}
Fix an integer $\ell \not\equiv 0$ (mod 3), 
which 
in the constructions below can be even or odd.  
Let $V = V_0 \cup V_1 \cup V_2$ be a partition of an $n$-element set $V$, 
where for optimality we take $\lfloor n/3 \rfloor = m = |V_2| \leq |V_1| \leq |V_0| \leq |V_2| + 1$.  
Let $G = (V, E)$ be given by the complete 3-partite graph $K[V_0, V_1, V_2]$.  
We now distinguish the cases $n, \ell$ (mod 3).  \\

\noindent  {\bf Case 1 ($n \not\equiv 2$ {\rm (mod 3)}).}  
Define $c_+: E \to V$
by setting, for each 
$i \in \mathbb{Z}_3$ and 
$(v_i, v_{i+1}) \in V_i \times V_{i+1}$, 
\begin{equation}
\label{eqn:4.6.2019.12:04p}  
c_+(\{v_i, v_{i+1}\}) = v_{i+1}.  
\end{equation}  
We say this same 
edge $e = \{v_i, v_{i+1}\} \in E$ is of {\it type $i$}, 
and we write $t(e) = i$ for its type.  
We write a 
fixed $\ell$-cycle $C_{\ell}$ in $G$ by a cyclic ordering $(e_0, e_1, \dots, e_{\ell-1})$
of its consecutive edges.  
A consecutive such pair $(e_k, e_{k+1})$ is a {\it 
reversal} 
when $e_k$ and $e_{k+1}$ are of the same type $t(e_k) = t(e_{k+1}) = i \in \mathbb{Z}_3$, where   
$(e_k, e_{k+1})$ is a {\it backward reversal} when $e_k \cap e_{k+1} \in V_{i+1}$, 
and $(e_k, e_{k+1})$ is a {\it forward reversal} when $e_k \cap e_{k+1} \in V_i$.  
Since $C_{\ell}$ is a cycle, the number of backward reversals is the number of forward reversals, 
and   
$C_{\ell}$ admits backward reversals lest $\ell \equiv 0$ (mod 3).  
Fix 
an arbitrary backward reversal $(e_k, e_{k+1})$ of $C_{\ell}$, where $k \in \mathbb{Z}_{\ell}$, where   
$t(e_k) = t(e_{k+1}) = i \in \mathbb{Z}_3$, and 
where
$e_k \cap e_{k+1} = \{v_{i+1}\} \subset V_{i+1}$.  
Then 
\begin{equation}
\label{eqn:5.22.2019.12:04p}  
c_+(e_k) 
\stackrel{(\ref{eqn:4.6.2019.12:04p})}{=}  
v_{i+1} 
\stackrel{(\ref{eqn:4.6.2019.12:04p})}{=}  
c_+(e_{k+1}),   
\end{equation}  
whence $C_{\ell}$ isn't rainbow.  
We observe from~(\ref{eqn:4.6.2019.12:04p})   
that 
$\deg_G^{c_+}(v_i) = 1 + |V_{i+1}|$ holds for each fixed $i \in \mathbb{Z}_3$ and for each fixed $v_i \in V_i$.  Indeed,  
an incident edge $e = \{v_i, v_j\} \in E$ is assigned the fixed color $c_+(e) = v_i$ when $v_j \in V_{i-1}$, 
and is assigned 
the variable color 
$c_+(e) = v_j$ among all $|V_{i+1}|$ many possible $v_j \in V_{i+1}$.
As such, 
$\delta^{c_+}(G) = \deg_G^{c_+}(v_1) = m + 1$ is achieved 
by any vertex $v_1 \in V_1$, while   
$\lceil (n+5)/3 \rceil = m+2$ 
is ensured by $n \not\equiv 2$ (mod 3).  \hfill $\Box$  \\

\noindent  {\bf Case 2 ($n \equiv 2$, $\ell \equiv 1$ {\rm (mod 3)}).}  
Here, $n\equiv 2$ (mod 3) ensures that 
$|V_0| = |V_1| = m+1$.  
Fix a perfect matching 
$M = \{ \{x_1, y_1\}, \dots, \{x_{m+1}, y_{m+1}\} \}$ of $G[V_0, V_1] = K[V_0, V_1]$, where 
$V_0 = \{x_1, \dots, x_{m+1}\}$ and $V_1 = \{y_1, \dots, y_{m+1}\}$, and fix a symbol
$\star \not\in V$.    
Define $c_M: E \to \{\star\} \cup V$ by 
\begin{equation}
\label{eqn:matchcolor}    
c_M(e) = 
\left\{ 
\begin{array}{cc}
c_+(e) & \text{if $e \in E \setminus E_G[V_0, V_1]$,}  \\
\star & \text{if $e \in M$,}  \\
x_b & \text{if $e = \{x_a, y_b\} \in E_G[V_0, V_1] \setminus M$.}  
\end{array}  
\right.  
\end{equation}  
We observe 
from~(\ref{eqn:matchcolor}) that     
$(G, c_M)$ is $(m+2)$-color-regular, while 
$\lceil (n+5)/3 \rceil = m+3$ is ensured by $n \equiv 2$ (mod 3).  
Indeed, 
as before in Case~1, a vertex $v_2 \in V_2$ has color-degree $\deg_G^{c_M}(v_2) = \deg_G^{c_+}(v_2) = 1 + |V_0| = m + 2$.  
Less easily, 
fix $x_a \in V_0$ and fix an incident edge $x_a \in e \in E$.  
If 
$e \cap V_2 \neq \emptyset$, then 
$e$ is assigned the fixed color 
$c_M(e) = c_+(e) = x_a$, and if $e = \{x_a, y_a\} \in M$, then $e$ is assigned the fixed color $c_M(e) = \star$.  
Otherwise, 
$e = \{x_a, y_b\} \in E[V_0, V_1] \setminus M$ for some $y_a \neq y_b \in V_1$, whence 
$e$ is assigned the variable color 
$c_M(e) = x_b$
among all 
$|V_1| - 1 = m$ many possible $y_b \in V_1 \setminus \{y_a\}$.  
Similarly, fix $y_b \in V_1$, and fix an incident edge $y_b \in e \in E$.  
If $e = \{x_b, y_b\} \in M$, then $e$ is assigned the fixed color $c_M(e) = \star$, 
and if $e = \{x_a, y_b\} \in E[V_0, V_1] \setminus M$ for some 
$x_b \neq x_a \in V_0$, then $e$ is assigned the fixed color $c_M(e) = x_b$.  
Otherwise, $e = \{y_b, v_2\} \in E[V_1, V_2]$
for some $v_2 \in V_2$, whence $e$ is assigned the variable color $c_M(e) = c_+(e)  = v_2$
among all $|V_2| = m$ many possible $v_2 \in V_2$.

We now observe  
that $(G, c_M)$ avoids rainbow $\ell$-cycles $C_{\ell}$.
For that, 
fix an $\ell$-cycle $C_{\ell} = (e_0, \dots, e_{\ell-1})$ of $G$ with backward reversal  
$(e_k, e_{k+1})$, where $k \in \mathbb{Z}_{\ell}$.  
For $C_{\ell}$ to be rainbow, 
we claim that $G$ must assume the color $\star$ within the backward reversal $(e_k, e_{k+1})$.  
Indeed, let 
$t(e_k) = t(e_{k+1}) = i \in \mathbb{Z}_3$, and let 
$e_k \cap e_{k+1} = \{v_{i+1}\} \subset V_{i+1}$.     
For $C_\ell$ to be rainbow, $i = 0$ 
is necessary 
lest~(\ref{eqn:5.22.2019.12:04p}) holds, so write   
$v_{i+1} = y_1 \in V_1$.  
Since $M$ is a matching, at most one of 
$e_k, e_{k+1} \in M$, but for $C_\ell$ to be rainbow, 
at least one such containment is necessary (as claimed) lest~(\ref{eqn:matchcolor})
gives 
$c_M(e_k) 
= 
x_1 
= 
c_M(e_{k+1})$.  
Now, for $C_{\ell}$ to be rainbow, 
the following are necessary:  
\begin{enumerate}
\item[$(a)$]  
$e_k \in M$ implies 
$(e_{k-1}, e_k)$ 
is a 
forward reversal, lest  
$$
c_M(e_{k-1})  
\stackrel{(\ref{eqn:matchcolor})}{=}      
c_+(e_{k-1}) 
\stackrel{(\ref{eqn:4.6.2019.12:04p})}{=}   
x_1
\stackrel{(\ref{eqn:matchcolor})}{=} 
c_M(e_{k+1});    
$$
\item[$(b)$]  
$e_{k+1} \in M$ implies 
$(e_{k+1}, e_{k+2})$ is a forward reversal, lest   
$$
c_M(e_{k+2})  
\stackrel{(\ref{eqn:matchcolor})}{=}      
c_+(e_{k+2}) 
\stackrel{(\ref{eqn:4.6.2019.12:04p})}{=}   
x_1
\stackrel{(\ref{eqn:matchcolor})}{=} 
c_M(e_k).     
$$
\end{enumerate}  
Either way, $C_{\ell}$ has further backward reversals 
(assuming $\star$ again) lest $\ell \equiv 2$ (mod 3).  \hfill $\Box$  \\

\noindent {\bf Case 3 ($n, \ell \equiv 2$ {\rm (mod 3)}).}  
We first slightly alter the graph $G = K[V_0, V_1, V_2]$ above, as follows.    
Fix $x \in V_0$ and $y \in V_1$ so that $U_0 = V_0 \setminus \{x\}$, $U_1 = V_1 \setminus \{y\}$, and $U_2 = V_2$ all have size $m$.    
Define $\hat{E}$ by the rule that, for each $\{u, v\} \in \binom{V}{2}$, we put $\{u, v\} \in \hat{E}$ if, and only if, 
$$
\{y\} \times U_0 
\not\ni 
(u, v) \not\in \bigcup_{i \in \mathbb{Z}_3} (U_i \times U_i).   
$$
In other words, $G = (V, E)$ and $\hat{G} = (V, \hat{E})$ differ only in the $3m = n - 2$ elements 
among 
$$
\hat{E} \setminus  E = 
\bigcup \big\{ \{x, u_0\}: \, u_0 \in U_0\big\}
\cup 
\bigcup \big\{ \{y, u_1\}: \, u_1 \in U_1\big\}
\quad \text{and} \quad 
E \setminus \hat{E} = 
\bigcup \big\{ \{y, v_0\}: \, v_0 \in U_0\big\}.  
$$
Define $\hat{c} : \hat{E} \to \{\star\} \cup V$ by setting, for each $e \in \hat{E}$,  
\begin{equation}
\label{eqn:hardcol}  
\hat{c}(e) = 
\left\{
\begin{array}{cc}
\star & \text{if $e \in \hat{E} \setminus E$,} \\
\star & \text{if $e = \{x, u_2\} \in \hat{E} \cap E$ for some $u_2 \in U_2$,}  \\
c_+(e) & \text{otherwise.}
\end{array}
\right.
\end{equation}  
We observe from~(\ref{eqn:hardcol}) that $(\hat{G}, \hat{c})$ is $(m+2)$-color-regular, while
$\lceil (n+5)/3) \rceil = m + 3$ is ensured by $n \equiv 2$ (mod 3).  Indeed, fix a vertex $u_0 \in U_0$, 
and fix an incident edge $u_0 \in e \in \hat{E}$.  
If $x \in e$, then $e$ is assigned the fixed color
$\hat{c}(e) = \star$, and if $e \cap U_2 \neq \emptyset$, then $e$ is assigned the fixed color $\hat{c}(e) = c_+(e) = u_0$.  
Otherwise, $e = \{u_0, u_1\} \in \hat{E}[U_0, U_1] = E[U_0, U_1]$ for some $u_1 \in U_1$, whence $e$ is assigned
the variable color $\hat{c}(e) = c_+(e) = u_1$ among all $|U_1| = m$ many possible $u_1 \in U_1$.  
Vertices $u_1 \in U_1$ and $u_2 \in U_2$ similarly have
$\hat{c}$-degree 
$m + 2$.  
For the fixed vertex $x \in V$, fix an incident edge $x \in e \in \hat{E}$.  If $e \cap (U_0 \cup U_2) \neq \emptyset$,
then $e$ is assigned the fixed color $\hat{c}(e) = \star$, and if $y \in e$, then $e$ is assigned the fixed color
$\hat{c}(e) = y$.  Otherwise, $e = \{x, u_1\}$ for some $u_1 \in U_1$, whence $e$ is assigned the variable color
$\hat{c}(e) = c_+(e) = u_1$ among all $|U_1| = m$ many possible $u_1 \in U_1$.  The fixed vertex
$y \in V$
similarly has $\hat{c}$-degree $m+2$.  
That 
$(\hat{G}, \hat{c})$ avoids rainbow $\ell$-cycles $C_{\ell}$ 
is 
sketched in the Appendix, when more needed concepts are developed.  
\hfill $\Box$

\section{Proofs of Theorems~\ref{thm:even} and~\ref{thm:or}}  
\label{sec:proofevenor}  
The proofs of Theorems~\ref{thm:even} and~\ref{thm:or} 
are based on the well-known stability method, together with a few elementary
results.  We present the tools we need in order of increasing technicality.    

\subsection{Elementary tools}  
\label{subsec:eletools}  
Edge-colored graphs $(G, c)$ on a vertex set $V$ 
correspond to directed graphs $\vec{G} = (V, \vec{E})$, as follows.  
For each $v \in V$, 
let $\{v, w_1\}, \dots, \{v, w_d\} \in E$ 
be a system of representatives
of the color classes of $c$ on edges at $v$, 
where $d = \deg_G^c(v)$.  
We  
put $(v, w_1), \dots, (v, w_d) \in \vec{E}$, and     
we 
say that a directed graph $\vec{G} = (V, \vec{E})$
obtained in this way (which need be neither oriented nor unique)    
is {\it associated with} $(G, c)$.   
Directed graphs $\vec{G} = (V, \vec{E})$ correspond to edge-colored graphs $(G, c)$, as 
follows.
For each $(v, w) \in \vec{E}$, we put $\{v, w\} \in E(G)$ and define $c(\{v, w\}) = w$.  
Then $(G, c)$ is uniquely determined by $\vec{G}$, although $G = (V, E)$ may be a multigraph.    
We pause for the following remark.  

\begin{remark}  
\rm 
\label{rem:2.25.2019.1:24p}  
In this paper, no directed graph $\vec{G} = (V, \vec{E})$ 
will allow $\vec{E}$ to be a multiset, nor will $\vec{E}$ consist of any directed loops.  When    
$(v, w) \in \vec{E}$ forbids $(w, v) \in \vec{E}$, 
then $\vec{G} = (V, \vec{E})$
is an {\it oriented graph}.  When so, the edge-colored graph $(G, c)$ determined by 
$\vec{G}$ 
is simple.  \hfill $\Box$  
\end{remark}  

In the contexts above, we make a couple of elementary observations.  
On the one hand, 
if $(G, c)$ is an edge-colored graph and
$\vec{G} = (V, \vec{E})$ is a directed
graph associated
with $(G, c)$, then   
every vertex $v \in V$ has out-degree
$\deg^+_{\vec{G}}(v) = \deg^c_G(v)$.  On the other hand, 
if $\vec{G} = (V, \vec{E})$ is an oriented graph and 
$(G, c)$ is the edge-colored graph 
determined 
by $\vec{G} = (V, \vec{E})$, then every vertex $v \in V$ satisfies 
$\deg_G^c(v) = \deg^+_{\vec{G}}(v) + 1$ when $v$ has positive in-degree in $\vec{G}$, 
and 
$\deg_G^c(v) = \deg^+_{\vec{G}}(v)$
otherwise.  
In these contexts, 
we next 
consider the extent to which rainbow cycles 
of $(G, c)$ 
relate to directed cycles of $\vec{G}$, and vice versa.  We begin 
with 
the following elementary but useful observation first noted by H.~Li in~\cite{li2013rainbow}.  

\begin{fact}
\label{fact:Li} 
Let $\vec{G} = (V, \vec{E})$ be an oriented graph, and let $(G, c)$
be the edge-colored graph determined by $\vec{G}$.  
Every directed $\ell$-cycle $\vec{C}_{\ell}$
in $\vec{G}$
corresponds to a rainbow $\ell$-cycle $C_{\ell}$ in $(G, c)$.  Moreover, 
every properly colored $\ell$-cycle 
$C_{\ell}$ in $(G, c)$ 
is, in fact, a rainbow $\ell$-cycle, and 
corresponds to a directed $\ell$-cycle $\vec{C}_{\ell}$ in $\vec{G}$.  
\end{fact}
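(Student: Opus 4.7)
The plan is to use the very rigid structure of the coloring $c$: each edge $\{v,w\} \in E(G)$ receives a color from $V$, and in fact the color must be one of its own endpoints. Indeed, since $\vec{G}$ is oriented, exactly one of $(v,w), (w,v)$ lies in $\vec{E}$, and the corresponding endpoint is the color assigned to $\{v,w\}$. This ``endpoint-coloring'' property is what drives everything, so I would open the proof by recording it explicitly.

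For the first assertion, I would take a directed $\ell$-cycle $\vec{C}_{\ell} = (v_0, v_1, \dots, v_{\ell-1})$ of $\vec{G}$ and observe that, by definition of $c$, the underlying edge $e_i = \{v_i, v_{i+1}\}$ receives color $c(e_i) = v_{i+1}$ for each $i \in \mathbb{Z}_{\ell}$. Since the $v_i$ are pairwise distinct, so are the colors $v_1, \dots, v_{\ell-1}, v_0$, which gives a rainbow copy of $C_{\ell}$ in $(G,c)$.

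For the second assertion, let $C_{\ell} = (u_0, u_1, \dots, u_{\ell-1})$ be a properly colored $\ell$-cycle in $(G,c)$, with consecutive edges $e_i = \{u_i, u_{i+1}\}$. By the endpoint-coloring property, $c(e_i) \in \{u_i, u_{i+1}\}$ for each $i$; accordingly, call $e_i$ \emph{forward} if $c(e_i) = u_{i+1}$ and \emph{backward} if $c(e_i) = u_i$. The key propagation step is: if $e_i$ is forward, then $c(e_{i+1}) \neq c(e_i) = u_{i+1}$ forces $c(e_{i+1}) = u_{i+2}$, so $e_{i+1}$ is also forward; symmetrically, if $e_i$ is backward, then $e_{i-1}$ must be backward. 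Running this around the cycle, either every edge is forward or every edge is backward.

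In the forward case, $(u_i, u_{i+1}) \in \vec{E}$ for all $i$, giving a directed $\ell$-cycle $\vec{C}_{\ell}$ in $\vec{G}$; the colors along $C_{\ell}$ are $u_1, \dots, u_{\ell-1}, u_0$, so $C_{\ell}$ is rainbow. In the backward case, $(u_{i+1}, u_i) \in \vec{E}$ for all $i$, giving a directed $\ell$-cycle in the reverse direction; the colors are $u_0, u_1, \dots, u_{\ell-1}$, again all distinct, so $C_{\ell}$ is rainbow. This handles both conclusions simultaneously. There is no serious obstacle here — the only subtle point is the propagation argument, and the use of \emph{oriented} (not merely directed) is essential so that $c(e_i)$ takes exactly one of the two endpoint values, making the dichotomy clean.
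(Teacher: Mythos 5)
Your proof is correct, and it is exactly the elementary argument intended: the paper states this Fact without proof (attributing the observation to H.~Li), and the endpoint-coloring property plus the forward/backward propagation around a properly colored cycle is the standard way to verify it. No gaps; the only point worth noting is that the orientation hypothesis is indeed what makes the forward/backward dichotomy exclusive, which you flag correctly.
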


In Fact~\ref{fact:Li}, the 
edge-colored graph $(G, c)$ is derived from a given 
oriented graph $\vec{G} = (V, \vec{E})$, and 
directed $\ell$-cycles $\vec{C}_{\ell}$ of $\vec{G}$ are in one-to-one correspondence
with rainbow $\ell$-cycles $C_{\ell}$ of $(G, c)$.  
However, 
when $(G, c)$ is given and $\vec{G} = (V, \vec{E})$ is associated with $(G, c)$, 
the same conclusion 
need not hold.  

\begin{fact}
\label{prop:easyprop}  
Let $(G, c)$ be an $n$-vertex edge-colored graph, and 
let $\vec{G} = (V, \vec{E})$ be a directed graph associated with $(G, c)$.  
Then $\vec{G}$ admits at most $n^{\ell - 1}$ many directed $\ell$-cycles
$\vec{C}_{\ell}$ that 
were not rainbow in $(G, c)$.  
\end{fact}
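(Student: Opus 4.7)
The plan is to exploit the defining property of $\vec{G}$ being associated with $(G, c)$: for every vertex $v \in V$ and every color $k$, the set of out-edges $(v, w) \in \vec{E}$ contains at most one edge with $c(\{v, w\}) = k$, since these out-edges represent distinct color classes at $v$. Equivalently, specifying $v$ together with a color $k$ determines at most one out-neighbor $w$ of $v$ in $\vec{G}$.

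Now write a non-rainbow directed $\ell$-cycle as $\vec{C} = (v_0, v_1, \ldots, v_{\ell-1})$ with edges $e_i = (v_i, v_{i+1})$ for $i \in \mathbb{Z}_\ell$. The non-rainbow hypothesis gives indices $i < j$ with $c(\{v_i, v_{i+1}\}) = c(\{v_j, v_{j+1}\})$. Applying the observation above at $v_j$, once $v_i$, $v_{i+1}$, and $v_j$ are known, the head $v_{j+1}$ is uniquely determined as the out-neighbor of $v_j$ of color $c(\{v_i, v_{i+1}\})$. Thus $v_{j+1}$ is redundant given the rest of the cycle data, so one degree of freedom is lost relative to a generic $\ell$-tuple of vertices.

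The main step is an encoding argument. For each non-rainbow cycle (viewed as a subgraph), I fix a canonical rooted sequence (say, rooted at the vertex of smallest label) and let $(i, j)$ be the lex-smallest pair in $\{0, 1, \ldots, \ell - 1\}^2$ with $i < j$ and $c(e_i) = c(e_j)$; this pair exists by hypothesis and is uniquely defined by the sequence. The cycle is then recoverable from the pair $(i, j)$ together with the $\ell - 1$ vertices $v_0, \ldots, v_j, v_{j+2}, \ldots, v_{\ell-1}$, since $v_{j+1}$ can be reconstructed from $v_j$ and the color $c(\{v_i, v_{i+1}\})$. The number of such encodings is at most $\binom{\ell}{2} \cdot n^{\ell - 1}$: at most $\binom{\ell}{2}$ choices for the pair $(i, j)$, and at most $n^{\ell-1}$ ordered tuples of $\ell - 1$ vertices.

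I expect no significant obstacle here; the argument is essentially a counting/encoding exercise, and the only point requiring care is verifying that the canonical pair $(i, j)$ is well-defined and that the encoding is injective once $(i, j)$ is fixed. Since $\ell$ is a fixed constant throughout the paper and $n \gg \ell$ in every application within the stability argument, the $\binom{\ell}{2}$ factor is absorbed into the stated bound of $n^{\ell - 1}$.
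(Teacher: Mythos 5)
Your argument rests on exactly the observation the paper uses: in a digraph associated with $(G,c)$ the out-edges at any fixed vertex form a system of representatives of the color classes there, so they carry pairwise distinct colors, and hence a repeated color on the cycle makes one vertex recoverable from the other $\ell-1$. The paper packages this as the nested sum \eqref{eqn:10.4.2019.2:26p} over rooted representations (each cycle counted $\ell$ times) and compares against the $n^{\ell-1}$ prefixes; you package it as an explicit injective encoding by a pair $(i,j)$ together with an $(\ell-1)$-tuple. These are the same proof with different bookkeeping, and your version is if anything more carefully spelled out than the paper's one-line justification.

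The only discrepancy is the constant: your count is $\binom{\ell}{2}\,n^{\ell-1}$, and your remark that the factor $\binom{\ell}{2}$ is ``absorbed into the stated bound'' is not literally true, since $n^{\ell-1}$ carries no implicit constant. To shave the factor one must exploit the $\ell$-fold multiplicity of rooted representations, which is what the displayed identity in the paper is gesturing at (and even there the bookkeeping for the exact constant is terse). This is immaterial in practice: the fact is only ever invoked against a count of $\Omega(n^{\ell})$ directed cycles for fixed $\ell$ (in Fact~\ref{fact:10} and in Case~2 of the proof of Theorem~\ref{thm:even}), so any bound of the form $O_{\ell}(n^{\ell-1})$ suffices. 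I would simply state your version with its explicit constant, or note that the constant is irrelevant for the applications, rather than claim the constant disappears.
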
  

\begin{proof}[Proof of Fact~\ref{prop:easyprop}]  
Let ${\cal A}_{\ell}$ 
(${\cal N}_{\ell}$) denote the family of all 
(non-rainbow) 
directed $\ell$-cycles
$(v_0, v_1, \dots, v_{\ell-1})$
in $\vec{G}$, written here as cyclic permutations.  Then 
\begin{equation}
\label{eqn:10.4.2019.2:26p}  
\ell 
|{\cal A}_{\ell}| =
\sum_{v_0 \in V}
\, 
\sum_{v_1 \in N_{\vec{G}}^+(v_0)} \, \dots 
\, 
\sum_{v_{\ell-2} \in N_{\vec{G}}^+(v_{\ell-3})} 
\, 
\sum_{v_{\ell-1} \in N_{\vec{G}}^+(v_{\ell-2}) \cap N_{\vec{G}}^-(v_0)} 
\, 
1.  
\end{equation}  
Each $(v_0, v_1, \dots, v_{\ell-1}) \in {\cal N}_{\ell}$ identifies 
a sum
in~\eqref{eqn:10.4.2019.2:26p}  
with at most $\ell$-terms, so $\ell |{\cal N}_{\ell}| \leq \ell n^{\ell-1}$.  
\end{proof}

The following concept is central throughout the remainder of the paper.  

\begin{definition}[$\lambda$-extremal]  
\label{def:lambdaext}  
\rm 
Fix $\lambda \geq 0$, an $n$-vertex directed graph 
$\vec{G} = (V, \vec{E})$, and an edge-colored graph $(G, c)$ with vertex set $V$ and edge set $E$.    
We say that 
\begin{enumerate}  
\item  
$\vec{G}$ is {\it $\lambda$-extremal} if there exists a partition $V = V_0 \cup V_1 \cup V_2$
where 
for all $i \in \mathbb{Z}_3$, 
\begin{equation}
\label{eqn:dirlambdaext}
e_{\vec{G}}(V_i, V_{i+1}) \geq \left(\tfrac{1}{9} - \lambda\right) n^2, 
\end{equation}  
where $e_{\vec{G}}(V_i, V_{i+1})$ denotes 
the number of edges $(v_i, v_{i+1}) \in \vec{E} \cap (V_i \times V_{i+1})$; 
\smallskip 
\item
$(G, c)$ is {\it $\lambda$-extremal} 
if there exists 
a partition $V = V_0 \cup V_1 \cup V_2$ 
on which some directed graph $\vec{G} = (V, \vec{E})$ associated with $(G, c)$
is $\lambda$-extremal.  
\end{enumerate}  
In these contexts, 
$V = V_0 \cup V_1 \cup V_2$ 
is said to be {\it $\lambda$-extremal} for $\vec{G}$ or~$(G, c)$.  
\end{definition}  

We conclude our elementary tools with the following fact.  

\begin{fact}
\label{fact:10}  
For all $0 \leq \lambda \leq 1/(28)$, and 
for every 
positive integer $\ell \equiv 0$ {\rm (mod 3)}, the following hold:    
\begin{enumerate}
\item 
Every $\lambda$-extremal $n$-vertex directed graph 
$\vec{G} = (V, \vec{E})$ has 
$\Omega(n^{\ell})$ 
many 
directed $\ell$-cycles $\vec{C}_{\ell}$.
\smallskip 
\item 
Every $\lambda$-extremal $n$-vertex edge-colored graph 
$(G, c)$ has 
$\Omega(n^{\ell})$  
many 
rainbow $\ell$-cycles $C_{\ell}$.  
\end{enumerate}  
\end{fact}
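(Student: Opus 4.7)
The plan is to prove part (1) by a greedy embedding inside the tripartite structure, and then derive part (2) from (1) via Fact \ref{prop:easyprop}.

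For part (1), fix a $\lambda$-extremal directed graph $\vec{G}=(V,\vec{E})$ with partition $V=V_{0}\cup V_{1}\cup V_{2}$ satisfying \eqref{eqn:dirlambdaext}. Writing $n_i=|V_i|$, the three constraints $n_i n_{i+1}\geq (\tfrac{1}{9}-\lambda)n^{2}$ together with $n_0+n_1+n_2=n$ force $n_i=\tfrac{n}{3}+O(\lambda n)$ for each $i\in\mathbb{Z}_{3}$ (solving the quadratic $\alpha^{2}-\alpha+2/9-2\lambda\leq 0$). A routine averaging argument then shows that for each $i$, at most $O(\sqrt{\lambda}\,n)$ vertices $v\in V_i$ have either in-degree from $V_{i-1}$ or out-degree to $V_{i+1}$ below $\tfrac{n}{3}-\sqrt{\lambda}\,n$. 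Deleting these exceptional vertices produces subsets $V_i'\subseteq V_i$ with $|V_i'|=\tfrac{n}{3}\pm O(\sqrt{\lambda}\,n)$, where every $v\in V_i'$ has both in-degree from $V_{i-1}'$ and out-degree to $V_{i+1}'$ at least $\tfrac{n}{3}-O(\sqrt{\lambda}\,n)$.

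Writing $\ell=3k$, we count ordered directed $\ell$-cycles $(v_{0},v_{1},\ldots,v_{\ell-1})$ with $v_{j}\in V_{j\bmod 3}'$ greedily. Pick $v_{0}\in V_{0}'$ in $\Omega(n)$ ways, and for each $0\leq j\leq \ell-3$ pick $v_{j+1}\in N^{+}_{\vec{G}}(v_{j})\cap V_{(j+1)\bmod 3}'\setminus\{v_{0},\ldots,v_{j}\}$ in $\Omega(n)$ ways using the robust out-degree estimate (only $O(\ell)$ vertices being excluded). For the closing vertex $v_{\ell-1}$, inclusion-exclusion inside $V'_{(\ell-1)\bmod 3}$ gives
\[
\bigl|N^{+}_{\vec{G}}(v_{\ell-2})\cap N^{-}_{\vec{G}}(v_{0})\cap V'_{(\ell-1)\bmod 3}\bigr|\ \geq\ 2\bigl(\tfrac{n}{3}-O(\sqrt{\lambda}\,n)\bigr)-\bigl(\tfrac{n}{3}+O(\sqrt{\lambda}\,n)\bigr)=\tfrac{n}{3}-O(\sqrt{\lambda}\,n),
\]
which is $\Omega(n)$ since $\lambda\leq 1/28$. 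Multiplying the $\Omega(n)$ choices at each of the $\ell$ steps and dividing by the $O(\ell)$-to-one overcount from cyclic rotations yields $\Omega(n^{\ell})$ directed $\ell$-cycles.

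For part (2), let $(G,c)$ be $\lambda$-extremal and fix a $\lambda$-extremal directed graph $\vec{G}$ associated with $(G,c)$, which exists by Definition \ref{def:lambdaext}. By part (1), $\vec{G}$ contains $\Omega(n^{\ell})$ directed $\ell$-cycles; by Fact \ref{prop:easyprop}, at most $n^{\ell-1}$ of them are non-rainbow in $(G,c)$, so $\Omega(n^{\ell})$ of them are rainbow. Since each unordered rainbow $\ell$-cycle in $G$ is realized by at most $2\ell$ directed cycles in $\vec{G}$ (cyclic rotations and possible reversals through 2-cycles), we obtain $\Omega(n^{\ell})$ distinct rainbow $\ell$-cycles $C_{\ell}$ in $(G,c)$. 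The main obstacle is quantitative control in the closing step: the margin $\tfrac{n}{3}-O(\sqrt{\lambda}\,n)$ must remain $\Omega(n)$, which is precisely where the hypothesis $\lambda\leq 1/28$ is used. The divisibility assumption $\ell\equiv 0\pmod 3$ is essential, as only then can a directed $\ell$-cycle respect the cyclic orientation $V_{0}\to V_{1}\to V_{2}\to V_{0}$ throughout.
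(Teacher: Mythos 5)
Your overall architecture (prove (1), then deduce (2) via Fact~\ref{prop:easyprop}) matches the paper, and your derivation of (2) from (1) is fine. The problem is in your proof of (1): the greedy embedding with vertex cleaning does not survive the constant $\lambda = 1/(28)$ that the statement requires. Concretely, declaring a vertex exceptional when it misses more than $\sqrt{\lambda}\,n$ edges to the adjacent part costs you a degree loss of at least $\sqrt{\lambda}\,n$ from the threshold itself, plus a further loss of order (number of missing edges)$/(\sqrt{\lambda}\,n) = \Theta(\sqrt{\lambda})\,n$ from the deleted vertices in the target part, and the missing-edge count between two parts can be as large as roughly $5\lambda n^2$ when $\lambda = 1/(28)$. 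Tracking these constants, the guaranteed out-degree into $V_{i+1}'$ is $n/3 - C\sqrt{\lambda}\,n$ with $C \geq 10$ or so; since $\sqrt{1/28} \approx 0.19$, this is negative, and no choice of cleaning threshold $t$ rescues it (optimizing $t + c\lambda/t$ still leaves a deficit of order $\sqrt{\lambda}\,n \gg n/3$). So the step you yourself flag as "precisely where the hypothesis $\lambda \leq 1/(28)$ is used" is exactly where the argument breaks: your proof establishes the fact only for $\lambda$ smaller than roughly $10^{-3}$, not for all $\lambda \leq 1/(28)$.

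The paper avoids vertex cleaning entirely and counts globally: it counts all $(|V_0|)_k(|V_1|)_k(|V_2|)_k$ directed $\ell$-cycles in the complete blow-up of the directed triangle that hit each part $k = \ell/3$ times, then subtracts, via a union bound over the at most $|V_i||V_{i+1}| - (\tfrac19-\lambda)n^2$ missing edges per pair, the cycles using a missing edge. The surviving fraction is $(\tfrac19-\lambda)\tfrac{n^3}{|V_0||V_1||V_2|} - 2 \geq 27(\tfrac19 - \lambda) - 2 = 1 - 27\lambda$ by convexity, which is positive exactly when $\lambda < 1/(27)$ — this is where $1/(28)$ comes from. If you want to keep your approach, you would either need to weaken the statement to "for all sufficiently small $\lambda$" (which, as it happens, would still suffice for the paper's applications, since $\lambda$ is there taken to be the minimum of $1/(28)$ and a far smaller constant), or replace the cleaning step with the paper's global deletion count.
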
  

\begin{proof}[Proof of Fact~\ref{fact:10}]  
Fix $0 \leq \lambda \leq 1/(28)$ and fix a positive integer $\ell \equiv 0$ (mod 3).  
To prove Statement~(1), set $k = \ell / 3$,   
and let $\vec{G} = (V, \vec{E})$ be an $n$-vertex directed graph with 
$\lambda$-extremal vertex partition $V = V_0 \cup V_1 \cup V_2$.  
Let $\vec{H}$ be a blow-up of the directed triangle on $V = V_0 \cup V_1 \cup V_2$, whose edges consist of 
$(V_0 \times V_1) \cup (V_1 \times V_2) \cup (V_2 \times V_0)$.  
Then, $\vec{H}$ admits precisely 
$(|V_0|)_k \times (|V_1|)_k \times (|V_2|)_k$ 
many directed $\ell$-cycles 
$\vec{C}_{\ell}$ meeting each of $V_0$, $V_1$, and $V_2$ exactly $k$ times.  
The number of these cycles having some edge $\vec{e} = (v_0, v_1)$ of $\vec{H} \setminus \vec{G}$, where $v_0 \in V_0$ and $v_1 \in V_1$, 
is at most 
$$
\left(
|V_0||V_1| - \left(\tfrac{1}{9} - \gamma\right)n^2 \right)|V_2| \times (|V_0| - 1)_{k-1} \times (|V_1|-1)_{k-1} (|V_2| - 1)_{k-1}.   
$$
More generally, the number of these cycles having some edge $\vec{e}$ of $\vec{H} \setminus \vec{G}$ is at most 
$$
\left(3 - \left(\tfrac{1}{9} - \gamma\right) \frac{n^3}{|V_0||V_1| |V_2|} \right) 
(|V_0|)_k \times (|V_1|)_k \times (|V_2|)_k.   
$$
Thus, $\vec{G}$ admits at least 
$$
\left(\left(\tfrac{1}{9} - \gamma\right) \frac{n^3}{|V_0||V_1| |V_2|}  - 2 \right) 
(|V_0|)_k \times (|V_1|)_k \times (|V_2|)_k 
$$
many directed $\ell$-cycles $\vec{C}_{\ell}$.    
Since $|V_0||V_1||V_2| \leq n^3/(27)$ holds by convexity, $\vec{G}$ admits at least 
$$
(1 - 27 \gamma) 
(|V_0|)_k \times (|V_1|)_k \times (|V_2|)_k = \Omega(n^{\ell}) 
$$
many directed $\ell$-cycles $\vec{C}_{\ell}$, where we used $\gamma \leq 1/(28)$.

For Statement~(2), let $(G, c)$ be an $n$-vertex $\lambda$-extremal 
edge colored graph, 
and let $\vec{G} = (V, \vec{E})$ be a directed graph 
associated
with $(G, c)$  
which has 
$\lambda$-extremal 
partition $V = V_0 \cup V_1 \cup V_2$.  
Let $\vec{F} \subseteq \vec{G}$ consist of all $(v_i, v_{i+1}) \in \vec{E}$
where $v_i \in V_i$ 
and $v_{i+1} \in V_{i+1}$ for $i \in \mathbb{Z}_3$.  
Then $\vec{F}$ is an 
oriented graph with $\lambda$-extremal partition $V = V_0 \cup V_1 \cup V_2$ which, by 
Statement~(1), admits $\Omega(n^{\ell})$ directed $\ell$-cycles $\vec{C}_{\ell}$.  
Fact~\ref{prop:easyprop} ensures that 
$\Omega(n^{\ell}) - n^{\ell - 1}$ of these directed cycles
correspond to rainbow cycles in $(G, c)$, 
because the edge-colored graph $F$ determined by $\vec{F}$ is, 
by construction, 
a subgraph of $G$.  
\end{proof}

\subsection{Stability results}  
In what follows, 
we distinguish
between whether or not a given structure is 
$\lambda$-extremal (cf.~Definition~\ref{def:lambdaext}).

\begin{lemma}
\label{lem:non-ext}
For all $\lambda > 0$, 
there exists $\alpha = \alpha (\lambda) > 0$ so that for all 
integers $\ell \geq 4$, 
there exists 
an integer $n_0 = n_0(\lambda, \alpha, \ell) \geq 1$
so that 
whenever 
$\vec{G}$ is an oriented graph
on $n \geq n_0$ many vertices 
satisfying 
\begin{equation}
\label{eqn:non-exthyp}
\delta^+(\vec{G}) \geq 
\left\{
\begin{array}{cc} 
\left(\frac{1}{3} - \alpha\right)n & \text{if $\ell \neq 5$,}  \\
\frac{n+1}{3}                      & \text{if $\ell = 5$,}  
\end{array}
\right.  
\end{equation}  
then $\vec{G}$ is $\lambda$-extremal or $\vec{G}$ admits a closed directed $\ell$-walk  
$\vec{W}_{\ell}$.  
\end{lemma}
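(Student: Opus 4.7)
The plan is to argue the contrapositive: assuming $\vec{G}$ admits no closed directed $\ell$-walk, I want to deduce that $\vec{G}$ is $\lambda$-extremal. First, I would show that $\vec{G}$ contains $\Omega(n^{3})$ directed triangles. Since $\delta^{+}(\vec{G}) \geq (\tfrac{1}{3} - \alpha)n$, a standard convexity estimate on $\sum_{v \in V} |N^{+}(v)|^{2}$ yields that many ordered pairs $(u, v)$ satisfy $|N^{+}(u) \cap N^{-}(v)| \geq (\tfrac{1}{3} - O(\alpha))n$, and in particular most vertices $u \in V$ lie on a directed triangle $T_{u} = u\, x\, y\, u$ with many choices of $x, y$.

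I would then build a $3$-partition of $V$ via reachability modulo $3$. Fix such a base vertex $u$ and set
\begin{equation*}
V_{i} = \{v \in V : \text{some directed walk from } u \text{ to } v \text{ has length} \equiv i \pmod{3}\}, \qquad i \in \mathbb{Z}_{3},
\end{equation*}
after discarding a small exceptional set of vertices that are unreachable from $u$ or lie in more than one $V_{i}$. The structural core of the argument is the claim that, under the no-closed-$\ell$-walk hypothesis, only $o(n^{2})$ edges $(v, w) \in \vec{E}$ can be \emph{misplaced}, meaning $v \in V_{i}$ and $w \in V_{j}$ with $j \not\equiv i + 1 \pmod{3}$. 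Any such edge yields a candidate closed $\ell$-walk: concatenate a walk $u \leadsto v$ of length $\equiv i \pmod{3}$, the edge $(v, w)$, and a walk $w \leadsto u$ of length $\equiv -j \pmod{3}$; then pad with copies of $T_{u}$ (each of length $3$) and, when the residue $\ell \pmod{3}$ demands it, a single short closed walk of length $4$ or $5$ through $u$, which one locates inside the robust triangle structure around $u$. Because $\gcd(3, 4) = \gcd(3, 5) = 1$ and $\ell \geq 6$ outside the exceptional case, such padding recovers length exactly $\ell$.

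The main obstacle will be $\ell = 5$. The padding argument requires combining a closed $3$-walk with a closed $2$-walk to reach length $5$, which is impossible in an oriented graph; hence a closed $5$-walk must be produced directly, without padding. Accordingly, the hypothesis in this case is sharpened to $\delta^{+}(\vec{G}) \geq (n+1)/3$, since the extremal blow-up of a triangle achieves out-degree exactly $\lfloor n/3 \rfloor$ and admits no closed $5$-walk. I would handle $\ell = 5$ by a separate, more delicate argument: if $\vec{G}$ fails to be $\lambda$-extremal, a misplaced out-neighbor of some $u$ combined with the strict excess of $\delta^{+}(\vec{G})$ over $n/3$ and the abundance of directed triangles through $u$ forces a closed $5$-walk by a pigeonhole applied to iterated out-neighborhoods $N^{+}(N^{+}(N^{+}(u)))$.

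Finally, once the edges of $\vec{G}$ are shown to respect the cyclic orientation $V_{0} \to V_{1} \to V_{2} \to V_{0}$ outside an $o(n^{2})$ error, the minimum out-degree hypothesis forces each $|V_{i}| = (1 \pm o(1))n/3$ and each $e_{\vec{G}}(V_{i}, V_{i+1}) \geq (\tfrac{1}{9} - \lambda) n^{2}$, so that $V = V_{0} \cup V_{1} \cup V_{2}$ witnesses the $\lambda$-extremality of $\vec{G}$.
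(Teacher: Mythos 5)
Your very first step fails, and it is the load-bearing one. From $\delta^{+}(\vec{G}) \geq (\tfrac13-\alpha)n$ alone you cannot extract even a single directed triangle, let alone $\Omega(n^{3})$ of them with ``robust'' structure around most vertices: deciding whether out-degree $n/3$ forces a directed triangle is exactly the triangular Caccetta--H\"aggkvist conjecture, which the paper explicitly flags as beyond reach. Your convexity estimate does not do what you claim: inclusion--exclusion gives $|N^{+}(u)\cap N^{-}(v)| \geq |N^{+}(u)|+|N^{-}(v)|-n$, and with both degrees near $n/3$ the right-hand side is about $-n/3$; you would need degrees above $n/2$ for this to produce common neighbors. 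The paper's route around this obstacle is the content of Proposition~\ref{prop:Lem17}: using the no-closed-$\ell$-walk hypothesis itself (via a two-step-path argument between large disjoint sets, Fact~\ref{fact:11.1.2018}), one first passes to a large induced subgraph with minimum \emph{semi}-degree close to $n/3$, and only then invokes the cited Ji--Wu--Song theorem (Fact~\ref{fact:JiWuSong}), which says a triangle-free oriented graph with $\delta_0 \geq 0.3025\,n$ contains directed $\ell$-cycles --- so absence of a closed $\ell$-walk forces a directed triangle. Without some such external input your argument never gets off the ground.

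The reachability-mod-$3$ partition and the padding scheme also have unaddressed problems. The closed walk you build from a misplaced edge $(v,w)$ --- a walk $u\leadsto v$, the edge, a walk $w\leadsto u$ --- has length you do not control; it may be far larger than $\ell$, and padding with length-$3$ triangles only produces closed walks of length $\geq L$ in the residue class of $L$ modulo $3$, so you cannot reach length exactly $\ell$. Worse, the ``single short closed walk of length $4$ or $5$ through $u$'' that you invoke to fix the residue cannot be assumed to exist: if $u$ lies on both a closed $3$-walk and a closed $4$-walk then $\ell=3i+4j$ already yields the desired $\ell$-walk for $\ell\geq 6$, so in the contrapositive precisely these configurations are forbidden (this is how the paper \emph{uses} equation~(\ref{eqn:numfact}), to rule out $4$-cycles through the triangle's vertices, not to supply padding). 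You would also need to prove the exceptional set of your partition is small, which is essentially the whole lemma. Finally, the $\ell=5$ case is dismissed with a vague pigeonhole on iterated out-neighborhoods; in the paper this case occupies a separate, substantially more delicate argument involving transitive triangles, independent sets, and the Erd\H{o}s--Stone theorem (Facts~\ref{fact:Cor18} and~\ref{prop:2.3}).
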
  

\noindent We prove Lemma~\ref{lem:non-ext} in 
Sections~\ref{sec:non-ext}--\ref{sec:nonext2}.    
We apply Lemma~\ref{lem:non-ext} 
in the following convenient form.  

\begin{cor}[the non-extremal case]  
\label{cor:non-ext}  
In the context of Lemma~\ref{lem:non-ext}, 
the following statements hold: 
\begin{enumerate}
\item  If $\ell = 5$ and $\vec{G}$ is not $\lambda$-extremal, then $\vec{G}$ contains
a directed 5-cycle $\vec{C}_5$;   
\item  If $\ell \neq 5$ and $\vec{G}$ is not $\lambda$-extremal, then $\vec{G}$ contains
$\Omega(n^{\ell})$ many directed $\ell$-cycles $\vec{C}_{\ell}$.   
\end{enumerate}  
Moreover, for even integers $\ell$,  Statement~(2) above holds when $\vec{G}$
is allowed to be a directed graph.  
\end{cor}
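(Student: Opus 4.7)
For Statement~(1), I would invoke Lemma~\ref{lem:non-ext} to obtain a closed directed $5$-walk $\vec{W}_5 = (v_0, v_1, v_2, v_3, v_4)$ in $\vec{G}$ and then argue that $\vec{W}_5$ must already be a directed $5$-cycle. Indeed, if $v_i = v_j$ for some $0 \leq i < j \leq 4$, then $\vec{W}_5$ decomposes into two closed directed walks of positive lengths $j - i$ and $5 - (j - i)$. Since $\vec{G}$ is oriented---admitting neither loops nor digons---every closed directed walk has length at least $3$; but $3 + 3 > 5$, a contradiction. Hence $v_0, \dots, v_4$ are distinct, and $\vec{W}_5$ is a directed $5$-cycle.

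For Statement~(2), the goal is to produce $\Omega(n^\ell)$ directed $\ell$-cycles. I would first reduce to counting closed directed $\ell$-walks: any closed directed $\ell$-walk that fails to be a cycle uses at most $\ell - 1$ distinct vertices, so for fixed $\ell$ there are only $O(n^{\ell - 1})$ such degenerate walks. Hence $\Omega(n^\ell)$ closed directed $\ell$-walks translate into $\Omega(n^\ell)$ directed $\ell$-cycles. To exhibit the walks, I would exploit non-extremality quantitatively: fixing a partition $V = V_0 \cup V_1 \cup V_2$ that maximizes $\sum_{i \in \mathbb{Z}_3} e_{\vec{G}}(V_i, V_{i+1})$, the min out-degree hypothesis gives $e(\vec{G}) \geq (1/3 - \alpha)n^2$ while non-$\lambda$-extremality constrains the cyclic-edge sum strictly below $(1/3 - \lambda)n^2$ for any sufficiently balanced partition, leaving $\Omega(n^2)$ transverse (non-cyclic) edges. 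Each such transverse edge can be extended through the bulk-cyclic substructure of $\vec{G}$ to $\Omega(n^{\ell - 2})$ closed directed $\ell$-walks, by choosing a compatible directed path of length $\ell - 1$ through the cyclic part from one endpoint of the transverse edge back to the other. Accounting for the $O(\ell)$ overcounting per walk yields $\Omega(n^\ell)$ closed walks, hence $\Omega(n^\ell)$ cycles. The main obstacle is rigorously handling the cyclic-extension step: lower-bounding the number of length-$(\ell - 2)$ cyclic paths between specified endpoints requires careful control of the bulk structure of $\vec{G}$ relative to the chosen partition, likely via tools developed in the proof of Lemma~\ref{lem:non-ext}.

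For Statement~(3), given even $\ell$ and a directed graph $\vec{G}$, I would split on the density of digons. Form an oriented subgraph $\vec{G}^\ast \subseteq \vec{G}$ by keeping one direction per digon, chosen to preserve min out-degree as much as possible. Since edge deletion can only decrease cyclic-edge counts, non-$\lambda$-extremality of $\vec{G}$ passes to $\vec{G}^\ast$. If $\delta^+(\vec{G}^\ast) \geq (1/3 - \alpha)|V|$, then Statement~(2) applied to $\vec{G}^\ast$ directly supplies $\Omega(n^\ell)$ directed $\ell$-cycles in $\vec{G}^\ast \subseteq \vec{G}$. Otherwise, $\vec{G}$ must contain many digons at some vertices; for even $\ell$ these digons combine with the abundant directed paths of length $\ell - 1$ (guaranteed by the min out-degree hypothesis) to form $\Omega(n^\ell)$ directed $\ell$-cycles, each digon pair $(u, v), (v, u)$ serving as the closing edge of some $(\ell - 1)$-length directed path from $u$ to $v$ in $\vec{G}$. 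The principal obstacle throughout is the supersaturation in Statement~(2), which converts the single-walk existence guarantee of Lemma~\ref{lem:non-ext} into the asymptotic cycle count required by Corollary~\ref{cor:non-ext}.
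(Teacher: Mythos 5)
Your argument for Statement~(1) is correct and is essentially the paper's: Lemma~\ref{lem:non-ext} hands you a closed directed $5$-walk, and in an oriented graph (no loops, no digons) any repeated vertex would split the walk into two closed walks of lengths summing to $5$ with each length at least $3$, which is impossible; the paper simply asserts this, so your justification is a welcome addition.

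Statements~(2) and the ``Moreover'' clause, however, have a genuine gap: you never supply the supersaturation mechanism that converts the \emph{single} closed $\ell$-walk guaranteed by Lemma~\ref{lem:non-ext} into $\Omega(n^{\ell})$ cycles, and the direct count you sketch does not work. In Statement~(2) you posit that a non-$\lambda$-extremal $\vec{G}$ has $\Omega(n^2)$ ``transverse'' edges each extendable through a ``bulk-cyclic substructure'' to $\Omega(n^{\ell-2})$ closed walks. But non-extremality means precisely that \emph{no} partition carries near-extremal cyclic structure; the graph may have no cyclic bulk at all (e.g.\ a quasirandom oriented graph with $\delta^+ \approx n/3$), so there is nothing to route the length-$(\ell-1)$ return path through, and the step you yourself flag as ``the main obstacle'' is exactly where the argument fails. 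The paper instead applies the directed regularity lemma (Lemma~3.2 of~\cite{kelly2008dirac}): the cluster digraph $\vec{R}$ admits an oriented spanning subgraph $\vec{Q}$ inheriting both the out-degree condition and non-$\lambda'$-extremality, Lemma~\ref{lem:non-ext} produces a closed $\ell$-walk in $\vec{Q}$, and the counting lemma applied to the corresponding regular pairs yields $\Omega(n^{\ell})$ genuine $\ell$-cycles. Your reduction from walks to cycles via the $O(n^{\ell-1})$ bound on degenerate walks is fine, but only once you have $\Omega(n^{\ell})$ walks to begin with.

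The ``Moreover'' part has two further problems. Deleting one arc from each digon to form $\vec{G}^{\ast}$ can destroy the out-degree hypothesis at every vertex (a vertex may have most of its out-neighbourhood inside digons), so the dichotomy you set up is not exhaustive in any useful way; and in the digon-rich branch, a digon $(u,v),(v,u)$ together with ``abundant'' directed $(\ell-1)$-paths from $u$ to $v$ is again an existence claim with no quantitative backing --- you would need $\Omega(n^{\ell-1})$ such paths per digon, which is the same supersaturation problem in disguise. The paper handles even $\ell$ by applying the regularity lemma for digraphs (Lemma~3.1 of~\cite{alon2003testing}): if the cluster digraph contains a $2$-cycle, the corresponding regular pair $(V_i, V_j)$, dense in both directions, immediately supports $\Omega(n^{\ell})$ directed $\ell$-cycles alternating between the two clusters; otherwise the cluster digraph is oriented and the previous argument applies. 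You should restructure Statements~(2) and~(3) around the regularity/counting framework rather than a direct edge-by-edge count.
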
  

\noindent  Note that Statement~(1) of Corollary~\ref{cor:non-ext} restates 
the conclusion of Lemma~\ref{lem:non-ext} when $\ell = 5$, since the only closed
directed 
$5$-walk $\vec{W}_5$ is the 5-cycle $\vec{C}_5$.  
It is standard 
to derive Statement~(2) of Corollary~\ref{cor:non-ext} from Lemma~\ref{lem:non-ext} 
by using a suitable regularity lemma.  
We sketch such a proof below.  

\begin{remark}
\rm 
In the context of Lemma~\ref{lem:non-ext}, let $\vec{G}$ be an oriented graph on $n \geq n_0(\lambda, 
\ell)$ many vertices  
which satisfies~(\ref{eqn:non-exthyp}), where $\ell \neq 5$.    
We may apply Lemma~3.2
from~\cite{kelly2008dirac}
to obtain 
a regular partition $V = V_0 \cup V_1 \cup \dots \cup V_t$ 
of $\vec{G}$ with cluster digraph $\vec{R}$, where $\vec{R}$ 
may not be oriented.   
Nonetheless, Lemma~3.2 guarantees that $\vec{R}$ 
admits an oriented spanning subgraph $\vec{Q} \subseteq \vec{R}$, where 
$\delta^+(\vec{Q}) / t$ can be taken arbitrarily close to $\delta^+(\vec{G}) / n$, 
and where 
$\delta^-(\vec{Q}) / t$ can be taken arbitrarily close to 
$\delta^-(\vec{G}) / n$.  
As such, if the oriented graph $\vec{G}$ is not $\lambda$-extremal, then the oriented
graph $\vec{Q}$ isn't $\lambda'$-extremal for some suitably small $0 < \lambda' \leq \lambda$.  
Lemma~\ref{lem:non-ext} then guarantees that $\vec{Q}$ admits a closed directed 
$\ell$-walk $\vec{W}_{\ell}$.
Applying 
a counting lemma to 
the system
of pairs 
$(V_i, V_j)$
corresponding to the edges of $\vec{W}_{\ell}$ 
guarantees 
$\Omega(n^{\ell})$ 
many directed $\ell$-cycles $\vec{C}_{\ell}$.

When $\ell$ is even, $\vec{G}$ need not be oriented.  Here, we may apply Lemma~3.1 of~\cite{alon2003testing}  
to obtain a regular partition $V = V_0 \cup V_1 \cup \dots \cup V_t$ of $\vec{G}$ with cluster digraph $\vec{R}$.  
Again, if $\vec{G}$ is not $\lambda$-extremal, then $\vec{R}$ is not $\lambda'$-extremal
for some suitably small $0 < \lambda' \leq \lambda$.  
If $\vec{R}$ is, in fact, an oriented graph, then we proceed identically to the above.  
Assume that 
$\vec{R}$ admits a 2-cycle, i.e., a closed 2-walk $\vec{W}_2$.   
Since $\ell$ is even, 
the pair $(V_i, V_j)$ corresponding 
to $\vec{W}_2$ admits $\Omega(n^{\ell})$ many directed $\ell$-cycles $\vec{C}_{\ell}$.  \hfill $\Box$  
\end{remark}  

We continue with an extremal counterpart to Corollary~\ref{cor:non-ext}.  

\begin{lemma}[the extremal case]  
\label{lem:ext}
There exists an absolute constant $\lambda_0 > 0$ so that, for all $0 < \lambda \leq \lambda_0$ and for all integers $\ell \geq 4$
not divisible by three, there exists an integer $n_0 = n_0(\lambda_0, \lambda, \ell) \geq 1$ 
so that whenever $(G, c)$ is a $\lambda$-extremal edge colored graph
on $n \geq n_0$ many vertices, the following hold:  
\begin{enumerate}
\item 
If $\ell \neq 5$ and 
$\delta^c(G) \geq (n+5)/3$  
(cf.~{\rm (\ref{eqn:degcond})}), then $(G, c)$ admits 
a rainbow $\ell$-cycle $C_{\ell}$;  
\item  
If 
$\delta^c(G) \geq (n+4)/3$, 
then 
$(G, c)$ admits a properly colored $\ell$-cycle $C_{\ell}$.  
\end{enumerate}  
\end{lemma}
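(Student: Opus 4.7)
The plan is to exploit the $\lambda$-extremal partition $V = V_0 \cup V_1 \cup V_2$ of $(G, c)$, together with an associated directed graph $\vec{G} = (V, \vec{E})$ realizing~(\ref{eqn:dirlambdaext}), and construct the required cycle greedily. First, I would carry out a standard cleaning step: iteratively reassign each vertex whose forward out-degree (into the canonical ``next'' part) is not close to maximum, until every $v \in V_i$ sends at least $(1/3 - O(\sqrt{\lambda}))n$ of its out-edges in $\vec{G}$ to $V_{i+1}$. A convexity argument then gives $|V_0|, |V_1|, |V_2| = n/3 + O(\sqrt{\lambda}\, n)$.

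Next, I would count \emph{exceptional} out-edges of $\vec{G}$, namely those $(u, v) \in \vec{E}$ with $u \in V_i$ and $v \notin V_{i+1}$. Since $\sum_{v \in V} \deg^+_{\vec{G}}(v) = \sum_{v \in V} \deg^c_G(v) \geq n(n+5)/3 = n^2/3 + 5n/3$, while the total number of forward edges is at most
\[
\sum_{i \in \mathbb{Z}_3} |V_i|\, |V_{i+1}| \leq \tfrac{1}{3}\bigl(|V_0| + |V_1| + |V_2|\bigr)^2 = n^2/3
\]
by convexity, there are at least $5n/3$ exceptional edges in $\vec{G}$. Each such edge is either \emph{within-part} (from $V_i$ to $V_i$) or \emph{backward} (from $V_i$ to $V_{i-1}$), and by pigeonhole at least $5n/6$ of them share a common type.

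I would then build the rainbow $\ell$-cycle by combining forward edges with one or two exceptional edges chosen to correct the residue of $\ell$ modulo~$3$. If $\ell \equiv 1 \pmod{3}$, I would use either one within-part exceptional edge (net part-shift $0$) together with $\ell - 1$ forward edges, or two backward exceptional edges (net part-shift $-2 \equiv 1 \pmod 3$) together with $\ell - 2$ forward edges; analogous options cover $\ell \equiv 2 \pmod{3}$. In each case, the required forward path is built greedily: at each step the current vertex $v$ has $\deg^+_{\vec{G}}(v) \geq (n+5)/3$ out-neighbors, so after forbidding vertices already in the walk, colors already used, and vertices outside the target part, at least $\Omega(n)$ valid extensions remain. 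A final step chooses $v_{\ell - 1}$ so that $(v_{\ell - 1}, v_0) \in \vec{E}$ holds with a color fresh to the cycle; this is achievable by averaging, since $v_0$ has $\Omega(n)$ eligible in-neighbors in the appropriate part.

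The main obstacle is closing the cycle back to $v_0$: the constraints of landing in the correct part, finding an edge to $v_0$, and picking a previously unused color must all align simultaneously. The slack of $5n/3$ in the degree hypothesis (over the extremal value $\approx n/3$) is quite tight, so I would need to choose $\lambda_0$ small enough that the cleaning step displaces only $O(\sqrt{\lambda}\, n)$ vertices and every greedy step retains $\Omega(n)$ candidates, and then handle the closing step by a short averaging argument showing that sufficiently many choices of $v_{\ell - 1}$ admit a fresh-color edge to $v_0$. Part~(2) follows by the same template with the weaker threshold $(n+4)/3$: since being properly colored requires only adjacent colors to differ, the color-avoidance constraint at each greedy step reduces from ``avoid all previously used colors'' to ``avoid only the color on the preceding edge'', which easily fits within the smaller excess of $4n/3$.
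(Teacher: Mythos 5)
Your opening counts are fine: with $\deg^+_{\vec G}(v)=\deg^c_G(v)$ and $\sum_i|V_i||V_{i+1}|\le n^2/3$, the associated digraph does carry at least $5n/3$ ``exceptional'' (non-forward) edges. But this is where the proposal stops engaging with what actually makes the extremal case hard, and the resulting gap is fatal. The sharpness constructions of Section~\ref{sharpness} (e.g.\ $(G,c_M)$ in Case~2, with $\delta^c(G)=m+2$) also have at least one exceptional out-edge per vertex, hence $\Omega(n)$ exceptional edges in total, and yet admit no rainbow $\ell$-cycle. So no argument whose only inputs are ``$\Omega(n)$ exceptional edges exist'' plus greedy color-avoidance can be correct: it would apply verbatim one unit below the threshold. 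The entire content of the lemma is the marginal gain from $(n+5)/3$ over $m+2$, and extracting that requires knowing \emph{where} the extra colors live, not merely that extra edges exist.

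Two concrete failures. First, your claim that ``at each step at least $\Omega(n)$ valid extensions remain'' is false exactly at the step where an exceptional edge must be used: a vertex $v\in V_i$ has $\deg^+_{\vec G}(v)-|V_{i+1}|=O(1)$ exceptional out-edges, so the cycle is forced through $O(1)$ specific edges whose colors are forced. Second, greedy avoidance of ``colors already used'' does not protect you from the global color collisions that the constructions are built on: in the near-canonical structure all backward edges at a vertex $u$ carry a single primary color $c_u$, and that same color reappears on forward edges at \emph{other} vertices (as with $c_M(\{x_a,v_2\})=x_a=c_M(\{x_b,y_a\})$), so the forced exceptional edge and a forced later edge can collide no matter how the free steps are chosen. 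This is why the paper's proof must first establish that $(G,c)$ is nearly canonical (Fact~\ref{fact:2.17.2019.5:52p}, Proposition~\ref{prop:Clm13.3}, Corollary~\ref{cor:Clm13.4}), then develop the strong-cycle extension machinery (Proposition~\ref{prop:Clm13.5}) so that a single short rainbow or properly colored configuration suffices, and finally locate that configuration by a pigeonhole on the ``special'' colors that the $+5/3$ (resp.\ $+4/3$) excess forces each vertex to spend inside its own part or on its backward edges (culminating in the contradiction with Statement~(5) of Corollary~\ref{cor:abc}, and in the amenability analysis of Observation~\ref{obs:13.11} for $\ell\equiv 1\pmod 3$). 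None of this structural analysis is present in, or replaceable by, the proposed greedy routing.
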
 

\noindent  We prove Lemma~\ref{lem:ext} in 
Sections~\ref{sec:ext}--\ref{sec:ex3}.    
We proceed to the proofs of Theorems~\ref{thm:even} and~\ref{thm:or}, which are formal consequences
of Corollary~\ref{cor:non-ext} and Lemma~\ref{lem:ext}.

\subsection{Proof of Theorem~\ref{thm:even}}
To define the absolute constant $\alpha > 0$ promised by 
Theorem~\ref{thm:even}, 
we consider auxiliary parameters.  
Let 
$\lambda_{\text{Lem.\ref{lem:ext}}}$
be the absolute constant 
$\lambda_0$ 
guaranteed by
Lemma~\ref{lem:ext}.  Set 
\begin{equation}
\label{eqn:lambda1.3}  
\lambda = \min \big\{ \tfrac{1}{28}, \,  
\lambda_{\text{Lem.\ref{lem:ext}}} \big\},   
\end{equation}
which is suitably small for an application of Fact~\ref{fact:10}.  
With $\lambda > 0$ given
in~(\ref{eqn:lambda1.3}),   
let 
\begin{equation}
\label{eqn:alpha1.3}
\alpha = \alpha_{\text{Lem.\ref{lem:non-ext}}}(\lambda) > 0 
\end{equation}  
be the constant guaranteed by Lemma~\ref{lem:non-ext}, which we take to be the constant
promised by Theorem~\ref{thm:even}.

Fix an even integer 
$\ell \geq 4$.  
Let $(G, c)$ be an $n$-vertex edge-colored graph 
satisfying~(\ref{eqn:degcond}), where in all that follows we assume that 
$n \geq n_0(\lambda, \alpha, \ell)$ is sufficiently
large.  
To prove Theorem~\ref{thm:even}, 
we distinguish between the cases of whether or not 
$(G, c)$ is $\lambda$-extremal, where $\lambda$ 
is given in~(\ref{eqn:lambda1.3}).    \\

\noindent {\bf Case~1 ({\rm $(G,c)$ is $\lambda$-extremal}).}  
In this case, 
we apply Fact~\ref{fact:10} or Lemma~\ref{lem:ext} to $(G, c)$.  
Assume first that $\ell \equiv 0$ (mod 3).       
By our choice of 
$\lambda \leq 1/(28)$ 
from~(\ref{eqn:lambda1.3}),     
Statement~(2) of Fact~\ref{fact:10} 
guarantees $\Omega(n^{\ell})$ many rainbow $\ell$-cycles $C_{\ell}$ in 
$(G, c)$.  
Assume now that 
$\ell \not\equiv 0$ (mod 3).   
By our choice of $\lambda \leq 
\lambda_{\text{Lem.\ref{lem:ext}}}$ 
from~(\ref{eqn:lambda1.3}),     
Statement~(1) of Lemma~\ref{lem:ext}  
guarantees a rainbow $\ell$-cycle $C_{\ell}$ in $(G, c)$.  
(Note: $\ell \neq 5$ by the parity of $\ell$.)  
\hfill $\Box$  \\

\noindent {\bf Case~2 ({\rm $(G,c)$ is not $\lambda$-extremal}).}  
In this case, we will indirectly apply 
Fact~\ref{prop:easyprop}  
and Corollary~\ref{cor:non-ext} to $(G, c)$.     
For that, 
let $\vec{G} = (V, \vec{E})$ be any directed graph associated with $(G, c)$, where    
necessarily 
$\vec{G}$ is not $\lambda$-extremal, and where 
$\delta^+\big(\vec{G}\big) = \delta^c(G) \geq \left(\frac{1}{3} - \alpha\right) n$  
is ensured by~(\ref{eqn:degcond}).  
By our choice of $\alpha = \alpha_{\text{Lem.\ref{lem:non-ext}}}(\lambda)$
in~(\ref{eqn:alpha1.3}) (and $\ell \neq 5$), 
Statement~(2) of 
Corollary~\ref{cor:non-ext}
guarantees $\Omega (n^{\ell})$ many directed $\ell$-cycles
$\vec{C}_{\ell}$ in $\vec{G}$.  
Fact~\ref{prop:easyprop}  
then 
guarantees that 
at least one of these 
corresponds to a rainbow $\ell$-cycle $C_{\ell}$
in $(G, c)$. \hfill $\Box$

\subsection{Proof of Theorem~\ref{thm:or}}  
We again use the auxiliary constants $\lambda > 0$ and $\alpha > 0$ determined 
in~(\ref{eqn:lambda1.3})    
and~(\ref{eqn:alpha1.3}).     
Fix an integer $\ell \geq 4$.  
Let $\vec{G} = (V, \vec{E})$ be an $n$-vertex oriented graph satisfying 
$\delta^+\big(\vec{G}\big) \geq (n+1)/3$, 
where in all that follows we assume
that $n \geq n_0(\lambda, \alpha, \ell)$ is sufficiently large.  
Let $\vec{H} \subseteq \vec{G}$ be maximally induced w.r.t.~satisfying
$\delta^-(\vec{H}) \geq 1$, and set $U = V(\vec{H})$.    
Note that every $u \in U$ satisfies
$\deg^+_{\vec{H}}(u) = \deg^+_{\vec{G}}(u)$.
Consequently, 
$|U|=\Omega (n)$ can be taken as large as needed since 
the number $e(\vec{H})$ of edges of $\vec{H}$ satisfies 
$$
\binom{|U|}{2} \geq e\big(\vec{H}\big) \geq |U| \delta^+\big(\vec{H}\big) \geq |U| \delta^+
\big(\vec{G}\big)   
\qquad \implies \qquad 
|U| \geq 2 \delta^+\big(\vec{G}\big) \geq 2n/3.  
$$
We now distinguish between the cases of whether or not $\vec{H}$ is $\lambda$-extremal, 
where $\lambda$ is determined 
in~(\ref{eqn:lambda1.3}).      \\

\noindent {\bf Case 1 ({\rm $\vec{H}$ is not $\lambda$-extremal}).}  
In this case, we apply Corollary~\ref{cor:non-ext} to 
$\vec{H}$, which is possible  
on account that $\delta^+(\vec{H}) \geq \delta^+(\vec{G})
\geq (n+1)/3 \geq ((1/3) - \alpha)n$, 
for $\alpha = \alpha_{\text{Lem.\ref{cor:non-ext}}}$ in (\ref{eqn:alpha1.3}).     
Whether or not $\ell = 5$, 
Corollary~\ref{cor:non-ext} 
guarantees a directed 
$\ell$-cycle $\vec{C}_{\ell}$ in $\vec{H}$, where $\vec{C}_{\ell}$ also appears in 
$\vec{G} \supseteq \vec{H}$.  \hfill $\Box$  \\

\noindent {\bf Case 2 ({\rm $\vec{H}$ is $\lambda$-extremal}).}  
In this case, we will apply Fact~\ref{fact:10} to $\vec{H}$ or we will 
indirectly apply 
Fact~\ref{fact:Li} 
and Lemma~\ref{lem:ext} to $\vec{H}$.   
Assume first that 
$\ell \equiv 0$ (mod 3).   
By our choice of $\lambda \leq 1/(28)$ 
in~(\ref{eqn:lambda1.3}), 
Statement~(1) of Fact~\ref{fact:10} 
guarantees $\Omega(n^{\ell})$ many directed $\ell$-cycles $\vec{C}_{\ell}$ in $\vec{H}$, 
each of which also appears
in $\vec{G} \supseteq \vec{H}$.  
Assume now that $\ell \not\equiv 0$ (mod 3).  
Let 
$(H, c)$ be the edge-colored graph determined by 
$\vec{H}$, where $H$ has vertex set $U = V(\vec{H})$.  Since every vertex $u \in U$
has positive in-degree in $\vec{H}$, we have that 
$$
\deg^c_H(u) = 1 + \deg^+_{\vec{H}}(u) \geq \tfrac{n+4}{3}.  
$$
By our choice of $\lambda \leq \lambda_{\text{Lem.\ref{lem:ext}}}$ 
in~(\ref{eqn:lambda1.3}), 
Statement~(2) of Lemma~\ref{lem:ext} guarantees a properly colored $\ell$-cycle 
$C_{\ell}$ in $(H, c)$.  
Since $(H, c)$ was determined by the oriented graph $\vec{H}$, 
Fact~\ref{fact:Li} guarantees that $C_{\ell}$
corresponds to a directed $\ell$-cycle $\vec{C}_{\ell}$
in $\vec{H}$, which also appears in $\vec{G} \supseteq \vec{H}$.  
\hfill $\Box$  

\section{Proof of Lemma~\ref{lem:non-ext}}  
\label{sec:non-ext}  
Lemma~\ref{lem:non-ext} is a formal consequence 
of
the following two propositions
(recall $\delta_0(\vec{G})$ from Theorem~\ref{thm:KKO}).        

\begin{prop}
\label{prop:Lem17}  
For all $\beta > 0$, there exists $\alpha = \alpha(\beta) > 0$ so that
for every integer $\ell \geq 4$, 
there exists
an integer $n_0 = n_0(\beta, \alpha, \ell) \geq 1$ so that the following holds.
Let $\vec{G} = (V, \vec{E})$ be an oriented graph 
satisfying~{\rm (\ref{eqn:non-exthyp})}    
on 
$n \geq n_0$ many vertices.  
If $\vec{G}$ admits no closed directed $\ell$-walk, then $\vec{G}$ admits 
an induced subgraph $\vec{H} = \vec{G}[U]$ on $|U| = m \geq (1 - \beta)n$
many 
vertices which satisfies 
\begin{equation}
\label{eqn:Lem17}  
\delta_0\big(\vec{H}\big) \geq \big(\tfrac{\delta^+(\vec{G})}{n} - \beta\big) m.  
\end{equation}  
\end{prop}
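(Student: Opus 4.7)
The plan is to perform an iterative cleaning procedure: repeatedly discard vertices whose in-degree or out-degree in the current induced subgraph is below the target threshold. Writing $\rho := \delta^+(\vec{G})/n$, I set $U_0 := V$ and, while there exists $v \in U_i$ with $\min\{\deg^+_{\vec{G}[U_i]}(v),\, \deg^-_{\vec{G}[U_i]}(v)\} < (\rho - \beta/2)|U_i|$, form $U_{i+1} := U_i \setminus \{v\}$. If the terminal set $U$ has size $m \geq (1-\beta)n$, then every remaining vertex satisfies $\min\{\deg^+_{\vec{G}[U]}(v), \deg^-_{\vec{G}[U]}(v)\} \geq (\rho - \beta/2)\,m \geq (\rho - \beta)\,m$, giving the required $\delta_0(\vec{H})$ bound. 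So the heart of the argument is to show that the procedure removes at most $\beta n$ vertices.

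Toward a contradiction, suppose $|V \setminus U| > \beta n$, and partition the first $\lceil \beta n \rceil$ removed vertices into $R^+$ (removed because of low out-degree) and $R^-$ (removed because of low in-degree), listing them $v_1, v_2, \ldots$ in removal order. For each $v_j \in R^+$, the bound $\deg^+_{\vec{G}}(v_j) \geq \rho n$ together with $\deg^+_{\vec{G}[U_{j-1}]}(v_j) < (\rho - \beta/2)(n - j + 1)$ forces $v_j$ to have at least $(\beta/2)(n-j+1) + (\rho - \beta/2)(j-1)$ out-neighbors in the previously-removed set $\{v_1,\ldots,v_{j-1}\}$. Summing these contributions over $j$ and comparing with the trivial upper bound of $\binom{|V \setminus U|}{2}$ on the number of oriented arcs within the removed set yields $|R^+| \leq \beta n/2$, so it remains to derive a contradiction from $|R^-| > \beta n / 2$.

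The main obstacle is controlling $|R^-|$: the hypothesis provides no lower bound on the in-degrees of $\vec{G}$, so pure edge counting fails, and the only remaining lever is the no-closed-$\ell$-walk hypothesis. The structural observation I would develop is that a large set of \emph{in-deficient} vertices must be balanced by rich edge-structure among the remaining vertices, because the excess out-degree from in-deficient vertices (which all have $\deg^+\geq \rho n$) has to terminate somewhere. More precisely, I would examine the bipartition $(R^-,\,V \setminus R^-)$ and, using both $\delta^+(\vec{G}) \geq \rho n$ and the oriented-graph constraint $\deg^+(v) + \deg^-(v) \leq n - 1$, argue that either $\vec{G}[V\setminus R^-]$ or the bipartite structure between $R^-$ and $V\setminus R^-$ contains enough arcs to support short directed cycles. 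Because closed $\ell$-walks (unlike $\ell$-cycles) admit vertex repetition, any cycle whose length divides $\ell$, or any pair of cycles whose lengths jointly generate $\ell$ as a nonnegative integer combination, already produces a closed $\ell$-walk—giving substantial flexibility via Schur/Frobenius-type arguments on cycle lengths.

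The technical crux—and where I expect the bulk of the work to lie—is calibrating $\alpha = \alpha(\beta)$ small enough, and $n_0$ large enough, that this closed-walk extraction succeeds once $|R^-| > \beta n/2$. This entails showing that once the in-deficient vertices are set aside, the residual substructure is sufficiently rich that cycles of appropriate lengths are guaranteed; I would expect to use the structural ideas underlying Theorem~\ref{thm:KKO} (and the Caccetta--H\"aggkvist-type counting behind it) to complete this step, yielding the contradiction that closes the proof.
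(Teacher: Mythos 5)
Your opening reduction (iterative cleaning, splitting the removed vertices into $R^+$ and $R^-$) is reasonable, but the proof has a genuine gap exactly where you place ``the bulk of the work'': controlling the in-deficient vertices $R^-$. The paper's proof (Section~\ref{sec:Lem17}) takes $U$ in one shot as the set of vertices of in-degree at least $\delta^+(\vec{G}) - \beta^2 n/2$ (see~(\ref{eqn:Vhigh})), and the entire content is Claim~\ref{clm:11.1.2018}: $\Delta^-(\vec{G}) \leq \delta^+(\vec{G}) + \beta^3 n/4$. Once the maximum in-degree is pinned near $\delta^+(\vec{G})$, the identity $\sum_v \deg^+_{\vec{G}}(v) = \sum_v \deg^-_{\vec{G}}(v)$ forces all but $O(\beta n)$ vertices to have in-degree near $\delta^+(\vec{G})$ as well; this averaging step is the bridge from ``no vertex has huge in-degree'' to ``few vertices have small in-degree,'' and it is absent from your plan. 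Claim~\ref{clm:11.1.2018} is in turn proved by a double count (Fact~\ref{fact:11.1.2018}) showing that if $\Delta^-$ were too large then every disjoint pair $(S,R)$ with $|S| \geq \delta^+(\vec{G})$ and $|R| \geq \Delta^-(\vec{G})$ would carry a two-edge path from $S$ to $R$, followed by an explicit construction of a closed $\ell$-walk through the vertex $x_{\max}$ of maximum in-degree, with separate arguments for $\ell = 4$, $\ell = 5$, and $\ell \geq 6$ (the last using $\ell = 3i+4j$ and a long directed path inside $N^+_{\vec{G}}(x_{\max})$).

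Your proposed route to the contradiction does not work as stated. You argue that if $|R^-| > \beta n/2$ then the residual structure has ``enough arcs to support short directed cycles.'' Arc density alone never produces directed cycles in an oriented graph (a transitive tournament is as dense as possible and acyclic), so some in-degree or structural hypothesis is indispensable; and the tools you propose to invoke --- Theorem~\ref{thm:KKO} and Caccetta--H\"aggkvist-type counting --- require a lower bound on the minimum semi-degree, i.e.\ on in-degrees, which is exactly what is unavailable on $V \setminus R^-$ and what the proposition is trying to manufacture. The plan is therefore circular at the decisive step. Your Frobenius-type observation about combining cycle lengths into a closed $\ell$-walk is correct and is indeed what the paper uses (see~(\ref{eqn:numfact})), but it only helps after cycles of suitable lengths have been located, which is the part you have not supplied. (Separately, the claim $|R^+| \leq \beta n/2$ is only sketched: each $v_j \in R^+$ is forced to send roughly $\beta n/2$ arcs into a previously-removed set of size at most $\beta n$, which is not by itself contradictory, so the summation must be carried out carefully --- but this is minor next to the $R^-$ gap.)
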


\begin{prop}
\label{prop:Lem19}  
For all $\lambda_0 > 0$, there exists 
$\beta = \beta(\lambda_0) > 0$ so that for every integer
$\ell \geq 4$, there exists an integer $m_0 = m_0(\lambda_0, \beta, \ell) \geq
1$ so that the following holds.  
Let $\vec{H}$ be an oriented graph on $m \geq m_0$ vertices
which admits no closed directed $\ell$-walk, but which satisfies 
$\delta_0(\vec{H}) \geq ((1/3) - \beta)m$.  Then $\vec{H}$ 
is $\lambda_0$-extremal.
\end{prop}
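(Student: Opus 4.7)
The plan is to proceed by contradiction: assuming $\vec{H}$ admits no closed directed $\ell$-walk, I derive a $\lambda_0$-extremal partition of $V(\vec{H})$ into three parts by showing that $\vec{H}$ must be very close to a blow-up of a directed triangle. The reduction to a strongly connected subgraph comes first: using $\delta_0(\vec{H}) \ge (\tfrac{1}{3} - \beta)m$ and a standard condensation argument, $\vec{H}$ contains a strong component $\vec{H}'$ on $m' \ge (1 - O(\beta))m$ vertices with $\delta_0(\vec{H}') \ge (\tfrac{1}{3} - O(\beta))m'$. A $\lambda_0$-extremal partition of $V(\vec{H}')$ can then be extended to $V(\vec{H})$ by distributing the $O(\beta)m$ remaining vertices arbitrarily.

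Let $d$ denote the greatest common divisor of the lengths of directed cycles in $\vec{H}'$. Since $\vec{H}'$ is strongly connected, a standard periodicity argument yields a potential function $\phi : V(\vec{H}') \to \mathbb{Z}_d$ satisfying $\phi(v) \equiv \phi(u) + 1 \pmod{d}$ on every edge $(u,v)$ of $\vec{H}'$. The sets $U_i = \phi^{-1}(i)$ (for $i \in \mathbb{Z}_d$) thus form a $d$-partition of $V(\vec{H}')$ with every edge running $U_i \to U_{i+1}$. The semi-degree lower bound forces $|U_i| \ge (\tfrac{1}{3} - O(\beta))m'$ for each $i$, and hence $d \le 3$. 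The value $d = 2$ is ruled out because then both the out- and in-neighbors of any $u \in U_0$ lie disjointly inside $U_1$ (as $\vec{H}'$ is oriented), forcing $|U_1| \ge 2(\tfrac{1}{3} - O(\beta))m'$ and symmetrically $|U_0| \ge 2(\tfrac{1}{3} - O(\beta))m'$, so that $m' = |U_0| + |U_1| \ge (\tfrac{4}{3} - O(\beta))m'$, a contradiction for small $\beta$. The case $d = 1$ is ruled out by exhibiting a closed $\ell$-walk directly: a strongly connected oriented graph with period one and min semi-degree near $m'/3$ contains cycles of many different lengths whose additive combinations reach every sufficiently large integer; for $m \ge m_0(\lambda_0, \beta, \ell)$, this yields an actual closed walk of length exactly $\ell$, contradicting the hypothesis.

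Hence $d = 3$, producing the partition $V(\vec{H}') = U_0 \cup U_1 \cup U_2$ with $|U_i| \ge (\tfrac{1}{3} - O(\beta))m'$ and all edges running $U_i \to U_{i+1}$. Counting out-degrees from $U_i$ gives
\[
e_{\vec{H}'}(U_i, U_{i+1}) \;\ge\; |U_i| \cdot \left(\tfrac{1}{3} - O(\beta)\right)m' \;\ge\; \left(\tfrac{1}{9} - O(\beta)\right) m^2,
\]
which, upon choosing $\beta$ small enough that $O(\beta) \le \lambda_0$, yields the required $\lambda_0$-extremal partition of $V(\vec{H})$ after absorbing the remaining vertices.

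I expect the main obstacle to be ruling out the period $d = 1$. This step is essentially a cycle-length existence problem parallel to the Kelly--K\"uhn--Osthus theorem (Theorem~\ref{thm:KKO}), but under the slightly relaxed semi-degree threshold $(\tfrac{1}{3} - \beta)m$ rather than $(n+1)/3$. The key difficulty is quantitative: for small $\ell$ (say $\ell = 4$), the Frobenius-type threshold for closed walk lengths in a period-one strongly connected oriented graph need not a priori be at most $\ell$, so pure asymptotic periodicity does not suffice; a more hands-on argument is required to produce a closed $\ell$-walk of the exact desired length from the rich short-cycle structure forced by the semi-degree hypothesis.
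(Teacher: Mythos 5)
Your reduction steps are mostly sound: under $\delta_0(\vec{H}) \geq (\tfrac13-\beta)m$ the graph is in fact \emph{strongly connected} outright (any source component of the condensation contains all in-neighbours of its vertices, so being oriented it has at least $2\delta^-(\vec{H})+1 > (\tfrac23-2\beta)m$ vertices, and likewise any sink component, which is impossible unless they coincide), the exclusion of period $d=2$ is correct, and the case $d=3$ does yield the $\lambda_0$-extremal partition. But the case $d=1$ is not a technical loose end to be patched later --- it is essentially the entire content of the proposition, and your proposal does not prove it. Aperiodicity of a strongly connected digraph only gives closed walks of all \emph{sufficiently large} lengths, where ``sufficiently large'' depends on the cycle lengths actually present; since $\ell$ is a fixed constant (possibly $4$) while the shortest cycles could a priori have length $\Theta(m)$ (note that even the existence of a directed triangle under $\delta^+ \geq m/3$ is the open Caccetta--H\"aggkvist conjecture), no Frobenius-type argument can produce a closed walk of length exactly $\ell$. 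You acknowledge this, but acknowledging the obstacle is not overcoming it.

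For comparison, the paper avoids periodicity altogether. For $\ell \neq 5$ it first invokes the Ji--Wu--Song theorem (Fact~\ref{fact:JiWuSong}): if $\vec{H}$ had \emph{no} directed triangle, then $\delta_0(\vec{H}) \geq 0.3025\,m$ would already force a directed $\ell$-cycle, a contradiction; so a directed triangle $(v_0,v_1,v_2)$ exists. The identity $\ell = 3i+4j$ for $\ell \geq 6$ then shows no $v_i$ lies on a directed $4$-cycle, and the sets $U_i = N^-_{\vec{H}}(v_i)\cap N^+_{\vec{H}}(v_{i+1})$ are shown (via Facts~\ref{fact:Clm19.1} and~\ref{fact:Clm19.2}) to be large and to carry $(\tfrac19-\lambda_0)m^2$ edges from $U_i$ to $U_{i+1}$. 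The case $\ell=5$ (where $3i+4j$ fails) requires a separate, more delicate argument built on transitive triangles and independence numbers. Your proposal would need a substitute for all of this machinery in the $d=1$ case, so as written it has a genuine gap.
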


The proof of Proposition~\ref{prop:Lem17}  
is not too difficult, and will be given later in this section.  The proof 
of
Proposition~\ref{prop:Lem19} is more involved, and will be postponed to the following section.  

\subsection{Proof of Lemma~\ref{lem:non-ext}}
Let $\lambda > 0$ 
be given.  
To define the 
constant $\alpha = \alpha(\lambda) > 0$ promised by 
Lemma~\ref{lem:non-ext}, we consider several auxiliary constants.  
First, 
set $\lambda_0 = \lambda / 2$, and     
let 
\begin{equation}
\label{eqn:beta19}  
\beta_{\text{Prop.\ref{prop:Lem19}}} = 
\beta_{\text{Prop.\ref{prop:Lem19}}}(\lambda_0) > 0
\end{equation} 
be the constant
guaranteed by Proposition~\ref{prop:Lem19}.  
Second, set 
\begin{equation}
\label{eqn:beta17}  
\beta = \tfrac{1}{2} \min \{ \lambda_0, 
\beta_{\text{Prop.\ref{prop:Lem19}}} \}.   
\end{equation}
Third, let 
\begin{equation}
\label{eqn:alpha17}  
\alpha_{\text{Prop.\ref{prop:Lem17}}} = 
\alpha_{\text{Prop.\ref{prop:Lem17}}} (\beta) > 
0 
\end{equation}  
be the constant guaranteed by Proposition~\ref{prop:Lem17}.  We define 
\begin{equation}
\label{eqn:alphanon-ext}  
\alpha = \min\{\alpha_{\text{Prop.\ref{prop:Lem17}}}, \beta \} 
\end{equation}
to be the constant promised by Lemma~\ref{lem:non-ext}. 
Let an integer $\ell \geq 4$ be given.  
Let $\vec{G} = (V, \vec{E})$ be an $n$-vertex oriented graph 
satisfying~(\ref{eqn:non-exthyp})
with $\alpha$ in~(\ref{eqn:alphanon-ext}),   
where in all that follows we assume that $n \geq n_0(\lambda, \alpha, \ell)$
is sufficiently large.      
We 
assume that $\vec{G}$ admits
no closed directed $\ell$-walk, and establish 
that $\vec{G}$ is $\lambda$-extremal.

Since $\vec{G}$ admits no closed directed $\ell$-walk, and by our choice of 
$\alpha \leq \alpha_{\text{Prop.\ref{prop:Lem17}}}$ 
in~(\ref{eqn:alpha17})   
and~(\ref{eqn:alphanon-ext}),   
Proposition~\ref{prop:Lem17} guarantees that $\vec{G}$
admits  
an induced subgraph $\vec{H} = \vec{G}[U]$ on 
$|U| = m \geq (1 - \beta)n$
(cf.~(\ref{eqn:beta17}))      
many vertices for which 
$$
\delta_0\big(\vec{H}\big) 
\stackrel{(\ref{eqn:Lem17})}{\geq} 
\big(\tfrac{\delta^+(\vec{G})}{n} - \beta\big)m  
\stackrel{(\ref{eqn:non-exthyp})}{\geq}  
\big(\tfrac{1}{3} - \alpha - \beta\big)m 
\stackrel{(\ref{eqn:alphanon-ext})}{\geq}    
\big(\tfrac{1}{3} - 2 \beta\big)m 
\stackrel{(\ref{eqn:beta17})}{\geq}      
\big(\tfrac{1}{3} - 
\beta_{\text{Prop.\ref{prop:Lem19}}}
\big)m.   
$$
Since $\vec{H}$ admits no closed directed $\ell$-walk,  
and by our choice of 
$\beta_{\text{Prop.\ref{prop:Lem19}}}$ 
in~(\ref{eqn:beta19}),   
Proposition~\ref{prop:Lem19} 
guarantees that 
$\vec{H}$ is $\lambda_0$-extremal.  
Let $U = V(\vec{H}) = U_0 \cup U_1 \cup U_2$ be any 
$\lambda_0$-extremal partition of $\vec{H}$
(cf.~Definition~\ref{def:lambdaext}), and   
let 
$V = V(\vec{G}) = V_0 \cup V_1 \cup V_2$ be any partition satisfying
$U_i \subseteq V_i$ for each 
$0 \leq i \leq 2$.  
Then, for each 
$i \in \mathbb{Z}_3$, 
\begin{multline*}  
e_{\vec{G}}(V_i, V_{i+1}) \geq e_{\vec{G}}(U_i, U_{i+1})
= e_{\vec{H}}(U_i, U_{i+1})  \\
\stackrel{\text{{\tiny Prop.\ref{prop:Lem19}}}}{\geq}
\left(\tfrac{1}{9} - \lambda_0\right)m^2   
\stackrel{\text{{\tiny Prop.\ref{prop:Lem17}}}}{\geq}
\left(\tfrac{1}{9} - \lambda_0\right) (1 - \beta)^2 n^2  
\stackrel{(\ref{eqn:beta17})}{\geq}      
\left(\tfrac{1}{9} - \lambda\right)n^2,  
\end{multline*}  
where we also used $\lambda = 2\lambda_0$.  
Thus, 
$V = V_0 \cup V_1 \cup V_2$ is a $\lambda$-extremal partition of 
$\vec{G}$, as desired.  

\subsection{Proof of Proposition~\ref{prop:Lem17}}    
\label{sec:Lem17}  
Let $\beta > 0$ be given.  
Define 
\begin{equation}
\label{eqn:11.1.2018.4:31p}  
\alpha = \beta^6 / (96).   
\end{equation}  
Let integer $\ell \geq 4$ be given.  
Let $\vec{G} = (V, \vec{E})$ be an $n$-vertex
oriented graph 
satisfying~(\ref{eqn:non-exthyp}), where in all that follows, we take
$n \geq n_0(\beta, \alpha, \ell)$ to be sufficiently large.  
Assume that $\vec{G}$ 
admits no closed directed $\ell$-walks.  
The subgraph
$\vec{H} = \vec{G}[U]$ 
desired in~(\ref{eqn:Lem17}) is induced 
on the following vertices of large in-degree:  
\begin{equation}
\label{eqn:Vhigh}
U = 
V_{\rm high} = 
\big\{v \in V: \, \deg_{\vec{G}}^-(v) \geq \delta^+(\vec{G})
- n(\beta^2/2)\big\}.      
\end{equation}
To see that $\vec{H} = \vec{G}[V_{\rm high}]$ 
satisfies~(\ref{eqn:Lem17}), we use
the following
claim (whose proof we defer for a moment).    

\begin{claim}
\label{clm:11.1.2018}  
$\Delta^-(\vec{G})  \leq \delta^+(\vec{G}) + n (\beta^3 / 4)$,    
where 
$\Delta^-(\vec{G})$ denotes the maximum in-degree in 
$\vec{G}$.  
\end{claim}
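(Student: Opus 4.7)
The plan is to argue by contradiction: suppose some $v_0 \in V$ violates the conclusion, so that $\deg^-_{\vec G}(v_0) > \delta^+(\vec G) + n\beta^3/4$. Write $A = N^-_{\vec G}(v_0)$ and $B = N^+_{\vec G}(v_0)$; since $\vec G$ is oriented, the sets $A$, $B$, and $\{v_0\}$ are pairwise disjoint subsets of $V$. The aim is to produce a closed directed $\ell$-walk through $v_0$, contradicting the standing hypothesis of Proposition~\ref{prop:Lem17} that $\vec G$ admits no closed directed $\ell$-walk.

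The first reduction is that a closed directed $\ell$-walk through $v_0$ is precisely a walk $v_0 \to v_1 \to \cdots \to v_{\ell-1} \to v_0$ with $v_1 \in B$ and $v_{\ell-1} \in A$, so it is enough to exhibit a directed walk of length $\ell - 2$ from some $b \in B$ to some $a \in A$. I would count such walks from below using the minimum out-degree hypothesis~\eqref{eqn:non-exthyp}: extending greedily via $\deg^+ \geq \delta^+(\vec G)$ at each step gives at least $|B|(\delta^+(\vec G))^{\ell-2} \geq (\delta^+(\vec G))^{\ell-1}$ directed walks of length $\ell - 2$ that originate in $B$. Under the contradictory assumption that none of them terminates in $A$, they must all end in $V \setminus A$, which has size strictly less than $n - \delta^+(\vec G) - n\beta^3/4$.

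The heart of the argument is then a matching upper bound on the number of such walks terminating at any single vertex $w \in V \setminus A$, so that summing this bound over $w$ contradicts the lower bound above. The main obstacle is that the straightforward upper bound on walks of length $\ell - 2$ ending at $w$ involves $\Delta^-_{\vec G}$, which is precisely the quantity being controlled; a direct counting would thus be circular. I plan to break this circularity by bootstrapping: start from the weak a~priori bound $\Delta^-_{\vec G}(\vec G) \leq n - 1 - \delta^+(\vec G)$ (immediate from the disjointness $A \sqcup B \sqcup \{v_0\} \subseteq V$), substitute it into the counting, and iterate. The specific choice $\alpha = \beta^6/96$ in~\eqref{eqn:11.1.2018.4:31p} is made precisely to provide enough slack to absorb the constant losses over these iterations. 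Alternatively, one can refine the walk count by fixing an intermediate vertex at a well-chosen step of the walk, where the local in-degree can be controlled by means other than $\Delta^-_{\vec G}$ (for instance via the edge count $e(V,A)$, which is forced to be large by the fact that $|A|$ itself is large); either route should yield the targeted bound $\Delta^-_{\vec G}(\vec G) \leq \delta^+(\vec G) + n\beta^3/4$.
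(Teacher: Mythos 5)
Your reduction is fine: a closed directed $\ell$-walk through $v_0$ is exactly a walk $v_0 \to b \to \cdots \to a \to v_0$ with $b \in B = N^+_{\vec G}(v_0)$ and $a \in A = N^-_{\vec G}(v_0)$, so it suffices to find a directed $(\ell-2)$-edge walk from $B$ to $A$. But the counting step at the heart of your argument cannot produce a contradiction, and no bootstrapping will repair it. The inequality you would derive is
$$|B|\,\big(\delta^+(\vec G)\big)^{\ell-2} \;\le\; |V\setminus A|\,\big(\Delta^-(\vec G)\big)^{\ell-2},$$
and this holds \emph{unconditionally}: since $A$, $B$, $\{v_0\}$ are disjoint we have $|B| \le |V\setminus A| - 1$, and since $\sum_v \deg^-(v) = \sum_v \deg^+(v) \ge n\,\delta^+(\vec G)$ the average in-degree already forces $\Delta^-(\vec G) \ge \delta^+(\vec G)$. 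So both factors on the right dominate the corresponding factors on the left for every admissible value of $\Delta^-(\vec G)$; the inequality carries no information, and iterating it can only reproduce bounds that are already true. Concretely, in the balanced blow-up of the directed triangle with $\ell \not\equiv 0 \pmod 3$, taking $v_0 \in V_0$ gives $B = V_1$, $A = V_2$, there are $\Theta(n^{\ell-1})$ walks of length $\ell-2$ out of $B$ and none of them lands in $A$ — the avoidance of $A$ is a structural phenomenon, not a counting deficit, so a global walk count can never certify that some walk must hit $A$.

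The paper's proof is of a different character and this is why it needs to be. It first proves Fact~\ref{fact:11.1.2018}: if two disjoint sets $R$, $S$ with $|R| \ge \Delta^-(\vec G)$ and $|S| \ge \delta^+(\vec G)$ admit no two-step connection $s \to v \to r$, then the claimed bound on $\Delta^-(\vec G)$ already follows, via a double count of $e_{\vec G}(S_1, S_2)$ for $S_1 = N^+_{\vec G}(S_0)$, $S_2 = N^+_{\vec G}(S_1)\setminus S_0$ that yields a quadratic inequality in $\Delta^-(\vec G)$ (this is where the choice $\alpha = \beta^6/96$ is actually consumed). Under the contradiction hypothesis one therefore gets two-step connections between \emph{all} such large disjoint pairs, and the closed $\ell$-walk is then built by explicit structural constructions that depend on $\ell$: immediately for $\ell = 4$; via the independence argument of~(\ref{eqn:obs2.32}) for $\ell = 5$; and for $\ell \ge 6$ via a directed path of length $\Omega(\beta^3 n)$ inside $N^+_{\vec G}(x_{\max})$ closed up by a two-step connection, using $\ell = 3i + 4j$. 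Your closing remark about ``fixing an intermediate vertex'' and using $e(V,A)$ gestures toward this kind of local double count, but as written it is not a proof, and the main route you propose fails for the reason above.
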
  

\noindent  Using 
Claim~\ref{clm:11.1.2018},   
we will verify that 
$|U| = |V_{\rm high}| = m \geq (1 - \beta) n$.  
Indeed, 
with $V_{\rm low} = V \setminus V_{\rm high}$,  
\begin{multline*}  
n \delta^+\big(\vec{G}\big) \leq 
\sum_{u \in V} \deg_{\vec{G}}^+(u) 
= \sum_{v \in V}
\deg^{-}_{\vec{G}}(v) = 
\sum_{w \in V_{\rm low}} \deg^-_{\vec{G}}(v) 
+ 
\sum_{x \in V_{\rm high}} \deg^-_{\vec{G}}(w)   \\
\stackrel{(\ref{eqn:Vhigh})}{<} 
\big|V_{\rm low}\big| \left(\delta^+\big(\vec{G}\big) - \tfrac{1}{2} \beta^2 n\right)
+ 
\big|V_{\rm high} \big| \Delta^-\big(\vec{G}\big)  
\stackrel{\text{\tiny Clm.\ref{clm:11.1.2018}}}{\leq}  
\big|V_{\rm low}\big| \left(\delta^+\big(\vec{G}\big) - \tfrac{1}{2} 
\beta^2 n\right)
+ 
\big|V_{\rm high} \big| \left(\delta^+\big(\vec{G}\big) + 
\tfrac{1}{4} \beta^3n\right), 
\end{multline*}  
from which 
$2|V_{\rm low}| \leq \beta |V_{\rm high}|
\leq \beta n$ and $|V_{\rm high}| \geq (1 - (\beta/2))n$ follow.  
By construction, 
both 
\begin{multline*}  
\delta^+\big(\vec{H}\big) \geq \delta^+\big(\vec{G}\big) - 
|V_{\rm low}| \geq 
\delta^+\big(\vec{G}\big) - 
\tfrac{1}{2} \beta n  
\geq 
\big(\tfrac{\delta^+(\vec{G})}{n} - \beta\big) n 
\geq 
\big(\tfrac{\delta^+(\vec{G})}{n} - \beta\big) m, \\  
\text{and} \qquad 
\delta^-\big(\vec{H}\big) \geq 
\min \big\{ \deg^-_{\vec{G}}(v) : v \in V_{\high} \big\} 
- |V_{\rm low}|  
\stackrel{(\ref{eqn:Vhigh})}{\geq}   
\delta^+\big(\vec{G}\big) - \tfrac{1}{2} \beta^2 n - \tfrac{1}{2} \beta n
\geq 
\big(\tfrac{\delta^+(\vec{G})}{n} - \beta\big) m 
\end{multline*}  
hold, as promised in~(\ref{eqn:Lem17}).  
Thus, it remains to prove 
Claim~\ref{clm:11.1.2018}, where we will use the following fact.

\begin{fact}
\label{fact:11.1.2018}  
Let $R, S \subset V$ 
be some disjoint pair 
with sizes 
$|R| \geq \Delta^-(\vec{G})$ and $|S| \geq \delta^+(\vec{G})$, 
where $(S, R)$   
admits no 
path $s \rightarrow v \rightarrow r$ in $\vec{G}$ with   
$s \in S$ and $r \in R$.    
Then 
$\Delta^-(\vec{G})  \leq \delta^+(\vec{G}) + n (\beta^3 / 4)$      
(cf.~Claim~\ref{clm:11.1.2018}).    
\end{fact}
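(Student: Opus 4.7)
I would argue by contradiction, assuming $\Delta := \Delta^-(\vec{G}) > \delta + n\beta^3/4$ with $\delta := \delta^+(\vec{G})$. The key structural observation is that, writing $T := N_{\vec{G}}^+(S)$, the no-$2$-path hypothesis on $(S,R)$ is equivalent to $N_{\vec{G}}^+(v) \cap R = \emptyset$ for every $v \in T$; equivalently, $T$ and $N_{\vec{G}}^-(R)$ are disjoint subsets of $V$. In particular, each $v \in T$ has $\deg_{\vec{G}}^+(v) \leq n - |R| \leq n - \Delta$.

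From here I would extract complementary bounds on $|T|$. A lower bound follows by double-counting the at least $|S|\delta \geq \delta^2$ edges out of $S$ (all landing in $T$) against the at-most-$\Delta$ in-edges absorbed by each vertex of $T$: $|T| \geq \delta^2/\Delta$. An upper bound follows from the global edge-count identity $n\delta \leq \sum_{v \in V}\deg_{\vec{G}}^+(v) \leq |T|(n-\Delta) + (n-|T|)(n-1)$, which rearranges to $|T|(\Delta - 1) \leq n(n - 1 - \delta)$. Combined, these give a quadratic constraint on $(\delta,\Delta,n)$.

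To close the gap between this constraint (which by itself only yields $\Delta \leq n - \delta$) and the sharp target $\Delta \leq \delta + n\beta^3/4$, I would iterate the no-$2$-path rigidity along successive out-neighborhoods $T_0 := S$, $T_i := N_{\vec{G}}^+(T_{i-1})$, for $i$ up to $\ell - 2$. The inherited closed-directed-$\ell$-walk-free hypothesis on $\vec{G}$ should force $T_{\ell-2} \cap N_{\vec{G}}^-(R) = \emptyset$: any common vertex would complete, via its edge into $R$ and a short return path through $R$ and $S$, to a forbidden closed directed $\ell$-walk. Propagating the double-counting bounds through each of these layers, together with the minimum out-degree $\delta \geq (\tfrac{1}{3} - \alpha)n$ and the calibrated $\alpha = \beta^6/96$, should compound the gap multiplicatively. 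The main obstacle I anticipate is quantitative---coaxing the iteration to yield precisely the cubic dependence on $\beta$ seen in the target bound---but the matching of exponents strongly suggests a three-stage amplification is the intended mechanism.
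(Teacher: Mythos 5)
Your first stage is sound but, as you concede, far too weak, and the amplification you propose to close the gap does not work. Two problems. First, the Fact is a statement about an arbitrary disjoint pair $(R,S)$ with $|R|\ge\Delta^-(\vec{G})$, $|S|\ge\delta^+(\vec{G})$ and no $2$-path from $S$ to $R$; nothing connects $R$ back to $S$. Your plan to turn a vertex of $T_{\ell-2}\cap N^-_{\vec{G}}(R)$ into a closed directed $\ell$-walk ``via a short return path through $R$ and $S$'' has no basis: such a return path exists in the intended application ($R=N^-_{\vec{G}}(x_{\max})$, $S=N^+_{\vec{G}}(x_{\max})$, returning through $x_{\max}$), but not under the hypotheses of the Fact, and the paper proves the Fact using only the degree condition~(\ref{eqn:non-exthyp}) and the no-$2$-path hypothesis --- the $\ell$-walk-free assumption enters only afterwards, in the case analysis for Claim~\ref{clm:11.1.2018}. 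Second, the quantitative mechanism is misdiagnosed: the $\beta^3/4$ is not the output of a three-stage multiplicative amplification but simply $\sqrt{6\alpha}$ with $\alpha=\beta^6/96$, arising from solving a single quadratic inequality.

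The missing idea is to look at the \emph{second} out-neighborhood and exploit its disjointness from $R$ itself, not merely from $N^-_{\vec{G}}(R)$. Fix $S_0\subseteq S$ with $|S_0|=\delta^+(\vec{G})$, let $S_1=N^+_{\vec{G}}(S_0)$ and $S_2=N^+_{\vec{G}}(S_1)\setminus S_0$. The no-$2$-path hypothesis gives $N^+_{\vec{G}}(S_1)\cap R=\emptyset$, so $R$, $S_0$, $S_2$ are pairwise disjoint and $|S_2|\le n-\Delta^-(\vec{G})-\delta^+(\vec{G})$. Every out-edge of $S_1$ lands in $S_0\cup S_2$, and since $\vec{G}$ is oriented, $e_{\vec{G}}(S_1,S_0)\le |S_0||S_1|-e_{\vec{G}}(S_0,S_1)$; hence $e_{\vec{G}}(S_1,S_2)\ge e_{\vec{G}}(S_0,S_1)\ge(\delta^+(\vec{G}))^2$, while also $e_{\vec{G}}(S_1,S_2)\le|S_2|\,\Delta^-(\vec{G})$. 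Writing $\Delta^-=\Delta^-(\vec{G})$ and $\delta^+=\delta^+(\vec{G})$, this yields $(\Delta^-)^2-(n-\delta^+)\Delta^-+(\delta^+)^2\le 0$, and the quadratic formula together with $\delta^+\ge(\tfrac13-\alpha)n$ and $\alpha=\beta^6/96$ gives $\Delta^-\le\delta^++n\beta^3/4$. Your single-layer count (out-degrees on $T$ bounded by $n-|R|$, plus a global edge count) discards exactly the information --- the smallness of the second neighborhood forced by its disjointness from $R\cup S_0$ --- that makes the bound sharp.
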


\begin{proof}[Proof of Fact~\ref{fact:11.1.2018}]  
Let $R, S \subset V$
be given as above.  
Fix $S_0 \subseteq S$ with $|S_0| = \delta^+(\vec{G})$ 
and set
$S_1 = N^+_{\vec{G}}(S_0)$.  Then 
$N^+_{\vec{G}}(S_1) \cap R = \emptyset$.  
Set $S_2 = N^+_{\vec{G}}(S_1)
\setminus S_0$ so that $R$, $S_0$ and $S_2$ are pairwise disjoint.  
Thus 
\begin{equation}
\label{eqn:11.1.2018.7:18p}  
\Delta^-\big(\vec{G}\big) + \delta^+\big(\vec{G}\big)
+ |S_2| \leq  
|R| + |S_0| + |S_2| \leq n.   
\end{equation}  
We double-count the number $e_{\vec{G}}(S_1, S_2)$
of edges from $S_1$ to $S_2$.  On the one hand, 
\begin{equation}  
\label{eqn:11.1.2018.7:30p}  
e_{\vec{G}}(S_1, S_2) \leq |S_2| \Delta^-\big(\vec{G}\big)
\stackrel{(\ref{eqn:11.1.2018.7:18p})}{\leq}    
\Delta^-\big(\vec{G}\big) \left(
n - \Delta^-\big(\vec{G}\big) - \delta^+\big(\vec{G}\big)  
\right).    
\end{equation}  
On the other hand, 
\begin{multline}  
\label{eqn:11.1.2018.7:42p}  
e_{\vec{G}}(S_1, S_2) 
\geq
|S_1| 
\delta^+\big(\vec{G}\big) - e_{\vec{G}}(S_1, S_0)  
\geq 
|S_1| 
\delta^+\big(\vec{G}\big) - \left(
|S_0||S_1| - e_{\vec{G}}(S_0, S_1)\right)   \\
= 
e_{\vec{G}}(S_0, S_1) \geq |S_0| \delta^+\big(\vec{G}\big) 
= 
\left(\delta^+\big(\vec{G}\big)\right)^2, 
\end{multline}  
where we twice used that $|S_0| = \delta^+(\vec{G})$.  
Comparing~(\ref{eqn:11.1.2018.7:30p})  
and~(\ref{eqn:11.1.2018.7:42p}), we infer 
\begin{multline*}  
\left(\Delta^-\big(\vec{G}\big)\right)^2 - 
\left(n - \delta^+\big(\vec{G}\big)\right) \Delta^-\big(\vec{G}\big)
+ 
\left(\delta^+\big(\vec{G}\big)\right)^2  
\leq 0 \\
\implies \qquad 
\Delta^-\big(\vec{G}\big)
\leq 
\frac{1}{2} \left( n - \delta^+\big(\vec{G}\big) + 
\sqrt{\left(n - 3\delta^+(\vec{G})\right)
\left(n + \delta^+(\vec{G})\right)}  
\right)  
\stackrel{(\ref{eqn:non-exthyp})}{\leq}  
\tfrac{1}{2} \left( n - \delta^+\big(\vec{G}\big) + 
n \sqrt{6\alpha}  
\right)  \\
= 
\delta^+\big(\vec{G}\big) + \tfrac{1}{2} \left( n - 3\delta^+\big(\vec{G}\big)
+ n \sqrt{6\alpha} \right)  
\stackrel{(\ref{eqn:non-exthyp})}{\leq}  
\delta^+\big(\vec{G}\big) + \tfrac{1}{2} 
n\left(3\alpha  + \sqrt{6 \alpha} \right)
\stackrel{(\ref{eqn:11.1.2018.4:31p})}{\leq}    
\delta^+\big(\vec{G}\big) + n\sqrt{6\alpha},   
\end{multline*}  
and so our choice of $\alpha = \beta^6/(96)$ 
from~(\ref{eqn:11.1.2018.4:31p})
completes the proof  
of Fact~\ref{fact:11.1.2018}.    
\end{proof}

We now prove 
Claim~\ref{clm:11.1.2018}.   

\subsubsection*{Proof of 
Claim~\ref{clm:11.1.2018}}  
Assume, on the contrary, that 
\begin{equation}  
\label{eqn:11.1.2018.7:10p}  
\Delta^-\big(\vec{G}\big) > \delta^+\big(\vec{G}\big) + 
\tfrac{1}{4} \beta^3 n.  
\end{equation}  
Then Fact~\ref{fact:11.1.2018} ensures that 
\begin{multline}  
\label{eqn:Fact4.4}  
\text{{\it every disjoint pair $R, S \subset V$ 
with 
$|R| \geq \Delta^-\big(\vec{G}\big)$ and $|S| \geq 
\delta^+\big(\vec{G}\big)$}}  \\ 
\text{{\it admits a  
directed 
path $s \rightarrow v \rightarrow r$ in $\vec{G}$ with   
$s \in S$ and $r \in R$.}}    
\end{multline}  
Fix $x_{\max} \in V$ 
satisfying $\deg^-_{\vec{G}}(x_{\max}) = \Delta^-(\vec{G})$.  
We distinguish several cases of $\ell \geq 4$.  \\

\noindent {\bf Case~1 ($\ell = 4$).}  
Set $R = N^-_{\vec{G}}(x_{\max})$ and $S = N^+_{\vec{G}}(x_{\max})$, which 
are disjoint    
and satisfy $|R| = \Delta^-(\vec{G})$ and $|S| \geq \delta^+(\vec{G})$.  
Then~(\ref{eqn:Fact4.4}) guarantees a 
directed 4-cycle $(x_{\max}, s, v, r, x_{\max})$,  
which 
contradicts that $\vec{G}$ admits
no closed directed 4-walks.  
In other words, (\ref{eqn:11.1.2018.7:10p}) must be false when $\ell = 4$.  \hfill $\Box$  \\

\noindent {\bf Case~2 ($\ell = 5$).}  
We use 
the following peculiar observation, proven in a moment:  
\begin{equation}
\label{eqn:obs2.32}  
\text{{\it if $x_{\max} \rightarrow y \rightarrow z \rightarrow a$ 
is a directed path in $\vec{G}$, then $(x_{\max}, a) \not\in \vec{E}$, 
$(x_{\max}, z) \not\in \vec{E}$, and $(y, a) \not\in \vec{E}$.}} 
\end{equation}  
Using~(\ref{eqn:obs2.32}), 
$N^+_{\vec{G}}(x_{\max})$ is an independent
set whose every fixed element $y \in N^+_{\vec{G}}(x_{\max})$ has an 
independent
out-neighborhood $N^+_{\vec{G}}(y)$ which is disjoint from   
$N^+_{\vec{G}}(x_{\max})$.  
Thus, 
for $z \in 
N^+_{\vec{G}}(y)$ fixed, 
it must be that 
$N^+_{\vec{G}}(z) \cap N^+_{\vec{G}}(x_{\max}) \neq \emptyset$ 
since otherwise 
$N^+_{\vec{G}}(x_{\max}) \cup N^+_{\vec{G}}(y) \cup N^+_{\vec{G}}(z) \subseteq V$
is a disjoint union with 
$$
\deg^+_{\vec{G}}(x_{\max}) + \deg^+_{\vec{G}}(y) + \deg^+_{\vec{G}}(z) 
\geq 3 \delta^+\big(\vec{G}\big) 
\geq 
n+1 \qquad \text{(recall $\ell = 5$ in~(\ref{eqn:non-exthyp}))}.   
$$
On the other hand, 
$N^+_{\vec{G}}(z) \cap N^+_{\vec{G}}(x_{\max}) \neq \emptyset$
violates~(\ref{eqn:obs2.32}), and so   
(\ref{eqn:11.1.2018.7:10p}) is false when $\ell = 5$.

To see~(\ref{eqn:obs2.32}),   
we first 
observe that 
\begin{equation}
\label{eqn:4.8.2019.10:45a}  
N^+_{\vec{G}}(a) \cap N^-_{\vec{G}}(x_{\max}) = \emptyset,   
\end{equation}  
since 
$b \in N^+_{\vec{G}}(a) \cap N^-_{\vec{G}}(x_{\max})$
would give the directed 5-cycle 
$(x_{\max}, y, z, a, b, x_{\max})$, which would contradict that $\vec{G}$ admits no closed
directed 5-walks.     
Now, (\ref{eqn:4.8.2019.10:45a}) forbids $(x_{\max}, a) \in \vec{E}$, since otherwise   
we set 
$R = N^-_{\vec{G}}(x_{\max})$ and 
$S = N^+_{\vec{G}}(a)$ 
and use~(\ref{eqn:Fact4.4}) to guarantee a directed 5-cycle 
$(x_{\max}, a, s, v, r, x_{\max})$.  
We next observe that 
\begin{equation}
\label{eqn:4.8.2019.10:59a}  
N^+_{\vec{G}}(a) \cap N^+_{\vec{G}}(x_{\max}) \neq \emptyset, 
\end{equation}  
since 
otherwise~(\ref{eqn:4.8.2019.10:45a}) gives 
that 
$N^+_{\vec{G}}(a) \cup N^+_{\vec{G}}(x_{\max}) 
\cup N^-_{\vec{G}}(x_{\max}) \subseteq V$ is a disjoint union with 
$$
\deg^+_{\vec{G}}(a) + \deg^+_{\vec{G}}(x_{\max}) 
+ \deg^-_{\vec{G}}(x_{\max})
\stackrel{(\ref{eqn:11.1.2018.7:10p})}{>} 
3 \delta^+\big(\vec{G}\big)  
\stackrel{(\ref{eqn:non-exthyp})}{\geq}
n + 1.       
$$
Using~(\ref{eqn:4.8.2019.10:59a}),   
fix $b \in N^+_{\vec{G}}(a) \cap N^+_{\vec{G}}(x_{\max})$.   
Then 
\begin{equation}
\label{eqn:4.8.2019.11:12a}  
N^+_{\vec{G}}(b) \cap 
N^-_{\vec{G}}(x_{\max}) \neq \emptyset,  
\end{equation}  
as otherwise we set 
$R = N^-_{\vec{G}}(x_{\max})$ and 
$S = N^+_{\vec{G}}(b)$
and use~(\ref{eqn:Fact4.4})    
to guarantee 
a directed 5-cycle $(x_{\max}, b, s, v, r, x_{\max})$.  
Using~(\ref{eqn:4.8.2019.11:12a}), we fix   
$c \in N^+_{\vec{G}}(b) \cap 
N^-_{\vec{G}}(x_{\max})$, which forbids 
$(x_{\max}, z) \in \vec{E}$ lest
$(x_{\max}, z, a, b, c, x_{\max})$ is a directed 5-cycle.  
Similarly
$(y, a) \not\in \vec{E}$, 
which proves~(\ref{eqn:obs2.32}).  
\hfill $\Box$   \\

\noindent {\bf Case 3 ($\ell \geq 6$).}  
By the argument of Case~1, $x_{\max}$ belongs to a directed 4-cycle $\vec{C}_4$.  
We first observe that 
$x_{\max}$ does not belong 
to a directed 3-cycle $\vec{C}_3$.   
Indeed\footnote{This statement holds for all integers $\ell \geq 3$
outside of $\ell = 5$, and can be proven by inducting on $\ell = 
\lfloor \ell/2 \rfloor + \lceil \ell/2 \rceil$.},   
\begin{equation}
\label{eqn:numfact}
\text{{\it every integer $\ell \geq 6$ can be expressed as 
$\ell = 3i + 4j$ for some integers $i, j \geq 0$,}} 
\end{equation}  
and so the inclusion of $x_{\max}$ along both a directed 3-cycle $\vec{C}_3$ and a directed
4-cycle $\vec{C}_4$ 
would place $x_{\max}$ in a closed directed $\ell$-walk in $\vec{G}$, contradicting our hypothesis.  
We next observe that a longest directed path 
$\vec{P} = (y_1, \dots, y_k)$ 
in $N^+_{\vec{G}}(x_{\max})$
satisfies $k = \Omega(n)$.  
Indeed, 
$|N^+_{\vec{G}}(y_k) \cap N^+_{\vec{G}}(x_{\max})|
\leq k - 2$
holds by the optimal length of $\vec{P}$, and so 
\begin{equation}
\label{eqn:4.8.2019.12:42p}  
\big|N^+_{\vec{G}}(y_k) \cup N^+_{\vec{G}}(x_{\max}) \big| = 
\deg^+_{\vec{G}}(y_k) + \deg^+_{\vec{G}}(x_{\max}) - 
\big|N^+_{\vec{G}}(y_k) \cap N^+_{\vec{G}}(x_{\max}) \big| 
\geq 
2 \delta^+\big(\vec{G}\big) - k.  
\end{equation}  
Since $x_{\max}$ belongs to no directed 3-cycles $\vec{C}_3$, 
\begin{multline}  
\label{eqn:4.8.2019.12:44p} 
N^+_{\vec{G}}(y_k) \cap 
N^-_{\vec{G}}(x_{\max}) = \emptyset
= N^+_{\vec{G}}(x_{\max}) \cap 
N^-_{\vec{G}}(x_{\max})  \quad 
\implies \quad 
N^+_{\vec{G}}(y_k) \cup N^+_{\vec{G}}(x_{\max}) \subseteq V \setminus 
N^-_{\vec{G}}(x_{\max})  \\
\implies \quad 
\big|N^+_{\vec{G}}(y_k) \cup N^+_{\vec{G}}(x_{\max}) \big| \leq n - 
\deg^-_{\vec{G}}(x_{\max})  
= 
n - 
\Delta^-\big(\vec{G}\big).  
\end{multline}  
Then $k = \Omega(n)$ follows 
comparing~(\ref{eqn:4.8.2019.12:42p})   
and~(\ref{eqn:4.8.2019.12:44p}):  
$$
k \geq 
2 \delta^+\big(\vec{G}\big) + 
\Delta^-\big(\vec{G}\big)  
- n 
\stackrel{(\ref{eqn:11.1.2018.7:10p})}{>}    
3\delta^+\big(\vec{G}\big) + \tfrac{1}{4} \beta^3n - n 
\stackrel{(\ref{eqn:non-exthyp})}{\geq}  
n\left(
\tfrac{1}{4} \beta^3 - 3 \alpha\right)
\stackrel{(\ref{eqn:11.1.2018.4:31p})}{=}    
n\left(
\tfrac{1}{4} \beta^3 - \tfrac{1}{32} \beta^6\right)  
\geq \beta^3 n / 8.  
$$

To conclude Case~3, set 
$R = N^-_{\vec{G}}(x_{\max})$ and $S = N^+_{\vec{G}}(y_k)$, which we observed
above 
are disjoint.  
Then~(\ref{eqn:Fact4.4}) guarantees a path $(s, v, r)$ with $s \in S$ and $r \in R$, 
whence 
$$
\big(x_{\max}, y_{k - \ell + 5}, y_{k-\ell + 6}, \dots, y_k, s, v, r, x_{\max}\big) 
$$
is a closed directed $\ell$-walk, contradicting our hypothesis.
In other words, (\ref{eqn:11.1.2018.7:10p}) must be false when $\ell \geq 6$.  
which proves Claim~\ref{clm:11.1.2018}.   \hfill $\Box$

\section{Proof of Proposition~\ref{prop:Lem19}}    
\label{sec:nonext2}  
In this section, we prove Proposition~\ref{prop:Lem19}, where   
we will use several auxiliary facts.  
The first fact is taken from Corollary~1.5 in~\cite{ji2018short}.  

\begin{fact}
\label{fact:JiWuSong}  
Fix an integer $\ell \geq 4$.  
Let $\vec{G} = (V, \vec{E})$ be a large $n$-vertex
oriented graph
which contains no directed triangle, but 
which satisfies 
$\delta_0(\vec{G}) 
\geq (0.3025)n$.    
Then $\vec{G}$ admits a directed $\ell$-cycle $\vec{C}_{\ell}$.  
\end{fact}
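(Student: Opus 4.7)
The plan is to apply the directed regularity method and reduce Fact~\ref{fact:JiWuSong} to a structural/extremal question about the resulting cluster digraph, in spirit very similar to the remark following Corollary~\ref{cor:non-ext}. First, I would apply a directed analogue of Szemer\'edi's regularity lemma to produce an $\varepsilon$-regular partition $V = V_0 \cup \cdots \cup V_t$ of $\vec{G}$ with cluster digraph $\vec{R}$, choosing $\varepsilon$ and the density threshold $d$ small relative to $1/\ell$ and relative to $(1/3) - 0.3025 > 0$. Standard inheritance estimates give $\delta_0(\vec{R}) \geq (0.3025 - \eta)t$ for $\eta$ arbitrarily small. Any directed triangle in $\vec{R}$ would, via a counting lemma, produce many directed triangles in $\vec{G}$, so $\vec{R}$ may be assumed to be triangle-free. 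Thereafter, any closed directed walk of length $\ell$ in $\vec{R}$ lifts to a $\vec{C}_\ell$ in $\vec{G}$ by the standard counting lemma for regular pairs.

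Second, it then suffices to prove the purely combinatorial claim that every large triangle-free oriented graph $\vec{R}$ with $\delta_0(\vec{R}) \geq (0.3025 - \eta)t$ admits a closed directed walk of every length $\ell \geq 4$. A clean route is to exhibit directed cycles in $\vec{R}$ of two coprime lengths through a common vertex---say a $\vec{C}_4$ and a $\vec{C}_5$---so that all sufficiently large lengths are realized as concatenations $4a + 5b$ based at the common vertex. The finitely many residual small values $\ell \in \{6, 7, 11\}$ can be resolved by finding a $\vec{C}_6$ or $\vec{C}_7$ directly, or by exhibiting a slightly longer auxiliary cycle sharing a vertex with the $\vec{C}_4$ or $\vec{C}_5$ already produced; that common vertex exists by an averaging argument on the number of short cycles through each vertex, which is forced to be $\Omega(t)$ on average once a single short cycle is found.

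Third, the structural heart of the proof---and the main obstacle---is to show that every large triangle-free oriented graph with $\delta_0 \geq 0.3025\, n$ contains a directed $4$-cycle and a directed $5$-cycle. For $\vec{C}_4$: the joint absence of $\vec{C}_3$ and $\vec{C}_4$ forces, for each edge $(u,v) \in \vec{E}$, severe disjointness between $N^+(v)$ and the sets $N^+(u)$, $N^-(u)$, $N^-(v)$; double counting edges across the resulting nested neighborhood unions yields an inequality that already fails well below $0.3025\, n$, similar in flavor to the argument proving Claim~\ref{clm:11.1.2018}. For $\vec{C}_5$: a similar but more delicate iterated-neighborhood argument applies (compare the analysis of $\ell = 5$ in the proof of Claim~\ref{clm:11.1.2018}), and here the constant $0.3025$ appears to be essentially sharp. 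Matching this numerical threshold is expected to be the technical crux of~\cite{ji2018short}, likely handled by a careful weighted-density optimization or flag-algebraic computation; outside of this specific numerical calibration, the overall stability-style framework follows the same pattern used for Lemmas~\ref{lem:non-ext} and~\ref{lem:ext}.
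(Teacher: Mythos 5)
This statement is not proved in the paper at all: as the opening line of Section~\ref{sec:nonext2} states, Fact~\ref{fact:JiWuSong} is imported verbatim from Corollary~1.5 of~\cite{ji2018short}, so there is no internal argument to compare yours against. Judged on its own terms, your proposal has a genuine gap: it is not a proof but a reduction of the statement to (essentially) itself. Your Step~2 asserts that ``it suffices to prove the purely combinatorial claim that every large triangle-free oriented graph with $\delta_0 \geq (0.3025-\eta)t$ admits a closed directed walk of every length $\ell \geq 4$,'' which is the content of Fact~\ref{fact:JiWuSong} up to the walk/cycle distinction --- and with the weakened constant $0.3025-\eta$ it is in fact a \emph{harder} statement, since the paper's threshold is already described as essentially sharp. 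Your Step~3 then explicitly defers the quantitative crux (that $0.3025\,n$ forces a $\vec{C}_4$ and a $\vec{C}_5$, and separately handles $\ell\in\{6,7,11\}$) to ``a careful weighted-density optimization or flag-algebraic computation'' that you expect to find in~\cite{ji2018short}. A proof whose central step is an appeal to the theorem being proved is not a proof.

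Two smaller points. First, the regularity detour is both unnecessary and slightly leaky: the cluster digraph $\vec{R}$ of a directed regularity partition need not be an oriented graph (it may contain $2$-cycles), so your ``purely combinatorial claim'' about triangle-free \emph{oriented} graphs does not directly apply to $\vec{R}$; one must either pass to an oriented spanning subgraph as in Lemma~3.2 of~\cite{kelly2008dirac} (losing a bit more in the degree condition) or treat $2$-cycles separately, and for odd $\ell$ a $2$-cycle alone does not yield $\vec{C}_\ell$. Second, the numerical-semigroup step is fine as far as it goes ($4a+5b$ realizes all integers except $1,2,3,6,7,11$), but the residual cases $\ell = 6, 7, 11$ are again only ``resolved'' by positing further short cycles whose existence at the threshold $0.3025\,n$ is precisely what needs proving. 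If you want a self-contained argument you must actually carry out the iterated-neighborhood double counting at the stated constant; otherwise the correct move here is simply to cite~\cite{ji2018short}, as the paper does.
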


\noindent  Our remaining facts 
are 
independent of the context of proving Proposition~\ref{prop:Lem19}, and are therefore 
verified in Section~\ref{sec:prooffacts}.  

\begin{fact}
\label{fact:Clm19.1}  
Fix an integer $\ell \geq 4$ and an 
$\eps \in (0, 1/(11)]$.  Let $\vec{G} = (V, \vec{E})$ be an oriented 
graph on $n \geq n_0(\ell, \eps)$ many vertices which admits 
no closed directed $\ell$-walk, but which satisfies $\delta_0(\vec{G}) \geq ((1/3) - \eps)n$.  
Let 
$(U_0, U_1)$ be a pair of subsets 
$U_0, U_1 \subseteq V$ satisfying the following conditions:
\begin{enumerate}
\item[(i)] $|U_0|, |U_1| \geq \delta_0(\vec{G})$; 
\item[(ii)] $|U_0 \cap U_1| \leq ((1/3) - 21\eps) n$; 
\item[(iii)] $\vec{G}$ admits no directed paths $u_0 \rightarrow v \rightarrow u_1$, where $u_0 \in U_0$ and $u_1 \in U_1$.  
\end{enumerate}  
Then, there exist independent sets $I_0 \subseteq U_0\setminus U_1$
and $I_1 \subseteq U_1 \setminus U_0$ with sizes 
\begin{equation}
\label{eqn:Clm19.1}  
|I_0| \geq |U_0 \setminus U_1| - 7\eps n \geq 20 \eps n \qquad \text{and} \qquad 
|I_1| \geq |U_1 \setminus U_0| - 7\eps n \geq 20 \eps n.  
\end{equation}  
\end{fact}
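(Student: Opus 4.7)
Set $A := U_0 \setminus U_1$ and $B := U_1 \setminus U_0$. By (i), $|U_0|, |U_1| \geq \delta_0(\vec{G}) \geq (\tfrac{1}{3} - \eps)n$, and combining with (ii) inclusion--exclusion gives
\[
|A|,\ |B|\ \geq\ \big(\tfrac{1}{3} - \eps\big)n - \big(\tfrac{1}{3} - 21\eps\big)n\ =\ 20\eps n.
\]
Introduce $W := N^+_{\vec{G}}(U_0)$ and $X := N^-_{\vec{G}}(U_1)$. The path-freeness in (iii) translates \emph{exactly} to $W \cap X = \emptyset$, while (i) gives $|W|, |X| \geq (\tfrac{1}{3} - \eps)n$, so also $|V \setminus (W \cup X)| \leq (\tfrac{1}{3} + 2\eps)n$.

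The independent set $I_0$ then emerges from a single structural observation: for any $a \in A \subseteq U_0$, the containment $N^+_{\vec{G}}(a) \subseteq W$ forces every edge of $\vec{G}$ with both endpoints in $A$ to have its head in $A \cap W$. Setting $S_0 := A \cap W$, the complement $I_0 := A \setminus S_0$ is therefore independent in $\vec{G}$, with $|I_0| = |A| - |S_0|$. The symmetric argument, using in-edges into $B \subseteq U_1$ in place of out-edges out of $A$, yields an independent $I_1 := B \setminus S_1$ with $|I_1| = |B| - |S_1|$, where $S_1 := B \cap X$.

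It therefore suffices to prove $|S_0|, |S_1| \leq 7\eps n$, which is the heart of the argument and the \emph{main obstacle}. My plan is to argue by contradiction: supposing $|S_0| > 7\eps n$, I would combine the density of $S_0$ with the minimum-degree bounds $\delta_0(\vec{G}) \geq (\tfrac{1}{3} - \eps)n$ and the disjointness $W \cap X = \emptyset$ to construct a closed directed $\ell$-walk in $\vec{G}$, contradicting the hypothesis. A natural starting configuration is a 2-path $u_0 \to a \to a^+$ with $u_0 \in U_0$, $a \in S_0$, $a^+ \in W$; iterated applications of the out-degree bound then extend this into a long directed walk that remains in $W$ while the current vertex lies in $U_0$, and a final return segment through $X$ (via an in-edge to some $u_1 \in U_1$) is used to close the walk up.

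The delicate point is controlling the walk length modulo $3$: because the near-tripartite structure forced by $W \cap X = \emptyset$ most naturally produces closed walks whose lengths are multiples of three, the case $\ell \not\equiv 0 \pmod{3}$ requires an auxiliary short detour through the comparatively small set $V \setminus (W \cup X)$ to adjust the congruence class. I expect this modular bookkeeping, together with the threshold $\eps \leq 1/(11)$, to both pin down the precise constant $7$ in $7\eps n$ and sharpen the basic estimate $|A|,|B| \geq 20\eps n$ to the $\geq 27\eps n$ needed to validate the chain $|I_0| \geq |U_0 \setminus U_1| - 7\eps n \geq 20\eps n$.
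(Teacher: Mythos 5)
Your reduction is sound as far as it goes: with $A = U_0\setminus U_1$, every arc inside $A$ has its head in $N^+(U_0)$, so $A\setminus S_0$ is independent. (The paper uses the same device but excludes the smaller set $T_0 = A\cap N^{+}(A)$, i.e.\ the vertices of $A$ with an in-neighbour \emph{in $A$}, which is all that independence requires; your $S_0 = A\cap N^+(U_0)$ additionally contains $A\cap N^+(U_0\cap U_1)$, and nothing in (i)--(iii) visibly controls how many vertices of $A$ receive arcs from $U_0\cap U_1$, so you have made the target strictly harder than necessary.) The real problem is that the bound $|S_0|,|S_1|\le 7\eps n$ --- which you yourself identify as the heart of the argument --- is left as an unexecuted plan, and the plan as described would not go through. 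A strategy of the form ``grow a long directed walk staying in $W$, return through $X$, and fix the length mod $3$ by a detour through $V\setminus(W\cup X)$'' cannot produce a closed walk of a \emph{prescribed} length $\ell$ when $\ell$ is small ($\ell=4$ and $\ell=5$ are exactly the critical cases in this paper), and even for large $\ell$ the minimum out-degree alone gives no way to steer a walk back to its starting vertex to close it up.

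For comparison, the paper never builds an $\ell$-walk directly from a dense exclusion set. It instead applies Fact~\ref{fact:Cor18} (a consequence of Proposition~\ref{prop:Lem17} and Theorem~\ref{thm:KKO}) to the induced subgraph $\vec G[T_j]$: if $|T_j|\ge\eps n$, then since $\vec G[T_j]$ inherits the no-closed-$\ell$-walk property, it contains a vertex $t_j$ with $|N^{+}(t_j)\cap T_j|\le(\tfrac13+\eps)|T_j|$. Condition (iii) then forces almost all of $N^{+}(t_0)$ outside $U_0\cup U_1$, producing a large set $N^{+}(T_0)\setminus(U_0\cup U_1)$ which, again by (iii), is disjoint from its counterpart for $T_1$; a count over these disjoint pieces shows $|T_0|,|T_1|\ge\eps n$ cannot both hold, and a second disjointness count (via a vertex of $U_{j+1}\setminus T_{j+1}$ with no in-neighbour there) gives $|T_j|\le\tfrac{132}{19}\eps n<7\eps n$, with $\eps\le 1/11$ entering exactly here. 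None of this machinery is recoverable from your sketch, so the proposal has a genuine gap at its central step.
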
  

\begin{remark}
\rm 
In many applications of 
Fact~\ref{fact:Clm19.1}, the pair $(U_0, U_1)$ 
will satisfy 
$U_0 \cap U_1 = \emptyset$.    \hfill $\Box$  
\end{remark}  

\begin{fact}
\label{fact:Clm19.2}  
Fix an integer $\ell \geq 4$ and an $\eps \in (0, 1/(54))$.  Let
$\vec{G} = (V, \vec{E})$ be an oriented graph on $n \geq n_0(\ell, \eps)$
many vertices which admits no closed directed $\ell$-walk, but which satisfies
$\delta_0(\vec{G}) \geq ((1/3) - \eps) n$.  
Let 
$(x, y, z, x)$ 
be a directed 
3-cycle $\vec{C}_3$ 
in $\vec{G}$, 
and assume that neither $x$ nor $y$ belongs to a directed 4-cycle
$\vec{C}_4$.  
Then, 
$|N^{-}_{\vec{G}}(x) \cap N^+_{\vec{G}}(y)| \geq 
((1/3) - 18 \eps)n$.  
\end{fact}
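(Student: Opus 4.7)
The plan is to lower-bound $|N^-(x) \cap N^+(y)|$ via inclusion--exclusion, after extracting structural consequences of the no-4-cycle hypotheses. Set $A := N^+_{\vec{G}}(y)$, $B := N^-_{\vec{G}}(x)$, $C := N^+_{\vec{G}}(x)$, $D := N^-_{\vec{G}}(y)$; since $\vec{G}$ is oriented, $A \cap D = B \cap C = \emptyset$, and $|A|, |B|, |C|, |D| \ge (\tfrac{1}{3}-\eps)n$. First I would record two direct consequences of the hypotheses: $(\ast)$ for every $a \in A$, $N^+(a) \cap B = \emptyset$, since otherwise $x \to y \to a \to b \to x$ would be a 4-cycle through $x$; and $(\ast\ast)$ for every $a \in A$ and every $b \in N^+(a)$, $N^+(b) \cap D = \emptyset$, since otherwise $y \to a \to b \to d \to y$ would be a 4-cycle through $y$. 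Specializing $(\ast\ast)$ to $a = z$ and $b = x \in N^+(z)$ yields the key corollary $C \cap D = \emptyset$.

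Setting $R := V \setminus (C \cup D)$, the three disjointnesses $A \cap D = B \cap C = C \cap D = \emptyset$ give $A \subseteq C \cup R$ and $B \subseteq D \cup R$, and therefore $A \cap B \subseteq R$. A straightforward inclusion--exclusion inside $R$ produces
\begin{equation*}
|A \cap B| \;\ge\; (|A| - |A \cap C|) + (|B| - |B \cap D|) - |R| \;\ge\; \left(\tfrac{1}{3} - 4\eps\right)n - |A \cap C| - |B \cap D|,
\end{equation*}
using $|A|, |B| \ge (\tfrac{1}{3}-\eps)n$ together with $|R| = n - |C| - |D| \le (\tfrac{1}{3}+2\eps)n$. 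Thus it suffices to establish $|A \cap C|, |B \cap D| \le 7\eps n$.

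For $|A \cap C|$, I would apply Fact~\ref{fact:Clm19.1} to $(U_0, U_1) = (C, B)$: hypothesis (i) holds since $|C|, |B| \ge \delta_0(\vec{G})$, (ii) since $C \cap B = \emptyset$, and (iii) is precisely ``no 4-cycle through $x$''. This yields an independent set $I_C \subseteq C$ with $|I_C| \ge |C| - 7\eps n$. The key observation is that $y \in C$ has no in-neighbors inside $C$ (any $v \in C$ with $v \to y$ would lie in $C \cap D = \emptyset$), so the only edges between $y$ and the rest of $C$ are the out-edges $y \to u$ with $u \in A \cap C$. Hence, after arranging $y \in I_C$, independence forces $(A \cap C) \cap I_C = \emptyset$, so $A \cap C \subseteq C \setminus I_C$ and $|A \cap C| \le 7\eps n$. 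A symmetric application to $(U_0, U_1) = (A, D)$, with the seed $x \in D$, delivers $|B \cap D| \le 7\eps n$. Combining, $|A \cap B| \ge (\tfrac{1}{3} - 4\eps - 14\eps)n = (\tfrac{1}{3} - 18\eps)n$, as required.

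The main obstacle is arranging the seeds $y \in I_C$ and $x \in I_D$ in the two applications of Fact~\ref{fact:Clm19.1}; the cleanest handling is a slight enhancement of that fact (or a direct inspection of its greedy construction) that permits a prescribed vertex in the output independent set, at the cost of at most an $O(1)$ additive loss in $|I_C|$ and $|I_D|$, which is negligible for the final bound.
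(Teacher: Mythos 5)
Your argument is correct and is genuinely different in structure from the paper's, even though both lean on Fact~\ref{fact:Clm19.1} applied to the same two pairs $(N^+(x),N^-(x))$ and $(N^+(y),N^-(y))$. The paper argues by contradiction: assuming $|N^-(x)\cap N^+(y)|$ is small, it extracts the independent sets $I_x\subseteq N^-(x)$ and $I_y\subseteq N^+(y)$ (i.e.\ the \emph{other} coordinates of those two applications), shows they are disjoint from $N^-(y)$ and $N^+(x)$ respectively, and concludes $N^+(x)\cap N^-(y)\neq\emptyset$, which manufactures the forbidden $4$-cycle $(x,v,y,z,x)$. You instead run a direct count: the observation $C\cap D=\emptyset$ (which in the paper appears only as the negation of what the contradiction produces) gives the partition $V=C\cup D\cup R$ with $|R|\le(\tfrac13+2\eps)n$, and the whole statement reduces to showing $|A\cap C|,|B\cap D|\le 7\eps n$. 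Your route is arguably cleaner — no contradiction hypothesis is needed, and the final inequality falls out of one inclusion–exclusion — at the price of having to control the two ``overlap'' sets $A\cap C$ and $B\cap D$, which is where your only soft spot lies.

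That soft spot is the seeding of $y$ into $I_C$ and $x$ into $I_D$. As a \emph{generic} enhancement of Fact~\ref{fact:Clm19.1}, the claim that a prescribed vertex can be inserted at $O(1)$ cost is false: forcing $y$ into an arbitrary independent set could require deleting all of $N^+(y)\cap C=A\cap C$, which is exactly the set you are trying to bound. What saves you is your parenthetical alternative, direct inspection of the construction: in the proof of Fact~\ref{fact:Clm19.1} one has $I_0=(U_0\setminus U_1)\setminus T_0$ where $T_0$ is the set of vertices with an in-neighbor inside $U_0\setminus U_1$ (and dually for $I_1$ with out-neighbors). Since $N^-(y)\cap C=D\cap C=\emptyset$ and $N^+(x)\cap D=C\cap D=\emptyset$, the vertices $y$ and $x$ lie in the canonical $I_C$ and $I_D$ automatically, with zero loss; equivalently, every $u\in A\cap C$ has the in-neighbor $y$ inside $C$, so $A\cap C\subseteq T_0$ and $|A\cap C|\le|T_0|\le 7\eps n$ directly. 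So the gap is closable by quoting the internals of Fact~\ref{fact:Clm19.1} rather than its statement; with that substitution the numerology $(\tfrac13-4\eps-7\eps-7\eps)n=(\tfrac13-18\eps)n$ checks out exactly. (One shared caveat: hypothesis~(ii) of Fact~\ref{fact:Clm19.1} reads $|U_0\cap U_1|\le(\tfrac13-21\eps)n$, whose right-hand side is negative for $\eps>1/63$; the paper's own proof applies the fact to disjoint pairs for $\eps<1/54$ in exactly the same way, so this is a constant-chasing issue inherited from the paper, not a defect of your argument.)
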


\noindent  We now prove Proposition~\ref{prop:Lem19}, and distinguish whether or not 
$\ell = 5$.

\subsection{Proof of Proposition~\ref{prop:Lem19} when $\boldsymbol{\ell \neq 5}$}        
Fix $\lambda_0 > 0$.  Define 
the promised constant 
\begin{equation}
\label{eqn:11.10.2018.6:07p}  
\beta = \beta(\lambda_0) = \min \left\{
\tfrac{1}{21} \lambda_0, \, 
\tfrac{1}{3} - 0.3025, \tfrac{1}{55} \right\}.  
\end{equation}  
Fix an integer $\ell \geq 4$, where $\ell \neq 5$.  
Let $\vec{H} = (V, \vec{E})$ be an $m$-vertex oriented graph, 
where $m \geq m_0(\lambda_0, \beta, \ell)$ is assumed to be sufficiently large
whenever needed.  Assume that $\vec{H}$   
admits no closed directed $\ell$-walk but satisfies 
$\delta_0(\vec{H}) \geq
((1/3) - \beta)m$.  
We prove that $\vec{H}$ is $\lambda_0$-extremal.

The central observation 
of the proof 
is 
that $\vec{H}$ admits directed triangles, since otherwise   
with 
$$
\delta_0\big(\vec{H}\big) 
\geq 
\left(\tfrac{1}{3} - \beta\right)m 
\stackrel{(\ref{eqn:11.10.2018.6:07p})}{\geq}    
(0.3025) m   
$$
Fact~\ref{fact:JiWuSong}
would guarantee a directed $\ell$-cycle $\vec{C}_{\ell}$ in $\vec{H}$, contradicting 
our hypothesis.  
Thus, fix a directed 3-cycle $(v_0, v_1, v_2, v_0)$ in $\vec{H}$.   
Our observation in~(\ref{eqn:numfact})   
guarantees that 
no vertex $v_i \in \{v_0, v_1, v_2\}$ can belong to a directed 4-cycle 
$\vec{C}_4$ lest $\vec{H}$ admits a closed directed $\ell$-walk.     
For fixed $i \in \mathbb{Z}_3$, 
we define 
\begin{equation}
\label{eqn:11.10.2018.3:08p}  
U_i = N_{\vec{H}}^-(v_i) \cap N_{\vec{H}}^+(v_{i+1}).  
\end{equation} 
Then $U_0$, $U_1$, and $U_2$ are pairwise disjoint because $\vec{H}$ is an oriented
graph.  
By our choice of $\beta < 1/(54)$ in~(\ref{eqn:11.10.2018.6:07p}), and by no 
$v_j \in \{v_0, v_1, v_2\}$ belonging to a directed 4-cycle $\vec{C}_4$,   
Fact~\ref{fact:Clm19.2}  
guarantees that 
\begin{equation}
\label{eqn:11.10.2018.3:11p}  
|U_i| = 
\big|N_{\vec{H}}^-(v_i) \cap N_{\vec{H}}^+(v_{i+1}) \big| 
\geq 
\left(\tfrac{1}{3} - 18 \beta\right) n.     
\end{equation}  
We claim that each $u_i \in U_i$ satisfies 
\begin{equation}
\label{eqn:11.10.2018.3:40p}  
\big|N^+_{\vec{H}}(u_i) \cap U_{i+1}\big| \geq 
\left(\tfrac{1}{3} - 45 \beta \right) n.    
\end{equation}  
If true, 
any partition $V = V_0 \cup V_1 \cup V_2$, where $U_j \subseteq V_j$ for each $j \in \mathbb{Z}_3$, 
is $\lambda_0$-extremal since 
\begin{multline*}  
e_{\vec{H}}(V_i, V_{i+1}) \geq 
e_{\vec{H}}(U_i, U_{i+1}) = \sum_{u_i \in U_i} \big|N^+_{\vec{H}}(u_i) \cap U_{i+1}\big|
\stackrel{(\ref{eqn:11.10.2018.3:40p})}{\geq}    
|U_i| 
\left(\tfrac{1}{3} - 45 \beta \right) n   \\ 
\stackrel{(\ref{eqn:11.10.2018.3:11p})}{\geq}    
\left(\tfrac{1}{3} - 45 \beta \right) \left(\tfrac{1}{3} - 18 \beta\right) n^2     
\geq 
\left(\tfrac{1}{9} - 21 \beta \right)n^2 
\stackrel{(\ref{eqn:11.10.2018.6:07p})}{\geq}    
\left(\tfrac{1}{9} - \lambda_0 \right)n^2.   
\end{multline*}

To prove~(\ref{eqn:11.10.2018.3:40p}), fix $i \in \mathbb{Z}_3$, and 
w.l.o.g.~assume 
$i = 0$.  
Fix $u_0 \in U_0 = N^-_{\vec{H}}(v_0) \cap N^+_{\vec{H}}(v_1)$.  
Then
$(v_1, u_0, v_0, v_1)$ is a directed 3-cycle $\vec{C}_3$, 
and so~(\ref{eqn:numfact})
gives that 
$u_0$ can belong to no 
directed 4-cycle $\vec{C}_4$.
As such, Fact~\ref{fact:Clm19.2} 
(applied to $(v_1, u_0, v_0, v_1)$)
guarantees that 
\begin{equation}  
\label{eqn:11.10.2018.5:27p}  
\big|N^-_{\vec{H}}(v_1) \cap N^+_{\vec{H}}(u_0)\big| \geq 
\left(
\tfrac{1}{3} 
- 18 \beta\right) n,    
\end{equation}  
which isn't 
yet~(\ref{eqn:11.10.2018.3:40p}), but it will be very close.    
With an error we can control, 
we shall `replace'  
$N^-_{\vec{H}}(v_1)$ 
in~(\ref{eqn:11.10.2018.5:27p})    
with 
$U_1 
\subseteq N^-_{\vec{H}}(v_1)$
from~(\ref{eqn:11.10.2018.3:08p}).  
We claim 
this error will be small 
if 
\begin{equation}
\label{eqn:11.11.2018.11:27a}  
\deg^-_{\vec{H}}(v_1) \leq \big(\tfrac{1}{3} + 9\beta \big)n.    
\end{equation}  
Indeed, 
if~(\ref{eqn:11.11.2018.11:27a}) holds, then we would have   
\begin{equation}  
\label{eqn:11.10.2018.4:42p}  
\big|N^-_{\vec{H}}(v_1) \setminus U_1\big|
\stackrel{(\ref{eqn:11.10.2018.3:08p})}{=}  
\deg^-_{\vec{H}}(v_1) - |U_1| 
\stackrel{(\ref{eqn:11.10.2018.3:11p})}{\leq}    
\deg^-_{\vec{H}}(v_1) - 
\left(\tfrac{1}{3} - 18 \beta\right)n  
\stackrel{(\ref{eqn:11.11.2018.11:27a})}{\leq}    
27 \beta n,   
\end{equation}  
and so comparing~(\ref{eqn:11.10.2018.5:27p})   
with~(\ref{eqn:11.10.2018.4:42p})    
yields 
$$
\big|N^+_{\vec{H}}(u_0) \cap U_1\big|
+ 27 \beta n 
\stackrel{(\ref{eqn:11.10.2018.4:42p})}{\geq}      
\big|
N^+_{\vec{H}}(u_0)
\cap 
N^-_{\vec{H}}(v_1) 
\big| 
\stackrel{(\ref{eqn:11.10.2018.5:27p})}{\geq}     
\left(\tfrac{1}{3} - 18 \beta\right)n,  
$$
which gives~(\ref{eqn:11.10.2018.3:40p}).  It thus remains to 
prove that~(\ref{eqn:11.11.2018.11:27a}) holds.   

To prove~(\ref{eqn:11.11.2018.11:27a}),  
we will apply 
Fact~\ref{fact:Clm19.1}  
to the pair 
$(N^-_{\vec{H}}(v_1), N^+_{\vec{H}}(v_1))$.  
Note that 
the hypotheses~(i)--(iii) 
of Fact~\ref{fact:Clm19.1}  
are met 
by $(N^-_{\vec{H}}(v_1), N^+_{\vec{H}}(v_1))$  
since 
$|N^-_{\vec{H}}(v_1)|$, $|N^+_{\vec{H}}(v_1)| \geq \delta_0(\vec{H})$, 
since 
$|N^-_{\vec{H}}(v_1) \cap N^+_{\vec{H}}(v_1)| = 0$, 
and since there are no paths $u^+ \rightarrow v \rightarrow u^-$ with 
$u^+ \in N^+_{\vec{H}}(v_1)$ and 
$u^- \in N^-_{\vec{H}}(v_1)$
lest 
$(v_1, u^+, v, u^-, v_1)$ 
is a directed 4-cycle containing $v_1$.   
Fact~\ref{fact:Clm19.1}  
guarantees 
an independent set 
$I_{v_1} \subseteq 
N^-_{\vec{H}}(v_1)$
of size 
\begin{equation}
\label{eqn:11.10.2018.4:33p}  
|I_{v_1}| \geq \deg^-_{\vec{H}}(v_1) - 7\beta n 
\geq \delta_0\big(\vec{H}\big) - 7 \beta n
\geq 
\left(\tfrac{1}{3} - \beta\right)n - 7\beta n  \geq 
\left(\tfrac{1}{3} - 8 \beta\right)n 
\stackrel{(\ref{eqn:11.10.2018.6:07p})}{>} 0,   
\end{equation}  
so fix $w_1 \in I_{v_1}$.  
Now, $N^+_{\vec{H}}(w_1) \cup N^-_{\vec{H}}(w_1) \cup I_{v_1} 
\subseteq V$ is a  
pairwise disjoint union, in which case
\begin{multline*}  
n \geq \deg^+_{\vec{H}}(w_1) + \deg^-_{\vec{H}}(w_1) + |I_{v_1}|  
\stackrel{(\ref{eqn:11.10.2018.4:33p})}{\geq}    
\deg^+_{\vec{H}}(w_1) + \deg^-_{\vec{H}}(w_1) + 
\deg^-_{\vec{H}}(v_1) - 7 \beta n  \\
\geq 
2 \delta_0\big(\vec{H}\big) 
+ \deg^-_{\vec{H}}(v_1) - 7 \beta n
\geq 
2 \left(\tfrac{1}{3} - \beta \right)n 
+ \deg^-_{\vec{H}}(v_1) 
- 7 \beta n 
= 
\deg^-_{\vec{H}}(v_1) 
+ \left(\tfrac{2}{3} - 9 \beta \right)n, 
\end{multline*}  
from which~(\ref{eqn:11.11.2018.11:27a}) now follows.   

\subsection{Proof of Proposition~\ref{prop:Lem19} when $\boldsymbol{\ell = 5}$}      
To prove Proposition~\ref{prop:Lem19} when 
$\ell = 5$, we use 
Facts~\ref{fact:JiWuSong}--\ref{fact:Clm19.2}  
together with the following two additional facts (which are also proven in Section~\ref{sec:prooffacts}).   

\begin{fact}
\label{fact:Cor18}  
Fix an integer $\ell \geq 4$ and an $\eps > 0$.  Let $\vec{G} = (V, \vec{E})$ be an oriented graph on $n \geq n_0(\ell, \eps)$ many vertices
which admits no closed directed $\ell$-walk.  
Then, $\delta^+(\vec{G}) \leq ((1/3) + \eps)n$    
and $\delta^-(\vec{G}) \leq ((1/3) + \eps)n$.  
\end{fact}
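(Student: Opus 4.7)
The plan is to argue by contradiction, using Proposition~\ref{prop:Lem17} to pass to an induced subgraph with large minimum semi-degree and then invoking the Kelly-K\"uhn-Osthus theorem (Theorem~\ref{thm:KKO}) to locate a directed $\ell$-cycle, which is itself a closed directed $\ell$-walk. The bound on $\delta^-$ will follow from the bound on $\delta^+$ by considering the reverse orientation $\vec{G}^R$, which is again an oriented graph whose closed directed $\ell$-walks correspond bijectively (by reversal) with those of $\vec{G}$, while $\delta^+(\vec{G}^R) = \delta^-(\vec{G})$. I therefore focus on establishing $\delta^+(\vec{G}) \leq (1/3 + \eps)n$.

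Suppose for contradiction that $\vec{G}$ admits no closed directed $\ell$-walk but $\delta^+(\vec{G}) \geq (1/3 + \eps)n$. Set $\beta = \eps/2$ and let $\alpha = \alpha_{\text{Prop.\ref{prop:Lem17}}}(\beta) > 0$. For $n$ sufficiently large (in particular so that $(1/3 + \eps)n > (n+1)/3$, handling the case $\ell = 5$), hypothesis~(\ref{eqn:non-exthyp}) of Proposition~\ref{prop:Lem17} holds, and so that proposition supplies an induced subgraph $\vec{H} = \vec{G}[U]$ on $m = |U| \geq (1-\beta)n$ vertices with
\[
\delta_0(\vec{H}) \geq \bigl(\delta^+(\vec{G})/n - \beta\bigr)m \geq (1/3 + \eps/2)m.
\]
For $n$ large enough, $m$ is large enough to ensure both $(1/3 + \eps/2)m \geq (m+1)/3$ and $m \geq 10^{10}\ell$, so Theorem~\ref{thm:KKO} applies to $\vec{H}$ and produces a directed $\ell$-cycle $\vec{C}_\ell$. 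This cycle is a closed directed $\ell$-walk in $\vec{G} \supseteq \vec{H}$, contradicting our assumption.

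The proof is essentially a direct synthesis of Proposition~\ref{prop:Lem17} and Theorem~\ref{thm:KKO}; the only obstacle is constants bookkeeping, namely choosing $\beta$ small enough relative to $\eps$ so that the degree loss incurred in passing from $\vec{G}$ to $\vec{H}$ still leaves $\delta_0(\vec{H})$ safely above the Kelly-K\"uhn-Osthus threshold $(m+1)/3$, and taking $n_0$ large enough for both cited results to apply simultaneously. No deeper step is required.
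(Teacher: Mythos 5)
Your proof is correct and is essentially identical to the paper's: both apply Proposition~\ref{prop:Lem17} with $\beta = \eps/2$ to extract an induced subgraph $\vec{H}$ with $\delta_0(\vec{H}) \geq (1/3 + \eps/2)m \geq (m+1)/3$, invoke Theorem~\ref{thm:KKO} to produce a directed $\ell$-cycle (hence a closed directed $\ell$-walk), and obtain the $\delta^-$ bound by reversing orientations. No issues.
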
  

\begin{fact}  
\label{prop:2.3}  
For all $\lambda > 0$, there exists $\eps = \eps (\lambda) > 0$ so that every 
oriented graph $\vec{G} = (V, \vec{E})$ on 
$n \geq n_0(\lambda, \eps)$ many vertices
with $\delta_0(\vec{G}) \geq ((1/3) - \eps)n$ 
will be $\lambda$-extremal, provided $\vec{G}$ 
has:  
\begin{enumerate}
\item  
a partition $V = V_0 \cup V_1 \cup V_2$ with  
$|V_1|,|V_2| \geq ((1/3) - \eps)n$  
and $e_{\vec{G}}(V_1), e_{\vec{G}}(V_2), e_{\vec{G}}(V_2, V_1) \leq 
\eps n^2$,     
\item or 
no transitive triangles.      
\end{enumerate}  
\end{fact}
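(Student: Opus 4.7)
\medskip

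\noindent\textbf{Plan.} The plan is to handle Case~(1) directly by a double-counting of the edges between the three parts, and then to reduce Case~(2) to Case~(1) by choosing the partition cleverly from a single vertex.

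For Case~(1), let $V = V_0\cup V_1\cup V_2$ be the given partition, write $x_{ij}=e_{\vec G}(V_i,V_j)$ for $i\ne j$ and $x_{ii}=e_{\vec G}(V_i)$, and note $|V_0|\le(1/3+2\eps)n$ from $|V_1|,|V_2|\ge(1/3-\eps)n$. Summing the in-degree bound over $V_1$, together with $x_{11},x_{21}\le\eps n^2$, yields
\[
x_{01}\;\ge\;|V_1|\bigl(\tfrac13-\eps\bigr)n-2\eps n^2\;\ge\;\bigl(\tfrac19-O(\eps)\bigr)n^2,
\]
and symmetrically, summing the out-degree bound over $V_2$ with $x_{21},x_{22}\le\eps n^2$ gives
\[
x_{20}\;\ge\;|V_2|\bigl(\tfrac13-\eps\bigr)n-2\eps n^2\;\ge\;\bigl(\tfrac19-O(\eps)\bigr)n^2.
\]
For the remaining class $x_{12}$, sum the in-degree bound over $V_2$ (using $x_{22}\le\eps n^2$) to obtain $x_{02}+x_{12}\ge|V_2|(\tfrac13-\eps)n-\eps n^2$, and use the oriented-graph cap $x_{02}+x_{20}\le|V_0||V_2|$ together with the just-established lower bound on $x_{20}$ to get
\[
x_{02}\;\le\;|V_0||V_2|-|V_2|\bigl(\tfrac13-\eps\bigr)n+2\eps n^2.
\]
Subtracting and invoking $|V_0|\le(1/3+2\eps)n$ yields $x_{12}\ge|V_2|\bigl(\tfrac23-|V_0|/n-O(\eps)\bigr)n-O(\eps)n^2\ge(1/9-O(\eps))n^2$. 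Choosing $\eps=\eps(\lambda)$ small enough drives each $O(\eps)$ error below $\lambda$, so the given partition witnesses $\lambda$-extremality.

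For Case~(2), fix any vertex $u\in V$ and set
\[
V_1:=N^+_{\vec G}(u),\qquad V_2:=N^-_{\vec G}(u),\qquad V_0:=V\setminus(V_1\cup V_2).
\]
Since $\vec G$ is oriented, $V_1\cap V_2=\emptyset$, and the semi-degree hypothesis gives $|V_1|,|V_2|\ge\delta_0(\vec G)\ge(1/3-\eps)n$. The absence of transitive triangles forces $V_1$ and $V_2$ to be independent sets: any edge inside $N^+_{\vec G}(u)$ would complete a transitive triangle with $u$ as the source, and any edge inside $N^-_{\vec G}(u)$ would complete one with $u$ as the sink. Moreover, an edge $v_2\to v_1$ with $v_2\in V_2$ and $v_1\in V_1$ would yield the transitive triangle $(v_2,u,v_1)$, since $v_2\to u$ and $u\to v_1$. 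Hence $e_{\vec G}(V_1)=e_{\vec G}(V_2)=e_{\vec G}(V_2,V_1)=0$, verifying (in fact with room to spare) the hypothesis of Case~(1), which then supplies $\lambda$-extremality.

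\medskip

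\noindent\textbf{Main obstacle.} The tightest step is the lower bound on $x_{12}=e_{\vec G}(V_1,V_2)$: unlike $x_{01}$ and $x_{20}$, it cannot be read directly off a single degree sum but must be extracted from the in-degree count into $V_2$, the oriented-graph cap on the pair $(V_0,V_2)$, and the size bound $|V_0|\le(1/3+2\eps)n$. The rest of the argument is bookkeeping of the $O(\eps)$ error terms, which only requires $\eps$ to be taken small compared to $\lambda$.
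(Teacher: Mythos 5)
Your proof is correct. For Condition~(1) your argument is essentially the paper's: lower-bound $e_{\vec G}(V_0,V_1)$ and $e_{\vec G}(V_2,V_0)$ by degree sums over $V_1$ and $V_2$, then extract $e_{\vec G}(V_1,V_2)$ by combining the in-degree sum over $V_2$ with the oriented-graph cap $e_{\vec G}(V_0,V_2)+e_{\vec G}(V_2,V_0)\le |V_0||V_2|$; the paper bounds $|V_0||V_2|$ by AM--GM where you use $|V_0|\le(\tfrac13+2\eps)n$, but this is only cosmetic. For Condition~(2), however, your route is genuinely different from — and substantially shorter than — the paper's. The paper observes that the underlying graph of a transitive-triangle-free oriented graph is $K_4$-free, applies the Erd\H{o}s--Stone theorem to obtain a balanced tripartition in which every pair carries close to $n^2/9$ edges, counts triangles in the resulting $3$-partite graph, and then argues that almost all of these (necessarily cyclic) triangles are consistently oriented, treating each pair $(V_i,V_{i+1})$ in turn. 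You instead fix a single vertex $u$, set $V_1=N^+_{\vec G}(u)$, $V_2=N^-_{\vec G}(u)$, $V_0=V\setminus(V_1\cup V_2)$, and note that the absence of transitive triangles makes $V_1$ and $V_2$ independent with no arcs from $V_2$ to $V_1$ (each of the three forbidden arcs would complete a transitive triangle through $u$, with $u$ as source, sink, or middle vertex respectively), so that Condition~(1) applies with all three error terms equal to zero. This is the same single-vertex device the paper itself deploys elsewhere (in the $\ell=5$ case of Proposition~\ref{prop:Lem19}, where Condition~(1) is invoked on neighborhoods of a fixed vertex), so the reduction is very much in the spirit of the paper; it simply renders Condition~(2) a corollary of Condition~(1) and avoids Erd\H{o}s--Stone and the orientation-consistency step entirely. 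Since $\lambda$-extremality only requires the three cyclic edge counts to be large (which in turn forces the parts to be nearly balanced), nothing is lost relative to the paper's more elaborate construction.
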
  

Now, 
let $\lambda_0 > 0$ be given.  Let 
\begin{equation}
\label{eqn:11.11.2018.12:03p} 
\eps_{\text{\tiny Fct.\ref{prop:2.3}}} 
= \eps_{\text{\tiny Fct.\ref{prop:2.3}}} (\lambda = \lambda_0) 
> 0 
\end{equation} 
be the constant guaranteed by 
Fact~\ref{prop:2.3}.  
We define the promised constant 
\begin{equation}
\label{eqn:11.11.2018.12:06p}  
\beta = 
\tfrac{1}{109}  
\eps_{\text{\tiny Fct.\ref{prop:2.3}}}.   
\end{equation}    
Let $\vec{H} = (V, \vec{E})$ be an $m$-vertex oriented graph, 
where in all that follows we assume $m \geq m_0(\lambda_0, 
\eps_{\text{\tiny Fct.\ref{prop:2.3}}}, \beta)$ is sufficiently large.   
Assume that $\vec{H}$ admits no closed directed 5-walks, i.e., 
directed 5-cycles $\vec{C}_5$, but 
which satisfies $\delta_0(\vec{H}) \geq ((1/3) - \beta)m$. 
We prove that $\vec{H}$ is $\lambda_0$-extremal.

For sake of argument, we assume that $\vec{H}$ admits some transitive
triangles, as otherwise by 
our choice of $\beta$ and 
$\eps_{\text{\tiny Fct.\ref{prop:2.3}}}$ 
in~(\ref{eqn:11.11.2018.12:03p})  
and~(\ref{eqn:11.11.2018.12:06p}),    
Conclusion~(2) of 
Fact~\ref{prop:2.3} would give that $\vec{H}$ is $\lambda_0$-extremal.  
For the remainder of the proof, 
we fix a transitive triangle $(x, y), (x, z), (y, z) 
\in \vec{E}$.  
Let $I = I_{x,z} = N^-_{\vec{H}}(x) \cap N^+_{\vec{H}}(z)$, which is an independent
set lest $(a, b) \in \vec{E} \cap (I \times I)$ gives the directed 5-cycle
$(x, y, z, a, b, x)$.  
Our first main observation is 
that $I$ is `large'.  

\begin{claim}  
\label{clm:4.10.2019.12:02p}  
\begin{equation}
\label{eqn:11.11.2018.12:25p}  
|I| \geq \left(\tfrac{1}{3} - 21 \beta\right) n.  
\end{equation}  
\end{claim}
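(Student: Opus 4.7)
The plan is to apply Fact~\ref{fact:Clm19.1} contrapositively, taking $\vec{G} = \vec{H}$, $\eps = \beta$, $U_0 = N^+_{\vec{H}}(z)$, and $U_1 = N^-_{\vec{H}}(x)$. Hypothesis~(i) is immediate from $\delta_0(\vec{H}) \geq (\tfrac{1}{3} - \beta)m$. For hypothesis~(iii), any directed path $u_0 \to v \to u_1$ with $u_0 \in N^+(z)$ and $u_1 \in N^-(x)$ would, upon being appended with the transitive-triangle edge $(x, z)$, close up to the directed 5-walk $z \to u_0 \to v \to u_1 \to x \to z$. Since $\vec{H}$ is oriented, short case-checks (e.g., $u_0 = x$ would force both $(z,x), (x,z) \in \vec{E}$) ensure that the five vertices $z, u_0, v, u_1, x$ are pairwise distinct, so this walk would be a $\vec{C}_5$, contradicting our assumption on~$\vec{H}$.

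Now suppose, for contradiction, that hypothesis~(ii) also holds, i.e., $|I| = |U_0 \cap U_1| \leq (\tfrac{1}{3} - 21\beta)m$. Then Fact~\ref{fact:Clm19.1} delivers independent sets $I_0 \subseteq N^+(z) \setminus N^-(x)$ and $I_1 \subseteq N^-(x) \setminus N^+(z)$ of size at least~$20 \beta m$. The central combinatorial observation is that for any $a \in I_0$ and $b \in I_1$, a directed edge $(a, b) \in \vec{E}$ would yield the 5-cycle $y \to z \to a \to b \to x \to y$, built from all three edges of the transitive triangle together with $(z, a), (a, b), (b, x)$; again one verifies distinctness via orientedness. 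So no directed edges point from $I_0$ to $I_1$.

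The main obstacle is to extract the final forbidden $\vec{C}_5$ from these restrictions. The route I would pursue first establishes two additional structural inclusions: for each $a \in I_0$, $N^+_{\vec{H}}(a) \subseteq V \setminus (I_0 \cup N^-(x) \cup \{z\})$, where the $N^-(x)$-exclusion is forced because any $v \in N^+(a) \cap N^-(x)$ would complete the $\vec{C}_5$ $y \to z \to a \to v \to x \to y$; and symmetrically, for each $b \in I_1$, $N^-_{\vec{H}}(b) \subseteq V \setminus (I_1 \cup N^+(z) \cup \{x\})$ via the 5-cycle $x \to y \to z \to v \to b \to x$. Combining these constraints with the independence of $I_0$ and $I_1$ through a careful double-count of edges into and out of $I_0 \cup I_1$, or by a second invocation of Fact~\ref{fact:Clm19.1} on a judiciously modified pair, should overshoot the degree budget supplied by $\delta_0(\vec{H})$ and force a forbidden~$\vec{C}_5$, completing the contrapositive and yielding $|I| > (\tfrac{1}{3} - 21\beta)m$.
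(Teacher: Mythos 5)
Your setup coincides with the paper's: apply Fact~\ref{fact:Clm19.1} to the pair $(N^+_{\vec{H}}(z), N^-_{\vec{H}}(x))$, check~(i) trivially and~(iii) via the $\vec{C}_5$ that $x \to z$ would close over any path from $N^+_{\vec{H}}(z)$ to $N^-_{\vec{H}}(x)$, then assume~(ii) for contradiction to extract $I_0$ and $I_1$. Your two structural inclusions are also correct, and your choice of neighborhoods ($N^+_{\vec{H}}(a)$ for $a \in I_0 \subseteq N^+_{\vec{H}}(z)$, $N^-_{\vec{H}}(b)$ for $b \in I_1 \subseteq N^-_{\vec{H}}(x)$) is a workable mirror image of the paper's, which instead takes the in-neighborhood of a vertex of the independent set inside $N^-_{\vec{H}}(x)$ and the out-neighborhood of a vertex of the independent set inside $N^+_{\vec{H}}(z)$. (Your ``no edge from $I_0$ to $I_1$'' observation, while true, ends up playing no role.)

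The gap is the final count, which you leave at ``should overshoot the degree budget,'' and the one point your sketch does not pin down is exactly the point on which the argument turns: $I$ itself must be placed into the disjoint union. Your inclusions give $N^+_{\vec{H}}(a) \cap N^-_{\vec{H}}(x) = \emptyset$ and $N^-_{\vec{H}}(b) \cap N^+_{\vec{H}}(z) = \emptyset$, so both neighborhoods avoid $I$, $I_0$, and $I_1$; independence of $I_0$ and $I_1$ rules out $N^+_{\vec{H}}(a) \cap I_0$ and $N^-_{\vec{H}}(b) \cap I_1$; and $N^+_{\vec{H}}(a) \cap N^-_{\vec{H}}(b) = \emptyset$ is precisely hypothesis~(iii), since a common vertex $v$ gives the path $a \to v \to b$ from $U_0$ to $U_1$. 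Hence $I \cup I_0 \cup I_1 \cup N^+_{\vec{H}}(a) \cup N^-_{\vec{H}}(b)$ is a pairwise disjoint union and
\[
n \;\geq\; |I| + |I_0| + |I_1| + \deg^+_{\vec{H}}(a) + \deg^-_{\vec{H}}(b)
\;\geq\; |I| + \big(\deg^+_{\vec{H}}(z) - |I| - 7\beta n\big) + \big(\deg^-_{\vec{H}}(x) - |I| - 7\beta n\big) + 2\delta_0\big(\vec{H}\big),
\]
which gives $|I| \geq 4\delta_0(\vec{H}) - 14\beta n - n \geq \left(\tfrac{1}{3} - 18\beta\right)n$, contradicting the assumed failure of~(\ref{eqn:11.11.2018.12:25p}). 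If $|I|$ is omitted from the left-hand sum, the same computation yields only $2|I| \geq \left(\tfrac{1}{3} - 18\beta\right)n$, i.e.\ $|I| \gtrsim n/6$, which does not contradict anything; so a generic ``double-count of edges into and out of $I_0 \cup I_1$'' is not guaranteed to land, and this accounting step needs to be made explicit.
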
  

\begin{proof}[Proof of Claim~\ref{clm:4.10.2019.12:02p}]    
Assume for contradiction that~(\ref{eqn:11.11.2018.12:25p}) fails to hold.   
We will apply
Fact~\ref{fact:Clm19.1}
to the pair $(N^-_{\vec{H}}(x), N^+_{\vec{H}}(z))$.   
Note that 
the hypotheses~(i)--(iii) are met
by $(N^-_{\vec{H}}(x), N^+_{\vec{H}}(z))$   
since 
$|N^-_{\vec{H}}(x)|, |N^+_{\vec{H}}(z))| \geq \delta_0(\vec{H})$, 
since 
$|N^-_{\vec{H}}(x) \cap N^+_{\vec{H}}(z))|
\leq ((1/3) - 21\beta) n$ 
on account that~(\ref{eqn:11.11.2018.12:25p}) failed, and since
there are no paths $u^+ \rightarrow v \rightarrow u^-$ with $u^+
\in N^+_{\vec{H}}(z)$ and $u^-\in N^-_{\vec{H}}(x)$ lest
$(x, z, u^+, v, u^-, 
x)$
is a directed 5-cycle $\vec{C}_5$.  
Fact~\ref{fact:Clm19.1}
guarantees disjoint independent sets $I_x \subseteq N^-_{\vec{H}}(x) \setminus
N^+_{\vec{H}}(z)$ and 
$I_z \subseteq N^+_{\vec{H}}(z) \setminus N^-_{\vec{H}}(x)$  
(disjoint also from $I$) 
with sizes 
\begin{multline}  
\label{eqn:11.11.2018.12:38p}  
|I_x| 
\geq 
\big|N^-_{\vec{H}}(x) \setminus N^+_{\vec{H}}(z)\big| - 7 \beta n = 
\deg_{\vec{H}}^-(x) - |I| - 7\beta n  \geq 20 \beta n\\
\text{and} \quad 
|I_z| 
\geq 
\big|N^+_{\vec{H}}(z) \setminus N^-_{\vec{H}}(x)\big| - 7 \beta n = 
\deg_{\vec{H}}^+(z) - |I| - 7\beta n \geq 20 \beta n.     
\end{multline}  
Fix $a_x \in I_x$ and $b_z \in I_z$.   
One may check that 
$$
N^-_{\vec{H}}(a_x) \cap N^+_{\vec{H}}(b_z)
=
N^-_{\vec{H}}(a_x) \cap I
=
N^+_{\vec{H}}(b_z) \cap I 
=
N^-_{\vec{H}}(a_x) \cap I_z
=
N^+_{\vec{H}}(b_z) \cap I_x = \emptyset.
$$ 
Thus, together with the independence 
of $I$, $I_x$, and $I_z$, we have that 
$I \cup I_x \cup I_y \cup N^-(a_x) \cup N^+(b_z) \subseteq V$ is a pairwise disjoint union, and so 
\begin{multline*}  
n \geq |I| + |I_x| + |I_z| + \deg^-_{\vec{H}}(a_x) + \deg^+_{\vec{H}}(b_z)  
\stackrel{(\ref{eqn:11.11.2018.12:38p})}{\geq}    
\deg^-_{\vec{H}}(x)
+ 
\deg^+_{\vec{H}}(z)
- |I|  
+ 
\deg^-_{\vec{H}}(a_x)
+ 
\deg^+_{\vec{H}}(b_z)
- 14 \beta n \\
\geq 
4 \delta_0\big(\vec{H}\big) 
- |I|  
- 14 \beta n
\stackrel{\text{\tiny{hyp}}}{\geq}  
4 \left(\tfrac{1}{3} - \beta\right)n - |I| - 14 \beta n 
= 
n - |I| + \left(\tfrac{1}{3} - 18 \beta\right) n,  
\end{multline*}  
from which $|I| \geq ((1/3) - 18 \beta)n$ follows,  
and contradicts our assumption that~(\ref{eqn:11.11.2018.12:25p}) failed to hold.    
\end{proof}

Continuing the proof of Proposition~\ref{prop:Lem19}, we 
attempt to meet 
Condition~(1) 
of Fact~\ref{prop:2.3} to $\vec{H}$ with $V_1 = I$
and with $V_2$ which we now define.  
For the remainder of the proof, 
fix $v \in I$
and take $V_2 = I_{v}$ to be a maximal independent set 
in 
$N^+_{\vec{H}}(v)$    
or 
$N^-_{\vec{H}}(v)$.    
In the former case, 
$0 = e_{\vec{H}}(V_1) = e_{\vec{H}}(V_2) = e_{\vec{H}}(V_2, V_1)$ since each of
$V_1 = I$
and $V_2 = I_{v}$ is independent and since 
\begin{equation}
\label{eqn:11.11.2018.4:52p}  
\text{$a \in N^+_{\vec{H}}(v)$ forbids $b \in N^+_{\vec{H}}(a) \cap I$}, 
\end{equation}  
lest 
$(x, z, v, a, b, x)$
is a directed 5-cycle $\vec{C}_5$ in $\vec{H}$.      
In the latter case, 
$0 = e_{\vec{H}}(V_1) = e_{\vec{H}}(V_2) = e_{\vec{H}}(V_1, V_2)$, 
where the last equality holds by $a \in N^-_{\vec{G}}(v)$ forbidding
$b \in N^-_{\vec{H}}(a) \cap I$ lest 
$(x, z, b, a, v, x)$
is a directed 5-cycle $\vec{C}_5$.  
In either case, 
we make the following claim.  

\begin{claim}
\label{clm:4.10.2019.12:27p}  
\begin{equation}
\label{eqn:11.11.2018.1:46p}  
|V_2| = \big|I_{v}\big| \geq \left(\tfrac{1}{3} - 24 \beta\right) n.   
\end{equation}  
\end{claim}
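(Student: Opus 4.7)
The plan is to split into two cases according to whether the out-neighborhood $N^+_{\vec{H}}(v)$ is independent or not, and in each case exhibit a large independent subset of one of $N^+_{\vec{H}}(v), N^-_{\vec{H}}(v)$ that extends to a maximal independent set of the required size.

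If $N^+_{\vec{H}}(v)$ is itself independent, I set $I_v := N^+_{\vec{H}}(v)$, which is trivially a maximal independent set in $N^+_{\vec{H}}(v)$. Then $|I_v| \geq \delta_0(\vec{H}) \geq (\tfrac{1}{3}-\beta)m \geq (\tfrac{1}{3}-24\beta)m$, as required, and we are done with the ``former case''.

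Otherwise, I fix $a,b \in N^+_{\vec{H}}(v)$ with $(a,b) \in \vec{E}$. Then $(v,a,b)$ is a transitive triangle, since $v\to a$, $v\to b$, and $a\to b$. The key observation is that the set $I' := N^-_{\vec{H}}(v) \cap N^+_{\vec{H}}(b)$ plays for $(v,a,b)$ the same role that $I$ played for $(x,y,z)$: any edge $p\to q$ inside $I'$ would produce the directed 5-cycle $(v,a,b,p,q,v)$, contradicting the hypothesis of Proposition~\ref{prop:Lem19} for $\ell=5$, so $I'$ is independent. I then claim $|I'| \geq (\tfrac{1}{3}-21\beta)m$, proved by repeating the argument of Claim~\ref{clm:4.10.2019.12:02p} almost verbatim with $(v,a,b)$ in place of $(x,y,z)$. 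Assuming $|I'|<(\tfrac{1}{3}-21\beta)m$ for contradiction, Fact~\ref{fact:Clm19.1} applies to the pair $(N^-_{\vec{H}}(v), N^+_{\vec{H}}(b))$: hypotheses (i)--(ii) are immediate, and (iii) holds because any path $u^+\to w\to u^-$ with $u^+\in N^+_{\vec{H}}(b)$ and $u^-\in N^-_{\vec{H}}(v)$ would close the directed 5-cycle $(v,b,u^+,w,u^-,v)$. The fact produces disjoint independent subsets of $N^-_{\vec{H}}(v)\setminus N^+_{\vec{H}}(b)$ and $N^+_{\vec{H}}(b)\setminus N^-_{\vec{H}}(v)$ of size $\geq 20\beta m$ each; selecting representatives from each and repeating the pairwise-disjointness computation of Claim~\ref{clm:4.10.2019.12:02p} leads to a vertex count forced to exceed $m$, the required contradiction.

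Finally, I extend $I'$ to a maximal independent set $I_v \subseteq N^-_{\vec{H}}(v)$; since extension does not decrease size, $|I_v| \geq |I'| \geq (\tfrac{1}{3}-21\beta)m \geq (\tfrac{1}{3}-24\beta)m$, as desired. The principal obstacle is the disjointness bookkeeping for the analog of Claim~\ref{clm:4.10.2019.12:02p}: each of the emptiness assertions (namely that the chosen in- and out-neighborhoods meet neither $I'$ nor the two new independent sets) must be re-derived via a 5-cycle that incorporates the edges $v\to a$, $a\to b$, and $v\to b$ of the new transitive triangle rather than those of $(x,y,z)$. This is routine 5-cycle bookkeeping and introduces no new ideas beyond those already developed in Claim~\ref{clm:4.10.2019.12:02p}.
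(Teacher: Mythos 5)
Your proof is correct, and it takes a genuinely different route from the paper's. The paper argues by contradiction, assuming both $\alpha(\vec{H}[N^+_{\vec{H}}(v)])$ and $\alpha(\vec{H}[N^-_{\vec{H}}(v)])$ are small: it extracts an edge $(a,b)$ inside $N^+_{\vec{H}}(v)$, first shows $N^+_{\vec{H}}(b)\cap N^+_{\vec{H}}(v)\neq\emptyset$ (else that edge already closes a $\vec{C}_5$ through an edge of $N^+_{\vec{H}}(b)\cap N^-_{\vec{H}}(v)$), then builds the set $C$ of endpoints of two-edge paths inside $N^+_{\vec{H}}(v)$ and invokes Fact~\ref{fact:Cor18} to find $c_0\in C$ with few out-neighbours in $N^+_{\vec{H}}(v)$, hence many in $N^-_{\vec{H}}(v)$, closing the cycle $(v,a,b,c_0,d,v)$. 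You instead observe that an edge $(a,b)$ inside $N^+_{\vec{H}}(v)$ creates a fresh transitive triangle with apex $v$, and simply re-run the Claim~\ref{clm:4.10.2019.12:02p} machinery on $(v,a,b)$ to exhibit $N^-_{\vec{H}}(v)\cap N^+_{\vec{H}}(b)$ as a large independent subset of $N^-_{\vec{H}}(v)$. I checked the transfer: independence of $I'$ via $(v,a,b,p,q,v)$, hypothesis~(iii) of Fact~\ref{fact:Clm19.1} via $(v,b,u^+,w,u^-,v)$, and each emptiness assertion in the disjointness count via a $\vec{C}_5$ through $v\to a\to b$ or $v\to b$ all go through verbatim, and the final degree count is unchanged. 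Your route is shorter, avoids Fact~\ref{fact:Cor18} and the set $C$ entirely, does not use $v\in I$ or the bound on $|I|$ (both of which the paper's proof of this claim relies on), and yields the slightly stronger constant $\tfrac13-21\beta$ in place of $\tfrac13-24\beta$; the paper's version is self-contained in that it does not recycle the earlier claim's argument, but buys little else.
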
  
\noindent If 
Claim~\ref{clm:4.10.2019.12:27p} holds,  
then together
with~(\ref{eqn:11.11.2018.12:25p}) and the considerations above, 
the partition $V = V_0 \cup V_1 \cup V_2$, where $V_0 = V \setminus (V_1 \cup V_2)$,  
meets the hypotheses 
of Fact~\ref{prop:2.3}.   
By 
our choice of 
$\eps_{\text{\tiny Fct.\ref{prop:2.3}}}$  
and $\beta$ in~(\ref{eqn:11.11.2018.12:03p}) and~(\ref{eqn:11.11.2018.12:06p}), 
Fact~\ref{prop:2.3} 
guarantees that $V = V_0 \cup V_1 \cup V_2$ is a $\lambda_0$-extremal partition of $\vec{H}$.
Thus, the proof of Proposition~\ref{prop:Lem19} when $\ell = 5$ will be complete upon 
proving Claim~\ref{clm:4.10.2019.12:27p}.

\begin{proof}[Proof of 
Claim~\ref{clm:4.10.2019.12:27p}]      
Assume for contradiction that the $\vec{H}$-subgraphs 
$\vec{H}[N^+_{\vec{H}}(v)]$ 
and 
$\vec{H}[N^-_{\vec{H}}(v)]$ 
induced 
respectively on 
$N^+_{\vec{H}}(v)$ 
and 
$N^-_{\vec{H}}(v)$ 
satisfy 
\begin{equation}
\label{eqn:11.11.2018.2:03p}  
\alpha \big(
\vec{H}\big[N^+_{\vec{H}}(v)\big]\big) 
< 
\left(\tfrac{1}{3} - 24 \beta \right)n      
\qquad \text{and} \qquad 
\alpha \big(
\vec{H}\big[N^-_{\vec{H}}(v)\big]\big) 
< 
\left(\tfrac{1}{3} - 24 \beta \right)n,    
\end{equation}  
where $\alpha (\cdot)$ denotes the independence number.  
Since 
$$
\big|N^+_{\vec{H}}(v)\big| 
= \deg^+_{\vec{H}}(v)  \geq \delta_0\big(\vec{H}\big) \geq  
\left(\tfrac{1}{3} - \beta\right)n      
> 
\left(\tfrac{1}{3} - 24 \beta \right)n      
> 
\alpha \big(
\vec{H}\big[N^+_{\vec{H}}(v)\big]\big), 
$$
$\vec{H}[N^+_{\vec{H}}(v)]$ admits 
edges $(a, b) \in \vec{E}$.   We fix one such and will observe
that  
\begin{equation}
\label{eqn:4.10.2019.1:10p}  
N^+_{\vec{H}}(b) \cap N^+_{\vec{H}}(v) \neq \emptyset.  
\end{equation}  
Indeed, assuming otherwise 
the set 
$N^+_{\vec{H}}(b) \cap N^-_{\vec{H}}(v)$ satisfies 
\begin{equation}  
\label{eqn:4.10.2019.1:12p}  
\big|N^+_{\vec{H}}(b) \cap N^-_{\vec{H}}(v)\big|
= 
\big|N^+_{\vec{H}}(b)\big| +  \big|N^-_{\vec{H}}(v)\big|
- 
\big|N^+_{\vec{H}}(b) \cup N^-_{\vec{H}}(v)\big|.     
\end{equation}  
From 
$\emptyset = N^+_{\vec{H}}(b) \cap N^+_{\vec{H}}(v) = 
N^+_{\vec{H}}(b) \cap I$  
(cf.~(\ref{eqn:11.11.2018.4:52p}) 
and~(\ref{eqn:4.10.2019.1:10p}))   
we infer
$N^+_{\vec{H}}(b) \subseteq 
V \setminus (N^+_{\vec{H}}(v) \cup I)$, and from 
$\emptyset = 
N^-_{\vec{H}}(v) \cap 
N^+_{\vec{H}}(v)  = 
N^-_{\vec{H}}(v) \cap 
I$ 
(recall that $I$ is independent)  
we infer 
$N^+_{\vec{H}}(b) \cup N^-_{\vec{H}}(v) \subseteq 
V \setminus (N^+_{\vec{H}}(v) \cup I)$, where   
$N^+_{\vec{H}}(v) \cup I$ is a disjoint union by the independence of $I$.   
Thus, 
\begin{multline*}  
\big|N^+_{\vec{H}}(b) \cap N^-_{\vec{H}}(v)\big|
\stackrel{(\ref{eqn:4.10.2019.1:12p})}{\geq}  
\deg^+_{\vec{H}}(b) +  \deg^-_{\vec{H}}(v)
- 
\big(n - \deg^+_{\vec{H}}(v) - |I|\big) \\
\geq
3 \delta_0\big(\vec{H}\big) - n + |I|  
\geq 
3 \left(\tfrac{1}{3} - \beta\right)n - n + |I| 
\stackrel{(\ref{eqn:11.11.2018.12:25p})}{\geq}
\left(\tfrac{1}{3} - 24 \beta\right)n  
\stackrel{(\ref{eqn:11.11.2018.2:03p})}{>}    
\alpha \big(
\vec{H}\big[N^-_{\vec{H}}(v)\big]\big).   
\end{multline*}  
Consequently, there exists an edge $(c, d) \in \vec{E}$ with $c, d \in 
N^+_{\vec{H}}(b) \cap N^-_{\vec{H}}(v)$, in which case 
$(v, a, b, c, d, v)$
is a directed 5-cycle $\vec{C}_5$ in $\vec{H}$.  
This proves~(\ref{eqn:4.10.2019.1:10p}).

Now, 
define 
\begin{equation}
\label{eqn:11.11.2018.4:59p}  
C = \left\{c \in N^+_{\vec{H}}(v): \text{ 
  $\exists$ a directed path 
on $3$ vertices contained in $\vec{H}[N^+_{\vec{H}}(v)]$ that ends in $c$}
\right\}.   
\end{equation}  
Note that \eqref{eqn:4.10.2019.1:10p} implies that $C$ is non-empty.
By this definition, 
every element $c \in C$ satisfies 
\begin{equation}
\label{eqn:11.11.2018.4:36p}  
N^+_{\vec{H}}(c) \cap N^+_{\vec{H}}(v)
= N^+_{\vec{H}}(c) \cap C.  
\end{equation}  
Since the 
$\vec{H}$-subgraph 
$\vec{H}[C]$ induced on $C$ admits no directed 5-cycles
$\vec{C}_5$,    
Fact~\ref{fact:Cor18}  
guarantees the existence of a vertex $c_0 \in C \subseteq N^+_{\vec{H}}(v)$ 
(cf.~(\ref{eqn:11.11.2018.4:59p}))
so that 
\begin{equation}
\label{eqn:11.11.2018.4:28p}  
\big|N^+_{\vec{H}}(c_0) \cap N^+_{\vec{H}}(v)\big|  
\stackrel{(\ref{eqn:11.11.2018.4:36p})}{=}    
\big|N^+_{\vec{H}}(c_0) \cap C \big|  
\leq 
\left\{
\begin{array}{cc}
\left(\tfrac{1}{3} + \beta\right) |C| & \text{ if $|C| = \Omega(1)$}  \\
|C|  & \text{ else}  
\end{array}
\right\}    
\leq  
\left(\tfrac{1}{3} + \beta\right) \big|N^+_{\vec{H}}(v)\big|.   
\end{equation}
Consider 
now 
$N^+_{\vec{H}}(c_0) \cap N^-_{\vec{H}}(v)
= 
(N^+_{\vec{H}}(c_0) \setminus C) \cap N^-_{\vec{H}}(v)$, where   
$C \subseteq N^+_{\vec{H}}(v)$
from~(\ref{eqn:11.11.2018.4:59p}) but where $N^+_{\vec{H}}(v) \cap N^-_{\vec{H}}(v_i)
= \emptyset$.       
Then 
\begin{multline*}  
\big|N^+_{\vec{H}}(c_0) \cap N^-_{\vec{H}}(v)\big|
= 
\big|\big(N^+_{\vec{H}}(c_0) \setminus C\big) \cap N^-_{\vec{H}}(v)\big|
= 
\big|\big(N^+_{\vec{H}}(c_0) \setminus C\big)\big|
+ 
\big|N^-_{\vec{H}}(v)\big|
- 
\big|\big(N^+_{\vec{H}}(c_0) \setminus C\big) \cup N^-_{\vec{H}}(v)\big| \\
\stackrel{(\ref{eqn:11.11.2018.4:28p})}{\geq}    
\deg^+_{\vec{H}}(c_0) - 
\left(\tfrac{1}{3} + \beta\right) \deg^+_{\vec{H}}(v) 
+ \deg_{\vec{H}}^-(v) 
- 
\big|\big(N^+_{\vec{H}}(c_0) \setminus C\big) \cup N^-_{\vec{H}}(v)\big| \\  
\stackrel{(\ref{eqn:11.11.2018.4:36p})}{=}
\deg^+_{\vec{H}}(c_0) - 
\left(\tfrac{1}{3} + \beta\right) \deg^+_{\vec{H}}(v)  
+ \deg_{\vec{H}}^-(v) 
- 
\big|\big(N^+_{\vec{H}}(c_0) \setminus N^+_{\vec{H}}(v) \big) 
\cup N^-_{\vec{H}}(v)\big|.  
\end{multline*}  
Using~(\ref{eqn:11.11.2018.4:52p}) and the independence of $I$,    
the last union resides in $V \setminus
(N^+_{\vec{H}}(v) \cup I)$, and so   
\begin{multline}  
\label{eqn:11.11.2018.5:35p}  
\big|N^+_{\vec{H}}(c_0) \cap N^-_{\vec{H}}(v)\big|
\geq 
\deg^+_{\vec{H}}(c_0) - 
\left(\tfrac{1}{3} + \beta\right) \deg^+_{\vec{H}}(v) 
+ \deg_{\vec{H}}^-(v) 
- 
\big(n - \deg^+_{\vec{H}}(v) - |I|\big)  \\
= 
\deg_{\vec{H}}^+(c_0) + \deg^-_{\vec{H}}(v) 
+ 
\left(\tfrac{2}{3} - \beta\right) \deg^+_{\vec{H}}(v)  
+ |I| - n 
\geq 
\left(\tfrac{8}{3} - \beta\right) \delta_0\big(\vec{H}\big) + |I| - n   
\qquad 
\qquad 
\\
\geq 
\left(\tfrac{8}{3} - \beta\right) 
\left(\tfrac{1}{3} - \beta\right)n   
+ |I| - n 
\stackrel{(\ref{eqn:11.11.2018.12:25p})}{\geq}
\left(\tfrac{8}{3} - \beta\right) 
\left(\tfrac{1}{3} - \beta\right)n   
+ 
\left(\tfrac{1}{3} - 21\beta\right)n 
- n 
\geq 
\left(\tfrac{2}{9} 
- 24 \beta
\right)n,   
\end{multline}  
which is positive 
by~(\ref{eqn:11.11.2018.12:06p}).    
Now,~(\ref{eqn:11.11.2018.4:59p})   
and~(\ref{eqn:11.11.2018.5:35p}) render a directed path 
$a \rightarrow b \rightarrow c_0 \rightarrow d$ where 
$a, b, c_0 \in N^+_{\vec{H}}(v)$ and 
$d \in N^+_{\vec{H}}(c_0) \cap N^-_{\vec{H}}(v)$, in which case 
$(v, a, b, c_0, d, v)$ 
is a directed 5-cycle in $\vec{H}$.  
Thus, our assumption 
in~(\ref{eqn:11.11.2018.2:03p}) is incorrect,  
which completes the proof of Claim~\ref{clm:4.10.2019.12:27p}.        
\end{proof}

\section{Proofs of Facts~\ref{fact:Clm19.1}--\ref{prop:2.3}}    
\label{sec:prooffacts}  
The easiest proof here is that of Fact~\ref{fact:Cor18}, which we give 
immediately.  
Fix an integer $\ell \geq 4$ and fix $\eps > 0$.  Let $\vec{G} = (V, \vec{E})$
be an oriented graph on $n \geq n_0(\ell, \eps)$ many vertices which admits
no closed directed $\ell$-walk.  
The latter conclusion of Fact~\ref{fact:Cor18} 
follows from the former by reversing the orientations on $\vec{E}$.  
If the former fails, 
then Proposition~\ref{prop:Lem17} ensures a large $m$-vertex 
subgraph $\vec{H} \subseteq \vec{G}$ satisfying 
$$
\delta_0\big(\vec{H}\big)
\geq 
\big(\tfrac{\delta^+(\vec{G})}{n} - \tfrac{\eps}{2}\big)m \geq 
\big(\tfrac{1}{3} + \tfrac{\eps}{2} \big)m \geq \tfrac{m+1}{3},    
$$
and so Theorem~\ref{thm:KKO} guarantees a directed $\ell$-cycle $\vec{C}_{\ell}$
in $\vec{H}$, and hence in $\vec{G}$.

\subsection*{Proof of Fact~\ref{fact:Clm19.1}}  
Fix an integer $\ell \geq 4$ and an $\eps \in (0, 1/(11)]$.  Let $\vec{G} = (V, \vec{E})$
be an oriented graph on $n \geq n_0(\ell, \eps)$ many vertices which admits 
no closed directed $\ell$-walk, but which satisfies $\delta_0(\vec{G}) \geq ((1/3) - \eps)n$.
Let $(U_0, U_1)$ be a pair of subsets satisfying~(i)--(iii) in 
the hypotheses of Fact~\ref{fact:Clm19.1}.  We prove that there exist
independent sets $I_0 \subseteq U_0 \setminus U_1$ and $I_1 \subseteq U_1 \setminus U_0$
satisfying~(\ref{eqn:Clm19.1}).    
To that end, define 
$$
T_0 = \big\{u_0 \in U_0 \setminus U_1 : \, 
N^-_{\vec{G}}(u_0) \cap (U_0 \setminus U_1) \neq \emptyset \big\} 
\quad \text{and} \quad    
T_1 = \big\{u_1 \in U_1 \setminus U_0 : \, 
N^+_{\vec{G}}(u_1) \cap (U_1 \setminus U_0) \neq \emptyset \big\}.   
$$
For fixed $j \in \mathbb{Z}_2$, the set $I_j = U_j \setminus (U_{j+1} \cup T_j)$ 
is independent and of size 
$|I_j| = |U_j\setminus U_{j+1}| - |T_j|$, so 
to prove~(\ref{eqn:Clm19.1}) we will prove 
$|T_j| \leq 7 \eps n$.  
In particular, our argument will show that $|T_0| \geq \eps n$ and $|T_1| \geq \eps n$ can't both hold, and that $|T_j| < \eps n$ implies 
$|T_{j+1}| \leq 7 \eps n$.   
It remains to verify these details.    

Write $U = U_0 \cup U_1$, and define 
\begin{equation}
\label{eqn:Clm19.1.9}
S_0 = N^+(T_0) \setminus U \qquad \text{and} \qquad S_1 = N^-(T_1) \setminus U, 
\qquad \text{where} \qquad S_0 \cap S_1 \stackrel{\text{{\tiny (iii)}}}{=} \emptyset.    
\end{equation}  
For $j \in \mathbb{Z}_2$, 
we will verify the implications  
\begin{equation}
\label{eqn:11.3.2018.1:43p}
|T_j| \geq \eps n \qquad \implies \qquad |S_j| \geq \delta_0\big(\vec{G}\big) - \left(\tfrac{1}{3} + \eps\right) |T_j|  
\qquad \implies \qquad |T_{j+1}| < \eps n.  
\end{equation}  
Indeed, $\vec{G}[T_j]$ is a large oriented graph with no closed directed $\ell$-walks, so 
Fact~\ref{fact:Cor18} guarantees $t_j \in T_j$:  
\begin{equation}
\label{eqn:Clm19.1.10}
|N^+_{\vec{G}}(t_0) \cap T_0| \leq \left(\tfrac{1}{3} + \eps\right)|T_0| \qquad \text{and} \qquad 
|N^-_{\vec{G}}(t_1) \cap T_1| \leq \left(\tfrac{1}{3} + \eps\right)|T_1|.
\end{equation}  
By definition, there exist $u_0 \in U_0 \setminus U_1$ and 
$u_1 \in U_1 \setminus U_0$ with $(u_0, t_0), (t_1, u_1) \in \vec{E}$, where 
$N^+(t_0) \cap T_0 = N^+(t_0) \cap (U_0 \setminus U_1)$
and $N^-(t_1) \cap T_1 = N^-(t_1) \cap (U_1 \setminus U_0)$ hold.  
Moreover, 
$u \in N^+_{\vec{G}}(t_0) \cap U_1$ is impossible lest $(u_0, t_0, u)$ violates~(iii), 
and $N^-_{\vec{G}}(t_1) \cap U_0 \neq \emptyset$ is similarly 
impossible.
Altogether, 
\begin{multline*}  
\deg^+_{\vec{G}}(t_0) = 
\big|N^+_{\vec{G}}(t_0) \cap U\big| + \big|N^+_{\vec{G}}(t_0) \setminus U\big|
= 
\big|N^+_{\vec{G}}(t_0) \cap (U_0\setminus U_1) \big| + 
\big|N^+_{\vec{G}}(t_0) \setminus U\big|  \\
= \big|N^+_{\vec{G}}(t_0) \cap T_0 \big| + 
\big|N^+_{\vec{G}}(t_0) \setminus U\big|  
\stackrel{(\ref{eqn:Clm19.1.10})}{\leq}  
\left(\tfrac{1}{3} + \eps\right)|T_0| 
+ 
\big|N^+_{\vec{G}}(t_0) \setminus U\big|
\stackrel{(\ref{eqn:Clm19.1.9})}{\leq} 
\left(\tfrac{1}{3} + \eps\right)|T_0| 
+ |S_0|, 
\end{multline*}  
and so the former implication of~(\ref{eqn:11.3.2018.1:43p}) holds with $j = 0$.  
Similarly, 
$$
\deg^-_{\vec{G}}(t_1) 
= 
\big|N^-_{\vec{G}}(t_1) \cap U\big| + \big|N^-_{\vec{G}}(t_1) \setminus U\big|
\leq 
\big|N^-_{\vec{G}}(t_1) \cap T_1\big| + |S_1|
\leq 
\left(\tfrac{1}{3} + \eps\right)|T_1| + |S_1|, 
$$
and so the former implication of~(\ref{eqn:11.3.2018.1:43p}) holds with $j = 1$.  
Finally, if both $|T_0|, |T_1| \geq \eps n$, then 
\begin{multline*}  
n 
\stackrel{(\ref{eqn:Clm19.1.9})}{\geq}  
|U| + |S_0| + |S_1|  
\stackrel{(\ref{eqn:11.3.2018.1:43p})}{\geq}
|U| + 2 \delta_0\big(\vec{G}\big) - \left(\tfrac{1}{3} + \eps\right) (|T_0| + |T_1|)  \\
\geq 
|U|  + 2 \delta_0\big(\vec{G}\big) - \left(\tfrac{1}{3} + \eps\right) |U_0 \triangle  U_1|   
= 
\left(\tfrac{2}{3} - \eps\right) (|U_0| + |U_1|)
+ 2 \delta_0\big(\vec{G}\big)  
- 
\left(\tfrac{1}{3} - 2\eps\right) |U_0 \cap U_1|   \\
\stackrel{\text{{\tiny (i)}}}{\geq} 
2 \delta_0\big(\vec{G}\big) \left(\tfrac{5}{3} - \eps\right)  - \left(\tfrac{1}{3} - 2\eps\right) |U_0 \cap U_1|
\stackrel{\text{{\tiny (ii)}}}{\geq} 
2\delta_0\big(\vec{G}\big) \left(\tfrac{5}{3} - \eps\right) - 
\left(\tfrac{1}{3} - 2\eps\right)
\left(\tfrac{1}{3} - 21\eps\right)n  \\
\geq 
2 \delta_0\big(\vec{G}\big) \left(\tfrac{5}{3} - \eps\right)  - 
\left(\tfrac{1}{9}
- \tfrac{23}{3} \eps + 42 \eps^2\right)n  
\geq 
2 \left(\tfrac{1}{3} - \eps\right) \left(\tfrac{5}{3}-\eps\right)n - 
\left(\tfrac{1}{9}
- \tfrac{23}{3} \eps + 42\eps^2 \right)n, 
\end{multline*}  
from which $\eps \geq 11 / (120)$ follows and contradicts
the hypothesis $\eps \leq 1/(11)$.  
This proves~(\ref{eqn:11.3.2018.1:43p}).  

By~(\ref{eqn:11.3.2018.1:43p}), it suffices to assume for fixed $j \in \mathbb{Z}_2$
that $|T_j| \geq \eps n$,     
and then to prove that $|T_j| \leq 7\eps n$.  
To that end, 
we find a vertex $z_{j+1} \in U_{j+1} \setminus T_{j+1}$ where, 
\begin{enumerate}
\item[$(a)$]  
when $j = 0$, the vertex $z_1 \in U_1 \setminus T_1$ has no 
in-neighbors from $U_1 \setminus T_1$; 
\item[$(b)$]  
when $j = 1$, the vertex $z_0 \in U_0 \setminus T_0$ has no 
out-neighbors in $U_0 \setminus T_0$.
\end{enumerate}   
We start 
by fixing $v_{j+1} \in I_{j+1}
= U_{j+1} \setminus (U_j \cup T_{j+1})$, 
which is possible by 
\begin{multline*}  
|I_{j+1}| = \big|U_{j+1} \setminus (U_j \cup T_{j+1})\big| = |U_{j+1}| - |U_0 \cap U_1| - |T_{j+1}| 
\stackrel{\text{{\tiny (i)}}}{\geq} 
\delta_0\big(\vec{G}\big) - |U_0 \cap U_1| - |T_{j+1}|   \\
\stackrel{\text{{\tiny (ii)}}}{\geq} 
\delta_0\big(\vec{G}\big) - \left(\tfrac{1}{3} - 21\eps\right) n - |T_{j+1}|  
\stackrel{(\ref{eqn:11.3.2018.1:43p})}{\geq}
\delta_0\big(\vec{G}\big) - \left(\tfrac{1}{3} - 21\eps\right) n - \eps n 
\geq 
19 \eps n.        
\end{multline*}  
Consider~$(a)$ above ($j = 0$).  
If $v_1$ has an in-neighbor $w_1 \in U_1 \setminus T_1$, then $w_1 \in U_0 \cap U_1$
because $I_1$ is independent.  If $w_1$ has an in-neighbor $x_1 \in U_1 \setminus T_1$, 
then $x_1 \in I_1$ lest we violate~(iii).  
If $x_1$ has an in-neighbor $y_1 \in U_1 \setminus T_1$, then $y_1 \in U_0 \cap U_1$ because 
$I_1$ is independent, but now we have violated~(iii).  Thus, some $z_1 \in \{v_1, w_1, x_1, y_1\}$
has no in-neighbor within $U_1 \setminus T_1$.  
Purely symmetric arguments establish~$(b)$.  

We use~$(a)$ and~$(b)$ above to conclude the proof  
of Fact~\ref{fact:Clm19.1}, where we first consider
$j = 0$.  The sets $U_1\setminus T_1$, $S_0$, and $T_0$
are pairwise disjoint by construction, and the set $Z_1 = N^-_{\vec{G}}(z_1)$
is disjoint from $U_1 \setminus T_1$ by~$(a)$ and is disjoint from each of $S_0$ and $T_0$
by~(iii).  
When $j = 1$, 
the sets $Z_0 = N^+(z_0)$, $U_0 \setminus T_0$, $S_1$, and $T_1$
are similarly pairwise disjoint.  
Thus, 
for whichever  
$j \in \mathbb{Z}_2$ satisfies $|T_j| \geq \eps n$, we have 
\begin{multline*}  
n \geq |Z_{j+1}| + |U_{j+1} \setminus T_{j+1}| + |S_j| + |T_j| 
\geq 
\delta_0\big(\vec{G}\big) + |U_{j+1}| - |T_{j+1}| + |S_j| + |T_j| \\
\stackrel{(\ref{eqn:11.3.2018.1:43p})}{>}  
2 \delta_0\big(\vec{G}\big) + |U_{j+1}| - \eps n  
+ \left(\tfrac{2}{3} - \eps\right) |T_j| 
\stackrel{\text{{\tiny (i)}}}{\geq}  
3 \delta_0\big(\vec{G}\big) - \eps n  
+ \left(\tfrac{2}{3} - \eps\right) |T_j| 
\geq 
n - 4\eps n 
+ \left(\tfrac{2}{3} - \eps\right) |T_j|,  
\end{multline*}  
from which $|T_j| \leq (132/(19)) \eps n < 7 \eps n$ follows 
from $\eps \in (0, 1/(11)]$.

\subsection*{Proof of Fact~\ref{fact:Clm19.2}}  
Fix an integer $\ell \geq 4$ and an $\eps \in (0, 1/(54))$.  Let $\vec{G} = (V, \vec{E})$
be an oriented graph on $n \geq n_0(\ell, \eps)$ many vertices which 
admits no closed directed 
$\ell$-walk, but which satisfies $\delta_0(\vec{G}) \geq ((1/3) - \eps) n$.
Let $(x, y, z, x)$ be a directed 3-cycle $\vec{C}_3$ in $\vec{G}$, 
and assume that neither $x$ nor $y$ belongs to a directed
4-cycle $\vec{C}_4$.  Assume, on the contrary, that 
\begin{equation}
\label{eqn:11.6.2018.5:45p}  
\big|N^{-}_{\vec{G}}(x) \cap N^+_{\vec{G}}(y) \big| < 
\left(\tfrac{1}{3} - 18 \eps\right) n.  
\end{equation}  
We will show that 
our assumption in~(\ref{eqn:11.6.2018.5:45p}) implies 
\begin{equation}
\label{eqn:11.9.2018.7:50p}  
N_{\vec{G}}^+(x) \cap N^-_{\vec{G}}(y) \neq \emptyset, 
\end{equation}  
in which case 
an element 
$v \in N_{\vec{G}}^+(x) \cap N^-_{\vec{G}}(y)$ 
would 
result in the directed
4-cycle 
$(x, v, y, z, x)$
containing both 
$x$ and $y$.    
We now establish the details for~(\ref{eqn:11.9.2018.7:50p}).    

First, 
we apply Fact~\ref{fact:Clm19.1} to each of the pairs $(X_0, X_1)$ and $(Y_0, Y_1)$, where 
$$
X_0 = N^+_{\vec{G}}(x), \qquad 
X_1 = N^-_{\vec{G}}(x), \qquad 
Y_0 = N^+_{\vec{G}}(y), \qquad 
Y_1 = N^-_{\vec{G}}(y).      
$$
Note that the hypotheses of 
Fact~\ref{fact:Clm19.1} 
are met 
since $\vec{G}$ 
admits no closed directed $\ell$-walks
but satisfies $\delta_0(\vec{G}) \geq ((1/3) - \eps)n$ for $0< \eps < 1/(54) < 1/(11)$, 
and where    
e.g.~$(X_0, X_1)$ satisfies the hypotheses~(i)--(iii) of Fact~\ref{fact:Clm19.1}  
since $|X_0|, |X_1| \geq \delta_0(\vec{G})$,  
since $|X_0 \cap X_1| = 0$, 
and since a directed path 
$x_0 \rightarrow v \rightarrow x_1$
with $x_0 \in X_0$ and $x_1 \in X_1$ would give a directed 4-cycle
$\vec{C}_4$ containing $x$.  
Fact~\ref{fact:Clm19.1} guarantees
independent sets $I_x \subseteq X_1 \setminus X_0 = X_1 = N^-_{\vec{G}}(x)$ 
and $I_y \subseteq Y_0 \setminus Y_1 = Y_0 = N^+_{\vec{G}}(y)$ of respective
sizes 
$|I_x| \geq |N^-_{\vec{G}}(x)| - 7\eps n$
and $|I_y| \geq |N^+_{\vec{G}}(y)| - 7 \eps n$.  
Note that $|I_x \cap I_y|$ 
is bounded by~(\ref{eqn:11.6.2018.5:45p}), we but claim that $I_x \cap I_y = 
\emptyset$.
Indeed, 
a vertex $v \in I_x \cap I_y$ must have its neighborhood $N_{\vec{G}}(v) = N^-_{\vec{G}}(v)
\cup N^+_{\vec{G}}(v)$ outside of $I_x \cup I_y$, 
and so 
\begin{multline*}  
\big|N^{-}_{\vec{G}}(x) \cap N^+_{\vec{G}}(y) \big|  
\geq |I_x \cap I_y|  
= |I_x| + |I_y| - |I_x \cup I_y| \geq 
|I_x| + |I_y| + 
\big|N_{\vec{G}}^+(v)\big| 
+ \big|N_{\vec{G}}^-(v)\big| 
- n   \\
\stackrel{\text{{\tiny Fct.\ref{fact:Clm19.1}}}}{\geq} 
\big|N_{\vec{G}}^-(x)\big| 
- 7 \eps n 
+ 
\big|N_{\vec{G}}^+(y)\big| 
- 7 \eps n 
+ 
\big|N_{\vec{G}}^+(v)\big| 
+ \big|N_{\vec{G}}^-(v)\big| 
- n  \\ 
\geq 4 \delta_0\big(\vec{G}\big) - 14 \eps n - n  
\geq 
4 \left(\tfrac{1}{3} - \eps\right)n - 14\eps - n  
= 
\left(\tfrac{1}{3} - 18 \eps\right) n 
\end{multline*}  
contradicts~(\ref{eqn:11.6.2018.5:45p}).

Second,
we claim that every $a_x \in I_x$ and $b_y \in I_y$ satisfy 
\begin{equation}
\label{eqn:11.9.2018.7:07p}  
N_{\vec{G}}^-(a_x) \cap N^+_{\vec{G}}(b_y)  \neq \emptyset.   
\end{equation}  
Indeed, $(b_y, a_x) \not\in \vec{E}$ lest 
$(x, y, b_y, 
a_x, x)$ is a directed 4-cycle containing both $x$ and $y$.  
Thus, 
$V \setminus (I_x \cup I_y)$ contains each of 
$N^-_{\vec{G}}(a_x)$ and $N^+_{\vec{G}}(b_y)$, and hence their union.
As such, 
\begin{multline}  
\label{eqn:11.9.8:10p}  
\big|N^-_{\vec{G}}(a_x) \cap N^+_{\vec{G}}(b_y)\big|
= 
\big|N^-_{\vec{G}}(a_x)\big| + \big|N^+_{\vec{G}}(b_y)\big|
- 
\big|N^-_{\vec{G}}(a_x) \cup N^+_{\vec{G}}(b_y)\big|
\geq 
\big|N^-_{\vec{G}}(a_x)\big| + \big|N^+_{\vec{G}}(b_y)\big|
+ |I_x \cup I_y| - n   \\
= 
\big|N^-_{\vec{G}}(a_x)\big| + \big|N^+_{\vec{G}}(b_y)\big|
+ |I_x| + |I_y| - n   
\stackrel{\text{\tiny 
Fct.\ref{fact:Clm19.1}}}{\geq}  
\big|N^-_{\vec{G}}(a_x)\big| + \big|N^+_{\vec{G}}(b_y)\big|
+ 
\big|N^-_{\vec{G}}(x)\big| + \big|N^+_{\vec{G}}(y)\big| - 14 \eps n - n   \\
\geq 
4 \delta_0\big(\vec{G}\big) - 14\eps n - n  
\geq 
4 \left(\tfrac{1}{3} - \eps \right)n - 14 \eps n - n 
= 
\left(\tfrac{1}{3} - 18 \eps \right)n 
> 
0.     
\end{multline}

Third and finally, 
we observe that $N^+_{\vec{G}}(x) \cap I_y = \emptyset = N^-_{\vec{G}}(y) \cap I_x$.  Indeed, and for example, 
any 
$b_y \in N^+_{\vec{G}}(x) \cap I_y$ 
and $a_x \in I_x$ beget $c_{xy} \in 
N_{\vec{G}}^-(a_x) \cap N^+_{\vec{G}}(b_y)$   
by~(\ref{eqn:11.9.2018.7:07p}), in which case 
$(x, b_y, c_{xy}, a_x, x)$
would be a directed 4-cycle containing $x$. 
Now, $V \setminus (I_x \cup I_y)$ contains 
each of $N^+_{\vec{G}}(x)$ and $N^-_{\vec{G}}(y)$, 
and so 
$$
\big|N^+_{\vec{G}}(x) \cap N^-_{\vec{G}}(y)\big| \geq 
\big|N^+_{\vec{G}}(x)\big| +  \big|N^-_{\vec{G}}(y)\big| + |I_x \cup I_y| - n,  
$$
where calculations identical to~(\ref{eqn:11.9.8:10p})   
establish~(\ref{eqn:11.9.2018.7:50p}).    

\subsection*{Proof of Fact~\ref{prop:2.3}}  
Let $\lambda > 0$ be given.  
The promised constant 
$\eps = \eps(\lambda) > 0$ 
will be defined in context.  
Let $\vec{G} = (V, \vec{E})$ be an oriented graph on 
$n \geq n_0(\lambda, \eps)$ many vertices which satisfies 
$\delta_0(\vec{G}) \geq ((1/3) - \eps)n$.  
We show that $\vec{G}$ is $\lambda$-extremal when Conditions~(1) or~(2) hold, which 
we handle separately.  

For Condition~(1), 
it suffices to take $\eps \in (0, \lambda/7]$.
Let $V = V_0 \cup V_1 \cup V_2$
be a partition satisfying $|V_1|, |V_2| \geq ((1/3) - \eps)n$ and 
$e_{\vec{G}}(V_1), e_{\vec{G}}(V_2), e_{\vec{G}}(V_2, V_1) \leq \eps n^2$.  
We bound each of $e_{\vec{G}}(V_2, V_0)$, $e_{\vec{G}}(V_0, V_1)$, 
and $e_{\vec{G}}(V_1, V_2)$  
suitably 
from below.  
First, 
our hypotheses give 
\begin{equation}
\label{eqn:10.26.4:35p}  
e_{\vec{G}}(V_2, V_0) \geq \Big(\sum_{v_2 \in V_2} \deg^+_{\vec{G}}(v_2) \Big)
- e_{\vec{G}}(V_2) - e_{\vec{G}}(V_2, V_1) 
\geq 
\left(\tfrac{1}{3} - \eps\right)^2n^2 - 2 \eps n^2 
\geq \left(\tfrac{1}{9} - \lambda\right)n^2,  
\end{equation}  
where we used $3\eps \leq \lambda$.  
Second, and similarly, 
$$
e_{\vec{G}}(V_0, V_1) \geq \Big(\sum_{v_1 \in V_1} \deg^-_{\vec{G}}(v_1) \Big) - 
e_{\vec{G}}(V_1) - e_{\vec{G}}(V_2, V_1)  
\geq 
\left(\tfrac{1}{3} - \eps\right)^2n^2 - 2 \eps n^2 
\geq \left(\tfrac{1}{9} - \lambda \right)n^2, 
$$
where we again used $3\eps \leq \lambda$.  
Note that, 
since $\vec{G}$ is oriented, our hypotheses 
and~(\ref{eqn:10.26.4:35p}) give   
\begin{equation}
\label{eqn:10.26.4:40p}  
e_{\vec{G}}(V_0, V_2) \leq |V_0||V_2| - 
e_{\vec{G}}(V_2, V_0)  
\leq 
\big(\tfrac{n - |V_1|}{2}\big)^2 - 
e_{\vec{G}}(V_2, V_0)  
\leq 5\eps n^2.  
\end{equation}    
Third, our hypotheses 
and~(\ref{eqn:10.26.4:40p}) give
$$
e_{\vec{G}}(V_1, V_2) \geq \Big(\sum_{v_2 \in V} \deg^-_{\vec{G}}(v_2)\Big)
- e_{\vec{G}}(V_2) - e_{\vec{G}}(V_0, V_2)
\geq 
\left(\tfrac{1}{3} - \eps\right)^2 n^2 - 6\eps n^2 
\geq 
\left(\tfrac{1}{9} - \lambda\right) n^2, 
$$  
where we used $7\eps \leq \lambda$.

For Condition~(2), 
we 
consider a suitably small $\gamma \in (0, \lambda/3]$ in context,  
and we take $\eps = \eps(\gamma) > 0$
according to an application of the Erd\H{o}s-Stone theorem~\cite{ErdosStone}, 
discussed below.  
Assume that $\vec{G} = (V, \vec{E})$ has 
no transitive triangles.  Then the underlying graph $G = (V, E)$
(obtained by removing orientations on arcs) is $K_4$-free.  
By our hypothesis, 
$$
\big|\vec{E}\big|
= \sum_{v\in V} \deg_{\vec{G}}^+(v) \geq n \delta_0\big(\vec{G}\big) \geq
\left(\tfrac{1}{3} - \eps\right)n^2,   
$$
and so altogether the underlying graph 
$G = (V, E)$ is $K_4$-free with 
$|E| \geq ((1/3) - \eps)n^2$ 
many edges.  
As such, 
the Erd\H{o}s-Stone theorem~\cite{ErdosStone} 
guarantees 
a partition $V = V_0 \cup V_1 \cup V_2$, where 
$|V_0| \leq |V_1| \leq |V_2| \leq |V_0| + 1$, and where 
each $0 \leq i < j \leq 2$ satisfies 
\begin{equation}
\label{eqn:7:31p}  
|E(G[V_i, V_j])| \geq \left(\tfrac{1}{9} - \gamma\right) n^2.   
\end{equation}  
Then\footnote{See, for example, the proof of 
Fact~\ref{fact:10}.}, the 3-partite graph 
\begin{equation}
\label{eqn:6.9.2019.12:30p}  
\text{$G[V_0, V_1] \cup G[V_1, V_2] \cup G[V_2, V_0]$ 
admits at least $((1/(27)) - \mu)n^3$  
many triangles $K_3$,}
\end{equation}   
where $\mu = \mu(\gamma) \to 0$ as $\gamma \to 0$.  
Since $\vec{G}$ has no transitive triangles, 
every triangle of $G$ corresponds to a 
directed 3-cycle $\vec{C}_3$ in $\vec{G}$.  Among other conclusions, we 
will show that almost all of the triangles 
of~(\ref{eqn:6.9.2019.12:30p})   
are commonly oriented, in one of the following two senses.  
We say that 
a directed 3-cycle $\vec{C}_3$ of $\vec{G}$ 
is 
{\it positively oriented}
when all of its arcs are among 
$(V_0 \times V_1) \cup (V_1 \times V_2) \cup (V_2 \times V_0)$, and we say that it 
is 
{\it negatively oriented}
when all of its arcs are among $(V_0 \times V_2) \cup (V_2 \times V_1) \cup (V_1 \times V_0)$.  

We average~(\ref{eqn:6.9.2019.12:30p}) over,   
say $V_0$, 
to obtain 
a vertex $\bar{v}_0 \in V_0$ belonging to at least $((1/9) - 2\mu) n^2$ many directed 3-cycles   
$\vec{C}_3$ of $\vec{G}$.   
At least half of these directed 3-cycles 
are commonly oriented, so w.l.o.g.~assume that at least half are positively
oriented.
Then 
\begin{multline}  
\label{eqn:6.9.2019.12:37p}
\text{$\bar{v}_0 \in V_0$ belongs to 
at least 
$((1/9) - 2\mu)n^2$ many directed 3-cycles $\vec{C}_3$ of $\vec{G}$, and in particular}  \\
\text{$\bar{v}_0 \in V_0$ belongs to 
at least 
$((1/(18)) - \mu)n^2$ many positively oriented 3-cycles $\vec{C}_3$ of $\vec{G}$.  }  
\end{multline}  
From~(\ref{eqn:6.9.2019.12:37p}), we 
will prove that 
\begin{equation}
\label{eqn:6:05p}
e_{\vec{G}}(V_1, V_2) \geq \left(\tfrac{1}{9} - \lambda\right)n^2  
\end{equation}  
follows.  
Indeed, 
$\vec{G}$ has no transitive triangles, 
so 
each of 
$$
K\big[
N^+_{\vec{G}}(\bar{v}_0) \cap V_1, 
N^+_{\vec{G}}(\bar{v}_0) \cap V_2\big]  
\qquad  
\text{and}
\qquad 
K\big[N^-_{\vec{G}}(\bar{v}_0) \cap V_1,  
N^-_{\vec{G}}(\bar{v}_0) \cap V_2\big]   
$$
is edge-disjoint from $E$.  Consequently, 
(\ref{eqn:7:31p})   
gives that 
each has size at most 
\begin{multline*}  
|V_1||V_2| - 
\left(\tfrac{1}{9} - \gamma\right)n^2  
\leq
\big(\tfrac{n-|V_0|}{2}\big)^2 - 
\left(\tfrac{1}{9} - \gamma\right)n^2  
\leq 2 \gamma n^2  \\
\implies \qquad 
\big|N^+_{\vec{G}}(\bar{v}_0) \cap V_1\big| \leq \sqrt{2\gamma} n 
\quad \text{or} \quad 
\big|N^+_{\vec{G}}(\bar{v}_0) \cap V_2\big| \leq \sqrt{2\gamma} n,  \\   
\text{and} \qquad 
\big|N^-_{\vec{G}}(\bar{v}_0) \cap V_1\big| \leq \sqrt{2\gamma} n  
\quad \text{or} \quad 
\big|N^-_{\vec{G}}(\bar{v}_0) \cap V_2\big| \leq \sqrt{2\gamma} n.    
\end{multline*}  
By~(\ref{eqn:6.9.2019.12:37p}), 
it must be that 
both 
$\big|N^+_{\vec{G}}(\bar{v}_0) \cap V_2\big| \leq \sqrt{2\gamma} n$ and 
$|N^-_{\vec{G}}(\bar{v}_0) \cap V_1| \leq \sqrt{2\gamma}n$ hold.  As such, 
$\bar{v}_0 \in V_0$ belongs to at most $2\gamma n^2$ many negatively oriented triangles, and so~(\ref{eqn:6.9.2019.12:37p}) may be updated to say that 
$\bar{v}_0 \in V_0$ belongs to at least $((1/9) - 2\mu - 2\gamma)n^2$ many positively oriented
triangles.  As such,  
$$
e_{\vec{G}}(V_1, V_2) \geq \big(\tfrac{1}{9} - 2\mu - 2\gamma\big)n^2  
\geq \big(\tfrac{1}{9} - \lambda\big) n^2 
$$ 
holds by taking 
$2\mu + 2 \gamma \leq \lambda$, and renders~(\ref{eqn:6:05p}).

The argument above shows that, for each $i \in \mathbb{Z}_3$, 
\begin{enumerate}
\item[$(a_i)$]  
either 
$e_{\vec{G}}(V_i, V_{i+1}) \geq ((1/9) - \lambda) n^2$, 
\item[$(b_i)$]  
or 
$e_{\vec{G}}(V_{i+1}, V_i) \geq ((1/9) - \lambda) n^2$.  
\end{enumerate}  
These outcomes must be consistent across $i \in \mathbb{Z}_3$, which is to say that either $(a_0)$, $(a_1)$, and $(a_2)$ all hold, or 
$(b_0)$, $(b_1)$, and $(b_2)$ all hold.  Indeed, assuming otherwise $\vec{G}$ would have $\Omega(n^3)$ many transitive triangles, contradicting our hypothesis.  
This proves that $\vec{G}$ is $\lambda$-extremal, as desired.

\section{Proof of Lemma~\ref{lem:ext} - Part 1: Strategy and Coarse Structure}  
\label{sec:ext}  
It suffices to take the promised constant $\lambda_0 > 0$ as 
\begin{equation}
\label{eqn:5.8.2019.1:39p}  
\lambda_0 = \big(\tfrac{1}{32,000}\big)^4.  
\end{equation}  
Now, fix $0 < \lambda \leq \lambda_0$ and fix an 
integer $\ell \geq 4$ which is not divisible by three.  In all that follows, 
we take the integer $n_0 = n_0(\lambda_0, \lambda, \ell)$ to be sufficiently 
large whenever needed.  
The proof of Lemma~\ref{lem:ext} is fairly technical, so we begin by outlining some of its strategy.

\subsection{Initial strategy}  
\label{sec:initstrat}  
Let $(G, c)$ be a $\lambda$-extremal edge-colored graph on $n \geq n_0$ many vertices.  
Recall that the hypotheses in Statements~(1) and~(2) 
of Lemma~\ref{lem:ext} 
assume 
\begin{equation}
\label{eqn:overarching2}  
\delta^c(G) \geq 
\left\{
\begin{array}{cc}
(n+5)/3 & \qquad \text{in Statement~(1)},  \\ 
(n+4)/3 & \qquad \text{in Statement~(2).}
\end{array}
\right.
\end{equation}  
If $(G, c)$ admits a rainbow $\ell$-cycle $C_{\ell}$,  
then the conclusions of Lemma~\ref{lem:ext} hold, so 
\begin{equation}
\label{eqn:overarching1}  
\text{{\it we assume throughout this proof that $(G, c)$ 
admits no rainbow $\ell$-cycles $C_{\ell}$.}}  
\end{equation}  
Moreover, 
\begin{equation}
\label{eqn:overarching3}  
\text{{\it we assume throughout this proof that $(G, c)$ 
is edge-minimal w.r.t.~satisfying 
both~{\rm (\ref{eqn:overarching2})}    
and~{\rm (\ref{eqn:overarching1})}}}.   
\end{equation}  
Observe, for 
example, that~(\ref{eqn:overarching3}) 
implies that $(G, c)$ admits no monochromatic paths $P$ 
or cycles $C$ on three or more edges, lest removing an internal edge $\{x, y\} \in E$
from $P$ or $C$ 
lowers neither $\deg^c_G(x)$ nor $\deg^c_G(y)$.    
Finally, 
for both cases of~(\ref{eqn:overarching2}), we set $m = \lfloor n/3 \rfloor$, 
where 
\begin{equation}
\label{eqn:theo23}  
\delta^c(G) \geq \tfrac{n+4}{3} \quad \implies \quad \delta^c(G) \geq 
\big\lfloor \tfrac{n}{3} \big\rfloor + 2 = 
m + 2.  
\end{equation}

Since we assume in Lemma~\ref{lem:ext} that $(G, c)$ is $\lambda$-extremal,   
fix a $\lambda$-extremal partition $V = V(G) = V_0 \cup V_1 \cup V_2$ 
of $(G, c)$ 
(recall Definition~\ref{def:lambdaext}).    
Our first main goal in proving Lemma~\ref{lem:ext} is to infer 
from~(\ref{eqn:overarching3}) that $(G,c)$ enjoys {\it nearly cannonical} structure 
on $V_0 \cup V_1 \cup V_2$, in the following sense.     
Let 
$H = K[V_0, V_1, V_2]$
be the complete 3-partite graph with vertex partition $V_0 \cup V_1 \cup V_2$, 
and consider an edge-coloring $\kappa$ on $H$ where,   
for each $i \in \mathbb{Z}_3$ and for each $v_i \in V_i$, 
\begin{enumerate}
\item[$(a)$]
$\kappa$ assigns 
all distinct colors
to the edges $\{v_i, v_{i+1}\} \in E$, where $v_{i+1} \in V_{i+1}$; 
\item[$(b)$]  
$\kappa$ assigns a common color to all the edges $\{v_i, v_{i-1}\} \in E$, where $v_{i-1} \in V_{i-1}$. 
\end{enumerate}  
We say that any such $(H, \kappa)$
is {\it cannonical} w.r.t.~$V_0 \cup V_1 \cup V_2$.  
(For example, a cannonical edge-colored graph $(H, \kappa)$ 
was used in Case~1 of Section~1.1, where $\kappa = c_+$ 
was defined in~(\ref{eqn:4.6.2019.12:04p}).)    
In the immediate sequel, we 
use~(\ref{eqn:overarching3}) to 
prove that $(G, c)$ is {\it nearly cannonical} w.r.t.~$V_0 \cup V_1 \cup V_2$, 
in the sense that  
for each $i \in \mathbb{Z}_3$, 
{\sl almost all}   
$v_i \in V_i$ admit distinctly colored edges to {\sl almost all} $v_{i+1} \in V_{i+1}$, and 
{\sl almost all}   
$v_i \in V_i$ admit commonly colored edges to {\sl almost all} $v_{i-1} \in V_{i-1}$.  
We now make these details precise.

\subsection{$\boldsymbol{(G, c)}$ is nearly cannonical: getting started}  
Definition~\ref{def:lambdaext} ensures that each $i \in \mathbb{Z}_3$ satisfies 
\begin{equation}
\label{eqn:sizeVi}  
|V_i| = \big(\tfrac{1}{3} \pm 3 \sqrt{\lambda} \big) n.  
\end{equation}  
Indeed, $G$ has at least $(1/3 - 3 \lambda)n^2$ many edges, so 
$\binom{V_0}{2} \cup \binom{V_1}{2} \cup \binom{V_2}{2}$
consists of at most $((1/6) + 3\lambda)n^2$ many pairs.  
Set $|V_i| = ((1/3) + e_i)n$, $i \in \mathbb{Z}_3$, so that 
$e_0 + e_1 + e_2 = 0$.  
Then 
$\binom{V_0}{2} \cup \binom{V_1}{2} \cup \binom{V_2}{2}$ has size 
$$
(1 +o(1)) \big(\tfrac{n^2}{6} + \tfrac{n^2}{2} \big(e_0^2 + e_1^2 + e_2^2\big) \big),  
$$
and this 
is too large when $\max\{|e_0|, |e_1|, |e_2| \} > \sqrt{6\lambda}$.

Next, fix $i \in \mathbb{Z}_3$.  
We shall say that a vertex $v_i \in V_i$ is an {\it $i$-good} vertex if 
\begin{equation}
\label{eqn:gooddeg}  
\deg_G^c (v_i, V_{i+1}) \geq |V_{i+1}| - \lambda^{1/4} n 
\qquad \text{and} \qquad 
\deg_G (v_i, V_{i-1}) \geq |V_{i-1}| - \lambda^{1/4} n,  
\end{equation}  
where as usual $\deg_G(v_i, V_{i-1})$ denotes the number of neighbors of $v_i$ in $V_{i-1}$, and where here
$\deg_G^c(v_i, V_{i+1})$ denotes the number of colors seen on the edges of $v_i$ to $V_{i+1}$.  
Then~(\ref{eqn:gooddeg}) says an $i$-good vertex $v_i$ 
admits distinctly colored edges to all but $\lambda^{1/4} n$ many vertices
$v_{i+1} \in V_{i+1}$, and it admits edges of varying colors to all but $\lambda^{1/4} n$ 
many vertices 
$v_{i-1} \in V_{i-1}$. 
Let $V_i^{\good}$ denote the set of $i$-good vertices $v_i \in V_i$.     
Using~(\ref{eqn:sizeVi})
and Definition~\ref{def:lambdaext}, 
it is easy to show  
that 
\begin{equation}
\label{eqn:Clm13.2B}  
\big|V_i^{\good}\big| \geq |V_i| - 24 \lambda^{1/4} n. 
\end{equation}  
With $i \in \mathbb{Z}_3$ still fixed, 
we shall say that 
a vertex $v_i \in V_i \setminus V_i^{\good}$ is an {\it $i$-bad
vertex}.  We 
write 
$V_i^{\bad} = V_i \setminus V_i^{\good}$ for the set of $i$-bad vertices, 
and 
we write 
$V^{\bad} = V_0^{\bad} \cup V_1^{\bad} \cup V_2^{\bad}$ for the set of 
{\it bad vertices}.  
Then 
bad vertices total 
at most $72\lambda^{1/4} n$  
by~(\ref{eqn:Clm13.2B}).

We  now 
alter the partition $V = V_0 \cup V_1 \cup V_2$ to 
$V = U_0 \cup U_1 \cup U_2$,  
as follows.  
For each $i$-good vertex $v_i \in V_i^{\good}$, we put $v_i \in U_i$.   
For each bad vertex $v \in V^{\bad}$, let $j_v \in \mathbb{Z}_3$ achieve 
\begin{equation}
\label{eqn:badvertexsplit}  
\deg^c_G\big(v_i, V^{\good}_{j_v}\big) =  
\max \Big\{\deg_G^c\big(v, V_0^{\good}\big), \, \deg_G^c\big(v, V_1^{\good}\big), \, 
\deg_G^c\big(v, V_2^{\good}\big) \Big\}.     
\end{equation}  
We then put $v \in U_{j_v-1}$.  Then $U_i$ consists of 
$V_i^{\good}$ 
together with those 
bad
vertices $v \in V^{\bad}$ satisfying
\begin{equation}  
\label{eqn:badvertexsplitprime}  
\deg_G^c\big(v, V_{i+1}^{\good}\big) \geq 
\max\Big\{
\deg_G^c\big(v, V_{i}^{\good}\big), \,  
\deg_G^c\big(v, V_{i-1}^{\good}\big)\Big\}.   
\end{equation}  
We write $U_i^{\good} = U_i \cap V_i^{\good} = V_i^{\good}$, 
and we maintain that these vertices are {\it good}.   
We write
$U_i^{\bad} = U_i \cap V^{\bad}$, and we maintain 
that these vertices are {\it bad}.   
Then 
(\ref{eqn:sizeVi})--(\ref{eqn:Clm13.2B}) give:      
\begin{multline}  
\label{eqn:Clm13.2}  
|U_i| = \big(\tfrac{1}{3} \pm 75 \lambda^{1/4} \big)n, \quad 
\big|U_i^{\good}\big| = \big(\tfrac{1}{3} \pm 75 \lambda^{1/4} \big)n, \quad 
\big|V^{\bad}\big| = 
\big|U^{\bad}_0\big|  
+ 
\big|U^{\bad}_1\big|  
+
\big|U^{\bad}_2\big|  
\leq 72 \lambda^{1/4} n, \\
\forall \ i \in \mathbb{Z}_3, \ 
\forall \ u \in U_i^{\good}, \ 
\deg_G^c(u, U_{i+1}) \geq |V_{i+1}| - 73\lambda^{1/4} n \geq |U_{i+1}| - 145 \lambda^{1/4} n, \\
\forall \ i \in \mathbb{Z}_3, \ 
\forall \ u \in U_i^{\good}, \ 
\deg_G^c\big(u, U_{i+1}^{\good}\big) \geq 
\big(\tfrac{1}{3} - 76 \lambda^{1/4}\big)n, \\  
\forall \ i \in \mathbb{Z}_3, \ 
\forall \ u \in U_i^{\good}, \ 
\deg_G(u, U_{i-1}) \geq |V_{i-1}| - 73 \lambda^{1/4} n 
\geq |U_{i-1}| - 145 \lambda^{1/4} n, \\
\forall \ i \in \mathbb{Z}_3, \ 
\forall \ u \in U_i^{\good}, \ 
\deg_G\big(u, U_{i-1}^{\good}  \big) \geq 
\big(\tfrac{1}{3} - 76 \lambda^{1/4}\big)n, \\  
\forall \ i \in \mathbb{Z}_3, \, 
\forall \ u \in U_i^{\bad}, \, 
\deg_G^c(u, U_{i+1}) 
\stackrel{(\ref{eqn:badvertexsplit})}{\geq}    
\tfrac{1}{3} \delta^c(G) 
- 72\lambda^{1/4} n  
\stackrel{(\ref{eqn:theo23})}{\geq} \big(\tfrac{1}{9} - 72\lambda^{1/4}\big)n.  
\end{multline}  
Henceforth, the initial partition $V = V_0 \cup V_1 \cup V_2$
is largely usurped by $V = U_0 \cup U_1 \cup U_2$.

\subsection{$\boldsymbol{(G, c)}$ is nearly cannonical: a next step}  
The inequalities in~(\ref{eqn:Clm13.2}) show that 
$|U_0|, |U_1|, |U_2|$ are nearly balanced, and that   
$G[U_0, U_1, U_3]$ differs
from the complete
3-partite graph $K[U_0, U_1, U_2]$
on few edges.   
The inequalities in~(\ref{eqn:Clm13.2}) also show 
that 
$(G, c)$ 
deviates very little from 
property~$(a)$  
of Section~\ref{sec:initstrat}, in that good vertices $u_i \in U_i^{\good}$   
(which are pervasive) have distinctly colored edges to nearly all $u_{i+1} \in U_{i+1}$.  
We now show that $(G, c)$ deviates little from the corresponding property~$(b)$.    
For that, we first  
show that good vertices $u_i \in U_i^{\good}$ are incident
to few colors $c(\{u_i, u_{i-1}\})$, where 
$u_{i-1} \in U_{i-1}^{\good}$.

\begin{fact}
\label{fact:2.17.2019.5:52p}  
For each $i \in \mathbb{Z}_3$ and for each $u_i \in U_i^{\good}$, we have 
$\deg_G^c(u_i, U_{i-1}^{\good}) \leq 160 \lambda^{1/4}  n$.  
\end{fact}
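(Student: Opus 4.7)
The plan is to argue by contradiction against (\ref{eqn:overarching1}): assume some $u_i \in U_i^{\good}$ satisfies $\deg^c_G(u_i, U_{i-1}^{\good}) > 160\lambda^{1/4}n$, and build a rainbow $\ell$-cycle $C_\ell$ in $(G,c)$. By cyclic relabeling I take $i = 0$. The hypothesis produces a set $A \subseteq U_2^{\good}$ with $|A| > 160\lambda^{1/4}n$ on which the backward colors $\gamma_a := c(\{u_0, a\})$ are pairwise distinct. I will build the cycle using one backward edge (when $\ell \equiv 2 \pmod 3$) or two backward edges (when $\ell \equiv 1 \pmod 3$) from $u_0$ into $A$, completing via forward rainbow paths around the triangle $U_0 \to U_1 \to U_2 \to U_0$. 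The engine is (\ref{eqn:Clm13.2}): each good vertex has at least $(\tfrac{1}{3} - 76\lambda^{1/4})n$ distinct-colored forward edges to good vertices of the next part, so greedy rainbow walks have $\Omega(n)$ valid choices at each step.

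For $\ell \equiv 2 \pmod 3$, fix $a \in A$ and construct $(u_0, a, v_1, \ldots, v_{\ell-2}, u_0)$, where $a \to v_1 \to \cdots \to u_0$ is a forward rainbow path of length $\ell - 1 \equiv 1 \pmod 3$ that correctly terminates in $U_0$. A greedy walk selects fresh-colored good forward neighbors at each of the $\ell - 2$ intermediate steps, avoiding the $O(\ell)$ already-used colors and vertices and the color $\gamma_a$; the closing edge to $u_0$ uses that $|N(u_0) \cap U_2^{\good}| = \Omega(n)$ supports many color-avoiding candidates. For $\ell \equiv 1 \pmod 3$, since $\ell - 1 \equiv 0 \pmod 3$ forward steps from $U_2$ would return to $U_2$, I use two backward edges to distinct $a, b \in A$ hinged at a bridge $w \in U_0^{\good}$, yielding $(u_0, a, w, b, v_1, \ldots, v_{\ell-4}, u_0)$ with short arc $a \to w \to b$ across $U_2 \to U_0 \to U_2$ and a forward rainbow arc of length $\ell - 3 \equiv 1 \pmod 3$ from $b$ back to $u_0$.

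The main obstacle is the pairwise distinctness of the four short-arc colors $\gamma_a, c(\{a,w\}), c(\{w,b\}), \gamma_b$ in the $\ell \equiv 1 \pmod 3$ case. Here I rely on $\gamma_a \neq \gamma_b$ (by choice of $A$), on the forward-distinct structure of $a, b$ (so the equalities $c(\{a, w\}) = \gamma_a$ and similar rule out at most $O(\lambda^{1/4}n)$ bridge vertices), and on a counting argument bounding the collision $c(\{a, w\}) = c(\{w, b\})$ by $O(\lambda^{3/4}n^3)$ bad triples $(a, b, w) \in A^2 \times U_0^{\good}$, which is negligible against the $\Omega(\lambda^{1/2}n^3)$ main count. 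Once the short arc is fixed, the forward segment of length $\ell - 3$ is built by the greedy walk of the $\ell \equiv 2$ case. All accumulated error terms sit strictly inside the slack provided by $|A| > 160\lambda^{1/4}n$ and the aggressive smallness of $\lambda_0 = (1/32000)^4$ from (\ref{eqn:5.8.2019.1:39p}), yielding the contradicting rainbow $\ell$-cycle.
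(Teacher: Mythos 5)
Your contradiction strategy is the same as the paper's (use the many distinctly colored $U_{i-1}$-edges at $u_i$ to close a long greedy rainbow path), and your $\ell \equiv 2 \pmod 3$ case is essentially the paper's Case~B. One caveat even there: the closing edge must land back in the distinctly-colored set $A$ itself, not merely in $N_G(u_0)\cap U_2^{\good}$. A large set of \emph{neighbors} of $u_0$ in $U_2^{\good}$ gives no control over the \emph{colors} of the closing edge, because at this stage nothing bounds from below how few colors a good vertex sees in the backward direction --- that is exactly what the Fact is trying to establish. The inclusion--exclusion $|S\cap A|\geq |S|+|A|-|U_2^{\good}|>0$ with $S$ a system of fresh-colored forward representatives of the penultimate vertex repairs this.

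The $\ell\equiv 1\pmod 3$ case has a genuine gap at the arc $a\to w\to b$. Both $\{a,w\}$ and $\{w,b\}$ are $U_0$--$U_2$ edges at the bridge $w$, i.e.\ both lie in the ``backward'' direction for $w$, and the contradiction hypothesis constrains only the single vertex $u_0$, not $w$. Your claimed $O(\lambda^{3/4}n^3)$ bound on the collisions $c(\{a,w\})=c(\{w,b\})$ has no source: it is entirely consistent with \eqref{eqn:Clm13.2} and with the minimality assumption \eqref{eqn:overarching3} that every $w\in U_0^{\good}\setminus\{u_0\}$ sees a single color on all of $E_G(w,U_2)$ --- this is precisely the cannonical coloring $c_+$ of Section~\ref{sharpness}, and it is the structure that Proposition~\ref{prop:Clm13.3} only \emph{later} establishes. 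Under such a coloring every path $a\to w\to b$ with $a,b\in U_2$ is monochromatic, the number of bad triples is $\Omega(n^3)$, and your cycle shape does not exist at all. (A secondary confusion: $\gamma_b=c(\{u_0,b\})$ is not an edge of your cycle unless $\ell=4$; the condition you actually need is $c(\{w,b\})\neq c(\{a,w\})$.) The paper escapes this by making the two forced backward steps \emph{consecutive}, forming a cherry $(u_2,u_1,u_0)$ whose center $u_1$ is drawn from a set $T_1^{\good}$ constructed via edge-minimality: if the cherry were monochromatic, deleting its first edge would lower no color degree below the threshold \eqref{eqn:overarching2}, contradicting \eqref{eqn:overarching3}. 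Your proposal never invokes \eqref{eqn:overarching3}, and some such use of minimality (or an equivalent device) is indispensable for controlling the second backward edge.
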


\begin{proof}[Proof of Fact~\ref{fact:2.17.2019.5:52p}]
Assume for contradiction 
that Fact~\ref{fact:2.17.2019.5:52p} is false for some index $i \in \mathbb{Z}_3$
and vertex $u_i \in U_i^{\good}$, and w.l.o.g.~assume $i = 2$.
We will first determine a set $T_1^{\good} \subseteq U_1^{\good}$ so that the fixed good
vertex 
$u_2 \in U_2^{\good}$ satisfies 
\begin{equation}
\label{eqn:2.17.2019.6:03p}  
\deg_G^c\big(u_2, T_1^{\good}\big) > 0 \text{ and 
every path $(u_2, u_1, v)$ in $G$ where $u_1 \in T_1^{\good}$ is rainbow}.  
\end{equation}  
To prove~(\ref{eqn:2.17.2019.6:03p}),   
we distinguish $\deg_G^c(u_2)$.   \\

\noindent {\bf Case 1 ($\deg_G^c(u_2) \geq (n+10)/3$).}  
Set $T_1^{\good} = U_1^{\good}$, where our contrary assumption gives 
\begin{equation}
\label{eqn:2.17.2019.6:38p}  
\deg_G^c(u_2, T_1^{\good}) = 
\deg_G^c(u_2, U_1^{\good}) > 160 \lambda^{1/4}  n  > 0,   
\end{equation}  
as desired.  
Now, every path $(u_2, u_1, v)$ with 
$u_1 \in T_1^{\good} = U_1^{\good}$ is rainbow  
lest the hypothesis of Case~1 gives that 
removing the edge
$\{u_2, u_1\} \in E$ from $G$
contradicts~(\ref{eqn:overarching3}).      \hfill $\Box$  \\

\noindent {\bf Case 2 ($\deg_G^c(u_2) < (n+10)/3$).}  
Set 
$$
T_1^{\good} = \big\{ u_1 \in U_1^{\good}: \ \exists \ u_0 \in U_0
\text{ where $c(\{u_2, u_0\}) = c(\{u_2, u_1\})$} \big\}.  
$$
Observe that 
$$
\deg_G^c(u_2) \geq \deg_G^c(u_2, U_0) 
+ \deg_G^c\big(u_2, U_1^{\good}\big)
- \deg_G^c\big(u_2, T_1^{\good}\big), 
$$
and so 
\begin{multline*}  
\deg_G^c\big(u_2, T_1^{\good}\big) 
\geq 
\deg_G^c(u_2, U_0) 
+ \deg_G^c\big(u_2, U_1^{\good}\big)
- \deg_G^c(u_2)   \\
\stackrel{(\ref{eqn:Clm13.2})}{\geq}    
\big(\tfrac{1}{3} - 76 \lambda^{1/4}\big)n  
+ \deg_G^c\big(u_2, U_1^{\good}\big)
- \deg_G^c(u_2)  
\stackrel{(\ref{eqn:2.17.2019.6:38p})}{>}    
\big(\tfrac{1}{3} - 76 \lambda^{1/4}\big)n  
+ 160 \lambda^{1/4} n 
- \deg_G^c(u_2)  \\
\stackrel{\text{{\tiny Case~2}}}{>}  
\big(\tfrac{1}{3} - 76 \lambda^{1/4}\big)n  
+ 160\lambda^{1/4} n 
- \tfrac{n+10}{3}  
= 
84\lambda^{1/4}n - O(1) > 0.   
\end{multline*}  
If $(u_2, u_1, v)$ is a monochromatic path with $u_1 \in T_1^{\good}$, 
then there exists $u_0 \in U_0$ where 
$(u_0, u_2, u_1, v)$ is monochromatic.  
Whether or not $v = u_0$,   
removing the edge
$\{u_2, u_1\} \in E$ from $G$  
contradicts~(\ref{eqn:overarching3}).  \hfill $\Box$

We now use~(\ref{eqn:2.17.2019.6:03p})   
to complete the proof of Fact~\ref{fact:2.17.2019.5:52p}.  
Fix an arbitrary vertex $u_1 \in T_1^{\good}$
(cf.~(\ref{eqn:2.17.2019.6:03p})),    
and fix an arbitrary vertex $u_0 \in N_G(u_1, U_0^{\good})$ (cf.~(\ref{eqn:Clm13.2})).  
By~(\ref{eqn:2.17.2019.6:03p}), 
the path $(u_2, u_1, u_0)$ is rainbow.    
We now distinguish the cases $\ell \equiv 1, 2$ (mod 3).  \\

\noindent {\bf Case~A ($\ell \equiv 1$ {\rm (mod 3)}).}  
Using~(\ref{eqn:Clm13.2}) and $n \geq n_0(\ell)$ sufficiently large, we can easily  
extend the rainbow path $(u_2, u_1, u_0)$ to 
a rainbow path 
$R_{\ell - 4} = 
(u_2, u_1, u_0, v_1, v_2, \dots, v_{\ell - 4})$
on $\ell-1$ vertices, 
where for each $1 \leq j \leq \ell
- 4$, 
we may choose 
$v_j \in U_J^{\good}$ 
for 
$J \equiv j$ (mod 3).  
Our contrary assumption gives that 
$u_2 \in U_2^{\good}$ sees
$160 \lambda^{1/4} n$ colors into $U_1^{\good}$, and at most $\ell - 1$ of them
were used on $R_{\ell - 4}$.   
Similarly, 
(\ref{eqn:Clm13.2})
gives that the vertex $v_{\ell - 4} \in U_0^{\good}$ (recall $\ell \equiv 1$ (mod 3)) 
sees 
$((1/3) - 76 \lambda^{1/4})n$ colors 
into $U_1^{\good}$, and at most $\ell - 1$ of them were used on $R_{\ell - 4}$.  
Then some $w_1 \in N_G(u_2, U^{\good}_1) \cap N_G(v_{\ell - 4}, U^{\good}_1)$
extends $R_{\ell-4}$ to a rainbow $\ell$-cycle $C_{\ell}$ 
as 
$$
160 \gamma^{1/4} n - (\ell - 1) + \big(\tfrac{1}{3} - 76 \lambda^{1/4} \big) n - (\ell - 1)
\geq \big(\tfrac{1}{3} + 84\lambda^{1/4}\big)n  - O(1)  
\stackrel{(\ref{eqn:Clm13.2})}{>}
|U_1|.
$$
\hfill $\Box$ \\


\noindent {\bf Case~B ($\ell \equiv 2$ {\rm (mod 3)}).}  
The rainbow
path $(u_2, u_1)$ may be extended to a rainbow path $\hat{R}_{\ell - 2} = 
(u_2, u_1, v_2, \dots, 
v_{\ell - 2})$ on $\ell-1$ vertices, where for each $2 \leq j \leq \ell - 2$, 
we may choose $v_j \in U_J^{\good}$ for $J
\equiv j$ (mod 3).  
Identically to the above, we may extend
the rainbow path 
$\hat{R}_{\ell - 2}$ to a rainbow 
$\ell$-cycle $C_{\ell}$.   
\end{proof}

\subsection{$\boldsymbol{(G, c)}$ is nearly cannonical: finale}  
We now show that, for a fixed $u_i \in U_i^{\good}$, 
edges $\{u_i, u_{i-1}\} \in E$, where $u_{i-1} \in 
U_{i-1}^{\good}$, 
are dominated by a single color.

\begin{prop}
\label{prop:Clm13.3}  
For each $i \in \mathbb{Z}_3$ and for each $u_i \in U_i^{\good}$, there exists a color
$c_{u_i}$ from $c$ where 
all but $161 \lambda^{1/4} n$ many vertices $u_{i-1} \in U_{i-1}^{\good}$  
satisfy $\{u_i, u_{i-1}\} \in E$ and $c(\{u_i, u_{i-1}\}) = c_{u_i}$.  
Together 
with~{\rm (\ref{eqn:Clm13.2})}, 
all but $233 \lambda^{1/4} n$ vertices $u_{i-1} \in U_{i-1}$ 
satisfy $\{u_i, u_{i-1}\} \in E$ and $c(\{u_i, u_{i-1}\}) = c_{u_i}$.  
\end{prop}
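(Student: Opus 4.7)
Fix $i \in \mathbb{Z}_3$ and $u_i \in U_i^{\good}$, and let $c_{u_i}$ be a color taken on the greatest number of edges from $u_i$ to $U_{i-1}^{\good}$. Set
\[
A = \big\{u_{i-1} \in U_{i-1}^{\good} : \{u_i, u_{i-1}\} \in E \text{ and } c(\{u_i, u_{i-1}\}) = c_{u_i}\big\}, \qquad B = U_{i-1}^{\good} \setminus A.
\]
The first claim is $|B| \leq 161\lambda^{1/4} n$; the subsequent ``$233$'' bound then follows immediately by adjoining the at most $72\lambda^{1/4} n$ vertices of $U_{i-1} \setminus U_{i-1}^{\good} \subseteq U^{\bad}$ from the first line of \eqref{eqn:Clm13.2}.

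Split $B = B_1 \sqcup B_2$ with $B_1 = B \setminus N_G(u_i)$ and $B_2 = B \cap N_G(u_i)$. Combining $|U_{i-1}^{\good}| \leq (\tfrac13 + 75\lambda^{1/4})n$ (first line of \eqref{eqn:Clm13.2}) with $\deg_G(u_i, U_{i-1}^{\good}) \geq (\tfrac13 - 76\lambda^{1/4})n$ (fifth line of \eqref{eqn:Clm13.2}) gives
\[
|B_1| \leq |U_{i-1}^{\good}| - \deg_G(u_i, U_{i-1}^{\good}) \leq 151\lambda^{1/4} n,
\]
so the real task is $|B_2| \leq 10\lambda^{1/4} n$. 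Assume for contradiction that $|B_2| > 10\lambda^{1/4} n$. Fact \ref{fact:2.17.2019.5:52p} bounds the number of colors used on the $\geq (\tfrac13 - 76\lambda^{1/4})n$ edges from $u_i$ to $U_{i-1}^{\good}$ by $160\lambda^{1/4} n$, so by the maximality of $c_{u_i}$, $|A| = \Omega(n)$. Pick any $w_1 \in A$ and $w_2 \in B_2$; the path $(w_1, u_i, w_2)$ is rainbow with two distinct colors $c_{u_i}$ and $c_2 := c(\{u_i, w_2\})$.

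The heart of the argument, paralleling Cases A and B of the proof of Fact \ref{fact:2.17.2019.5:52p}, is to extend $(w_1, u_i, w_2)$ to a rainbow $\ell$-cycle $C_\ell$ and thus contradict \eqref{eqn:overarching1}. When $\ell \equiv 2 \pmod 3$, I realize the closing path from $w_2$ back to $w_1$ of length $\ell - 2 \equiv 0 \pmod 3$ as a purely forward tripartite walk through good vertices cycling through $U_i^{\good}, U_{i+1}^{\good}, U_{i-1}^{\good}$; at each step, the third and fifth lines of \eqref{eqn:Clm13.2} provide $\geq (\tfrac13 - 76\lambda^{1/4})n$ forward-neighbor candidates, and forward color-distinctness at the source good vertex (second line of \eqref{eqn:Clm13.2}) ensures that each of the $O(\ell)$ forbidden used colors rules out at most one candidate, so the greedy selection closes the cycle. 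When $\ell \equiv 1 \pmod 3$, $\ell - 2 \equiv 2 \pmod 3$ and I substitute exactly one ``correction step'' into the closing walk, placing it so that the edge in question runs between a good vertex in $U_i^{\good}$ and $w_1 \in U_{i-1}^{\good}$; viewed as an edge forward at the $U_{i-1}^{\good}$-side endpoint $w_1$, its color ranges over $w_1$'s distinctly colored forward-neighborhood in $U_i^{\good}$, so varying the source good vertex over $\Omega(n)$ candidates eliminates only $O(\ell)$ choices and the greedy step succeeds. In either case the rainbow $\ell$-cycle so obtained contradicts \eqref{eqn:overarching1}, forcing $|B_2| \leq 10\lambda^{1/4} n$ and hence $|B| \leq 161\lambda^{1/4} n$.

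\textbf{Main obstacle.} The delicate point is the $\ell \equiv 1 \pmod 3$ correction step. Proposition \ref{prop:Clm13.3} itself cannot be invoked at intermediate good vertices during the extension (that would be circular), so the color on the correcting edge must be controlled by forward color-distinctness at the \emph{other} endpoint; placing it adjacent to $w_1$ (which has forward-distinct colors to $U_i^{\good}$ by the second line of \eqref{eqn:Clm13.2}) is precisely what makes this substitution work. The smallness of $\lambda \leq \lambda_0 = (1/32000)^4$ from \eqref{eqn:5.8.2019.1:39p} keeps the $O(\lambda^{1/4} n) + O(\ell)$ exclusions far below the $\Omega(n)$ candidate pool available at every step of the greedy extension.
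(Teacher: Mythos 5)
Your reduction to bounding $|B_2|$, and the bookkeeping $151+10=161$ and $161+72=233$, match the paper's targets, and your $\ell \equiv 1 \pmod 3$ construction is sound: there the closing edge $\{v,w_1\}$ has its other endpoint in $U_i$, so it is \emph{forward} at $w_1$, and forward color-distinctness at the good vertex $w_1$ (second displayed line of \eqref{eqn:Clm13.2}) genuinely lets you vary $v$ and dodge the $O(\ell)$ used colors. The real gap is in the case $\ell \equiv 2 \pmod 3$, which you wave through as a routine purely-forward greedy closure. In a consistently forward walk $w_2 \to x_1 \to \cdots \to x_{\ell-3} \to w_1$, the penultimate vertex $x_{\ell-3}$ lies in $U_{i+1}$, so the closing edge $\{x_{\ell-3}, w_1\}$ is necessarily \emph{backward} at $w_1$. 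Your claim that each forbidden color rules out at most one candidate fails for this edge: forward-distinctness at $x_{\ell-3}$ only separates the colors of distinct forward edges of the \emph{same} vertex $x_{\ell-3}$, and says nothing about how $c(\{x_{\ell-3},w_1\})$ varies as the candidate $x_{\ell-3}$ varies. All these candidate edges are backward at $w_1$, and Fact~\ref{fact:2.17.2019.5:52p} permits them to carry as few as one color, which could coincide with $c_2 = c(\{u_i,w_2\})$ or with $c_{u_i}$, killing every candidate simultaneously. (You cannot invoke Corollary~\ref{cor:Clm13.4} to separate $w_1$'s dominant backward color from $c_2$: that corollary is deduced \emph{from} Proposition~\ref{prop:Clm13.3}.) A minor additional slip: pigeonhole over the at most $160\lambda^{1/4}n$ colors gives $|A| = \Omega(\lambda^{-1/4})$, a constant, not $\Omega(n)$; you only need $|A|\ge 1$, so this is harmless.

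This uncontrolled backward closing edge is precisely what the paper's proof is organized around, and you have flagged the wrong residue class as the delicate one. The paper keeps the backward closing edge incident to $u_2$ itself and tames it with the target set $T_1(u_2)$ and the color set $C(u_2)$: it first uses the pigeonhole in \eqref{eqn:2.19.2019.10:01a} together with the no-monochromatic-$P_4$ consequence of edge-minimality \eqref{eqn:overarching3} to arrange that the two colors committed at the outset are both non-primary at $u_2$, so the ``at most $2/3$'' count makes $T_1(u_2)$ of size $\Omega(n)$, and it then routes the remainder of the cycle so as to avoid \emph{all} of the at most $160\lambda^{1/4}n$ colors in $C(u_2)$, so that whatever color the closing edge carries is automatically new. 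Your $\ell\equiv 2$ case needs an analogous mechanism at $w_1$ (a target set inside $N_G(w_1,U_{i+1})$ avoiding $c_{u_i}$ and $c_2$, plus a walk avoiding all of $w_1$'s backward colors), and making that target set nonempty is not automatic, since nothing proved so far prevents $c_2$ from being the dominant backward color of every $w_1 \in A$. As it stands, the $\ell\equiv 2$ half of your argument does not go through.
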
  

For the proof and use of Proposition~\ref{prop:Clm13.3}, we establish some notation.  
Fix $i \in \mathbb{Z}_3$ and fix $u_i \in U_i$.  On the edges 
$E_G(u_i, U_{i-1})$ between $u_i$ and $U_{i-1}$, 
let $c_{u_i}$ be a most frequent color, which we call
the {\it primary} color of $E_G(u_i, U_{i-1})$.    
Edges 
of $E_G(u_i, U_{i-1})$ 
colored by $c_{u_i}$ are called {\it typical} edges, and edges
of $E_G(u_i, U_{i-1})$ 
colored otherwise are called {\it special} edges.  
We write $N_G^{\typ}(u_i, U_{i-1})$ for the set of $u_{i-1} \in U_{i-1}$ 
where $\{u_i, u_{i-1}\} \in E$ is 
a 
typical edge, 
and we write $N_G^{\spec}(u_i, U_{i-1})$ for the set of $u_{i-1} \in U_{i-1}$ 
where $\{u_i, u_{i-1}\} \in E$ is 
a 
special edge.  
We write 
\begin{equation}
\label{eqn:3.3.2019.6:33p}  
\deg_G^{\typ}\big(u_i, U_{i-1}\big) = \big|N_G^{\typ}\big(u_i, U_{i-1}\big)\big| 
\qquad \text{and} \qquad 
\deg_G^{\spec}\big(u_i, U_{i-1}\big) = \big|N_G^{\spec}\big(u_i, U_{i-1}\big)\big|.   
\end{equation}  

\begin{proof}[Proof of Proposition~\ref{prop:Clm13.3}]  
Assume for contradiction that Proposition~\ref{prop:Clm13.3} is false for some index $i \in \mathbb{Z}_3$
and vertex $u_i \in U_i^{\good}$, and w.l.o.g.~assume $i = 2$.  
Then, the fixed vertex $u_2 \in U_2^{\good}$ satisfies
\begin{equation}
\label{eqn:2.19.2019.10:01a}  
\deg^{\spec}_G\big(u_2, U_1^{\good}\big) \geq 161\lambda^{1/4}  n \qquad \text{while} \qquad 
\deg_G^c\big(u_2, U_1^{\good}\big) 
\stackrel{\text{Fact\ref{fact:2.17.2019.5:52p}}}{\leq}
160 \lambda^{1/4} n.   
\end{equation}  
We will produce a contradiction similar to that for 
Fact~\ref{fact:2.17.2019.5:52p}, 
where we will use~(\ref{eqn:2.19.2019.10:01a}) to construct a rainbow $\ell$-cycle $C_{\ell}$
in $(G, c)$, which will   
contradict~(\ref{eqn:overarching1}).   
We again distinguish the cases 
$\ell \equiv 1, 2$ (mod 3).  \\

\noindent {\bf Case 1 ($\ell \equiv 1$ {\rm (mod 3)}).}  
The inequalities in~(\ref{eqn:2.19.2019.10:01a}) together imply that there exist neighbors $u_1 \neq v_1 \in N_G(u_2, U_1^{\good})$  
for which $c(\{u_2, u_1\}) = c(\{u_2, v_1\})$ differs from the primary color $c_{u_2}$.  
For simplicity, let $c_{u_2}$ be {\sl blue} and let 
$c(\{u_2, u_1\}) = c(\{u_2, v_1\})$ 
be {\sl red}.  
Using~(\ref{eqn:Clm13.2}), 
fix $u_0 \neq v_0 \in N_G(u_1, U_0^{\good}) \cap N_G(v_1, U_0^{\good})$.  
Since $(G, c)$ admits no monochromatic paths on four vertices, 
none of the edges of the four-cycle $(u_1, u_0, v_1, v_0)$
can be red, and not all of them can be blue.  W.l.o.g., assume $\{u_1, u_0\}$ is colored 
{\sl yellow} so that $(u_2, u_1, u_0)$ is a red-yellow path    
which avoids the primary color blue for $u_2$.  
Similarly to the proof of Fact~\ref{fact:2.17.2019.5:52p}, 
we will extend $(u_2, u_1, u_0)$ to a rainbow $\ell$-cycle $C_{\ell}$, which will 
contradict~(\ref{eqn:overarching1}).

Consider the following set which will be an eventual `target space':  
$$
T_1(u_2) = \big\{t_1 \in N_G\big(u_2, U_1^{\good}\big): \ c(\{u_2, t_1\}) 
\text{ is neither red nor yellow } \big\} \subseteq U_1^{\good}.   
$$
Since blue is the primary color for $u_2$, some edges $\{u_2, t_1\}$ with $t_1 \in T_1(u_2)$
are colored blue.  
Now, among the colors blue, red, and yellow, neither red nor yellow are primary, so at most a $2/3$ portion of 
neighbors $v_1 \in N_G(u_2, U_1^{\good})$ have red or yellow edges with $u_2$.  Thus, 
\begin{multline}  
\label{eqn:2.19.2019.10:44a}  
\qquad 
\qquad 
|T_1(u_2)| \geq \tfrac{1}{3} \deg_G\big(u_2, U_1^{\good}\big)
\stackrel{(\ref{eqn:Clm13.2})}{\geq}
\tfrac{1}{3}  
\big(\tfrac{1}{3} - 76 \lambda^{1/4} \big) 
n \stackrel{(\ref{eqn:5.8.2019.1:39p})}{\geq} \tfrac{n}{10}, \\    
\text{while $u_2$ sees at most $160 \lambda^{1/4} n$ colors into $T_1(u_2)$
(cf.~Fact~\ref{fact:2.17.2019.5:52p})}.
\qquad 
\qquad 
\end{multline}  
Let $C(u_2)$ be the set of colors used on edges between $u_2$ and $T_1(u_2)$.  
As we did for Fact~\ref{fact:2.17.2019.5:52p}, we extend the rainbow path
$(u_2, u_1, u_0)$ to a rainbow path $R_{\ell - 4}
= (u_2, u_1, u_0, w_1, w_2, \dots, w_{\ell - 4})$ on $\ell - 1$ vertices, 
where for each $1 \leq j \leq \ell - 4$, 
we may choose $w_j \in U_J^{\good}$ for $J \equiv j$ (mod 3), but where this time we avoid
the $|C(u_2)| \leq 160 \lambda^{1/4} n$ many colors of $C(u_2)$, which we may do on 
account of~(\ref{eqn:Clm13.2}).  
Since $w_{\ell-4} \in U_0^{\good}$ (recall $\ell \equiv 1$ (mod 3)), 
(\ref{eqn:Clm13.2}) gives that $\deg_G^c(w_{\ell-4}, U_1^{\good}) \geq |U_1^{\good}| - 
145 \lambda^{1/4} n$, 
so from $T_1(u_2) \subseteq U_1^{\good}$, 
\begin{equation}
\label{eqn:2.20.2019.9:26a}  
\deg_G^c(w_{\ell - 4}, T_1(u_2))
\geq |T_1(u_2)| - 145 \lambda^{1/4}n 
\stackrel{(\ref{eqn:2.19.2019.10:44a})}{\geq} \tfrac{n}{10} - 145 \lambda^{1/4} n
\stackrel{(\ref{eqn:5.8.2019.1:39p})}{>}  
160 \lambda^{1/4} n  + \ell - 1
\stackrel{(\ref{eqn:2.19.2019.10:44a})}{\geq} 
|C(u_2)| + \ell - 1.     
\end{equation}  
Thus, we may choose a neighbor 
$t_1 \in N_G(w_{\ell - 4}) \cap T_1(u_2)$ where $c(\{w_{\ell - 4}, t_1\}) \not\in C(u_2)$  
differs from any color used on $R_{\ell - 4}$.  Now, $(u_2, u_1, u_0, w_1, w_2, 
\dots, w_{\ell - 4}, t_1)$ is a rainbow $\ell$-cycle $C_{\ell}$ in $(G, c)$ 
(where $c(\{u_2, t_1\}) \in C(u_2)$ but where $C(u_2)$ was used 
nowhere else on $C_{\ell}$), 
which contradicts~(\ref{eqn:overarching1}).   \hfill $\Box$  \\

\noindent {\bf Case 2 ($\ell \equiv 2$ {\rm (mod 3)}).}  
The proof is analogous to that above, where we may simplify the preamble of 
Case~1.  Here, fix a single neighbor $u_1 \in N_G(u_2, U_1^{\good})$ where 
$c(\{u_2, u_1\})$ 
(which we assume is red)  
differs from the primary color blue for $u_2$. 
We will extend the rainbow path $(u_2, u_1)$ to a rainbow
$\ell$-cycle
$C_{\ell}$,  
which will contradict~(\ref{eqn:overarching1}).   To do so, this time we define
$$
T_1(u_2) = \big\{t_1 \in N_G\big(u_2, U_1^{\good}\big): \text{ $c(\{u_2, t_1\})$ is not red}
\big\},     
$$
and again we define $C(u_2)$ to be the set of colors on edges between $u_2$ and 
$T_1(u_2)$.  
Since red is not the primary color of $u_2$, at most half the neighbors
$v_1 \in N_G(u_2, U_1^{\good})$ have a red edge with $u_2$, and so 
the final conclusions of~(\ref{eqn:2.19.2019.10:44a}) hold.    
On account of~(\ref{eqn:Clm13.2}), we may extend the rainbow path $(u_2, u_1)$ to a rainbow
path $\hat{R}_{\ell - 2} = (u_2, u_1, v_2, \dots, v_{\ell - 2})$ on $\ell-1$
vertices, where for each 
$2 \leq j \leq \ell - 2$, we may choose $v_j \in 
U_J^{\good}$ for $J \equiv j$ (mod 3), and where again we may avoid the $|C(u_2)|
\leq 160 \lambda^{1/4} n$ many colors of $C(u_2)$.  
The inequality in~(\ref{eqn:2.20.2019.9:26a}) holds for the vertex 
$v_{\ell - 2} \in U_0^{\good}$
(recall $\ell \equiv 2$ (mod 3)),     
so we may choose 
$t_1 \in N_G(v_{\ell - 2}) \cap T_1(u_2)$ where $c(\{v_{\ell - 2}, t_1\}) \not\in C(u_2)$
differs from any color used on $\hat{R}_{\ell - 2}$.  Now, $(u_2, u_1, v_2, \dots, 
v_{\ell - 2}, t_1)$ is a rainbow $\ell$-cycle $C_{\ell}$ in $(G, c)$,   
which contradicts~(\ref{eqn:overarching1}).
\end{proof}

We conclude the nearly cannonical structure of $(G, c)$ by noting that, for each 
$i \in \mathbb{Z}_3$,  distinct
good vertices $u_i \neq v_i \in U_i^{\good}$ admit distinct
primary colors.  

\begin{cor}
\label{cor:Clm13.4}  
For each $i \in \mathbb{Z}_3$ and for each $u_i \neq v_i \in U_i^{\good}$, the primary colors
$c_{u_i}$ and $c_{v_i}$ differ.  
\end{cor}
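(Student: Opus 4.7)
The plan is to argue by contradiction directly from the edge-minimality assumption~\eqref{eqn:overarching3}, without constructing a rainbow $\ell$-cycle explicitly. Suppose for some fixed $i \in \mathbb{Z}_3$ there are distinct $u_i \neq v_i \in U_i^{\good}$ with a common primary color $c_{u_i} = c_{v_i}$. I would like to exhibit an edge of $G$ whose deletion preserves both~\eqref{eqn:overarching2} and~\eqref{eqn:overarching1}, contradicting the edge-minimality~\eqref{eqn:overarching3}.

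First I would pull out a common typical neighbor in $U_{i-1}^{\good}$. By Proposition~\ref{prop:Clm13.3}, each of $u_i$ and $v_i$ has typical edges (i.e., edges of its own primary color) to all but at most $161 \lambda^{1/4} n$ vertices of $U_{i-1}^{\good}$, so the intersection
\[
T \;=\; N_G^{\typ}\bigl(u_i, U_{i-1}^{\good}\bigr) \cap N_G^{\typ}\bigl(v_i, U_{i-1}^{\good}\bigr)
\]
has size at least $\bigl|U_{i-1}^{\good}\bigr| - 322 \lambda^{1/4} n$, which is positive by~\eqref{eqn:Clm13.2} and~\eqref{eqn:5.8.2019.1:39p}. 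Pick any $u_{i-1} \in T$; then the two edges $\{u_i, u_{i-1}\}$ and $\{v_i, u_{i-1}\}$ are both colored by the common primary color $c_{u_i} = c_{v_i}$.

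Next I would check that the edge $e = \{u_i, u_{i-1}\}$ can be removed from $G$ without decreasing any $c$-degree. At the endpoint $u_i$, there are at least $|U_{i-1}^{\good}| - 161 \lambda^{1/4} n \geq 2$ other typical edges of color $c_{u_i}$ into $U_{i-1}$, so $\deg_G^c(u_i)$ is unaffected. At the endpoint $u_{i-1}$, the sibling edge $\{v_i, u_{i-1}\}$ still witnesses the color $c_{u_i} = c_{v_i}$, so $\deg_G^c(u_{i-1})$ is also unaffected. Therefore $(G - e, c)$ still satisfies the degree condition~\eqref{eqn:overarching2}; and since $G - e \subseteq G$, the graph $(G - e, c)$ inherits the property of containing no rainbow $\ell$-cycle~\eqref{eqn:overarching1}. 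This contradicts the edge-minimality~\eqref{eqn:overarching3} and completes the proof.

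The main (and only real) obstacle is simply verifying that the common typical neighbor $u_{i-1}$ exists, which reduces to the quantitative estimate in Proposition~\ref{prop:Clm13.3} together with the lower bound on $|U_{i-1}^{\good}|$ from~\eqref{eqn:Clm13.2}; once $u_{i-1}$ is in hand, the double-color-witness argument runs automatically.
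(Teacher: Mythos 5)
Your proof is correct and is essentially the paper's own argument: both hinge on Proposition~\ref{prop:Clm13.3} supplying common typical neighbors of $u_i$ and $v_i$ in $U_{i-1}^{\good}$ and then contradicting the edge-minimality assumption~\eqref{eqn:overarching3}. The paper packages the contradiction as a monochromatic $4$-cycle through two common typical neighbors (invoking its earlier remark that~\eqref{eqn:overarching3} forbids monochromatic cycles on three or more edges), while you verify the redundant edge removal directly using a single common neighbor; the difference is purely cosmetic.
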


\begin{proof}[Proof of Corollary~\ref{cor:Clm13.4}]
Fix $i \in \mathbb{Z}_3$ and fix $u_i \neq v_i \in U_i^{\good}$.    
Then 
$$
\big|N^{\typ}_G(u_i, U_{i-1}) \cap N^{\typ}_G(v_i, U_{i-1}) \big| 
\stackrel{\text{Prop.\ref{prop:Clm13.3}}}{\geq}  
|U_{i-1}| - 466 \lambda^{1/4} 
\stackrel{(\ref{eqn:Clm13.2})}{\geq}  
\big(\tfrac{1}{3} - 541 \lambda^{1/4} \big)n \stackrel{(\ref{eqn:5.8.2019.1:39p})}{\geq} 2.   
$$
If $c_{u_i} = c_{v_i}$, then 
any pair from the set above
renders a monochromatic 4-cycle, 
contradicting~(\ref{eqn:overarching3}).    
\end{proof}   

\section{Proof of Lemma~\ref{lem:ext} - Part 2: Strong Cycles and the Case 
$\ell \equiv 2$ (mod 3)}  
\label{sec:ext2}  
Continuing from the previous section, we now prepare to 
prove Lemma~\ref{lem:ext} when $\ell \equiv 2$ (mod 3).  
The central tools of this proof
are 
important observations on so-called {\it strong cycles} in the nearly cannonical
edge-colored graph $(G, c)$.    
Many of these observations will also be important later 
when we prove the case 
$\ell \equiv 1$ (mod 3) of Lemma~\ref{lem:ext}.

\subsection{Strong cycles}  
We say that a cycle 
$C_k = (u_1, \dots, u_k)$ 
(with prescribed vertex $u_1$)  
is a {\it strong} cycle if there exists $i \in \mathbb{Z}_3$ so that 
$u_1 \in U_i^{\good}$ and $u_k \in N^{\typ}_G(u_1, U_{i-1})$.  
We determine conditions under which rainbow or properly colored paths can be
extended to strong rainbow or strong properly colored cycles.

\begin{prop}
\label{prop:Clm13.5}  
Fix integers $1 \leq k < K \leq \ell$ and
fix 
$i, j \in \mathbb{Z}_3$ for which $K - k \equiv (i - 1) - j$ {\rm (mod 3)}.  
Let $P$ be a $(u_i, u_j)$-path on $k$ vertices linking $u_i \in U_i^{\good}$ and $u_j \in U_j$.  
The following statements hold:  
\begin{enumerate}
\item
If $P$ is rainbow and $c_{u_i}$-free, then $P$ may be extended
to a strong rainbow $K$-cycle $C_K$; 
\item  
If $P$ is properly colored
and 
its $u_i$-edge is not $c_{u_i}$-colored,   
then $P$ may be extended to a strong properly colored $K$-cycle $C_K$; 
\item  
When $K \equiv k$ {\rm (mod 3)}    
and $(G, c)$ admits 
a strong rainbow $k$-cycle $C_k$, then $(G, c)$
also 
admits a strong rainbow $K$-cycle $C_K$;   
\item  
When $K \equiv k$ {\rm (mod 3)}  
and $(G, c)$ admits a strong properly colored $k$-cycle $C_k$, then $(G, c)$
also 
admits a strong properly colored $K$-cycle $C_K$.   
\end{enumerate}  
\end{prop}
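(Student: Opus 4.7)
The plan is to prove statements (1) and (2) by greedy extension using the nearly canonical tripartite structure of $(G,c)$ on $U_0 \cup U_1 \cup U_2$, and then to derive (3) and (4) from (1) and (2) by deleting the closing typical edge of the given strong $k$-cycle. For (1), starting from the endpoint $u_j \in U_j$ of the path $P$, I would extend $P$ one vertex at a time, choosing the $t$-th new vertex $v_t$ from the good set $U_{j+t}^{\good}$ (indices mod $3$) and insisting that the new edge carry a color distinct from all colors previously used on the extended path and distinct from $c_{u_i}$. Since~(\ref{eqn:Clm13.2}) guarantees every $u \in U_J^{\good}$ has at least $(1/3 - 76\lambda^{1/4})n$ distinctly colored edges into $U_{J+1}^{\good}$, while at most $k + t \leq \ell$ colors are forbidden at step $t$, the first $K - k - 1$ extension steps proceed without difficulty.

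The main obstacle is the closing step. The modular hypothesis $K - k \equiv (i-1) - j \pmod{3}$ places the last new vertex $v_{K-k}$ in $U_{i-1}^{\good}$, exactly where a typical closing edge to $u_i$ must land. I would choose $v_{K-k}$ to simultaneously (a) be adjacent to the previous extension vertex $w := v_{K-k-1}$ via an edge whose color is unused on the extended path and not equal to $c_{u_i}$, and (b) lie in $N_G^{\typ}(u_i, U_{i-1})$, so that the closing edge $\{u_i, v_{K-k}\}$ is $c_{u_i}$-colored (yielding a strong cycle) and is rainbow-compatible since the rest of the cycle is $c_{u_i}$-free. Proposition~\ref{prop:Clm13.3} excludes at most $161\lambda^{1/4} n$ vertices of $U_{i-1}^{\good}$ from (b), while~(\ref{eqn:Clm13.2}) implies $w$ has at most $145\lambda^{1/4} n$ non-neighbors in $U_{i-1}^{\good}$ and at most $O(\ell)$ neighbors in $U_{i-1}^{\good}$ along a forbidden color; summing these defects still leaves $(1 - o(1))|U_{i-1}^{\good}|$ valid choices.

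For (2) the same scheme works, except that at each extension step we only need the new edge to carry a color differing from the immediately preceding edge and from $c_{u_i}$; the closing edge $\{u_i, v_{K-k}\}$ is again $c_{u_i}$-colored, and properness there is guaranteed by the $c_{u_i}$-avoidance of $\{w, v_{K-k}\}$ and by the hypothesis that the $u_i$-edge of $P$ is not $c_{u_i}$-colored. Finally, statements (3) and (4) reduce to (1) and (2): given a strong rainbow (respectively, strong properly colored) $k$-cycle $C_k = (u_1, \dots, u_k)$ with $u_1 \in U_i^{\good}$ and $u_k \in N_G^{\typ}(u_1, U_{i-1})$, deleting the closing edge $\{u_1, u_k\}$ (which is $c_{u_1}$-colored) yields a $(u_1, u_k)$-path on $k$ vertices that is rainbow and $c_{u_1}$-free, or properly colored with its $u_1$-edge not $c_{u_1}$-colored, respectively. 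Setting $j := i-1$, the hypothesis $K \equiv k \pmod{3}$ becomes $K - k \equiv (i-1) - j \equiv 0 \pmod{3}$, so (1) or (2) applies and produces the desired strong $K$-cycle.
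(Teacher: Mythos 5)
Your argument is correct and follows essentially the same route as the paper's proof: greedily extend the path through the good sets in cyclic order while avoiding $c_{u_i}$ and all previously used colors, choose the final vertex inside $N_G^{\typ}(u_i, U_{i-1})$ so that the closing edge carries the still-unused primary color $c_{u_i}$ (via Proposition~\ref{prop:Clm13.3} and the degree bounds of~(\ref{eqn:Clm13.2})), and obtain (3)--(4) from (1)--(2) by deleting the typical closing edge of the given strong $k$-cycle. The one detail you gloss over is the very first extension step, which launches from $u_j \in U_j$ rather than from a good vertex; when $u_j \in U_j^{\bad}$ the good-vertex bound you quote does not apply, but the last line of~(\ref{eqn:Clm13.2}) still gives $\deg_G^c(u_j, U_{j+1}^{\good}) \geq (\tfrac{1}{9} - 144\lambda^{1/4})n$, which is exactly how the paper handles this case.
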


\begin{proof}[Proof of Proposition~\ref{prop:Clm13.5}]
Let integers $1 \leq k < K \leq \ell$ and elements $i, j \in \mathbb{Z}_3$ be given satisfying
$K - k \equiv (i-1) - j$ (mod 3), and let $P = (u_i, \dots, u_j)$
be a $(u_i, u_j)$-path on $k$ vertices linking $u_i \in U_i^{\good}$ and $u_j \in U_j$.  
To prove Statement~(1), assume that $P = R$ is rainbow and 
$c_{u_i}$-free.  
Similarly to the proofs of Fact~\ref{fact:2.17.2019.5:52p} 
and 
Proposition~\ref{prop:Clm13.3}, 
we will extend $R$ to a $c_{u_i}$-free rainbow path 
$\tilde{R}_{K-k-1} = (u_i, \dots, u_j, v_{j+1}, \dots, v_{j + K-k-1})$ on $K-1$ vertices,  
where for each $j+1 \leq h \leq j + K-k-1$, we may choose $v_{h} \in U^{\good}_{H}$
for $H \equiv h$ (mod 3).   
We begin with the first step, where it is not guaranteed
in our hypothesis
that $u_j \in U_j$ is a good vertex.  If $u_j \in U_j^{\bad}$, 
then 
\begin{equation}
\label{eqn:2.20.2019.5:56p}  
\deg_G^c(u_j, U_{j+1}) 
\stackrel{(\ref{eqn:Clm13.2})}{\geq}   
\big(\tfrac{1}{9} - 72 \lambda^{1/4}\big) n
\qquad 
\stackrel{(\ref{eqn:Clm13.2})}{\implies}  
\qquad 
\deg_G^c\big(u_j, U_{j+1}^{\good}\big) 
\stackrel{(\ref{eqn:Clm13.2})}{\geq}   
\big(\tfrac{1}{9} - 144 \lambda^{1/4}\big) n.  
\end{equation}  
Thus, we may select $v_{j+1} \in N_G(u_j, U_{j+1}^{\good})$ 
for $\tilde{R}_{K-k-1}$ 
while 
avoiding $c_{u_i}$ and the colors of $R$.
If $u_j \in U_j^{\good}$ is a good vertex, 
then the neighborhood 
$N_G(u_j, U_{j+1}^{\good})$ 
is larger still (cf.~(\ref{eqn:Clm13.2})), and
again 
we may select 
$v_{j+1}$ for $\tilde{R}_{K-k-1}$ as described above.  
We select all remaining vertices $v_h$ for $\tilde{R}_{K-k-1}$, where 
$j+2 \leq h \leq j + K - k - 1$, in a similar fashion.
By our hypothesis $K - k \equiv (i-1) - j$ (mod 3), the terminal vertex $v_{j+K-k-1}
\in U_{i-2}^{\good}$ while the initial vertex $u_i \in U_i^{\good}$.   
Comparing Proposition~\ref{prop:Clm13.3}    
and~(\ref{eqn:Clm13.2}), we see 
\begin{equation}
\label{eqn:2.20.2019.5:59p}  
\big|N_G^{\typ} (u_i, U_{i-1})
\cap 
N_G(v_{j + K - k - 1}, U_{i-1})  
\big|
\geq |U_{i-1}| - 306 \lambda^{1/4}n
\stackrel{(\ref{eqn:Clm13.2})}{\geq}  
\big(\tfrac{1}{3} - 381 \lambda^{1/4}\big)n 
\stackrel{(\ref{eqn:5.8.2019.1:39p})}{>} 0, 
\end{equation}  
and so we may select a vertex $u_{i-1}$ from the set above
whose adjacency 
with 
$v_{j + K - k - 1}$ avoids $c_{u_i}$ and the colors of 
$\tilde{R}_{K-k-1}$. Since $c(\{u_i, u_{i-1}\}) = c_{u_i}$ is the primary color
of $u_i$, which hasn't yet been used, $C_K = (u_i, \dots, u_j, v_{j+1}, \dots, v_{j+K-k-1}, 
u_{i-1})$ is a strong rainbow $K$-cycle.

The proof of Statement~(2) is absolutely the same as that of Statement~(1).  
In particular, for the properly colored $k$-vertex path 
$P = (u_i, \dots, u_j)$ linking
$u_i \in U_i^{\good}$ and $u_j \in U_j$ whose $u_i$-edge is not $c_{u_i}$-colored,   
the proof above allows 
the segment $(u_j, v_{j+1}, \dots, v_{j+K-k-1}, u_{i-1})$ 
of 
$\tilde{R}_{K-k-1}$
to be rainbow, $c_{u_i}$-free, and to be free of the colors from $P$.  Thus, 
$C_K = (u_i, \dots, u_j, v_{j+1}, \dots, v_{j+K-k-1}, u_{i-1})$ is a strong properly colored
$K$-cycle.

Statements~(3) and~(4) now follow immediately from Statements~(1) and~(2).  
Indeed, let $C_k = (u_1, \dots, u_k)$ be a strong rainbow or properly colored $k$-cycle
where $u_1 \in U_i^{\good}$ and $u_k \in N^{\typ}(u_1, U_{i-1})$ for some $i \in \mathbb{Z}_3$.  
Ignoring the edge $\{u_1, u_k\}$, the path $P_k = (u_1, \dots, u_k)$ is 
rainbow or properly 
colored, 
where $u_k \in U_{i-1}$ assumes $j = i - 1$.  Taking $K \equiv 
k + (i-1) - (i-1) \equiv k$ (mod 3) and $K \leq \ell$, Statements~(1) or~(2) 
extend $P_k$ to a strong rainbow or properly colored $K$-cycle $C_K$.  
\end{proof}

It will be convenient to have the following corollary of 
Proposition~\ref{prop:Clm13.5} in the case $\ell \equiv 2$ (mod 3).    

\begin{cor}
\label{cor:abc}
Let $\ell \equiv 2$ {\rm (mod 3)} and fix $i \in \mathbb{Z}_3$.   
The following statements hold:  
\begin{enumerate}  
\item 
Each $u_i \in U_i^{\good}$ satisfies $N^{\spec}_G(u_i, U_{i-1}) = \emptyset$; 
\item 
Let $R = (u_i, v, w_i)$ be a rainbow path with $u_i \in U_i^{\good}$ and $w_i \in U_i$.   
Then $c_{u_i}$ appears on $R$.  In particular, 
$c(\{u_i, v\}) = c_{u_i}$
or $(G, c)$ admits 
a properly colored $\ell$-cycle $C_{\ell}$; 
\item
Let $R = (u_i, v, u_{i-1})$ be a rainbow path with $u_i \in U_i^{\good}$ and 
$u_{i-1} \in U_{i-1}^{\good}$.  Then $c_{u_i} = c_{u_{i-1}}$ or 
$c_{u_i}$ or $c_{u_{i-1}}$ appears on $R$.  As well, if 
$c(\{u_i, v\}) \neq c_{u_i}$ and $c(\{u_{i-1}, v\}) \neq c_{u_{i-1}}$, then 
$(G, c)$ admits 
a properly colored $\ell$-cycle $C_{\ell}$; 
\item  
Let $\ell \neq 5$, and let $u_i, v_i \in U_i^{\good}$ and $w_i, x_i \in U_i$ span disjoint
edges $\{u_i, w_i\}, \{v_i, x_i\} \in E(G)$.   
Then $c_{u_i}$, 
$c_{v_i}$, $c(\{u_i, w_i\})$, and $c(\{v_i, x_i\})$ can't all be distinct.  
\item  Let $\ell \neq 5$, and let $T_i \subseteq U_i^{\good}$ be a set with the property
that for all $u_i \in T_i$, there exist 
$v_i \neq w_i \in N_G(u_i, U_i)$ so that $c(\{u_i, v_i\})$, $c(\{u_i, w_i\})$, and 
$c_{u_i}$ are all distinct.  Then $|T_i| \leq 5$.  
\end{enumerate}  
\end{cor}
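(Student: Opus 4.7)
The plan is to apply Proposition~\ref{prop:Clm13.5} to carefully chosen short paths, leveraging that $\ell \equiv 2 \pmod 3$ makes the divisibility condition $K - k \equiv (i-1) - j \pmod 3$ align for the path lengths and endpoint indices that naturally arise.

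For~(1), a putative special edge $u_{i-1} \in N^{\spec}_G(u_i, U_{i-1})$ gives a trivially rainbow, $c_{u_i}$-free $2$-vertex path $(u_i, u_{i-1})$ with $k = 2$, $j = i-1$, satisfying $\ell - 2 \equiv 0 \equiv (i-1) - (i-1) \pmod 3$; Proposition~\ref{prop:Clm13.5}(1) produces a rainbow $\ell$-cycle, contradicting~(\ref{eqn:overarching1}). For~(2), the hypothesized $3$-vertex rainbow path $R = (u_i, v, w_i)$ fits $k = 3$, $j = i$, with $\ell - 3 \equiv (i-1) - i \pmod 3$; Proposition~\ref{prop:Clm13.5}(1) forbids $R$ from being $c_{u_i}$-free, and if $c(\{u_i, v\}) \neq c_{u_i}$ then $R$ is properly colored with non-primary $u_i$-edge, so Proposition~\ref{prop:Clm13.5}(2) yields a properly colored $\ell$-cycle. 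For~(3), use Proposition~\ref{prop:Clm13.3} to pick $u_{i-2} \in N^{\typ}_G(u_{i-1}, U_{i-2}) \cap U_{i-2}^{\good}$ disjoint from $R$, so that $c(\{u_{i-1}, u_{i-2}\}) = c_{u_{i-1}}$; the extended $4$-vertex path has $k = 4$, $j = i-2$, and $\ell - 4 \equiv 1 \equiv (i-1) - (i-2) \pmod 3$. If $c_{u_i} \neq c_{u_{i-1}}$ and neither appears on $R$, the extension is rainbow and $c_{u_i}$-free, so Proposition~\ref{prop:Clm13.5}(1) concludes; otherwise the hypotheses on $c(\{u_i, v\})$ and $c(\{u_{i-1}, v\})$ ensure the extension is properly colored with non-primary $u_i$-edge, so Proposition~\ref{prop:Clm13.5}(2) concludes.

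For~(4), assume the four colors are distinct and use Proposition~\ref{prop:Clm13.3} to pick $y \in N^{\typ}_G(u_i, U_{i-1}) \cap N^{\typ}_G(v_i, U_{i-1})$ disjoint from $\{w_i, x_i\}$; the $5$-vertex path $(w_i, u_i, y, v_i, x_i)$ is then rainbow with edge colors $c(\{u_i, w_i\}), c_{u_i}, c_{v_i}, c(\{v_i, x_i\})$. Since $\ell \neq 5$ and $\ell \equiv 2 \pmod 3$, we have $\ell \geq 8$ and $\ell - 5 \equiv 0 \pmod 3$. The plan is to extend this path to a rainbow $\ell$-cycle by continuing past $x_i$ through good vertices $z_1 \in U_{i+1}^{\good}, z_2 \in U_{i+2}^{\good}, \ldots$ in the canonical cycle direction, closing back through $w_i$ via a primary-colored edge and the edge $\{w_i, u_i\}$, after which we invoke an argument analogous to Proposition~\ref{prop:Clm13.5}. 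The main obstacle is that $w_i$ and $x_i$ are only in $U_i$, not necessarily in $U_i^{\good}$: hence the first extension step from $x_i$ must appeal to the global color-degree bound (\ref{eqn:theo23}) and (\ref{eqn:Clm13.2}) rather than the good-vertex structure, and the closing edge into $w_i$ must avoid color collisions with the four used colors (achieved via Proposition~\ref{prop:Clm13.3} since $w_i$'s typical neighborhood in $U_{i-1}$ is large).

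For~(5), assume $|T_i| \geq 6$ and for each $u \in T_i$ fix witness edges $\{u, a_1(u)\}, \{u, a_2(u)\} \in E(G[U_i])$ with distinct non-primary colors $\gamma_1(u) \neq \gamma_2(u)$. The goal is to find distinct $u, u' \in T_i$ and a choice $(j, j') \in \{1, 2\}^2$ so that the edges $\{u, a_j(u)\}$ and $\{u', a_{j'}(u')\}$ are vertex-disjoint and the four colors $c_u, c_{u'}, \gamma_j(u), \gamma_{j'}(u')$ are all distinct; statement~(4) then yields the contradiction. By Corollary~\ref{cor:Clm13.4}, the primary colors $\{c_u\}_{u \in T_i}$ are pairwise distinct, and for each ordered pair $(u, u')$ only a bounded number of the four $(j, j')$ choices is obstructed by color-coincidence or vertex-overlap. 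A pigeonhole over the $\binom{6}{2} = 15$ pairs, tracking forbidden configurations, produces the desired good pair. The main obstacle is the finite combinatorial case analysis ruling out the extremal configurations with $|T_i| = 5$, for which the bound is presumably sharp.
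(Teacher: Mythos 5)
Your arguments for Statements~(1)--(3) coincide with the paper's proof and are fine. The gap is in Statement~(4). Your $5$-vertex path $(w_i, u_i, y, v_i, x_i)$ with $y \in N^{\typ}_G(u_i, U_{i-1}) \cap N^{\typ}_G(v_i, U_{i-1})$ already consumes \emph{both} primary colors $c_{u_i}$ and $c_{v_i}$ (on the edges $\{u_i,y\}$ and $\{y,v_i\}$), so the strong-cycle machinery of Proposition~\ref{prop:Clm13.5} cannot be invoked at $u_i$ or $v_i$: that machinery requires the path to be $c_{u_i}$-free precisely so that the cycle can be closed with a typical edge into the good initial vertex. Closing at $w_i$ instead does not work: $w_i$ is only assumed to lie in $U_i$, not $U_i^{\good}$, so Proposition~\ref{prop:Clm13.3} gives no large typical neighborhood for it; and even setting that aside, the index arithmetic fails --- extending canonically from $x_i \in U_i$ by $\ell - 5 \equiv 0 \pmod 3$ vertices lands you back in $U_i$, so the closing edge into $w_i \in U_i$ would be a within-class edge that is not guaranteed to exist, while a typical closing edge would need the penultimate vertex in $U_{i-1}$, forcing $\ell \equiv 1 \pmod 3$. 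The paper avoids all of this by building the $6$-vertex rainbow path $(u_i, w_i, u_{i+1}, u_{i-1}, v_i, x_i)$: it detours $w_i \to u_{i+1} \to u_{i-1} \to v_i$ using a $C$-free edge into $U_{i+1}^{\good}$, a further $C$-free edge into $N^{\typ}_G(v_i, U_{i-1})$, and the typical ($c_{v_i}$-colored) edge into $v_i$, so that the whole path is rainbow, avoids $c_{u_i}$, starts at the good vertex $u_i$, and ends in $U_j$ with $j=i$, $k=6$, $\ell - 6 \equiv -1 \equiv (i-1)-i \pmod 3$; Proposition~\ref{prop:Clm13.5}(1) then closes it with the reserved color $c_{u_i}$.

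Statement~(5) is also not established. Your plan is to locate two vertex-disjoint witness edges whose two special colors together with the two primary colors are all distinct, and then invoke~(4); but the obstructed configurations cannot always be avoided by pigeonhole --- for instance, a witness of one vertex of $T_i$ may itself be another vertex of $T_i$, or the two witness edges may be forced to share an endpoint or a color. The paper does not avoid these configurations; it disposes of them by deriving a contradiction directly from Statement~(2) (a rainbow $U_i$-path $(x_i,u_i,v_i)$ missing $c_{x_i}$) in the overlapping cases, and from Statement~(4) only in the disjoint cases. Since your~(5) also rests on your unproved~(4), both parts need to be reworked along the paper's lines.
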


\begin{proof}[Proof of Corollary~\ref{cor:abc}]  
Let $\ell \equiv 2$ (mod 3) and fix $i \in \mathbb{Z}_3$.  
For Statement~(1), fix
$u_i \in U_i^{\good}$.  If $u_{i-1} \in N_G^{\spec}(u_i, U_{i-1})$, then $\{u_i, u_{i-1}\}$
is a $c_{u_i}$-free rainbow path which   
Proposition~\ref{prop:Clm13.5} 
guarantees can be extended to a strong rainbow $\ell$-cycle $C_{\ell}$
(by setting $j = i - 1$ and $k = 2$, and with $\ell \equiv 2$ (mod 3)),  
which contradicts~(\ref{eqn:overarching1}).    

For Statement~(2), let $R = (u_i, v, w_i)$ be a rainbow path with $u_i \in U_i^{\good}$
and $w_i \in U_i$.  If $R$ is $c_{u_i}$-free, then 
Proposition~\ref{prop:Clm13.5} 
guarantees that $R$ can be extended to a strong rainbow $\ell$-cycle $C_{\ell}$
(by setting $j = i$ and $k = 3$, and with $\ell \equiv 2$ (mod 3)), 
which again contradicts~(\ref{eqn:overarching1}).    
In particular, if $c(\{u_i, v\}) \neq c_{u_i}$, then 
Proposition~\ref{prop:Clm13.5} 
guarantees that $R$ can be extended to a strong properly colored $\ell$-cycle $C_{\ell}$.  

For Statement~(3), let $R = (u_i, v, u_{i-1})$ be a rainbow path with $u_i \in U_i^{\good}$
and $u_{i-1} \in U_{i-1}^{\good}$.  
Assume for contradiction that 
$R$ avoids both 
$c_{u_i} \neq c_{u_{i-1}}$.  
Since $u_{i-1} \in U_{i-1}^{\good}$ is a good vertex, 
Proposition~\ref{prop:Clm13.3} guarantees a vertex $u_{i-2} \in 
N_G(u_{i-1}, 
U_{i-2}^{\good})$
distinct from $v$ for which 
$c(\{u_{i-1}, u_{i-2}\}) = c_{u_{i-1}}$.     
Then the path $S = (u_i, v, u_{i-1}, u_{i-2})$ is rainbow (because $R$ is rainbow
and avoids $c_{u_{i-1}}$), and the path $S$ avoids $c_{u_i}$ (because 
$R$ does and because 
$c_{u_i} \neq c_{u_{i-1}}$).  As such, 
Proposition~\ref{prop:Clm13.5}   
guarantees that $S$ can be extended to a strong rainbow $\ell$-cycle $C_{\ell}$
(by setting $j = i - 2$ and $k = 4$, and with $\ell \equiv 2$ (mod 3)), 
which contradicts~(\ref{eqn:overarching1}).    
In particular, assume $c(\{u_i, v\}) \neq c_{u_i}$ and $c(\{u_{i-1},v\})
\neq c_{u_{i-1}}$.  Then $S = (u_i, v, u_{i-1}, u_{i-2})$ is proper
(because $R$ is rainbow and $c(\{u_{i-1}, v\}) \neq c_{u_{i-1}}$).  
Since 
$c(\{u_i, v\}) \neq c_{u_i}$, 
Proposition~\ref{prop:Clm13.5}   
guarantees that $S$ can be extended to a strong properly colored
$\ell$-cycle $C_{\ell}$.

For Statement~(4), 
let $\ell \neq 5$, and let $u_i, v_i \in U_i^{\good}$ and $w_i, x_i \in U_i$
span disjoint edges $\{u_i, w_i\}, \{v_i, x_i\} \in E(G)$.  Assume, on the contrary, 
that $C = \{c_{u_i}, c_{v_i}, c(\{u_i, w_i\}), c(\{v_i, x_i\})\}$ 
is a set of four distinct colors.  
Fix any $u_{i+1} \in N_G(w_i, U_{i+1}^{\good})$
where the edge $\{w_i, u_{i+1}\} \in E(G)$ is $C$-free
(which is possible by the argument in~(\ref{eqn:2.20.2019.5:56p})).  
Now, fix any
$$
u_{i-1} \in N_G(u_{i+1}, U_{i-1}) \cap N^{\typ}_G(v_i, U_{i-1}) 
$$    
where the edge $\{u_{i-1}, u_{i+1}\} \in E(G)$ is $(C \cup c(\{w_i, u_{i+1}\}))$-free 
(which is possible by the argument in~(\ref{eqn:2.20.2019.5:59p})).    
Now, $(u_i, w_i, u_{i+1}, u_{i-1}, v_i, x_i)$ is a rainbow path avoiding $c_{u_i}$, 
which Proposition~\ref{prop:Clm13.5} guarantees can be extended to a strong rainbow
$\ell$-cycle $C_{\ell}$
(by setting $j = i$ and $k = 6$, and with $\ell \equiv 2$ (mod 3)),   
which again contradicts~(\ref{eqn:overarching1}).

For Statement~(5), 
let $T_i \subseteq U_i^{\good}$ be a set with the property so described, but assume
for contradiction 
that $|T_i| \geq 6$.  
Fix $u_i \in T_i$, where we take $c_{u_i}$ to be {\sl blue}, 
and let $v_i \neq w_i \in N_G(u_i, U_i)$ be guaranteed by the definition of $T_i$, 
where we take 
$c(\{u_i, v_i\})$
to be {\sl red} and $c(\{u_i, w_i\})$ to be {\sl yellow}.  
Since $|T_i| \geq 6$, there exists $x_i \in T_i \setminus \{u_i, v_i, w_i\}$ 
where $c_{x_i}$ is neither
red nor yellow.   
Since $x_i \neq u_i$, Corollary~\ref{cor:Clm13.4}  
guarantees that $c_{x_i} \neq c_{u_i}$ can't be blue, 
so we take $c_{x_i}$ to be {\sl green}.  
Let $y_i \neq z_i \in N_G(x_i, U_i)$ be guaranteed by the definition of $T_i$.  
We now distinguish the extent to which 
$\{u_i, v_i, w_i\}$ and 
$\{x_i, y_i, z_i\}$ overlap.  \\

\noindent {\bf Case 1 ($\{u_i, v_i, w_i\} \cap \{x_i, y_i, z_i\} = \emptyset$).}  
If $c(\{x_i, y_i\})$ is yellow, then $\{u_i, v_i\}$ and 
$\{x_i, y_i\}$ 
violate
Statement~(4) 
above.  
Similarly, if $c(\{x_i, y_i\})$ is red, then $\{u_i, w_i\}$ and 
$\{x_i, y_i\}$ 
violate
the same.   
Assume neither $\{x_i, y_i\}$ nor $\{x_i, z_i\}$ is red or yellow, 
where the definition of $T_i$ 
ensures neither is green.    
At most one of these pairs can be blue, so assume $\{x_i, y_i\}$ is neither
red, yellow, green, nor blue.  Now, $\{u_i, v_i\}$ and $\{x_i, y_i\}$ violate
Statement~(4) above.  \hfill $\Box$  \\

\noindent {\bf Case 2 ($u_i \in \{y_i, z_i\}$).}  
Assume w.l.o.g.~that $u_i = z_i$.  
If $c(\{u_i, x_i\})$ 
is yellow, then $(x_i, u_i, v_i)$ 
violates Statement~(2) above.
If $c(\{u_i, x_i\})$ is not yellow, then 
it is also not green 
by the definition of $T_i$, 
and so 
$(x_i, u_i, w_i)$ violates the same Statement~(2).   
\hfill $\Box$  \\

\noindent {\bf Remark.}  Since 
$x_i \in U_i^{\good} \setminus \{u_i, v_i, w_i\}$, we do not have 
the case $x_i \in \{v_i, w_i\}$.
\hfill $\Box$  \\

\noindent {\bf Case 3 ($u_i \not\in \{y_i, z_i\};$ 
$\{v_i, w_i\} \cap \{y_i, z_i\} \neq \emptyset$).}  
Assume w.l.o.g.~that $w_i = y_i$.  If $c(\{x_i, y_i\})$ is yellow, then 
$\{u_i, v_i\}$ and $\{x_i, y_i\}$
violate Statement~(4) above.  If $c(\{x_i, y_i\})$ is red, 
then $(u_i, w_i = y_i, x_i)$
violates Statement~(2) above.
If $c(\{x_i, y_i\})$ is blue, then $(x_i, y_i = w_i, u_i)$ 
violates Statement~(2) above.   
Otherwise, 
$c(\{x_i, y_i\})$ isn't green
by the definition of $T_i$, 
so 
$\{u_i, v_i\}$
and $\{x_i, y_i\}$
violate Statement~(4) above.  
\end{proof}

\subsection{Proof of Lemma~\ref{lem:ext}:  Statement~(1) when 
$\boldsymbol{\ell \equiv 2}$ (mod 3)}  
Let $\ell \equiv 2$ (mod 3), where $\ell \neq 5$.  
The hypothesis of Statement~(1) of Lemma~\ref{lem:ext} gives that 
$\delta^c(G) \geq (n+5)/3$.    
Assume w.l.o.g.~that 
\begin{equation}
\label{eqn:2.23.2019.12:33p}  
|U_2| \leq |U_1| \leq |U_0|, \qquad \text{in which case} \qquad 
|U_2| 
\leq 
\left\lfloor \tfrac{n}{3} \right\rfloor  
\leq 
\left\lceil \tfrac{n}{3} \right\rceil  
\leq |U_0|.        
\end{equation}  
In the immediate sequel, we motivate the main approach of the proof.  

\subsubsection{Main idea of proof}  
We shall make repeated use of Statement~(5) of Corollary~\ref{cor:abc}, 
for which 
we establish the following notation.  
Fix $i \in \mathbb{Z}_3$ and $u_i \in U_i^{\good}$, and define 
\begin{equation}
\label{eqn:4.27.2019.1:06p}  
c^{\spec}(u_i, U_i) = 
\big\{ c(\{u_i, v_i\}) \not= c_{u_i}: \, v_i \in N_G(u_i, U_i) \big\}  
\end{equation}  
for the set of special (non-primary) 
colors on edges $\{u_i, v_i\} \in E_G(u_i, U_i)$ incident
to $u_i$ in $U_i$.  
By Statement~(1) of Corollary~\ref{cor:abc}, all edges
$\{u_i, u_{i-1}\} \in E_G(u_i, U_{i-1})$ are colored by $c_{u_i}$, 
and so 
\begin{multline}
\label{eqn:4.20.2019.7:05p}  
\big|c^{\spec}(u_i, U_i)\big| \geq \deg_G^c(u_i) - \deg_G^c(u_i, U_{i-1}) - \deg_G^c(u_i, 
U_{i+1})  
= 
\deg_G^c(u_i) - 1 - \deg_G^c(u_i, U_{i+1}) \\
\geq \delta^c(G) - 1 - |U_{i+1}|  
\geq \tfrac{n+5}{3} - 1 - |U_{i+1}|  
= 
\tfrac{n+2}{3} - |U_{i+1}|.  
\qquad 
\qquad 
\end{multline}  
In particular, 
when $i = 1 \in \mathbb{Z}_3$, we infer 
that every $u_1 \in U_1^{\good}$ satisfies 
\begin{equation}
\label{eqn:4.20.2019.6:56p}  
\big|c^{\spec}(u_1, U_1)\big|  
\geq \tfrac{n+2}{3} - |U_2|  
\stackrel{(\ref{eqn:2.23.2019.12:33p})}{\geq}    
\tfrac{n+2}{3} - \left\lfloor \tfrac{n}{3} \right\rfloor.
\end{equation}  
As such, 
if $n \equiv 2$ (mod 3), 
then~(\ref{eqn:4.20.2019.6:56p}) gives 
$|c^{\spec}(u_1, U_1)| \geq 2$ for every $u_1 \in U_1^{\good}$, and so 
$T_1 = U_1^{\good}$ readily 
contradicts Statement~(5) of Corollary~\ref{cor:abc}   
(because $|U_1^{\good}|$ 
from~(\ref{eqn:Clm13.2}) is much too large).    
Similarly, if $|U_2| \leq \lfloor n/3 \rfloor - 1$, 
then~(\ref{eqn:4.20.2019.6:56p}) gives 
$|c^{\spec}(u_1, U_1)| \geq 2$ for every $u_1 \in U_1^{\good}$, giving the same contradiction.  
The main idea of the current proof exploits a similar theme to the instances
$n \equiv 2$ (mod 3) or $|U_2| \leq \lfloor n/3 \rfloor - 1$, which we announce as our goal:  
\begin{multline}  
\label{eqn:4.21.2019.4:18p}  
\qquad 
\qquad 
\text{{\it we seek to determine a large set $T_i \subseteq U_i^{\good}$, 
for some $i \in \mathbb{Z}_3$,}}  \\
\text{{\it 
where every $u_i \in T_i$
satisfies $|c^{\spec}(u_i, U_i)| \geq 2$.}}  
\qquad 
\qquad 
\end{multline}  
When so, 
we contradict Statement~(5) of Corollary~\ref{cor:abc}.

\subsubsection{Supporting details}  From the discussion above, 
it suffices to consider the case $n \not\equiv 2$ (mod 3)
and $|U_2| = \lfloor n/3 \rfloor$.  As such, 
$|U_2| = |U_1| = \lfloor n/3 \rfloor$ and $|U_0| = \lceil n/3 \rceil$.  
Now, for $u_1 \in U_1^{\good}$, we define 
$$
S(u_1) = \big\{v_1 \in N_G(u_1, U_1): \, c(\{u_1, v_1\}) \neq c_{u_1} \big\}.  
$$
We refine 
the partition $U_1 = U_1^{\good} \cup U_1^{\bad}$  
from~(\ref{eqn:Clm13.2}) by subdividing 
$U_1^{\good}$ into 
\begin{equation}
\label{eqn:4.20.2019.8:05p}  
A_1 = \big\{u_1 \in U_1^{\good}: \, S(u_1) \cap U_1^{\bad} \neq \emptyset
\big\}
\qquad \text{and} \qquad 
B_1 = U_1^{\good} \setminus A_1.     
\end{equation}  
We will observe the following fact.

\begin{fact}
\label{fact:u1B1A1}  
Every $u_1 \in B_1$ satisfies $S(u_1) \subseteq A_1$.  
\end{fact}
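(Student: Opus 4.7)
The plan is to argue by contradiction: assume $u_1 \in B_1$ and $v_1 \in S(u_1) \setminus A_1$, and aim to construct a strong rainbow $\ell$-cycle $C_\ell$, contradicting~(\ref{eqn:overarching1}). Since $u_1 \in B_1$ forces $S(u_1) \subseteq U_1^{\good}$, we have $v_1 \in U_1^{\good} \setminus A_1 = B_1$. Write $\chi := c(\{u_1, v_1\})$, so $\chi \neq c_{u_1}$. The immediate target is a rainbow, $c_{u_1}$-free 3-vertex path $R = (u_1, v_1, w_1)$ with $w_1 \in U_1$: Proposition~\ref{prop:Clm13.5}(1) applied with $i = j = 1$, $k = 3$, $K = \ell$ will then extend $R$ to a strong rainbow $\ell$-cycle, since the required congruence $K - k \equiv (i-1) - j \pmod{3}$ reduces to $\ell - 3 \equiv -1 \pmod{3}$, which holds precisely when $\ell \equiv 2 \pmod{3}$.

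The crucial step will be an auxiliary claim: any edge $\{x, y\} \in E$ with $x, y \in U_1^{\good}$ has color in $\{c_x, c_y\}$. To prove this, suppose $c(\{x, y\}) \notin \{c_x, c_y\}$; choose $u_0 \in N_G^{\typ}(y, U_0^{\good})$ with $c_{u_0}$ outside the forbidden set $\{c_x, c_y, c(\{x, y\})\}$, possible by Proposition~\ref{prop:Clm13.3} (large typical neighborhood) and Corollary~\ref{cor:Clm13.4} (distinct primary colors in $U_0^{\good}$), forcing $c(\{y, u_0\}) = c_y$; then pick any $w_2 \in N_G(u_0, U_2^{\good})$, obtaining $c(\{u_0, w_2\}) = c_{u_0}$ from Corollary~\ref{cor:abc}(1). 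The 4-vertex path $(x, y, u_0, w_2)$ is then rainbow and $c_x$-free; Proposition~\ref{prop:Clm13.5}(1) with $i = 1$, $j = 2$, $k = 4$, $K = \ell$ applies since $\ell - 4 \equiv -2 \equiv (i-1) - j \pmod{3}$ under $\ell \equiv 2 \pmod{3}$, yielding a strong rainbow $\ell$-cycle, again contradicting~(\ref{eqn:overarching1}).

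Back to the main argument, applying the auxiliary claim to $\{u_1, v_1\}$ forces $\chi = c_{v_1}$, so $u_1 \notin S(v_1)$. The hypothesis $\delta^c(G) \geq (n+5)/3$, combined with Corollary~\ref{cor:abc}(1) (every edge from $v_1$ into $U_0$ is $c_{v_1}$-colored), yields $|c^{\spec}(v_1, U_1)| \geq (n+2)/3 - |U_2| \geq 1$ from $|U_2| \leq \lfloor n/3 \rfloor$ and $n \not\equiv 2 \pmod{3}$. Thus $S(v_1) \neq \emptyset$; picking $w_1 \in S(v_1)$, the assumption $v_1 \in B_1$ gives $w_1 \in U_1^{\good}$, and $w_1 \neq u_1$ holds automatically. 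Reapplying the claim to $\{v_1, w_1\}$ together with $w_1 \in S(v_1)$ forces $c(\{v_1, w_1\}) = c_{w_1}$. Corollary~\ref{cor:Clm13.4} makes $c_{v_1}, c_{w_1}, c_{u_1}$ pairwise distinct, so $R = (u_1, v_1, w_1)$ is indeed rainbow and $c_{u_1}$-free, delivering the promised contradiction.

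The hard part will be the auxiliary claim on edge colors inside $G[U_1^{\good}]$: without it, $\chi$ is unconstrained, making neither $w_1 \neq u_1$ nor the $c_{u_1}$-freeness of $R$ accessible. Both the claim and the main argument rely critically on $\ell \equiv 2 \pmod{3}$ matching the congruence requirement of Proposition~\ref{prop:Clm13.5}(1), and on the abundance of typical $U_0^{\good}$-neighborhoods together with the canonical structure furnished by Corollary~\ref{cor:abc}(1).
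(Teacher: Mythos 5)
Your proof is correct, and it reaches the contradiction by a somewhat different route than the paper's. The paper splits on whether $c(\{u_1,v_1\})$ equals $c_{v_1}$: if not, it contradicts Statement~(3) of Corollary~\ref{cor:abc} with a path $(u_1,v_1,u_0)$ into $U_0^{\good}$; if so, it finds $w_1 \in S(v_1) \subseteq U_1^{\good}$, uses Statement~(2) to force $c(\{v_1,w_1\}) = c_{u_1}$, and then contradicts Statement~(3) again with $(v_1,w_1,u_0)$. You instead first establish the auxiliary claim that every edge of $G[U_1^{\good}]$ carries one of its endpoints' primary colors --- proved from scratch via the $4$-vertex path $(x,y,u_0,w_2)$ through $U_0^{\good}$ and $U_2$ together with Proposition~\ref{prop:Clm13.5}(1), whose congruence $\ell - 4 \equiv -2 \equiv (i-1)-j \pmod 3$ does check out for $\ell \equiv 2 \pmod 3$ --- and then apply it twice to pin down $c(\{u_1,v_1\}) = c_{v_1}$ and $c(\{v_1,w_1\}) = c_{w_1}$, so that $(u_1,v_1,w_1)$ is rainbow and $c_{u_1}$-free, contradicting Statement~(2) of Corollary~\ref{cor:abc} (equivalently, Proposition~\ref{prop:Clm13.5}(1) with $k=3$). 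Your auxiliary claim subsumes the paper's Case~1 and removes the case analysis; its cost is one extra path construction that the paper avoids by citing Statement~(3) of Corollary~\ref{cor:abc}, which was itself proved by the same kind of extension. All supporting ingredients you invoke are legitimately available at this point: Proposition~\ref{prop:Clm13.3} and Corollary~\ref{cor:Clm13.4} for selecting $u_0$ with a prescribed-color-avoiding primary color, Corollary~\ref{cor:abc}(1) for the $c_{u_0}$-colored edge into $U_2$, the bound $|c^{\spec}(v_1,U_1)| \geq 1$ under the standing reduction to $n \not\equiv 2 \pmod 3$ and $|U_2| = \lfloor n/3 \rfloor$, and the observation that $u_1 \notin S(v_1)$ guarantees $w_1 \neq u_1$.
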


\begin{proof}[Proof of Fact~\ref{fact:u1B1A1}]    
Fix $u_1 \in B_1$, but assume for contradiction that 
$v_1 \in S(u_1) \cap B_1$.  
Since both $u_1 \neq v_1 \in U_1^{\good}$ are good vertices, 
Corollary~\ref{cor:Clm13.4}  
guarantees that $c_{u_1} \neq c_{v_1}$, where we will take 
$c_{u_1}$ to be {\sl red} and $c_{v_1}$ to be {\sl blue}.  
From $v_1 \in S(u_1)$, we infer that 
$c(\{u_1, v_1\})$ is not $c_{u_1} = \text{ red}$.  
We distinguish two cases.  \\

\noindent {\bf Case 1 ($c(\{u_1, v_1\}) \neq c_{v_1}$).}  
Here, 
we will take $c(\{u_1, v_1\})$ to be {\sl yellow}.  
Proposition~\ref{prop:Clm13.3} guarantees a vertex 
$u_0 \in N_G^{\typ}(v_1, U_0^{\good})$ so that 
$c(\{u_0, v_1\}) = c_{v_1}$ is blue but 
$c_{u_0}$ is neither red, blue, nor yellow.  
We take $c_{u_0}$ to be {\sl green}.  
Now, $R = (u_1, v_1, u_0)$ is a rainbow path where 
$u_1 \in U_1^{\good}$, where $u_0 \in U_0^{\good}$, but where 
neither 
$c_{u_1} \neq c_{u_0}$
(red nor green) 
appear
on $R$, which contradicts Statement~(3) of Corollary~\ref{cor:abc}.
\hfill $\Box$  \\

\noindent {\bf Case 2 ($c(\{u_1, v_1\}) = c_{v_1}$).}  
From~(\ref{eqn:4.20.2019.6:56p}), we infer that $|S(v_1)| \geq 1$, 
where $u_1 \not\in S(v_1)$ on account that $c(\{u_1, v_1\}) = c_{v_1}$
is blue.  
From $v_1 \in B_1$, we infer that $S(v_1) \cap U_1^{\bad} = \emptyset$, 
and so there exists $u_1 \neq w_1 \in S(v_1) \subseteq U_1^{\good}$.  
From $w_1 \in S(v_1)$, we infer that 
$c(\{v_1, w_1\}) \neq c_{v_1}$ is not blue.  
So the path $(u_1, v_1, w_1)$ is rainbow, and
Statement~(2) of Corollary~\ref{cor:abc} implies
that $c(\{v_1, w_1\})$ is $c_{u_1}$ (red).
Since $u_1, v_1, w_1 \in U_1^{\good}$ are good and distinct, 
Corollary~\ref{cor:Clm13.4}  
guarantees that 
the primary colors $c_{u_1}$ (red), 
$c_{v_1}$ (blue), and 
$c_{w_1}$ are distinct, where we take $c_{w_1}$ to be {\sl green}.   
Since $w_1 \in U_1^{\good}$ is good, 
Proposition~\ref{prop:Clm13.3} guarantees a vertex 
$u_0 \in N_G^{\typ}(w_1, U_0^{\good})$ so that $c_{u_0}$ is neither
$c_{v_1}$ (blue), $c(\{u_0, w_1\})$ (green), nor $c(\{v_1, w_1\})$ (red).  
Now, $(v_1, w_1, u_0)$ contradicts Statement~(3) of Corollary~\ref{cor:abc}.  
\end{proof}  

Fact~\ref{fact:u1B1A1} admits the following corollary.  

\begin{cor} 
\label{cor:4.20.2019.7:41p}  
There exist distinct $u_1, v_1, w_1 \in U_1^{\good}$
satisfying 
$S(u_1) \cap S(v_1) \cap S(w_1) \neq \emptyset$.  
\end{cor}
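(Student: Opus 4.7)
\textit{Proof plan.} The strategy is a two-case pigeonhole argument on $|A_1|$ versus $|U_1^{\bad}|$, built on the observation that $|S(u_1)| \geq 1$ for every $u_1 \in U_1^{\good}$ together with Fact~\ref{fact:u1B1A1}.

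First I would note that $|S(u_1)| \geq 1$ for each $u_1 \in U_1^{\good}$, since \eqref{eqn:4.20.2019.6:56p} together with $n \not\equiv 2 \pmod{3}$ and $|U_2| = \lfloor n/3 \rfloor$ gives $|c^{\spec}(u_1,U_1)| \geq (n+2)/3 - \lfloor n/3 \rfloor \geq 1$, and each special color is borne by at least one vertex of $S(u_1)$. I would then rule out the degenerate configurations $|U_1^{\bad}| = 0$ and $|A_1| = 0$: in either case $A_1 = \emptyset$ and $B_1 = U_1^{\good}$, so Fact~\ref{fact:u1B1A1} would force $S(u_1) \subseteq A_1 = \emptyset$ for every $u_1 \in U_1^{\good}$, contradicting the previous step. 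Hence $|A_1|, |U_1^{\bad}| \geq 1$.

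With those preliminaries in hand, I would split on the threshold $|A_1|$ versus $2|U_1^{\bad}|$. If $|A_1| \geq 2|U_1^{\bad}| + 1$, then choosing for each $u_1 \in A_1$ some witness $v_1(u_1) \in S(u_1) \cap U_1^{\bad}$ defines a map $A_1 \to U_1^{\bad}$, and pigeonhole yields some $v_1 \in U_1^{\bad}$ with at least $\lceil (2|U_1^{\bad}|+1)/|U_1^{\bad}| \rceil \geq 3$ preimages in $A_1 \subseteq U_1^{\good}$; these provide the desired triple. Otherwise $|A_1| \leq 2|U_1^{\bad}|$, and using \eqref{eqn:Clm13.2} one obtains $|B_1| \geq ((1/3) - 219 \lambda^{1/4})n$ while $|A_1| \leq 144 \lambda^{1/4} n$. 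Fact~\ref{fact:u1B1A1} supplies, for each $u_1 \in B_1$, some $v_1(u_1) \in S(u_1) \cap A_1$, giving a map $B_1 \to A_1$; the choice of $\lambda_0$ in \eqref{eqn:5.8.2019.1:39p} ensures the ratio $|B_1|/|A_1|$ vastly exceeds $3$, so pigeonhole again produces a common element of $A_1$ lying in $S(u_1)$ for at least three distinct $u_1 \in B_1 \subseteq U_1^{\good}$.

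The argument is essentially routine counting, so I do not anticipate a substantive obstacle. The only care point is handling the degenerate configurations at the outset, which follows immediately from Fact~\ref{fact:u1B1A1} combined with the lower bound $|S(u_1)| \geq 1$.
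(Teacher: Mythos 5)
Your proposal is correct and follows essentially the same route as the paper: both arguments rest on Fact~\ref{fact:u1B1A1}, the bound $|S(u_1)|\geq 1$ from~(\ref{eqn:4.20.2019.6:56p}), and a two-case pigeonhole producing either a triple in $A_1$ meeting at a vertex of $U_1^{\bad}$ or a triple in $B_1$ meeting at a vertex of $A_1$. The only cosmetic difference is the case-splitting threshold ($|A_1|$ versus $2|U_1^{\bad}|$ in your version, $|B_1|$ versus $2|A_1|$ in the paper's), which leads to the same two sub-arguments.
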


\begin{proof}[Proof of Corollary~\ref{cor:4.20.2019.7:41p}]
Define the auxiliary directed graph 
$\vec{\Gamma} = (U_1, \vec{E})$ by the rule
that for each $(u_1, v_1) \in U_1 \times U_1$, we put 
$(u_1, v_1) \in \vec{E}$ if, and only if, $v_1 \in S(u_1)$.  In this notation, 
$S(u_1) = N^+_{\vec{\Gamma}}(u_1)$.     
We now distinguish
two cases.  \\

\noindent {\bf Case 1 ($|B_1| > 2|A_1|$).}  
For the bipartition $A_1 \cup B_1$  
(cf.~(\ref{eqn:4.20.2019.8:05p})), we infer 
$$
\sum_{a_1 \in A_1} \big|N^-_{\vec{\Gamma}}(a_1) \cap B_1\big|
= 
\sum_{b_1 \in B_1} \big|N^+_{\vec{\Gamma}}(b_1) \cap A_1\big|  
= 
\sum_{b_1 \in B_1} |S(b_1) \cap A_1|    
\stackrel{\text{Fct.\ref{fact:u1B1A1}}}{=}  
\sum_{b_1 \in B_1} |S(b_1)|
\stackrel{(\ref{eqn:4.20.2019.6:56p})}{\geq}  
|B_1| > 2 |A_1|.  
$$
By averaging, there exists 
$\bar{a}_1 \in A_1$ which satisfies
$|N_{\vec{\Gamma}}^-(\bar{a}_1) \cap B_1| \geq 3$, so let 
$b_1, b_1', b_1'' \in N_{\vec{\Gamma}}^-(\bar{a}_1)$.  
Then 
$$
\bar{a}_1 \in
N_{\Gamma}^+(b_1) 
\cap 
N_{\Gamma}^+\big(b_1'\big) 
\cap 
N_{\Gamma}^+\big(b_1''\big) 
= 
S(b_1) \cap S\big(b_1'\big) \cap S\big(b_1''\big), 
$$
and so $S(b_1) \cap S(b_1') \cap S(b_1'') \neq \emptyset$.  \hfill $\Box$  \\

\noindent {\bf Case 2 ($|B_1| \leq 2|A_1|$).}  
For the bipartition $A_1 \cup U_1^{\bad}$ (recall $A_1 \subseteq U_1^{\good}$), 
we infer 
\begin{equation}  
\label{eqn:4.21.2019.12:18a}  
\sum_{u_1 \in U_1^{\bad}} 
\big|N^-_{\vec{\Gamma}}(u_1) \cap A_1\big|  
= 
\sum_{a_1 \in A_1} 
\big|N^+_{\vec{\Gamma}}(a_1) \cap U_1^{\bad}\big|  
= 
\sum_{a_1 \in A_1} 
\big|S(a_1) \cap U_1^{\bad}\big|  
\stackrel{\text{def}}{\geq}  
|A_1|, 
\end{equation}  
where we used the definition of $A_1$  
from~(\ref{eqn:4.20.2019.8:05p}).  Moreover, 
from the bipartition $U_1^{\good} = A_1 \cup B_1$, we infer   
\begin{multline}  
\label{eqn:4.21.2019.12:03p}
3|A_1| \geq |A_1| + |B_1| = \big|U_1^{\good}\big|
\stackrel{(\ref{eqn:Clm13.2})}{\geq}
\big(\tfrac{1}{3} - 75\lambda^{1/4}\big) n   \\
\stackrel{(\ref{eqn:5.8.2019.1:39p})}{\geq} 
648 \lambda^{1/4} n  
\stackrel{(\ref{eqn:Clm13.2})}{\geq}
9 \big|U_1^{\bad}\big| 
\qquad 
\implies \qquad |A_1|
\geq 3 \big|U_1^{\bad}\big|.    
\end{multline}  
Combining~(\ref{eqn:4.21.2019.12:18a})   
and~(\ref{eqn:4.21.2019.12:03p}) yields 
$\sum_{u_1 \in U_1^{\bad}} 
|N^-_{\vec{\Gamma}}(u_1) \cap A_1|  
\geq 
3|U_1^{\bad}|$, 
and so an average vertex $\bar{u}_1 \in U_1^{\bad}$
satisfies $|N_{\vec{\Gamma}}^-(\bar{u}_1) \cap A_1| \geq 3$.   
Let $a_1, a_1', a_1'' \in N_{\vec{\Gamma}}^-(\bar{u}_1)$, in which case 
$$
\bar{u}_1 \in
N_{\Gamma}^+(a_1) 
\cap 
N_{\Gamma}^+\big(a_1'\big) 
\cap 
N_{\Gamma}^+\big(a_1''\big) 
= 
S(a_1) \cap S\big(a_1'\big) \cap S\big(a_1''\big), 
$$
and so $S(a_1) \cap S(a_1') \cap S(a_1'') \neq \emptyset$. 
\end{proof}

For the remainder of the proof, we fix
distinct $u_1, v_1, w_1 \in U_1^{\good}$ guaranteed by 
Corollary~\ref{cor:4.20.2019.7:41p}.  We also fix 
an element $x_1 \in S(u_1) \cap S(v_1) \cap S(w_1)$.   
We garner the following useful corollary.

\begin{cor}
\label{cor:4.21.2019.2:26p}  
The coloring $c$ is constant 
on the edges $E_G(x_1, U_1)$.  
\end{cor}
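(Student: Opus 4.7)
The plan is to argue by contradiction: suppose some $y_1 \in N_G(x_1, U_1)$ has $d := c(\{x_1, y_1\}) \neq a := c(\{x_1, u_1\})$. First I would apply Statement~(2) of Corollary~\ref{cor:abc} to the path $(u_1, x_1, y_1)$. Since $a \neq d$ this path is rainbow, and since $u_1 \in U_1^{\good}$ while $y_1 \in U_1$, Statement~(2) forces $c_{u_1}$ to appear on it; but $x_1 \in S(u_1)$ gives $a \neq c_{u_1}$, so we are driven to $d = c_{u_1}$.

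Next I would push this constraint onto the edges $\{x_1, v_1\}$ and $\{x_1, w_1\}$. If $c(\{x_1, v_1\}) \neq d$, then $(v_1, x_1, y_1)$ is rainbow; Statement~(2), together with $x_1 \in S(v_1)$ (which yields $c(\{v_1, x_1\}) \neq c_{v_1}$), forces $d = c_{v_1}$, contradicting $d = c_{u_1}$ via Corollary~\ref{cor:Clm13.4}. Hence $c(\{x_1, v_1\}) = d = c_{u_1}$, and the identical argument applied with $w_1$ in place of $v_1$ yields $c(\{x_1, w_1\}) = c_{u_1}$.

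Finally, I would apply Statement~(2) to the rainbow path $(u_1, x_1, v_1)$ read from the good endpoint $v_1$: its colors are $a$ and $c_{u_1}$, and since $c(\{v_1, x_1\}) = c_{u_1} \neq c_{v_1}$, Statement~(2) forces $a = c_{v_1}$. Now the path $(w_1, x_1, u_1)$ carries colors $c_{u_1}$ and $c_{v_1}$, which are distinct by Corollary~\ref{cor:Clm13.4}, so it is rainbow; Statement~(2) applied with $w_1$ at the good endpoint demands that $c_{w_1}$ appear on it, yet both $c_{u_1}$ and $c_{v_1}$ differ from $c_{w_1}$ by Corollary~\ref{cor:Clm13.4}, a contradiction. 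The only real bookkeeping obstacle will be the degenerate possibilities $y_1 \in \{v_1, w_1\}$, where one of the intermediate rainbow paths collapses to a repeated vertex; in each such instance the symmetric variant of the argument (permuting the roles among $u_1, v_1, w_1$) recovers the same contradiction, so these cases should not cause any genuine difficulty.
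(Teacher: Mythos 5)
Your proof is correct and uses the same machinery as the paper's: Statement~(2) of Corollary~\ref{cor:abc} to force each edge at $x_1$ to carry the primary color of a good endpoint, and Corollary~\ref{cor:Clm13.4} to make those primary colors collide in a contradiction. The paper merely organizes the deductions in a different order (first establishing $c(\{u_1,x_1\})=c(\{v_1,x_1\})=c(\{w_1,x_1\})$, then handling a general $y_1$), but the argument is essentially identical, and your treatment of the degenerate cases $y_1\in\{v_1,w_1\}$ (which in fact only trivialize part of your second step) is fine.
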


\begin{proof}[Proof of Corollary~\ref{cor:4.21.2019.2:26p}]
We first show that 
\begin{equation}  
\label{eqn:4.21.2019.2:45p}  
c(\{u_1, x_1\}) = c(\{v_1, x_1\}) = c(\{w_1, x_1\}).  
\end{equation}  
For that, 
since $u_1, v_1, w_1 \in U_1^{\good}$ are distinct good vertices, 
Corollary~\ref{cor:Clm13.4} guarantees that $c_{u_1}$, $c_{v_1}$, and $c_{w_1}$
are distinct, so we   
take $c_{u_1}$ to be {\sl red}, $c_{v_1}$ to be {\sl blue}, and 
$c_{w_1}$ to be {\sl yellow}.  
Assume, on the contrary, that $c(\{u_1, x_1\}) \neq c(\{v_1, x_1\})$.  
Then $(u_1, x_1, v_1)$ is a rainbow $U_1$-path 
where $u_1 \in U_1^{\good}$
is a good vertex, so Statement~(2) of Corollary~\ref{cor:abc}
guarantees that $c(\{x_1, v_1\})$ is $c_{u_1} = \text{red}$.  
Applying the same argument to $(v_1, x_1, u_1)$, we infer
that $c(\{u_1, x_1\})$ is $c_{v_1} = \text{blue}$.  
Now, $c_{w_1} = \text{yellow}$ appears on neither 
$(w_1, x_1, u_1)$ 
nor $(w_1, x_1, v_1)$ (since $x_1 \in S(w_1)$ guarantees
that $c(\{w_1, x_1\})$ is not $c_{w_1} = \text{yellow}$).  
Since $w_1 \in U_1^{\good}$ is a good vertex, Statement~(2) of 
Corollary~\ref{cor:abc} guarantees that both 
$(w_1, x_1, u_1)$ and $(w_1, x_1, v_1)$ are monochromatic, and so 
$c(\{w_1, x_1\})$ is both red and blue, a contradiction.  

Corollary~\ref{cor:4.21.2019.2:26p}
now easily follows from~(\ref{eqn:4.21.2019.2:45p}), where we take
that common color to be {\sl green}.    
By the argument above, 
any edge $\{x_1, y_1\} \in E_G(x_1, U_1)$ that isn't colored green
must be colored each of red, blue, and yellow, which isn't possible.  
\end{proof}    

\subsubsection{Finale}  
We return to our goal in~(\ref{eqn:4.21.2019.4:18p}).    
Let $u_1, v_1, w_1 \in U_1^{\good}$ and $x_1 \in S(u_1) \cap S(v_1)
\cap S(w_1)$ be fixed from the previous subsection, where 
all of $E_G(x_1, U_1)$ is colored {\sl green}, which is the only color
from before which 
we now need to reference.  
Then $E_G(x_1, U_1 \cup U_2)$ admits at most $|U_2| + 1$ colors, 
the set of which we call $C = C(x_1, U_1, U_2)$.  
As such, 
the number of non-$C$ colors on $E_G(x_1, U_0)$ is at least 
$$
\deg_G^c(x_1) - \deg_G^c(x_1, U_1) - \deg_G^c(x_1, U_2)
\geq 
\delta^c(G) - 1 - |U_2| 
\geq 
\tfrac{n+5}{3} - 1 - |U_2|
= 
\tfrac{n+2}{3} - |U_2|
\stackrel{(\ref{eqn:2.23.2019.12:33p})}{\geq}    
\tfrac{n+2}{3} - \left\lfloor \tfrac{n}{3} \right\rfloor,    
$$
which is positive.  
Fix $u_0 \in N_G(x_1, U_0)$ where $c(\{u_0, x_1\}) \not\in C$.  
In particular, 
$c(\{u_0, x_1\})$ is not green, and 
we take $c(\{u_0, x_1\})$ to be {\sl purple}.  
(It won't matter if $c(\{u_0, x_1\})$ appeared in the previous subsection, 
so long as $c(\{u_0, x_1\})$ is not green.)  
Define 
\begin{equation}
\label{eqn:4.21.2019.4:30p}  
T_0 = \big\{v_0 \in U_0^{\good}: \, v_0 \neq u_0, \, 
c_{v_0} \neq c(\{u_0, x_1\}) = \text{purple}, \, c_{v_0} \neq \text{green}\big\},   
\end{equation}  
where 
Corollary~\ref{cor:Clm13.4}  
guarantees 
\begin{equation}
\label{eqn:4.27.2019.1:33p}  
|T_0| \geq \big|U_0^{\good}\big| - 3 
\stackrel{(\ref{eqn:Clm13.2})}{\geq}    
\big(\tfrac{1}{3} - 75 \lambda^{1/4}\big)n  - 3  
= \Omega(n).  
\end{equation}  
We make the following critical observation.

\begin{obs}  
\label{obs:4.21.2019.4:31p}  
An edge
$\{v_0, x_1\} \in E_G(x_1, T_0)$ must be colored $c_{v_0}$.
\end{obs}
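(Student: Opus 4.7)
My plan is to fix $v_0 \in T_0$ with $\{v_0, x_1\} \in E(G)$ and to argue by contradiction, splitting on whether or not $c(\{v_0, x_1\})$ equals the color $c(\{u_0, x_1\})$, which we name \emph{purple}. In the easier sub-case $c(\{v_0, x_1\}) \neq$ purple, the path $R = (v_0, x_1, u_0)$ is rainbow, has $v_0 \in U_0^{\good}$, and has both endpoints in $U_0$. Statement~(2) of Corollary~\ref{cor:abc}, applied with $i = 0$, will force $c_{v_0}$ to appear on $R$; since $c(\{x_1, u_0\})$ is purple and $c_{v_0}$ is not (by the definition~$(\ref{eqn:4.21.2019.4:30p})$ of $T_0$), the only remaining possibility is $c(\{v_0, x_1\}) = c_{v_0}$, which is the desired conclusion.

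The main obstacle is the opposite sub-case $c(\{v_0, x_1\}) =$ purple. I will rule this out by exploiting the triple $u_1, v_1, w_1 \in U_1^{\good}$ furnished by Corollary~\ref{cor:4.20.2019.7:41p}, together with the crucial fact from Corollary~\ref{cor:4.21.2019.2:26p} that each of the edges $\{u_1, x_1\}, \{v_1, x_1\}, \{w_1, x_1\}$ shares the common color \emph{green} (distinct from purple by the choice of $u_0$). For each $y_1 \in \{u_1, v_1, w_1\}$ the path $(y_1, x_1, v_0)$ uses only green and purple and is therefore rainbow; Statement~(3) of Corollary~\ref{cor:abc}, applied with $i = 1$, will then yield one of the alternatives $c_{y_1} = c_{v_0}$, $c_{y_1}$ appears on the path, or $c_{v_0}$ appears on the path.

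Each of these alternatives will be dispatched easily: the last is forbidden outright by the definition of $T_0$, and the middle collapses to $c_{y_1} =$ purple because $x_1 \in S(y_1)$ forces $c_{y_1} \neq c(\{y_1, x_1\}) =$ green. Consequently, every $c_{y_1}$ with $y_1 \in \{u_1, v_1, w_1\}$ must lie in the two-element set consisting of purple and $c_{v_0}$. But Corollary~\ref{cor:Clm13.4} guarantees that $c_{u_1}, c_{v_1}, c_{w_1}$ are pairwise distinct, so three distinct values cannot all lie in a two-element set — the pigeonhole contradiction that closes the proof. This also clarifies why the preceding construction insisted on a triple from Corollary~\ref{cor:4.20.2019.7:41p} rather than a single $U_1^{\good}$ vertex: two candidates would not be able to overfill a set of size two.
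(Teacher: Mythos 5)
Your proof is correct and follows essentially the same route as the paper: the same case split on whether $c(\{v_0,x_1\})$ equals purple, Statement~(2) of Corollary~\ref{cor:abc} for the first case, and the triple $u_1,v_1,w_1$ with Corollary~\ref{cor:Clm13.4} and Statement~(3) of Corollary~\ref{cor:abc} for the second. The only (cosmetic) difference is that the paper first selects, by distinctness of the three primary colors, a single $y_1$ with $c_{y_1}$ neither purple nor $c_{v_0}$ and contradicts Statement~(3) once, whereas you apply Statement~(3) to all three paths and finish by pigeonhole.
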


\begin{proof}[Proof of Observation~\ref{obs:4.21.2019.4:31p}]  
For a fixed $\{v_0, x_1\} \in E_G(x_1, T_0)$, we distinguish two cases.  \\

\noindent {\bf Case 1 ($c(\{v_0, x_1\}) \neq c(\{u_0, x_1\}) = \text{{\rm 
purple}}$).}  
Here, 
$(v_0, x_1, u_0)$ is a rainbow path where $v_0 \in T_0 \subseteq
U_0^{\good}$ is a good vertex.  
Statement~(2) of Corollary~\ref{cor:abc} guarantees that $c_{v_0}$ appears on 
$(v_0, x_1, u_0)$, and since $c_{v_0} \neq c(\{u_0, x_1\}) = \text{purple}$ holds 
by the 
definition of $T_0$, we must have $c(\{v_0, x_1\}) = c_{v_0}$.  
\hfill $\Box$   \\

\noindent {\bf Case 2 ($c(\{v_0, x_1\}) = c(\{u_0, x_1\}) = 
\text{{\rm purple}}$).}  
Among the fixed distinct 
vertices $u_1, v_1, w_1 \in U_1^{\good}$ above, 
Corollary~\ref{cor:Clm13.4} guarantees that 
at most one of the distinct colors $c_{u_1}$, $c_{v_1}$, $c_{w_1}$  
can equal $c(\{u_0, x_1\}) = c(\{v_0, x_1\}) = \text{purple}$, 
and at most one of $c_{u_1}$, $c_{v_1}$, $c_{w_1}$ can equal $c_{v_0}$.  
Assume w.l.o.g.~that
$$
c_{v_0} \neq c_{u_1} \neq c(\{u_0, x_1\}) = c(\{v_0, x_1\}) = \text{purple}.  
$$
Now, the path $(u_1, x_1, v_0)$ is a green-purple rainbow path
where $u_1 \in U_1^{\good}$ and $v_0 \in T_0 \subseteq U_0^{\good}$
are good vertices satisfying $c_{u_1} \neq c_{v_0}$.  
Statement~(3) guarantees that one of $c_{v_0} \neq c_{u_1}$ appears on 
$(u_1, x_1, v_0)$, but neither do.  Indeed, $c_{v_0}$ is neither
green nor purple by the definition of $T_0$ 
(cf.~(\ref{eqn:4.21.2019.4:30p})),   
and $c_{u_1}$ is not green by $x_1 \in S(u_1)$ and it is not purple
by our choice above.  
\end{proof}

We now conclude the proof of Statement~(1) of Lemma~\ref{lem:ext} when $\ell \equiv 2$ (mod 3).  
Fix a vertex $v_0 \in T_0$.  
By combining Statement~(1) of Corollary~\ref{cor:abc} with 
Observation~\ref{obs:4.21.2019.4:31p},   
we conclude that 
all edges $E_G(v_0, U_2 \cup \{x_1\})$ are colored the single primary color 
$c_{v_0}$.  
However distinctly the edges $E_G(v_0, U_1 \setminus \{x_1\})$ are colored, 
the 
edges $E_G(v_0, U_1 \cup U_2)$ are colored with at most 
$1 + (|U_1| - 1) = |U_1| = \lfloor n/3 \rfloor$ 
many colors, one of which is the primary color $c_{v_0}$.  
(Recall that it suffices to consider the case $|U_2| = |U_1| = \lfloor n/3 \rfloor$.)  
All remaining colors incident to $v_0$ are special and 
are applied to $E_G(v_0, U_0)$, the number of which is precisely
given by the parameter $|c^{\spec}(v_0, U_0)|$ from~(\ref{eqn:4.27.2019.1:06p}).
Altogether, we conclude    
\begin{equation}
\label{eqn:4.27.2019.1:50p}  
\big|c^{\spec}(v_0, U_0)\big| 
\geq 
\deg_G^c(v_0, U_0) - 
|c(E_G(v_0, U_1 \cup U_2)|  
\geq
\delta^c(G) - \big\lfloor \tfrac{n}{3} \big\rfloor   
\geq 
\tfrac{n+5}{3} - \big\lfloor \tfrac{n}{3} \big\rfloor  
\geq \tfrac{5}{3},   
\end{equation}  
and therefore 
$|c^{\spec}(v_0, U_0)| 
\geq 2$.
Now, 
(\ref{eqn:4.27.2019.1:33p})   
and~(\ref{eqn:4.27.2019.1:50p}) together contradict Statement~(5) of Corollary~\ref{cor:abc}.   

\subsection{Proof of Lemma~\ref{lem:ext}: Statement~(2) when $\boldsymbol{\ell \equiv 2}$ (mod 3)}  
Let $\ell \equiv 2$ (mod 3).  
The hypothesis of Statement~(2) of Lemma~\ref{lem:ext} 
gives that $\delta^c(G) \geq (n+4)/3$  
(cf.~(\ref{eqn:2.23.2019.1:07p})).     
We again assume
w.l.o.g.~that~(\ref{eqn:2.23.2019.12:33p}) holds, and we   
want to conclude
that $(G, c)$ admits a properly colored $\ell$-cycle $C_{\ell}$.   
\begin{equation}
\label{eqn:2.23.2019.4:47p}  
\text{{\it We assume, on the contrary, 
that $(G, c)$ does not admit a properly colored $\ell$-cycle $C_{\ell}$.}}  
\end{equation}  
Our assumption in~(\ref{eqn:2.23.2019.4:47p}) will guarantee vertices   
$x_1 \in U_1^{\good}$ 
and $y_1, z_1 \in U_1$, 
where 
$(x_1, y_1, z_1)$ is a rainbow $U_1$-path 
satisfying $c(\{x_1, y_1\}) \neq c_{x_1}$.  Then~(\ref{eqn:2.23.2019.4:47p})   
contradicts 
Statement~(2) of Corollary~\ref{cor:abc}.

We begin our work with an observation.  
Fix an auxiliary vertex $u_0 \in U_0^{\good}$, where   
Statement~(1) of Corollary~\ref{cor:abc} guarantees that $E_G(u_0, U_2)$ is colored
only with $c_{u_0}$.  
We observe that 
\begin{equation}
\label{eqn:4.27.2019.3:45p}  
E_G(u_0, U_0) \text{ is also colored only with $c_{u_0}$}.
\end{equation} 
To see~(\ref{eqn:4.27.2019.3:45p}),   
suppose $v_0 \in N_G(u_0, U_0)$ admits $c(\{u_0, v_0\}) \neq c_{u_0}$.
Let $u_1 \in N_G(v_0, U_1^{\good})$  
have color $c(\{v_0, u_1\}) \neq c(\{u_0, v_0\})$, where we used
$\deg^c_G(v_0, U^{\good}_1) \geq ((1/9) - 144 \lambda^{1/4})n$  
implicit in~(\ref{eqn:Clm13.2}).   
Statement~(1) of 
Corollary~\ref{cor:abc} guarantees that $c(\{v_0, u_1\}) = c_{u_1}$.
As such, the number $|c^{\spec}(u_1, U_1)|$ of special colors incident to $u_1$ in $U_1$
satisfies 
\begin{multline}  
\label{eqn:2.23.2019.5:31p}  
\big|c^{\spec}(u_1, U_1)\big| \geq \deg_G^c(u_1) - 
\deg_G^c(u_1, U_0) - \deg_G^c(u_1, U_2)  
\geq 
\delta^c(G) - 
\deg_G^c(u_1, U_0) - \deg_G^c(u_1, U_2)  \\ 
= 
\delta^c(G) - 
1 - \deg_G^c(u_1, U_2)  
\geq 
\tfrac{n+4}{3} - 
1 - |U_2|  
\stackrel{(\ref{eqn:2.23.2019.12:33p})}{\geq}    
\tfrac{n+1}{3} - \big\lfloor \tfrac{n}{3} \big\rfloor \geq \tfrac{1}{3}, 
\end{multline}  
so fix $v_1 \in N_G(u_1, U_1)$ where $c(\{u_1, v_1\}) \neq c_{u_1}$.  
Now, $P = (u_0, v_0, u_1, v_1)$ is a properly colored path where $c(\{u_0, v_0\}) \neq c_{u_0}$.  
Proposition~\ref{prop:Clm13.5}  
guarantees (with $i = 0$, $j = 1$, $k = 4$, and $\ell \equiv 2$ (mod 3))
that $P$ 
can be extended to a strong rainbow $\ell$-cycle $C_{\ell}$, 
contradicting~(\ref{eqn:2.23.2019.4:47p}).    
This proves~(\ref{eqn:4.27.2019.3:45p}).

We choose the first promised vertex
$x_1 \in U_1^{\good}$ arbitrarily, where the auxiliary vertex
$u_0 \in U_0^{\good}$ above is still fixed.  
To choose the second promised vertex $y_1 \in U_1$, define 
\begin{equation}
\label{eqn:2.25.2019.1:14p}  
A_{u_0} = \{u_1 \in N_G(u_0, U_1): \, c(\{u_0, u_1\}) \neq c_{u_0} \}     
\text{ and }  
B_{x_1} = \{v_1 \in N_G(x_1, U_1): \, c(\{x_1, v_1\}) \neq c_{x_1} \}.  
\end{equation}  
(The set $B_{x_1}$ is the same as $S(x_1)$ from the previous subsection.)  
Then $A_{u_0} \cup B_{x_1} \subseteq U_1$, and so 
\begin{equation}
\label{eqn:4.28.2019.1:37p}  
|A_{u_0} \cap B_{x_1}| = 
|A_{u_0}| + |B_{x_1}| - |A_{u_0} \cup B_{x_1}|  
\geq 
|A_{u_0}| + |B_{x_1}| - |U_1|.  
\end{equation}  
From our observation above (cf.~(\ref{eqn:4.27.2019.3:45p})), 
all of $E_G(u_0, U_0 \cup U_2)$ is colored
with $c_{u_0}$, and therefore $|A_{u_0}| \geq \deg_G^c(u_0) - 1$.  
Since $x_1 \in U_1^{\good}$ is a good vertex, Statement~(1) of Corollary~\ref{cor:abc}
guarantees that all of $E_G(x_1, U_0)$ is 
colored with $c_{x_1}$, and therefore 
$$
|B_{x_1}| \geq \deg_G^c(x_1) - \deg_G^c(x_1, U_0) - \deg_G^c(x_1, U_2)
\geq \deg_G^c(x_1) - 1 - |U_2|.  
$$
Returning to~(\ref{eqn:4.28.2019.1:37p}), we conclude   
\begin{multline*}  
|A_{u_0} \cap B_{x_1}| \geq |A_{u_0}| + |B_{x_1}| - |U_1|
\geq \deg_G^c(u_0) + \deg_G^c(x_1) - 2 - |U_1| - |U_2|  \\
= 
\deg_G^c(u_0) + \deg_G^c(x_1) - 2 - (n - |U_0|)  
\geq 2 \delta^c(G) - 2 - n + |U_0| \\
\geq 2 \big(\tfrac{n+4}{3}\big) - 2 - n + |U_0|  
= \tfrac{2n+2}{3}  - n + |U_0|  
\stackrel{(\ref{eqn:2.23.2019.12:33p})}{\geq}  
\tfrac{2n+2}{3}  - n + \big\lceil \tfrac{n}{3} \big\rceil
\geq 
\tfrac{2}{3}.  
\end{multline*}  
Fix $y_1 \in A_{u_0} \cap B_{x_1}$ arbitrarily.

To choose the third promised vertex $z_1 \in U_1$, we make a couple observations.  
First, we observe that the path $(x_1, y_1, u_0)$
must be monochromatic.  Indeed, 
since $y_1 \in A_{u_0} \cap B_{x_1}$, 
we infer from~(\ref{eqn:2.25.2019.1:14p}) that 
$c(\{u_0, y_1\}) \neq c_{u_0}$ 
and 
$c(\{x_1, y_1\}) \neq c_{x_1}$.   
Thus, if $(x_1, y_1, u_0)$ were rainbow, 
then Statement~(3) of Corollary~\ref{cor:abc} would guarantee that $(G, c)$ admits a 
properly colored $\ell$-cycle $C_{\ell}$, 
contradicting~(\ref{eqn:2.23.2019.4:47p}).    
Henceforth, we take $c(\{u_0, y_1\}) = c(\{x_1, y_1\})$ to be {\sl blue}.  
Second, 
we observe that 
\begin{equation}
\label{eqn:4.28.2019.2:48p}  
\text{all of $E_G(y_1, U_0)$ is colored by $c(\{u_0, y_1\}) = c(\{x_1, y_1\}) = \text{blue}$}.  
\end{equation}  
Indeed, suppose $\{v_0, y_1\} \in E_G(y_1, U_0)$ admitted $c(\{v_0, y_1\}) \neq c(\{u_0, y_1\}) 
= \text{blue}$.  
Then 
$(u_0, y_1, v_0)$ is a rainbow path
with $c(\{u_0, y_1\}) \neq c_{u_0}$
(because $y_1 \in A_{u_0}$ 
from~(\ref{eqn:2.25.2019.1:14p})).    
Statement~(2) of Corollary~\ref{cor:abc}
then 
guarantees that $(G, c)$ admits a properly colored $\ell$-cycle $C_{\ell}$,   
again 
contradicting~(\ref{eqn:2.23.2019.4:47p}).    
Now, all of $E_G(y_1, U_0 \cup \{x_1\})$ is colored blue, and so the number of 
non-blue colors of $E_G(y_1, U_1)$ is at least 
\begin{multline*}  
\deg_G^c(y_1) - \deg_G^c(y_1, U_0) - 
\deg_G^c(y_1, U_2) 
= 
\deg_G^c(y_1) - 1 - \deg_G^c(y_1, U_2)  \\
\geq 
\delta^c(G) - 1 - |U_2| 
\geq 
\tfrac{n+4}{3} - 1 - |U_2|  
\stackrel{(\ref{eqn:2.23.2019.12:33p})}{\geq}  
\tfrac{n+1}{3} - \big\lfloor \tfrac{n}{3} \big\rfloor 
\geq \tfrac{1}{3}.    
\end{multline*}  
Fix any $z_1 \in N_G(y_1, U_1)$ for which $c(\{y_1, z_1\})$ is not blue.  
Since $c(\{x_1, y_1\})$ is blue, we infer that 
$(x_1, y_1, z_1)$ is a rainbow path where $c_{x_1} \neq c(\{x_1, y_1\}) = 
\text{blue}$ is guaranteed by $y_1 \in B_{x_1}$ 
from~(\ref{eqn:2.25.2019.1:14p}).  
Thus, 
Statement~(2) of Corollary~\ref{cor:abc} guarantees from 
the rainbow 
$U_1$-path $(x_1, y_1, z_1)$ (where $c_{x_1} \neq c(\{x_1, y_1\}) = \text{blue}$) that 
$(G, c)$ admits a properly colored $\ell$-cycle $C_{\ell}$, 
again contradicting~(\ref{eqn:2.23.2019.4:47p}).

\section{Proof of Lemma~\ref{lem:ext} - Part 3: Strong or Short Cycles and the Case 
$\ell \equiv 1$ (mod 3)}  
\label{sec:ex3}  
Continuing from the previous sections, 
we prove Lemma~\ref{lem:ext} in the case $\ell \equiv 1$ (mod 3).  
For this, we will need a number of supporting details, where 
we begin 
by establishing an analogue of Corollary~\ref{cor:abc} for 
$\ell \equiv 1$ (mod 3)  
(another corollary of Proposition~\ref{prop:Clm13.5}).   
We use the following terminology and notation.  
For a fixed $j \in \mathbb{Z}_3$, recall the set $U_j = U_j^{\good} \cup U_j^{\bad}$
(cf.~(\ref{eqn:Clm13.2})).  We 
shall say that a vertex $u_j \in U_j^{\bad}$ is an {\it internal (bad) vertex} 
if $\deg_G^c(u_j, U_j) \geq 3$, 
and that $u_j \in U_j^{\bad}$ is an {\it external (bad) vertex} otherwise.  
We then define 
\begin{equation}
\label{eqn:IjEj}  
I_j^{\bad} = \left\{u_j \in U_j^{\bad}: \ \deg_G^c(u_j, U_j) \geq 3 \right\}
\qquad \text{and} \qquad 
E_j^{\bad} = \left\{u_j \in U_j^{\bad}: \ \deg_G^c(u_j, U_j) \leq 2 \right\}.       
\end{equation}  

\begin{cor}
\label{cor:13.7}  
Let $\ell \equiv 1$ {\rm (mod 3)}.  Fix an index $j \in \mathbb{Z}_3$,  
a vertex $u_j \in U_j^{\good}$, and an edge $\{u_j, v\} \in E$.
\begin{enumerate}  
\item  
If $v \in U_j$
or $v \in I_{j+1}^{\bad}$, 
then $c(\{u_j, v\}) = c_{u_j}$ is the primary color of $u_j$. 
\item 
The edges 
$E_G(u_j, U_{j-1})$
admit at least 
$\deg_G^c(u_j) - 1 - |U_{j+1} \setminus I_{j+1}^{\bad}|$
non-$c_{u_j}$ colors.   
\item
If $v \in U_{j-1}^{\bad}$ 
and $c(\{u_j, v\}) \neq c_{u_j}$, 
then 
$\deg_G^c(v, U_j) \geq ((1/6) - 37\lambda^{1/4}) n$.   
\item 
If 
$v \in E_{j-1}^{\bad}$ 
and $c(\{u_j, v\}) \neq c_{u_j}$, then 
$\deg_G^c(v, U_j) \geq \deg_G^c(v) - 3$.   
\item 
If $v \in U_{j-1}^{\good}$ 
and $c(\{u_j, v\}) \neq c_{u_j}$, then 
$\deg_G^c(v, U_j \setminus I_j^{\bad}) \geq \deg_G^c(v) 
- 1$.
\end{enumerate}  
\end{cor}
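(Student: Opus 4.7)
The plan is to derive each of the five statements from a single extension argument: given a rainbow, $c_{u_j}$-free path of length two or three starting at $u_j \in U_j^{\good}$, Proposition~\ref{prop:Clm13.5}(1) will produce a strong rainbow $\ell$-cycle $C_{\ell}$, contradicting~(\ref{eqn:overarching1}). Since $\ell \equiv 1 \pmod{3}$, the congruence $K - k \equiv (i-1) - j_0 \pmod{3}$ required by Proposition~\ref{prop:Clm13.5} is met precisely by two-vertex paths ending in $U_j$ (as $\ell - 2 \equiv -1 \equiv (j-1) - j$) and by three-vertex paths ending in $U_{j+1}$ (as $\ell - 3 \equiv -2 \equiv (j-1) - (j+1)$). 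These two path types will carry the entire proof.

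For Statement~(1), assume $c(\{u_j, v\}) \neq c_{u_j}$. If $v \in U_j$, then $(u_j, v)$ is itself a $c_{u_j}$-free rainbow path ending in $U_j$ that extends to a strong rainbow $\ell$-cycle. If $v \in I_{j+1}^{\bad}$, then $\deg_G^c(v, U_{j+1}) \geq 3$ provides $w \in N_G(v, U_{j+1})$ with $c(\{v, w\}) \not\in \{c_{u_j}, c(\{u_j, v\})\}$, making $(u_j, v, w)$ a rainbow, $c_{u_j}$-free path ending in $U_{j+1}$ that extends similarly. Statement~(2) is then immediate: by~(1), every edge from $u_j$ to $U_j \cup I_{j+1}^{\bad}$ bears the single color $c_{u_j}$, so the remaining $\deg_G^c(u_j) - 1$ distinct colors at $u_j$ must appear on edges to $U_{j-1} \cup (U_{j+1} \setminus I_{j+1}^{\bad})$, and the latter set contributes at most $|U_{j+1} \setminus I_{j+1}^{\bad}|$ of them.

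For Statements~(3)--(5), the three-vertex extension instantly forces $c(\{v, w\}) \in \{c_{u_j}, c(\{u_j, v\})\}$ for every $w \in N_G(v, U_{j+1})$, so $\deg_G^c(v, U_{j+1}) \leq 2$. For~(3), the placement rule~(\ref{eqn:badvertexsplit}) for $v \in U_{j-1}^{\bad}$ gives $\deg_G^c(v, U_j) \geq \deg_G^c(v, V_j^{\good}) \geq \deg_G^c(v, V_{j-1}^{\good}) \geq \deg_G^c(v, U_{j-1}) - |U_{j-1}^{\bad}|$; combining with $\deg_G^c(v) \leq \deg_G^c(v, U_{j-1}) + \deg_G^c(v, U_j) + 2$ yields $\deg_G^c(v) \leq 2\deg_G^c(v, U_j) + |U_{j-1}^{\bad}| + 2$, and the hypothesis $\delta^c(G) \geq (n+4)/3$ together with~(\ref{eqn:Clm13.2}) produces the bound $\deg_G^c(v, U_j) \geq ((1/6) - 37\lambda^{1/4})n$. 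For~(4), adding $v \in E_{j-1}^{\bad}$ gives $\deg_G^c(v, U_{j-1}) \leq 2$; moreover one of the two colors in $C_{j+1}^v \subseteq \{c_{u_j}, c(\{u_j, v\})\}$, namely $c(\{u_j, v\})$, already appears on the edge $\{u_j, v\}$ to $U_j$, so $|C_{j+1}^v \setminus C_j^v| \leq 1$ and therefore $\deg_G^c(v) \leq 2 + \deg_G^c(v, U_j) + 1$.

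Statement~(5) is the main obstacle. Since $v \in U_{j-1}^{\good}$, the already-established Statement~(1), applied with $v$ in place of $u_j$, forces every edge from $v$ into $U_{j-1} \cup I_j^{\bad}$ to carry the color $c_v$, so the only potential ``new'' colors at $v$ come from edges into $U_j \setminus I_j^{\bad}$ or $U_{j+1}$. The crucial additional observation will be that Proposition~\ref{prop:Clm13.3} applied to $v$ supplies many edges from $v$ to $U_{j+1}^{\good}$ of color $c_v$, placing $c_v$ inside $C_{j+1}^v \subseteq \{c_{u_j}, c(\{u_j, v\})\}$ and forcing the dichotomy $c_v \in \{c_{u_j}, c(\{u_j, v\})\}$. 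Letting $C_j^{v, \text{good}}$ denote the colors on $v$'s edges to $U_j \setminus I_j^{\bad}$, the edge $\{u_j, v\}$ certifies $c(\{u_j, v\}) \in C_j^{v, \text{good}}$, so in both resulting cases of the dichotomy the full set of colors at $v$ collapses into $C_j^{v, \text{good}} \cup \{c_{u_j}\}$, giving $\deg_G^c(v) \leq \deg_G^c(v, U_j \setminus I_j^{\bad}) + 1$. The chief difficulty lies precisely in this dichotomy on $c_v$, which requires marrying the primary-color structure from Proposition~\ref{prop:Clm13.3} with the extension argument and then tracking the collapse simultaneously through both cases to obtain the uniform bound.
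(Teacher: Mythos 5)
Your proposal is correct and follows essentially the same route as the paper: Statement (1) via the two admissible path-extensions of Proposition~\ref{prop:Clm13.5} (a $2$-vertex path ending in $U_j$ and a $3$-vertex path ending in $U_{j+1}$), Statement (2) as its counting consequence, and Statements (3)--(5) via the forced constraint $c(\{v,w\}) \in \{c_{u_j}, c(\{u_j,v\})\}$ for all $w \in N_G(v, U_{j+1})$, combined respectively with the placement rule~(\ref{eqn:badvertexsplitprime}), the definition of $E_{j-1}^{\bad}$, and the dichotomy $c_v \in \{c_{u_j}, c(\{u_j,v\})\}$ coming from the primary color of $v$ living on $E_G(v, U_{j+1})$. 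The arithmetic in each part checks out and matches the paper's bounds.
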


\begin{proof}[Proof of Corollary~\ref{cor:13.7}]  
Let $\ell \equiv 1$ (mod 3). Fix an index $j \in \mathbb{Z}_3$, and w.l.o.g.~let $j = 
0 \in \mathbb{Z}_3$.  Fix 
a good vertex 
$u_0 \in U_0^{\good}$, and fix an edge $\{u_0, v\} \in E$.  

For Statement~(1), 
assume first that $v \in U_0$.  If $c(\{u_0, v\}) \neq c_{u_0}$, 
then $P = (u_0, v)$
is a $c_{u_0}$-free rainbow path which
Proposition~\ref{prop:Clm13.5} extends to a strong rainbow
$\ell$-cycle $C_{\ell}$ (using $i = j = 0$, $k = 2$, and $K = \ell \equiv 1$ (mod 3)), 
contradicting~(\ref{eqn:overarching1}).    
Assume next that $v \in I^{\bad}_1$, in which case 
$v$ sees at least three colors in $U_1$.  If $c(\{u_0, v\}) \neq c_{u_0}$, then $v$
admits a neighbor $w_1 \in U_1$ where $c(\{v, w_1\})$ is neither $c_{u_0}$ nor
$c(\{u_0, v\})$.  Now, $P = (u_0, v, w_1)$ is a $c_{u_0}$-free rainbow path which
Proposition~\ref{prop:Clm13.5} extends to a strong rainbow
$\ell$-cycle $C_{\ell}$ (using $i = 0$, $j = 1$, $k = 3$, and $K = \ell \equiv 1$ (mod 3)), 
again contradicting~(\ref{eqn:overarching1}).    

Statement~(2) is an easy consequence of Statement~(1).  For $u_0 \in U_0^{\good}$ satisfies
\begin{equation}  
\label{eqn:2.27.2019.1:53p}  
\deg_G^c(u_0) = \deg_G^c(u_0, U_0) + \deg_G^c(u_0, U_1) + \deg_G^c(u_0, U_2),   
\end{equation}  
where Statement~(1) guarantees that 
all edges of $E_G(u_0, U_0 \cup I_1^{\bad})$ 
are colored $c_{u_0}$.  
However colors are assigned to 
$E_G(u_0, U_1 \setminus I_1^{\bad})$, 
at least $\deg_G^c(u_0) - 1 - |U_1\setminus I_1^{\bad}|$
many non-$c_{u_0}$ colors remain, and these must occur on the edges
of $E_G(u_0, U_2)$.   

For Statement~(3), 
we prepare an observation used multiple times below.  
For an edge $\{u_0, u_2\} \in E_G(u_0, U_2)$ satisfying $c(\{u_0, u_2\}) \neq c_{u_0}$, 
we observe that 
\begin{equation}
\label{eqn:2.27.2019.2:11p}  
\text{{\it 
every edge $\{u_2, u_1\} \in E_G(u_2, U_1)$ must be colored either 
$c(\{u_0, u_2\})$ or $c_{u_0}$,}} 
\end{equation}  
lest $(u_0, u_2, u_1)$ is a $c_{u_0}$-free rainbow
path which 
Proposition~\ref{prop:Clm13.5} extends to a strong rainbow
$\ell$-cycle $C_{\ell}$ (using $i = 0$, $j = 1$, $k = 3$, and $K = \ell \equiv 1$ (mod 3)), 
contradicting~(\ref{eqn:overarching1}).    
Now, as in Statement~(3), assume $v \in U_2^{\bad}$  
where $c(\{u_0, v\}) \neq c_{u_0}$.  
Then~(\ref{eqn:2.27.2019.2:11p})   
gives $\deg_G^c(v, U_1^{\good}) \leq 2$, where~(\ref{eqn:badvertexsplit})   
and~(\ref{eqn:badvertexsplitprime}) add that $v \in U_2^{\bad}$ satisfies
$$
\deg_G^c\big(v, U_0^{\good}\big) \geq 
\max\Big\{  
\deg_G^c\big(v, U_1^{\good}\big), \, 
\deg_G^c\big(v, U_2^{\good}\big) \Big\}  
\geq 
\deg_G^c\big(v, U_2^{\good}\big).  
$$  
Since $V = U_1^{\good} \cup U_2^{\good} \cup U_3^{\good} \cup V^{\bad}$ is a partition, 
\begin{multline}  
\label{eqn:2.27.2019.1:23p}  
\deg_G^c(v) = 
\deg_G^c\big(v, U_0^{\good}\big) 
+
\deg_G^c\big(v, U_1^{\good}\big) 
+
\deg_G^c\big(v, U_2^{\good}\big) 
+
\deg_G^c\big(v, V^{\bad}\big) \\
\leq 
\deg_G^c\big(v, U_0^{\good}\big) 
+
\deg_G^c\big(v, U_1^{\good}\big) 
+
\deg_G^c\big(v, U_2^{\good}\big) 
+
\big|V^{\bad}\big|  \\
\stackrel{(\ref{eqn:2.27.2019.2:11p})}{\leq}     
\deg_G^c\big(v, U_0^{\good}\big)
+ 2
+ 
\deg_G^c\big(v, U_2^{\good}\big)
+
\big|V^{\bad}\big|  
\stackrel{(\ref{eqn:badvertexsplitprime})}{\leq}
2 \deg_G^c\big(v, U_0^{\good}\big) 
+ 2 
+ 
\big|V^{\bad}\big|  \\
\stackrel{(\ref{eqn:Clm13.2})}{\leq}     
2 \deg_G^c\big(v, U_0^{\good}\big) 
+ 2 
+ 72 \lambda^{1/4} n \leq  
2 \deg_G^c\big(v, U_0^{\good}\big) 
+ 73 \lambda^{1/4} n.  
\end{multline}  
Thus, 
we conclude Statement~(3) from 
\begin{multline*}  
\deg_G^c(v, U_0)  
\geq 
\deg_G^c\big(v, U_0^{\good}\big)
\stackrel{(\ref{eqn:2.27.2019.1:23p})}{\geq}    
\tfrac{1}{2} \big(\deg_G^c(v) - 73 \lambda^{1/4}n\big)  \\
\geq 
\tfrac{1}{2} \big(\delta^c(G) - 73 \lambda^{1/4}n\big)  
\geq 
\tfrac{1}{2} \big(\tfrac{n+4}{3}  - 73 \lambda^{1/4}n\big)  
\geq 
\big(\tfrac{1}{6} - 37 \lambda^{1/4}\big)n.   
\end{multline*}

For Statement~(4), assume that $v \in E^{\bad}_2$ and that $c(\{u_0, v\}) \neq c_{u_0}$.  
As before with~(\ref{eqn:2.27.2019.1:53p}), 
we have 
$$
\deg_G^c(v) = \deg_G^c(v, U_0) + \deg_G^c(v, U_1) + \deg_G^c(v, U_2).   
$$
By 
the definition of $E_2^{\bad}$ in~(\ref{eqn:IjEj}), 
we have 
$\deg_G^c(v, U_2) \leq 2$.  
Moreover, (\ref{eqn:2.27.2019.2:11p}) gives 
$\deg_G(v, U_1) \leq 2$, where these colors can only be $c(\{u_0, v\})$ and $c_{u_0}$.  
Since $c(\{u_0, v\})$ is used on $E_G(v, U_0)$, Statement~(4)
follows.  

For Statement~(5), assume that $v \in U_2^{\good}$ and that $c(\{u_0, v\}) \neq c_{u_0}$.  
As before with~(\ref{eqn:2.27.2019.1:53p}), 
we have 
$$
\deg_G^c(v) = \deg_G^c(v, U_0) + \deg_G^c(v, U_1) + \deg_G^c(v, U_2).   
$$
Statement~(1) ensures that 
all edges of $E_G(v, U_2 \cup I_0^{\bad})$ are 
assigned the primary color $c_v$.  
Moreover, (\ref{eqn:2.27.2019.2:11p}) gives 
all edges of 
$E_G(v, U_1)$
are assigned 
$c(\{u_0, v\})$ and $c_{u_0}$, which must include   
$c_v$.  
Now, all 
non-$\{c_{u_0}, c_v\}$ colors incident to $v$ must be on the edges 
$E_G(v, U_0\setminus I_0^{\bad})$, where $c(\{u_0, v\})$ is one such color used.    
In either case
of $c_v \in \{c_{u_0}, c(\{u_0, v\})\}$,   
only $c_{u_0}$ is possibly not used
on 
$E_G(v, U_0\setminus I_0^{\bad})$.   
\end{proof}

\subsection{On 4-cycles $\boldsymbol{C_4}$ in $\boldsymbol{(G, c)}$ when 
$\boldsymbol{\ell \equiv 1}$ (mod 3)}  
Since $4 \equiv 1$ (mod 3), 
which is the modular case of Lemma~\ref{lem:ext} we seek to prove, 
we study 4-cycles $C_4$ in $(G, c)$
from the point of view of 
Proposition~\ref{prop:Clm13.5}
and Corollary~\ref{cor:13.7}.  We begin with the following notation and terminology.    
For $j \in \mathbb{Z}_3$ and $u_j \in U_j$, 
recall from~(\ref{eqn:3.3.2019.6:33p}) that an edge $\{u_j, u_{j-1}\} \in E_G(u_j, U_{j-1})$
is said to be {\it typical} when 
$c(\{u_j, u_{j-1}\}) = c_{u_j}$ is the primary color of $u_j$, and is said to be {\it special}
otherwise.  
In the reverse 
of~(\ref{eqn:3.3.2019.6:33p}), 
we write 
\begin{multline*}  
N_G^{\typ}\big(u_{j-1}, U_j^{\good}\big) = \left\{ u_j \in N_G\big(u_{j-1}, U_j^{\good}\big): 
c(\{u_{j-1}, u_j\}) = c_{u_j} \right\}, \\
N_G^{\spec}\big(u_{j-1}, U_j^{\good}\big) = \left\{ u_j \in N_G\big(u_{j-1}, U_j^{\good}\big): 
c(\{u_{j-1}, u_j\}) \neq c_{u_j} \right\}, \\
\deg^{\typ}_G\big(u_{j-1}, U_j^{\good} \big) = 
\Big| N_G^{\typ}\big(u_{j-1}, U_j^{\good}\big) \Big|,  
\qquad \text{and} \qquad 
\deg^{\spec}_G\big(u_{j-1}, U_j^{\good} \big) = 
\Big| N_G^{\spec}\big(u_{j-1}, U_j^{\good}\big) \Big|.   
\end{multline*}  
We proceed with an initial observation.

\begin{obs}
\label{obs:3.11.2019.1:47}  
Let $\ell \equiv 1$ {\rm (mod 3)}.  
Fix $j \in \mathbb{Z}_3$, $u_{j-1} \in U_{j-1}$, and 
$u_j \neq v_j \in N_G^{\spec}(u_{j-1}, U_j^{\good})$.   
Then 
$c(\{u_{j-1}, u_j\})$, $c(\{u_{j-1}, v_j\})$, $c_{u_j}$, and $c_{v_j}$ can't all
be distinct.  
\end{obs}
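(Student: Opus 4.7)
The plan is to derive a contradiction from the assumption that the four colors $c(\{u_{j-1}, u_j\})$, $c(\{u_{j-1}, v_j\})$, $c_{u_j}$, and $c_{v_j}$ are pairwise distinct, by producing a rainbow $\ell$-cycle in $(G,c)$ and thereby contradicting our standing assumption~(\ref{eqn:overarching1}). The key idea is that the rainbow 3-path $(u_j, u_{j-1}, v_j)$ can be closed through a common typical neighbor $w_{j-1}$ of both $u_j$ and $v_j$ in $U_{j-1}$, giving a strong rainbow $4$-cycle which can then be expanded since $4 \equiv 1 \equiv \ell$ (mod 3).

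Concretely, I would first invoke Proposition~\ref{prop:Clm13.3} applied separately to the good vertices $u_j, v_j \in U_j^{\good}$: each of $N_G^{\typ}(u_j, U_{j-1}^{\good})$ and $N_G^{\typ}(v_j, U_{j-1}^{\good})$ omits at most $161 \lambda^{1/4} n$ vertices of $U_{j-1}^{\good}$. Since $|U_{j-1}^{\good}| = \Omega(n)$ by~(\ref{eqn:Clm13.2}) and $\lambda \le \lambda_0$ is tiny per~(\ref{eqn:5.8.2019.1:39p}), the intersection of these two sets has size $\Omega(n)$, so a vertex $w_{j-1}$ in the intersection distinct from $u_{j-1}$ can be selected. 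By construction, $c(\{v_j, w_{j-1}\}) = c_{v_j}$ and $c(\{u_j, w_{j-1}\}) = c_{u_j}$.

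Next, I would verify that $C_4 := (u_j, u_{j-1}, v_j, w_{j-1})$ is a rainbow $4$-cycle: its four edge-colors are exactly $c(\{u_{j-1}, u_j\})$, $c(\{u_{j-1}, v_j\})$, $c_{v_j}$, and $c_{u_j}$, which are pairwise distinct by the contrary assumption. Since $u_j \in U_j^{\good}$ and $w_{j-1} \in N_G^{\typ}(u_j, U_{j-1})$, this $C_4$ is \emph{strong} in the sense of Section~\ref{sec:ext2}. To conclude, when $\ell = 4$ the cycle $C_4$ already contradicts~(\ref{eqn:overarching1}); when $\ell \geq 7$, Statement~(3) of Proposition~\ref{prop:Clm13.5} (with $k = 4$, $K = \ell$, using $\ell \equiv 4$ (mod 3)) promotes $C_4$ to a strong rainbow $\ell$-cycle $C_\ell$, again contradicting~(\ref{eqn:overarching1}). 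There is no real obstacle here; the only bookkeeping point is confirming that $w_{j-1}$ can be chosen distinct from $u_{j-1}$, which is immediate from the $\Omega(n)$ lower bound on the typical-neighbor intersection.
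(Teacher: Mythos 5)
Your proposal is correct and follows essentially the same route as the paper: both arguments use Proposition~\ref{prop:Clm13.3} to find a common typical neighbor $w_{j-1} \in U_{j-1}^{\good}$ of $u_j$ and $v_j$, observe that $(u_j, u_{j-1}, v_j, w_{j-1})$ is then a strong rainbow $4$-cycle under the contrary assumption, and invoke Statement~(3) of Proposition~\ref{prop:Clm13.5} to extend it to a strong rainbow $\ell$-cycle contradicting~(\ref{eqn:overarching1}). Your explicit handling of the case $\ell = 4$ and of the requirement $w_{j-1} \neq u_{j-1}$ is a minor, welcome tightening of the same argument.
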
  

\begin{proof}[Proof of Observation~\ref{obs:3.11.2019.1:47}]    
Let $\ell \equiv 1$ (mod 3).  Fix $j \in \mathbb{Z}_3$, and w.l.o.g.~let $j = 0$.
Fix $u_2 \in U_2$ and fix 
$u_0 \neq v_0 \in N_G^{\spec}(u_2, U_0^{\good})$.  
We apply 
Proposition~\ref{prop:Clm13.3} 
and Corollary~\ref{cor:Clm13.4}
to each of $u_0 \neq v_0 \in U_0^{\good}$   
to determine 
at least 
$$
\big|U_2^{\good}\big| - 322 \lambda^{1/4} n
\stackrel{(\ref{eqn:Clm13.2})}{\geq}    
\left(\frac{1}{3} - 397 \lambda^{1/4} \right)n 
\stackrel{(\ref{eqn:5.8.2019.1:39p})}{>} 0 
$$
many vertices $w_2 \in U_2^{\good}$
for which 
$\{u_0, w_2\}, \{v_0, w_2\} \in E$ 
and 
$$
c(\{u_0, w_2\}) = c_{u_0} \neq c_{v_0} = c(\{v_0, w_2\}).
$$
If Observation~\ref{obs:3.11.2019.1:47} is false, then 
$(u_0, w_2, v_0, u_2)$ 
is a strong rainbow 4-cycle which 
Statement~(3) of 
Proposition~\ref{prop:Clm13.5} extends to a strong rainbow
$\ell$-cycle $C_{\ell}$,  
contradicting~(\ref{eqn:overarching1}).    
\end{proof}

We continue with a second observation, which is a corollary of the one above.    

\begin{cor}  
\label{cor:13.8}  
Let $\ell \equiv 1$ {\rm (mod 3)}.  Fix $j \in \mathbb{Z}_3$ and $u_{j-1} \in U_{j-1}$.  
If $\deg_G^{\spec}(u_{j-1}, U_j^{\good}) \geq 4$, 
then all but at most one of the edges $\{u_{j-1}, u_j\} \in E$, where $u_j \in N_G^{\spec}(u_{j-1}, 
U_j^{\good})$ are monochromatic.  Thus, 
$$
\deg_G^{\typ}\big(u_{j-1}, U_j^{\good}\big) 
\geq 
\deg_G^c\big(u_{j-1}, U_j^{\good}\big) - 3,    
$$
where in particular 
$$
\deg_G^{\typ}\big(u_{j-1}, U_j^{\good}\big) 
\geq 
\left\{
\begin{array}{cc}
\big(\tfrac{1}{6} - 110 \lambda^{1/4}\big)n 
& \text{if $\deg_G^{\spec}(u_{j-1}, U_j^{\good}) \geq 1$,}  \\
\big(\tfrac{1}{9}  - 144 \lambda^{1/4}\big)n  & \text{otherwise.}  
\end{array}
\right.  
$$
\end{cor}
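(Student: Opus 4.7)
The plan is to prove the three assertions of Corollary~\ref{cor:13.8} in sequence, using Observation~\ref{obs:3.11.2019.1:47} as the main combinatorial tool and Corollary~\ref{cor:Clm13.4} to guarantee that distinct good vertices in $U_j^{\good}$ carry pairwise distinct primary colors.

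For the first assertion, enumerate $N_G^{\spec}(u_{j-1}, U_j^{\good}) = \{u_j^{(1)}, \ldots, u_j^{(s)}\}$ with $s \geq 4$ and write $c_i = c(\{u_{j-1}, u_j^{(i)}\})$ and $p_i = c_{u_j^{(i)}}$, so that $p_i \neq p_{i'}$ for $i \neq i'$ and $c_i \neq p_i$ for each $i$. Whenever $c_i \neq c_{i'}$, Observation~\ref{obs:3.11.2019.1:47} forces either $c_i = p_{i'}$ or $c_{i'} = p_i$. I would first rule out three distinct values among $c_1, c_2, c_3$: three applications to the pairs $(1,2),(1,3),(2,3)$ would pin down the cyclic pattern $c_1 = p_2$, $c_2 = p_3$, $c_3 = p_1$ (after relabeling), and then the fourth edge's color $c_4$---whether distinct from $c_1, c_2, c_3$ or equal to one of them---forces a new coincidence incompatible with the distinctness of the $p_i$'s. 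I would then rule out a $(2,2)$-split $c_1 = c_2 \neq c_3 = c_4$ by applying the observation to all four cross pairs, which ultimately forces $p_1 = p_2$, contradicting Corollary~\ref{cor:Clm13.4}. Only an ``all but one'' distribution survives, giving the first assertion.

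For the second assertion, I count the colors on edges from $u_{j-1}$ into $U_j^{\good}$: the typical edges contribute exactly $t := \deg_G^{\typ}(u_{j-1}, U_j^{\good})$ distinct colors (the primary colors of their good endpoints, pairwise distinct by Corollary~\ref{cor:Clm13.4}), while the special edges contribute at most $2$ colors if $s \geq 4$ (by the first assertion) and at most $s \leq 3$ colors otherwise, giving $\deg_G^c(u_{j-1}, U_j^{\good}) \leq t + 3$ in every case. For the third assertion, I branch on whether any special edge exists. If $\deg_G^{\spec}(u_{j-1}, U_j^{\good}) \geq 1$, pick a good endpoint $u_j^{\ast}$ of such an edge and apply Corollary~\ref{cor:13.7} to $u_j^{\ast}$ with $v = u_{j-1}$: its Statement~(3), when $u_{j-1} \in U_{j-1}^{\bad}$, yields $\deg_G^c(u_{j-1}, U_j) \geq (1/6 - 37\lambda^{1/4})n$; its Statement~(5), when $u_{j-1} \in U_{j-1}^{\good}$, yields the stronger $\deg_G^c(u_{j-1}, U_j \setminus I_j^{\bad}) \geq \delta^c(G) - 1 \geq (n+1)/3$. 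Subtracting $|U_j^{\bad}| \leq 72\lambda^{1/4} n$ from~(\ref{eqn:Clm13.2}) converts either bound to one on $\deg_G^c(u_{j-1}, U_j^{\good})$, and applying the second assertion drops $3$ more to give the claimed $(1/6 - 110\lambda^{1/4})n$. If $\deg_G^{\spec}(u_{j-1}, U_j^{\good}) = 0$, every edge from $u_{j-1}$ to $U_j^{\good}$ is typical, so $\deg_G^{\typ}(u_{j-1}, U_j^{\good}) = \deg_G(u_{j-1}, U_j^{\good})$; the bound $\deg_G^c(u_{j-1}, U_j) \geq (1/9 - 72\lambda^{1/4})n$ from~(\ref{eqn:Clm13.2}) (available whether $u_{j-1}$ is good or bad), minus $|U_j^{\bad}|$, yields the $(1/9 - 144\lambda^{1/4})n$ bound.

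The main obstacle is the structural case analysis in the first assertion: each pair of distinct-colored special edges leaves three admissible coincidences under Observation~\ref{obs:3.11.2019.1:47}, so one must carefully organize the bookkeeping to show that every distribution outside the claimed ``$(s-1)+1$'' shape collapses to a repeated primary color, violating Corollary~\ref{cor:Clm13.4}.
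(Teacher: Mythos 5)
Your proposal is correct and follows essentially the same route as the paper: Observation~\ref{obs:3.11.2019.1:47} combined with Corollary~\ref{cor:Clm13.4} forces the special colors into the ``all but one'' shape, and the two degree bounds then follow from the color count, Corollary~\ref{cor:13.7}, and~(\ref{eqn:Clm13.2}) exactly as you describe. The paper merely organizes the case analysis for the first assertion differently (by whether some special color repeats versus all special colors being pairwise distinct, rather than by the number of distinct special colors), but the combinatorial content is identical.
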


\begin{proof}[Proof of Corollary~\ref{cor:13.8}]    
Let $\ell \equiv 1$ (mod 3).  Fix $j \in \mathbb{Z}_3$ and w.l.o.g.~assume $j = 0$.  
Fix $u_2 \in U_2$ and assume that $\deg_G^{\spec}(u_2, U_0^{\good}) \geq 4$.  
For sake of argument, 
\begin{multline}  
\label{eqn:5.7.2019.9:55a}  
\text{{\sl we assume the special edges $\{u_2, u_0\} \in E$,}} \\ 
\text{{\sl 
where $u_0 \in N_G^{\spec}(u_2, U_0^{\good})$, are not entirely monochromatic.}}    
\end{multline}  
We consider two cases.  \\

\noindent {\bf Case 1 ($\exists \ u_0 \neq v_0 \in N_G^{\spec}(u_2, U_0^{\good})$: 
$c(\{u_2, u_0\}) = c(\{u_2, v_0\})$).}  
Fix $u_0 \neq v_0 \in N_G^{\spec}(u_2, U_0^{\good})$ with $c(\{u_2, u_0\}) = c(\{u_2, v_0\})$.  
Using~(\ref{eqn:5.7.2019.9:55a}),   
we infer the existence of an edge 
$\{u_2, w_0\} \in E$, where $w_0 \in N_G^{\spec}(u_2, U_0^{\good})$, 
for which $c(\{u_2, w_0\}) \neq c(\{u_2, u_0\}) = c(\{u_2, v_0\})$.  
We claim that 
\begin{equation}
\label{eqn:3.11.2019.3:07p}  
c(\{u_2, u_0\}) = c(\{u_2, v_0\}) = c_{w_0}.    
\end{equation}  
Indeed, 
pivoting $\{u_2, u_0\}$ against $\{u_2, w_0\}$, 
Observation~\ref{obs:3.11.2019.1:47}   
ensures $c(\{u_2, u_0\}) = c_{w_0}$ or $c(\{u_2, w_0\}) = c_{u_0}$.  
In the former case, 
(\ref{eqn:3.11.2019.3:07p}) holds by the   
hypothesis of Case~1.  In the latter case, 
we pivot $\{u_2, v_0\}$ against $\{u_2, w_0\}$, where 
Observation~\ref{obs:3.11.2019.1:47}   
gives 
$c(\{u_2, v_0\}) = c_{w_0}$ or $c(\{u_2, w_0\}) = c_{v_0}$.  If 
$c(\{u_2, w_0\}) = c_{u_0}$, 
then 
Corollary~\ref{cor:Clm13.4} gives $c(\{u_2, w_0\}) \neq c_{v_0}$, and so 
$c(\{u_2, v_0\}) = c_{w_0}$  
and again~(\ref{eqn:3.11.2019.3:07p}) holds.

If the first conclusion 
of Corollary~\ref{cor:13.8} does not hold,    
we ignore the edge $\{u_2, w_0\}$ to 
infer 
the existence of an edge $\{u_2, x_0\} \in E$, where 
$x_0 \in N_G^{\spec}(u_2, U_0^{\good})$, 
for which $c(\{u_2, x_0\}) \neq c(\{u_2, u_0\})$.  
By~(\ref{eqn:3.11.2019.3:07p}), 
$$
c_{x_0} = c(\{u_2, u_0\}) = c(\{u_2, v_0\}) = c_{w_0}, 
$$
and $x_0 \neq w_0 \in U_0^{\good}$ 
contradicts Corollary~\ref{cor:Clm13.4}.    \hfill $\Box$  \\

\noindent {\bf Case 2 ($\forall \ u_0 \neq v_0 \in N_G^{\spec}(u_2, U_0^{\good})$,  
$c(\{u_2, u_0\}) \neq c(\{u_2, v_0\})$).}  
Since $\deg_G^{\spec}(u_2, U_0^{\good}) \geq 4$, 
fix distinct $u_0, v_0, w_0, x_0 \in 
N_G^{\spec}(u_2, U_0^{\good})$.
Using Observation~\ref{obs:3.11.2019.1:47},    
we take w.l.o.g.~$c(\{u_2, u_0\}) = c_{v_0}$.  
Observation~\ref{obs:3.11.2019.1:47}   
then ensures that 
$c(\{u_2, w_0\}) = c_{u_0}$ (since $c_{v_0} \neq c_{w_0}$ 
from Corollary~\ref{cor:Clm13.4})
and $c(\{u_2, x_0\}) = c_{u_0}$, 
which contradicts the hypothesis of Case~2.  \hfill $\Box$  \\

The remaining assertions 
of Corollary~\ref{cor:13.8} are now easy to establish.  
When $\deg_G^{\spec}(u_2, U_0^{\good}) = 0$, all edges of $E_G(u_2, U_0^{\good})$
are typical, 
and so 
$$
\deg_G^{\typ}\big(u_2, U_0^{\good}\big) \geq 
\deg^c_G\big(u_2, U_0^{\good}\big) 
\stackrel{(\ref{eqn:Clm13.2})}{\geq}    
\big(\tfrac{1}{9} - 72 \lambda^{1/4}\big) n - 72 \lambda^{1/4} n
= \big(\tfrac{1}{9} - 144 \lambda^{1/4}\big) n.   
$$
When $\deg_G^{\spec}(u_2, U_0^{\good}) \geq 1$, 
the first assertion of 
Corollary~\ref{cor:13.8} guarantees that edges $\{u_2, u_0\} \in E$ with $u_0 \in 
N_G^{\spec}(u_2, U_0^{\good})$ are colored with at most three colors.  
Thus, 
\begin{multline*}  
\deg_G^{\typ}\big(u_2, U_0^{\good}\big) \geq 
\deg^c_G\big(u_2, U_0^{\good}\big)  - 3
\stackrel{\text{Cor.\ref{cor:13.7}}}{\geq}    
\big(\tfrac{1}{6} - 37 \lambda^{1/4} \big)n 
- 72 \lambda^{1/4} n 
- 3  \\
= 
\big(\tfrac{1}{6} - 109 \lambda^{1/4}\big)n - 3  
\geq 
\big(\tfrac{1}{6} - 110 \lambda^{1/4}\big)n,   
\end{multline*}  
as promised.  
\end{proof}

\begin{definition}[$j$-special]\label{def:jspecial}
\rm 
A 4-cycle $(u_j, u_{j-1}, v_j, v_{j-1})$ is
{\it $j$-special} 
for some $j \in \mathbb{Z}_3$ 
if 
$u_j, v_j \in U_j^{\good}$, $u_{j-1}, v_{j-1} \in U_{j-1}$, 
$\{u_j, u_{j-1}\}, \{v_j, v_{j-1}\} \in E$ are typical, and 
$\{u_j, v_{j-1}\}, \{v_j, u_{j-1}\} \in E$ are special.  
\end{definition}

\begin{obs}  
\label{obs:13.9}  
Let $\ell \equiv 1$ {\rm (mod 3)}, and fix $j \in \mathbb{Z}_3$.  
A $j$-special 4-cycle $(u_j, u_{j-1}, v_j, v_{j-1})$ receives precisely three colors, 
where in particular $c(\{u_j, v_{j-1}\}) = c(\{v_j, u_{j-1}\})$.  
\end{obs}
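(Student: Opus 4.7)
The plan is to use the strong structure of the 4-cycle together with edge-minimality to pin down the pattern of repeated colors. First I would reorder the $j$-special 4-cycle as $C=(u_j,v_{j-1},v_j,u_{j-1})$. Traversed this way, the prescribed starting vertex $u_j$ lies in $U_j^{\good}$ and the closing edge $\{u_{j-1},u_j\}$ is typical, so $u_{j-1}\in N_G^{\typ}(u_j,U_{j-1})$; hence $C$ is a strong cycle in the sense of Section~6. Since $\ell\equiv 1\equiv 4\pmod 3$, Statement~(3) of Proposition~\ref{prop:Clm13.5} would turn a strong rainbow $4$-cycle into a strong rainbow $\ell$-cycle, contradicting~(\ref{eqn:overarching1}). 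Thus $C$ is not rainbow, and at least two of its four edges share a color.

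Next I would identify which coincidences are possible. Corollary~\ref{cor:Clm13.4} gives $c_{u_j}\neq c_{v_j}$, so the two typical edges $\{u_j,u_{j-1}\}$ and $\{v_j,v_{j-1}\}$ carry distinct colors. By the definition of a $j$-special cycle, the special edges satisfy $c(\{u_j,v_{j-1}\})\neq c_{u_j}$ and $c(\{v_j,u_{j-1}\})\neq c_{v_j}$. So the only remaining ways for $C$ to fail to be rainbow are:
$(a)$ $c(\{u_j,v_{j-1}\})=c(\{v_j,u_{j-1}\})$, $(b)$ $c(\{u_j,v_{j-1}\})=c_{v_j}$, or $(c)$ $c(\{v_j,u_{j-1}\})=c_{u_j}$.

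The main content of the proof is to rule out $(b)$ and $(c)$; the argument for $(c)$ is symmetric, so I focus on $(b)$. Assume $c(\{u_j,v_{j-1}\})=c_{v_j}$. Because $v_j\in U_j^{\good}$, Proposition~\ref{prop:Clm13.3} guarantees that all but $161\lambda^{1/4}n$ vertices in $U_{j-1}^{\good}$ lie in $N_G^{\typ}(v_j,U_{j-1})$, so we may pick some $w_{j-1}\in N_G^{\typ}(v_j,U_{j-1})$ with $w_{j-1}\notin\{v_{j-1},u_{j-1}\}$. Then the path $(u_j,v_{j-1},v_j,w_{j-1})$ has all three of its edges colored $c_{v_j}$, giving a monochromatic path on four vertices. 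Removing its internal edge $\{v_{j-1},v_j\}$ leaves both $\deg_G^c(v_{j-1})$ and $\deg_G^c(v_j)$ unchanged (the color $c_{v_j}$ still appears at each endpoint), contradicting the edge-minimality hypothesis~(\ref{eqn:overarching3}). The same argument, with the roles of the two typical edges swapped, rules out $(c)$.

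Thus only $(a)$ remains, and $c(\{u_j,v_{j-1}\})=c(\{v_j,u_{j-1}\})$ as claimed. It then remains to confirm that the three colors actually appearing on $C$, namely $c_{u_j}$, $c_{v_j}$, and the common special color, are pairwise distinct: $c_{u_j}\neq c_{v_j}$ by Corollary~\ref{cor:Clm13.4}; the special color differs from $c_{u_j}$ (resp.\ $c_{v_j}$) by the definition of special (resp.\ by the elimination of case $(b)$). I expect the only nonroutine step to be the verification that $(b)$ and $(c)$ truly do force a monochromatic path of length three, which reduces to checking that the many typical neighbors of $v_j$ (resp.\ $u_j$) in $U_{j-1}^{\good}$ allow one to avoid the two already-used vertices; this is immediate from the quantitative bound in Proposition~\ref{prop:Clm13.3}, so no real obstacle arises.
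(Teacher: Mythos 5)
Your proof is correct and follows essentially the same route as the paper's: Corollary~\ref{cor:Clm13.4} separates the two primary colors, edge-minimality~(\ref{eqn:overarching3}) together with Proposition~\ref{prop:Clm13.3} rules out a special edge taking the other vertex's primary color (your monochromatic-$P_4$ phrasing removes exactly the same edge the paper removes), and Statement~(3) of Proposition~\ref{prop:Clm13.5} with~(\ref{eqn:overarching1}) forces the two special edges to agree. The only difference is presentational: you establish non-rainbowness first and then eliminate cases, whereas the paper eliminates the color coincidences first and concludes with the rainbow contradiction.
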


\begin{proof}[Proof of Observation~\ref{obs:13.9}]
Let $\ell \equiv 1$ (mod 3).  Fix $j \in \mathbb{Z}_3$, and w.l.o.g.~let $j = 0$.  Fix a 0-special
4-cycle $(u_0, u_2, v_0, v_2)$.  By its definition, we infer  
$c(\{u_2, u_0\}) = c_{u_0}$ and $c(\{v_2, v_0\})
= c_{v_0}$ are primary, 
which Corollary~\ref{cor:Clm13.4}   
ensures are distinct.  
By its definition, $c(\{u_0, v_2\}) \neq c_{u_0}$ 
and 
$c(\{v_0, u_2\}) \neq c_{v_0}$ 
are special.  Observe that $c(\{u_0, v_2\}) \neq c_{v_0}$ since otherwise 
Proposition~\ref{prop:Clm13.3} guarantees that 
$G - \{v_0, v_2\}$ 
(here denoting edge-removal) 
contradicts~(\ref{eqn:overarching3}).    
Similarly, $c(\{v_0, u_2\}) \neq c_{u_0}$.  
Thus, 
if $c(\{u_0, v_2\}) \neq c(\{v_0, u_2\})$, then $(u_0, u_2, v_0, v_2)$ is a strong 
rainbow 4-cycle
which Statement~(3) of 
Proposition~\ref{prop:Clm13.5} extends to a strong rainbow
$\ell$-cycle $C_{\ell}$, 
contradicting~(\ref{eqn:overarching1}).    
\end{proof}

We conclude this subsection with a
corollary of the preceding observation.

\begin{cor}
\label{cor:13.10}  
Let $\ell \equiv 1$ {\rm (mod 3)}.  Fix $j \in \mathbb{Z}_3$, $u_{j-1} \neq v_{j-1} \in U_{j-1}$, 
and a color $\alpha$ from $c$.  Set 
\begin{multline*}  
\qquad 
\qquad 
\qquad 
A 
= A_{\alpha}(u_{j-1}) 
= \left\{u_j \in N_G^{\spec}(u_{j-1}, U_j): \, c(\{u_j, u_{j-1}\}) = \alpha \right\}  \\
\quad \text{and} \quad 
B 
= B_{\alpha}(v_{j-1}) 
= \left\{v_j \in N_G^{\spec}(v_{j-1}, U_j): \, c(\{v_j, v_{j-1}\}) \neq \alpha \right\}.    
\qquad 
\qquad 
\qquad 
\end{multline*}  
Then $|A \cup B| < ((1/6) + 186 \lambda^{1/4})n$.  
\end{cor}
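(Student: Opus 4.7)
The plan is to pair the $j$-special 4-cycle obstruction from Observation~\ref{obs:13.9} with the quantitative estimate of Corollary~\ref{cor:13.8}. Write $T_1 = N_G^{\typ}(u_{j-1}, U_j^{\good})$ and $T_2 = N_G^{\typ}(v_{j-1}, U_j^{\good})$, and set $A^* = A \cap U_j^{\good}$, $B^* = B \cap U_j^{\good}$. The key step will be the dichotomy that either $A^* \cap T_2 = \emptyset$ or $B^* \cap T_1 = \emptyset$.

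To prove this dichotomy, suppose instead that there exist $u_j \in A^* \cap T_2$ and $v_j \in B^* \cap T_1$. They are automatically distinct, since membership in $A$ forces $\{w, u_{j-1}\}$ to be special while membership in $T_1$ forces it to be typical. Consider the 4-cycle $u_j \to v_{j-1} \to v_j \to u_{j-1} \to u_j$ and relabel its vertices in the sense of Definition~\ref{def:jspecial} by setting $u_j' = u_j$, $u_{j-1}' = v_{j-1}$, $v_j' = v_j$, $v_{j-1}' = u_{j-1}$. Then the edges $\{u_j', u_{j-1}'\} = \{u_j, v_{j-1}\}$ and $\{v_j', v_{j-1}'\} = \{v_j, u_{j-1}\}$ are typical (by $u_j \in T_2$ and $v_j \in T_1$), while $\{u_j', v_{j-1}'\} = \{u_j, u_{j-1}\}$ (colored $\alpha$) and $\{v_j', u_{j-1}'\} = \{v_j, v_{j-1}\}$ (colored some $\beta \neq \alpha$) are special. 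Hence $(u_j', u_{j-1}', v_j', v_{j-1}')$ is $j$-special, and Observation~\ref{obs:13.9} forces its two special edges to share a color, contradicting $\alpha \neq \beta$.

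Assume without loss of generality that $A^* \cap T_2 = \emptyset$; the other branch is completely symmetric under the interchange $u_{j-1} \leftrightarrow v_{j-1}$, $T_1 \leftrightarrow T_2$. Since every $v_j \in B$ has $\{v_j, v_{j-1}\}$ special we also get $B \cap T_2 = \emptyset$, and because $T_2 \subseteq U_j^{\good}$ no bad vertex of $U_j$ belongs to $T_2$ either, so $A \cup B \subseteq U_j \setminus T_2$. When $\deg_G^{\spec}(v_{j-1}, U_j^{\good}) \geq 1$, Corollary~\ref{cor:13.8} gives $|T_2| \geq ((1/6) - 110\lambda^{1/4})n$, and combining with $|U_j| \leq ((1/3) + 75\lambda^{1/4})n$ from~(\ref{eqn:Clm13.2}) yields
\[
|A \cup B| \leq |U_j| - |T_2| \leq \left(\tfrac{1}{6} + 185\lambda^{1/4}\right)n < \left(\tfrac{1}{6} + 186\lambda^{1/4}\right)n.
\]
If instead $\deg_G^{\spec}(v_{j-1}, U_j^{\good}) = 0$ then $B^* = \emptyset$, so in particular $B^* \cap T_1 = \emptyset$, and I repeat the argument symmetrically using $T_1$; in the fully degenerate sub-case $\deg_G^{\spec}(u_{j-1}, U_j^{\good}) = 0$ too, one has $A^* = B^* = \emptyset$, forcing $A \cup B \subseteq U_j^{\bad}$, which has size at most $72\lambda^{1/4}n$ by~(\ref{eqn:Clm13.2}), far below the required bound.

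I expect the main obstacle to lie in correctly setting up the relabeling in Definition~\ref{def:jspecial} so that the cyclic pattern of typical and special edges matches the definition; once that verification is in hand, the remainder is just bookkeeping against the size bounds of~(\ref{eqn:Clm13.2}) and Corollary~\ref{cor:13.8}, with the minor case analysis needed only to cover the degenerate situations in which Corollary~\ref{cor:13.8}'s stronger bound is vacuous.
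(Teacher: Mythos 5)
Your proof is correct and rests on the same two ingredients as the paper's: the $j$-special $4$-cycle assembled from one vertex of $A^{*}\cap T_2$ and one of $B^{*}\cap T_1$ (ruled out by Observation~\ref{obs:13.9}), together with the typical-degree lower bound of Corollary~\ref{cor:13.8} and the size estimates of~(\ref{eqn:Clm13.2}); in effect you state the paper's argument contrapositively. The only organizational difference is that you obtain the bound from the containment $A\cup B\subseteq U_j\setminus T_i$ directly, whereas the paper assumes $|A\cup B|$ is large and produces the two offending vertices by inclusion--exclusion; your handling of the degenerate subcases $\deg_G^{\spec}(\,\cdot\,,U_j^{\good})=0$ is also sound.
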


\begin{proof}[Proof of Corollary~\ref{cor:13.10}]
Let $\ell \equiv 1$ (mod 3).  Fix $j \in \mathbb{Z}_3$, and w.l.o.g.~let $j = 0$.  Fix
$u_2 \neq v_2 \in U_2$ and fix a color $\alpha$ of $c$.  Let $A = A(u_2)$ 
and $B = B(v_2)$ be defined as above, but 
assume for contradiction that 
\begin{equation}
\label{eqn:3.12.2019.5:28p}  
|A \cup B| \geq \big(\tfrac{1}{6} + 186 \lambda^{1/4} \big) n.  
\end{equation}  
We will use~(\ref{eqn:3.12.2019.5:28p}) to guarantee distinct vertices 
\begin{equation}
\label{eqn:3.12.2019.5:35p}  
N_G^{\typ}\big(u_2, 
U_0^{\good}\big)
\cap (A \cup B) \ni u_0 \neq  
v_0 \in 
N_G^{\typ}\big(v_2, 
U_0^{\good}\big)
\cap (A \cup B).    
\end{equation}  
If~(\ref{eqn:3.12.2019.5:35p}) holds, then it will conclude our proof, because 
$(u_2, u_0, v_2, v_0)$ 
would be a rainbow 0-special 4-cycle,  
contradicting 
Observation~\ref{obs:13.9}.    
To see this, we first note 
from~(\ref{eqn:3.12.2019.5:35p})
that $u_0 \neq v_0 \in U_0^{\good}$ 
are good vertices, 
where $u_0 \in N_G^{\typ}(u_2, U_0^{\good})$ guarantees
$c(\{u_0, u_2\}) = c_{u_0}$ 
and $v_0 \in N_G^{\typ}(v_2, U_0^{\good})$ guarantees
$c(\{v_0, v_2\}) = c_{v_0}$, and where
$c_{u_0} \neq c_{v_0}$ is guaranteed by Corollary~\ref{cor:Clm13.4}.   
Since $u_0 \in A \cup B$ happens only from $u_0 \in B\setminus A$
(because $c(\{u_0, u_2\}) = c_{u_0}$ is not special for $u_0$), we infer 
$c(\{u_0, v_2\}) \neq \alpha$
is some non-$\alpha$ special color for $u_0$.  
Since $v_0 \in A \cup B$ happens only from $v_0 \in A \setminus B$
(because $c(\{v_0, v_2\}) = c_{v_0}$ is not special for $v_0$), we infer $c(\{v_0, u_2\}) = \alpha$
is special for $v_0$.   
(Note:  the existence of vertices $u_0$ and $v_0$ in~(\ref{eqn:3.12.2019.5:35p}) implies
they are necessarily distinct.)      
Thus, $(u_2, u_0, v_2, v_0)$ is a rainbow 0-special 4-cycle, as claimed.

To prove~(\ref{eqn:3.12.2019.5:35p})   
(from~(\ref{eqn:3.12.2019.5:28p})), define 
$$
A^{\good} = A \cap U_0^{\good} \qquad \text{and} \qquad B^{\good} = B \cap U_0^{\good}.  
$$ 
Then
\begin{equation}
\label{eqn:3.13.2019.5:08p}  
\big|A^{\good} \cup B^{\good}\big| 
\stackrel{(\ref{eqn:Clm13.2})}{\geq}    
|A \cup B| - 72 \lambda^{1/4} n 
\stackrel{(\ref{eqn:3.12.2019.5:28p})}{\geq}  
\big(\tfrac{1}{6} + 114 \lambda^{1/4}\big) n, 
\end{equation}  
and so one of $|A^{\good}|$ or $|B^{\good}|$ is large.  
Our proof will ultimately show that both are large, and so we begin by
assuming the former is non-empty.  
We therefore infer that 
$\deg_G^{\spec}(u_2, U_0^{\good}) \geq |A^{\good}| > 0$, 
and so Corollary~\ref{cor:13.8}  
gives 
\begin{equation}
\label{eqn:3.13.2019.4:36p}  
\deg_G^{\typ}\big(u_2, U_0^{\good}\big) \geq \big(\tfrac{1}{6} - 110 \lambda^{1/4}\big)n.  
\end{equation}  
If $N_G^{\typ}(u_2, U_0^{\good})$ and $A \cup B$ were disjoint, we would have 
\begin{multline}  
\label{eqn:3.13.2019.4:40p}  
|U_0|  
\geq 
\deg_G^{\typ}\big(u_2, U_0^{\good}\big)
+ 
|A \cup B|
\stackrel{(\ref{eqn:3.13.2019.4:36p})}{\geq}    
\big(\tfrac{1}{6} - 110\lambda^{1/4}\big)n    
+ |A\cup B|  \\
\stackrel{(\ref{eqn:3.12.2019.5:28p})}{\geq}  
\big(\tfrac{1}{6} - 110\lambda^{1/4}\big)n   
+ 
\big(\tfrac{1}{6} + 186 \lambda^{1/4} \big)n  
= 
\big(\tfrac{1}{3} + 76\lambda^{1/4}\big) n  
\stackrel{(\ref{eqn:Clm13.2})}{>}    
|U_0|, 
\end{multline}  
a contradiction.  
This guarantees the existence of the vertex $u_0$ 
in~(\ref{eqn:3.12.2019.5:35p}).

To guarantee the 
existence of the vertex $v_0$ 
in~(\ref{eqn:3.12.2019.5:35p}), we argue similarly.  For that, 
$N_G^{\typ}(u_2, U_0^{\good})$ and $A^{\good}$ are disjoint subsets of $U_0^{\good}$, 
and so 
\begin{multline}  
\label{eqn:3.13.2019.5:06p}  
\big|A^{\good}\big| + 
\big(\tfrac{1}{6} - 110 \lambda^{1/4}\big)n    
\stackrel{(\ref{eqn:3.13.2019.4:36p})}{\leq}    
\big|A^{\good}\big| + \deg_G^{\typ}\big(u_2, U_0^{\good}\big) \leq \big|U_0^{\good}\big| \\
\stackrel{(\ref{eqn:Clm13.2})}{\leq}    
\big(\tfrac{1}{3} + 75 \lambda^{1/4}\big)n   
\qquad 
\implies \qquad 
\big|A^{\good}\big| \leq \big(\tfrac{1}{6} + 185 \lambda^{1/4}\big) n.  
\end{multline}  
Thus, 
\begin{multline*}  
\deg_G^{\spec}\big(v_2, U_0^{\good}\big)
\geq 
\big|B^{\good} \setminus A^{\good}\big| = \big|A^{\good} \cup B^{\good}\big| - \big|A^{\good}\big|  \\
\stackrel{(\ref{eqn:3.13.2019.5:08p})}{\geq}     
\big(\tfrac{1}{6} + 186\lambda^{1/4}\big)n - \big|A^{\good}\big|  
\stackrel{(\ref{eqn:3.13.2019.5:06p})}{\geq}  
\lambda^{1/4} n  > 0,   
\end{multline*}  
and so Corollary~\ref{cor:13.8}  
gives 
$$
\deg_G^{\typ}\big(v_2, U_0^{\good}\big) \geq 
\big(\tfrac{1}{6} - 110\lambda^{1/4}\big)n.  
$$
We now proceed identically to before with~(\ref{eqn:3.13.2019.4:36p})--(\ref{eqn:3.13.2019.4:40p}).    
\end{proof}

\subsection{Amenable elements of $\mathbb{Z}_3$}  
Recall the partition $V(G) = U_0 \cup U_1 \cup U_2$ 
of $(G, c)$ 
from~(\ref{eqn:Clm13.2}).  For each $j \in \mathbb{Z}_3$, recall the (so-called 
{\it internal} and {\it external}) 
sets of bad vertices 
$$
I_j^{\bad} = \{u_j \in U_j^{\bad}: \, \deg^c_G(u_j, U_j) \geq 3\}
\qquad \text{and} \qquad 
E_j^{\bad} = \{u_j \in U_j^{\bad}: \, \deg^c_G(u_j, U_j) \leq 2\}
$$
from~(\ref{eqn:IjEj}).    
In particular, $U_j^{\bad} = I_j^{\bad} \cup E_j^{\bad}$ is a partition, 
and so 
\begin{equation}
\label{eqn:3.14.2019.2:15p}  
U_j = U_j^{\good} \cup U_j^{\bad} = U_j^{\good} \cup I_j^{\bad} \cup E_j^{\bad}
\end{equation}  
are partitions.  
We set 
\begin{equation}
\label{eqn:3.14.2019.2:16p}  
\hat{U}_j = U_j^{\good} \cup E_j^{\bad} \qquad \text{and} \qquad 
\Delta_j = m - \big|\hat{U}_j\big|, 
\end{equation}  
where $m = \lfloor n/3 \rfloor$ 
from~(\ref{eqn:theo23}).  
The following observation follows by elementary means
(independent from $\ell$ (mod 3)), 
and plays an important role
in our proof of Lemma~\ref{lem:ext}.  

\begin{obs}
\label{obs:13.11}  
There exists $j \in \mathbb{Z}_3$ so that  
\begin{enumerate}
\item 
$\Delta_j \geq 0$; 
\item
$|I_{j+1}^{\bad}| \leq 2 \Delta_j$; 
\item  
$|U_{j+2}| \leq m + 2\Delta_j + 2$.  
\end{enumerate}  
\end{obs}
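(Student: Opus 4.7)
Write $a_j = |U_j|$ and $b_j = |I_j^{\bad}|$, so $|\hat{U}_j| = a_j - b_j$ and $\Delta_j = m - a_j + b_j$. The three conditions of Observation~\ref{obs:13.11} translate to $\textup{(1)}\ \Delta_j \ge 0$, $\textup{(2)}\ b_{j+1} \le 2\Delta_j$, and $\textup{(3)}\ a_{j+2} \le m + 2\Delta_j + 2$, and the only inputs are $\sum_{j \in \mathbb{Z}_3} a_j = n$ with $n - 3m \in \{0, 1, 2\}$ and $b_j \ge 0$.

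The plan is to pick $j^* \in \mathbb{Z}_3$ that maximizes $\Delta_{j^*}$ and verify $\textup{(1)}$ and $\textup{(3)}$ unconditionally there. Condition $\textup{(1)}$ is immediate from $\sum_j \Delta_j = 3m - n + |I^{\bad}| \ge -2$, which forces $\Delta_{j^*} \ge 0$. For $\textup{(3)}$, the argmax inequality $\Delta_{j^*+1} \le \Delta_{j^*}$ rearranges to $a_{j^*+1} \ge a_{j^*} - b_{j^*}$, so
\[
a_{j^*+2} = n - a_{j^*} - a_{j^*+1} \le n - 2a_{j^*} + b_{j^*} = (n-2m) + 2\Delta_{j^*} - b_{j^*} \le m + 2\Delta_{j^*} + 2,
\]
after substituting $a_{j^*} = m - \Delta_{j^*} + b_{j^*}$ and using $n \le 3m + 2$.

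If $\textup{(2)}$ also holds at $j^*$, we are done. Otherwise $b_{j^*+1} > 2\Delta_{j^*}$, and I will argue that one of the neighbors $j^* + 1$ or $j^* + 2$ satisfies all three conditions. Setting $c_j := m - a_j$, failure of $\textup{(2)}$ combined with $\Delta_{j^*+1} \le \Delta_{j^*}$ and the sum identity $\sum_j c_j \ge -2$ yields the key bound $\Delta_{j^*+1} \ge b_{j^*} + b_{j^*+2} - 1$. A short dichotomy on whether $b_{j^*} + b_{j^*+2} \ge 1$ resolves the question: in the generic case, this slack bound verifies $\textup{(1)}$, $\textup{(2)}$, $\textup{(3)}$ at $j^* + 1$ (or, if $\textup{(2)}$ narrowly fails there, at $j^* + 2$); in the degenerate case $b_{j^*} = b_{j^*+2} = 0$, the integer constraints pinpoint the configuration uniquely, forcing $n = 3m + 2$, tied argmax $\Delta_{j^*} = \Delta_{j^*+2}$, and $b_{j^*+1} = 2\Delta_{j^*} + 1$, under which direct substitution verifies all three conditions at $j^* + 2$.

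The main obstacle is this last case analysis: one must carefully track the integer-part edges to confirm that in every failure scenario at $j^*$, exactly one of the two neighbors admits the required slack. Although the argument uses only elementary integer arithmetic ($n \le 3m + 2$, $b_j \ge 0$, and the argmax condition), the tight sub-case (where $n = 3m + 2$ with both $b_{j^*}$ and $b_{j^*+2}$ vanishing) requires somewhat fiddly bookkeeping.
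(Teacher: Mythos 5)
Your overall strategy --- take $j^*$ maximizing $\Delta_{j^*}$, verify conclusions~(1) and~(3) there, and fall back to $j^*+1$ or $j^*+2$ when~(2) fails --- is the same as the paper's, and your verification of~(1), your derivation of~(3) at the argmax, and your key bound $\Delta_{j^*+1} \ge b_{j^*} + b_{j^*+2} - 1$ are all correct (for the record, the cancellation there uses $\Delta_{j^*+2} \le \Delta_{j^*}$ rather than $\Delta_{j^*+1} \le \Delta_{j^*}$, but both hold by the choice of $j^*$). The generic branch $b_{j^*} + b_{j^*+2} \ge 1$ can indeed be closed along the lines you indicate.

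The degenerate branch $b_{j^*} = b_{j^*+2} = 0$ is where the argument breaks: neither of your structural claims there is true. Take $n = 3m+2$ with $|U_{j^*}| = m$, $|U_{j^*+1}| = |U_{j^*+2}| = m+1$, $b_{j^*} = b_{j^*+2} = 0$, $b_{j^*+1} = 1$. Then $\Delta_{j^*} = \Delta_{j^*+1} = 0$ and $\Delta_{j^*+2} = -1$, so $j^*$ is a legitimate argmax at which~(2) fails and you are in the degenerate case; yet $\Delta_{j^*} \ne \Delta_{j^*+2}$ and, worse, $j^*+2$ violates conclusion~(1). A second configuration with $n = 3m$, all $\Delta_j = 1$ and $b_{j^*+1} = 3$ shows that $n = 3m+2$ is not forced either. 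The Observation does hold in both configurations --- via $j^*+1$, not $j^*+2$ --- and the correct dichotomy in the degenerate case is on the sign of $\Delta_{j^*+1}$ rather than on the $b$'s: if $\Delta_{j^*+1} \ge 0$, then $j^*+1$ works outright, since condition~(2) there reads $0 = b_{j^*+2} \le 2\Delta_{j^*+1}$ and condition~(3) there reads $|U_{j^*}| = m - \Delta_{j^*} \le m \le m + 2\Delta_{j^*+1} + 2$; only when $\Delta_{j^*+1} \le -1$ does the tightness argument actually pin the configuration ($n = 3m+2$, $\Delta_{j^*+1} = -1$, $\Delta_{j^*+2} = \Delta_{j^*}$, $b_{j^*+1} = 2\Delta_{j^*}+1$) and make $j^*+2$ work. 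This sign split on $\Delta_{j^*+1}$ is essentially the one the paper uses, so the fix is short, but as written your case analysis has a genuine hole.
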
  
\noindent  We say that an element $j \in \mathbb{Z}_3$ satisfying Conclusions~(1)--(3)
of Observation~\ref{obs:13.11} is {\it amenable}.

\begin{proof}[Proof of Observation~\ref{obs:13.11}]
As they are defined in~(\ref{eqn:3.14.2019.2:16p}),   
let $j \in \mathbb{Z}_3$ satisfy 
\begin{equation}
\label{eqn:3.14.2019.2:26p}  
\Delta_j = \max\{ \Delta_0, \Delta_1, \Delta_2\},  
\qquad \text{and w.l.o.g.~let $j = 0$}.  
\end{equation}  
Conclusion~(1) holds with $j = 0$ 
from~(\ref{eqn:3.14.2019.2:26p}),     
lest 
$\Delta_1, \Delta_2 \leq \Delta_0 \leq -1$ 
and $m = \lfloor n/3 \rfloor \geq (n-2)/3$ 
give
\begin{multline*}  
3m - 
\big|\hat{U}_0\big|
-
\big|\hat{U}_1\big|
-
\big|\hat{U}_2\big|
\stackrel{(\ref{eqn:3.14.2019.2:16p})}{=}    
\Delta_0 + \Delta_1 + \Delta_2 \leq -3  \\
\implies \qquad 
3m + 3 \leq 
\big|\hat{U}_0\big|
+
\big|\hat{U}_1\big|
+
\big|\hat{U}_2\big|  
\stackrel{(\ref{eqn:3.14.2019.2:15p}), 
(\ref{eqn:3.14.2019.2:16p})}{\leq}   
|U_0| + |U_1| + |U_2| = n \leq 3m + 2, 
\end{multline*}  
a contradiction.  Conclusion~(3) also holds with $j = 0$ 
from~(\ref{eqn:3.14.2019.2:26p}),     
since 
\begin{multline*}  
|U_2| = n - |U_0| - |U_1|
\stackrel{(\ref{eqn:3.14.2019.2:15p}), 
(\ref{eqn:3.14.2019.2:16p})}{=}    
n 
-
\big|I_0^{\bad}\big|
- 
\big|\hat{U}_0\big|
- 
\big|I_1^{\bad}\big|
- 
\big|\hat{U}_1\big|
\leq 
n 
-
\big|\hat{U}_0\big|
- 
\big|\hat{U}_1\big|  \\
\leq m + 2 
+ 
\Big(m 
-
\big|\hat{U}_0\big|
\Big)  
+ 
\Big(m 
-
\big|\hat{U}_1\big|
\Big)  
\stackrel{(\ref{eqn:3.14.2019.2:16p})}{=}    
m + 2 + \Delta_0 + \Delta_1 
\stackrel{(\ref{eqn:3.14.2019.2:26p})}{\leq}    
m + 2\Delta_0 + 2.  
\end{multline*}  
For sake of argument, we assume that 
Conclusion~(2) fails with $j = 0$
from~(\ref{eqn:3.14.2019.2:26p}):       
\begin{equation}
\label{eqn:3.14.2019.5:39p}
\big|I_1^{\bad}\big| \geq 2 \Delta_0 + 1.  
\end{equation}

Observe that 
\begin{multline}  
\label{eqn:3.14.2019.6:56.5p}  
\big|I_0^{\bad}\big| + 
\big|I_1^{\bad}\big|  
\leq 
\big|I_0^{\bad}\big| + 
\big|I_1^{\bad}\big| + 
\big|I_2^{\bad}\big| 
\stackrel{(\ref{eqn:3.14.2019.2:15p}),   
(\ref{eqn:3.14.2019.2:16p})}{=}    
n 
- \big|\hat{U}_0\big|
- \big|\hat{U}_1\big|
- \big|\hat{U}_2\big|  \\
\leq 
2 + 
\Big(m - \big|\hat{U}_0\big|\Big)
+
\Big(m - \big|\hat{U}_1\big|\Big)
+
\Big(m - \big|\hat{U}_2\big|\Big)
\stackrel{(\ref{eqn:3.14.2019.2:16p})}{=}    
2 + \Delta_0 + \Delta_1 + \Delta_2, \\  
\implies \qquad 
0 \leq \big|I_0^{\bad}\big| \leq 2 + \Delta_0 + \Delta_1 + \Delta_2 - \big|I_1^{\bad}\big|
\stackrel{(\ref{eqn:3.14.2019.5:39p})}{\leq}  
1 + \Delta_1 + \Delta_2 - \Delta_0.  
\end{multline}  
If $\Delta_1 \leq - 1$, then 
Observation~\ref{obs:13.11} would hold with $j = 2$.   
Indeed, $\Delta_1 \leq -1$ and 
$\Delta_2 \leq \Delta_0$  
from~(\ref{eqn:3.14.2019.2:26p}) would render 
$I_0^{\bad} = \emptyset$, and so 
$\Delta_1 = -1$, and $\Delta_2 = \Delta_0$ 
would be the maximum
from~(\ref{eqn:3.14.2019.2:26p}).   
If 
$\Delta_2 = \Delta_0$ is the maximum
from~(\ref{eqn:3.14.2019.2:26p}), 
we already observed that 
Conclusion~(1) would hold with $j = 2$, i.e., $\Delta_2 \geq 0$, 
and that Conclusion~(3) would hold with $j = 2$, i.e., $|U_1| \leq 
m + 2\Delta_2 + 2$.  
If $I_0^{\bad} = \emptyset$ and $\Delta_2 \geq 0$, 
Conclusion~(2) would hold with $j = 2$, i.e., 
$|I_0^{\bad}| = 0 \leq 2 \Delta_2$.  
For sake of argument, we assume that 
\begin{equation}
\label{eqn:3.14.2019.6:09p}  
\Delta_1 \geq 0.  
\end{equation}

Note that~(\ref{eqn:3.14.2019.6:09p}) says Conclusion~(1) holds
with $j = 1$.  
Conclusion~(3) also holds with $j = 1$, since 
\begin{multline*}  
|U_0| 
\stackrel{(\ref{eqn:3.14.2019.2:15p}),   
(\ref{eqn:3.14.2019.2:16p})}{=}  
\big|\hat{U}_0\big| + \big|I_0^{\bad}\big|  
\stackrel{(\ref{eqn:3.14.2019.6:56.5p})}{\leq}  
\big|\hat{U}_0\big| + 1 + \Delta_1 + \Delta_2 - \Delta_0 \\
\stackrel{(\ref{eqn:3.14.2019.2:26p})}{\leq}    
\big|\hat{U}_0\big| + 1 + \Delta_1  
\stackrel{(\ref{eqn:3.14.2019.2:16p})}{=}  
m - \Delta_0 + 1 + \Delta_1 
\stackrel{(\ref{eqn:3.14.2019.2:26p})}{\leq}    
m + 1 + \Delta_1 
\stackrel{(\ref{eqn:3.14.2019.6:09p})}{\leq}    
m + 2\Delta_1 + 2,   
\end{multline*}  
where we used $\Delta_0 \geq 0$.  
For sake of argument, we assume Conclusion~(2) fails with $j = 1$:  
\begin{equation}
\label{eqn:3.14.2019.8:02p}  
\big|I_2^{\bad}\big| \geq 2 \Delta_1 + 1.  
\end{equation}

We conclude that 
Observation~\ref{obs:13.11} holds with $j = 2$.   
For that, observe that 
\begin{multline*}  
\big|I_0^{\bad}\big|
+ 2 \Delta_0 + 2 \Delta_1 + 2
\stackrel{(\ref{eqn:3.14.2019.5:39p}), 
(\ref{eqn:3.14.2019.8:02p})}{\leq}      
\big|I_0^{\bad}\big| 
+
\big|I_1^{\bad}\big| 
+
\big|I_2^{\bad}\big| 
\stackrel{(\ref{eqn:3.14.2019.6:56.5p})}{\leq}    
2 + \Delta_0 + \Delta_1 + \Delta_2  \\
\implies \qquad 
0 \leq \big|I_0^{\bad}\big| 
\leq 
\Delta_2 - \Delta_0 - \Delta_1  
\stackrel{(\ref{eqn:3.14.2019.2:26p})}{\leq}    
- \Delta_1
\stackrel{(\ref{eqn:3.14.2019.6:09p})}{\leq}
0.    
\end{multline*}  
Thus, 
$I_0^{\bad} = \emptyset$, 
$\Delta_1 = 0$, and $\Delta_2 = \Delta_0$  
is the maximum 
from~(\ref{eqn:3.14.2019.2:26p}).    
As 
$\Delta_2 = \Delta_0$ is the maximum from~(\ref{eqn:3.14.2019.2:26p}),     
we already observed that 
Conclusion~(1) holds, i.e., $\Delta_2 \geq 0$, and 
that 
Conclusion~(3) holds, i.e., $|U_1| \leq m + 2\Delta_2 + 2$.      
Since $\Delta_1 = 0$ and $I_0^{\bad} = \emptyset$, Conclusion~(2) also holds, i.e., 
$|I_0^{\bad}| = 0 = 2\Delta_1$, which completes the proof  
of Observation~\ref{obs:13.11}.  
\end{proof}

Observation~\ref{obs:13.11}  
guarantees at least one {\it amenable} element 
$j \in \mathbb{Z}_3$, i.e., one where 
Conclusions~(1)--(3) of Observation~\ref{obs:13.11} hold.   
We next consider properties of an amenable $j \in \mathbb{Z}_3$  
when $\ell \equiv 1$ (mod 3).

\begin{fact}
\label{fact:13.12}  
Let $\ell \equiv 1$ {\rm (mod 3)}.  
Let $j \in \mathbb{Z}_3$ be amenable, and fix $u_{j+1} \in U_{j+1}$.   
Then
$$
\deg_G^{\spec}(u_{j+1}, U_{j+2}^{\good}) \leq 
\left\{
\begin{array}{cc}
2 \Delta_j + 5 & \text{if 
$u_{j+1} \in \hat{U}_{j+1}$
{\rm (cf.~(\ref{eqn:3.14.2019.2:16p}))},}  \\   
n/(10)  & \text{if 
$u_{j+1} \in I_{j+1}^{\bad}$
{\rm (cf.~(\ref{eqn:IjEj})}    
and~{\rm (\ref{eqn:3.14.2019.2:16p}))}.}  
\end{array}
\right.  
$$
\end{fact}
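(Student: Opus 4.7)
The plan is to argue by contradiction: assume $\deg_G^{\spec}(u_{j+1}, U_{j+2}^{\good})$ exceeds the stated bound (so in particular it exceeds $4$), and invoke Corollary~\ref{cor:13.8} to extract a dominant color $\alpha$ together with a large subset $W \subseteq A_\alpha(u_{j+1}) \cap U_{j+2}^{\good}$ whose edges to $u_{j+1}$ are all colored $\alpha$.  By Corollary~\ref{cor:Clm13.4}, the primary colors $c_w$ for $w \in W$ are pairwise distinct and each differs from $\alpha$.

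For Case~1 ($u_{j+1} \in \hat{U}_{j+1}$), the strategy is to locate some $w \in W$ together with a neighbor $v \in U_j$ of $u_{j+1}$ such that the $3$-path $P = (w, u_{j+1}, v)$ is rainbow and $c_w$-free.  With $i = j+2$, $j_P = j$, and $k = 3$, the congruence $K - k = \ell - 3 \equiv 1 \equiv (i-1) - j_P \pmod{3}$ holds, so Proposition~\ref{prop:Clm13.5}(1) extends such a $P$ to a strong rainbow $\ell$-cycle, contradicting~(\ref{eqn:overarching1}).  When $u_{j+1} \in U_{j+1}^{\good}$ with $c_{u_{j+1}} \neq \alpha$, one simply takes $v \in N_G^{\typ}(u_{j+1}, U_j)$ (so $c(\{u_{j+1}, v\}) = c_{u_{j+1}}$) and $w \in W$ with $c_w \neq c_{u_{j+1}}$, excluding at most one element of $W$ by Corollary~\ref{cor:Clm13.4}; already $|W| \geq 2$ suffices here.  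The residual sub-cases --- principally $c_{u_{j+1}} = \alpha$, or $u_{j+1} \in E_{j+1}^{\bad}$ where Corollary~\ref{cor:13.7}(1) no longer forces edges in $U_{j+1}$ to a single color, or configurations in which the only admissible $v$ lies in a small exceptional subset of $U_j$ --- must be handled by lengthening $P$ with an additional vertex and re-applying Proposition~\ref{prop:Clm13.5}(1) with a different parity.  The sizes of the exceptional sets encountered are controlled by the amenability conditions $|I_{j+1}^{\bad}| \leq 2\Delta_j$ and $|U_{j+2}| \leq m + 2\Delta_j + 2$, producing the additive $2\Delta_j$ contribution to the bound, with the constant $5$ absorbing fixed exceptions.

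For Case~2 ($u_{j+1} \in I_{j+1}^{\bad}$), the much looser bound $n/10$ admits a cleaner argument: the three or more internal colors on $E_G(u_{j+1}, U_{j+1})$ provide significant additional flexibility in constructing short rainbow paths from $w \in W$, and a combination of Corollary~\ref{cor:13.10} (with a judicious choice of $v_{j+1}$) with the reduction of Corollary~\ref{cor:13.8} yields the bound with substantial slack.  The main obstacle I anticipate is Case~1, where the tight additive constant $5$ demands exhaustive case-splitting on the relative positions of $\alpha$, $c_{u_{j+1}}$, and the primary colors $\{c_w : w \in W\}$, together with careful joint management of the constraints $c \neq \alpha$, $c \neq c_w$, and $c \neq c_{u_{j+1}}$ on each edge of the chosen rainbow path.
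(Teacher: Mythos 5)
There is a genuine gap, and it is concentrated in Case~1, where your path-extension strategy does not match the mechanism that actually produces the bound $2\Delta_j+5$. The paper's argument for $u_{j+1}\in\hat{U}_{j+1}$ is a pure edge count, not a cycle construction: by Corollary~\ref{cor:13.8} the special edges in $E_G^{\spec}(u_{j+1},U_{j+2}^{\good})$ carry at most two colors in total, and by Statements~(4)/(5) of Corollary~\ref{cor:13.7} one has $\deg_G^c(u_{j+1},U_{j+2})\ge \deg_G^c(u_{j+1})-3\ge m-1$; hence all but two of the special edges are edges \emph{in excess of} the $\deg_G^c(u_{j+1},U_{j+2})$ distinctly colored representatives, giving $|U_{j+2}|\ge \deg_G(u_{j+1},U_{j+2})\ge \deg_G^c(u_{j+1})+\deg_G^{\spec}(u_{j+1},U_{j+2}^{\good})-5$, which collides with the amenability bound $|U_{j+2}|\le m+2\Delta_j+2$ of Observation~\ref{obs:13.11}. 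Your proposed route instead tries to build a rainbow, $c_w$-free path $(w,u_{j+1},v)$ and invoke Proposition~\ref{prop:Clm13.5}(1). That works only when the dominant special color $\alpha$ differs from $c_{u_{j+1}}$; in the residual configuration $\alpha=c_{u_{j+1}}$ (which you flag but defer), every typical backward edge of $u_{j+1}$ and every special edge to $U_{j+2}^{\good}$ is $c_{u_{j+1}}$-colored, so the natural $3$-paths are monochromatic, and the natural $4$-vertex replacements $(w,u_{j+1},w',v_{j+1})$ fail because generically $c(\{u_{j+1},w'\})=c_{w'}=c(\{w',v_{j+1}\})$. More fundamentally, a path-extension argument of this type can only rule out an absolute constant number of usable special edges; it cannot produce a bound that grows linearly with $\Delta_j$ (which may be of order $\lambda^{1/4}n$), so it cannot recover the stated inequality.

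Your Case~2 sketch is also too thin to count as a proof. The paper's argument there is not about the internal colors of $u_{j+1}$ at all: it sets $A_2=A_2(u_{j+1})$ to be the $\alpha$-colored special neighborhood in $U_{j+2}^{\good}$ (large by Corollary~\ref{cor:13.8}), sets $\overline{B}_2=N_G^{\typ}(u_{j+1},U_{j+2}^{\good})$, uses $(j+2)$-special $4$-cycles and Observation~\ref{obs:13.9} to force every special edge from $\overline{B}_2$ back into $\hat{U}_{j+1}$ to be $\alpha$-colored, then double counts the non-$\alpha$ special edges from $\overline{B}_2$ into $I_{j+1}^{\bad}$ (via Statement~(2) of Corollary~\ref{cor:13.7} and the amenability condition $|I_{j+1}^{\bad}|\le 2\Delta_j$) to extract, by averaging, a single vertex $v_{j+1}\in I_{j+1}^{\bad}$ with $|B_2(v_{j+1})|\ge\tfrac12|\overline{B}_2|$, and finally contradicts Corollary~\ref{cor:13.10} with the disjoint union $A_2\cup B_2$. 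None of these steps is present or replaceable by the "flexibility from internal colors" you invoke, so both halves of the fact need the counting arguments, not the cycle constructions.
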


\begin{proof}[Proof of Fact~\ref{fact:13.12}]
Let $\ell \equiv 1$ (mod 3).  
Fix an amenable $j \in \mathbb{Z}_3$, and w.l.o.g.~let $j = 0$.  
Note first that 
\begin{equation}
  \label{eqn:3.14.2019.4:37p}  
  \deg_G^c(v, U_2) \geq \deg_G^c(v) - 3 \text{ 
for every $v \in \hat{U}_1$ with $\deg^{\spec}_G(v, U^{\good}_2) > 0$}, 
\end{equation}  
since 
$u_2 \in N^{\spec}_G(v, U^{\good}_2)$
gives $u_2 \in U^{\good}_2$ and $c(\{u_2, v\}) \neq c_{u_2}$, 
and 
$v \in \hat{U}_1 = U^{\good}_1 \cup E^{\bad}_1$
allows us 
to apply Statement~(4) or~(5) of Corollary~\ref{cor:13.7}.


Fix a vertex $u_1 \in U_1$, and   
first let $u_1 \in \hat{U}_1$.  Assume, on the contrary, 
that 
\begin{equation}
\label{eqn:3.15.2019.5:57p}  
\deg_G^{\spec}(u_1, U_2^{\good}) \geq 2 \Delta_0 + 6 
\stackrel{\text{Obs.\ref{obs:13.11}}}{\geq}    
6.   
\end{equation}  
Now, the edges $E_G(u_1, U_2)$ consist of $\deg_G^c(u_1, U_2)$ many distinctly colored edges 
together with some number of edges of repeated colors.  
By Corollary~\ref{cor:13.8}, the special edges 
$E_G^{\spec}(u_1, U_2^{\good})$, i.e., those of the form 
$\{u_1, u_2\} \in E$ for 
$u_2 \in N_G^{\spec}(u_1, U_2^{\good})$, come in at most two colors, so         
we have 
\begin{multline*}  
|U_2| \geq \deg_G(u_1, U_2) \geq \deg_G^c(u_1, U_2) + \deg_G^{\spec}\big(u_1, U_2^{\good}\big)
- 2  \\
\stackrel{(\ref{eqn:3.14.2019.4:37p})}{\geq}    
\deg_G^c(u_1) + \deg_G^{\spec}\big(u_1, U_2^{\good}\big) - 5
\stackrel{(\ref{eqn:3.15.2019.5:57p})}{\geq}    
\deg_G^c(u_1) + 2\Delta_0 + 1  
\stackrel{(\ref{eqn:theo23})}{\geq}    
m + 2 \Delta_0 + 3, 
\end{multline*}  
which contradicts Conclusion~(3) of 
Observation~\ref{obs:13.11}.

Now let $u_1 \in I_1^{\bad}$.  Assume, on the contrary, that 
\begin{equation}
\label{eqn:3.17.2019.4:19p}  
\deg_G^{\spec}\big(u_1, U_2^{\good}\big) > \tfrac{n}{10}.  
\end{equation}  
By Corollary~\ref{cor:13.8}, all but one of the edges
$E_G^{\spec}(u_1, U_2^{\good})$ are monochromatic, and in some color $\alpha$ of $c$.    
To prepare an upcoming application of Corollary~\ref{cor:13.10}, we define
\begin{multline}  
\label{eqn:3.18.2019.12:44p}  
A_2 =    
A_2(u_1) = 
\left\{v_2 \in N_G^{\spec}\big(u_1, U_2^{\good}\big): \, 
c(\{u_1, v_2\}) = \alpha
\right\}, \\ 
\text{where} \qquad 
|A_2| 
\stackrel{\text{Cor.\ref{cor:13.8}}}{\geq}
\deg_G^{\spec}\big(u_1, U_2^{\good}\big) - 1
\stackrel{(\ref{eqn:3.17.2019.4:19p})}{>}
\tfrac{n}{10} - 1 \geq \tfrac{n}{11}.    
\end{multline}  
For Corollary~\ref{cor:13.10}, we will 
identify a set $B_2 \subseteq
U_2^{\good}$ corresponding to $A_2$ above\footnote{Strictly speaking, the set $B_2$ we will define below 
will be a subset
of that in the hypothesis of Corollary~\ref{cor:13.10}.}.    
For that, we first consider the following
superset
$\overline{B}_2 \supseteq B_2$, from which we will later extract $B_2$:  
\begin{equation}
\label{eqn:3.18.2019.1:28p}  
\overline{B}_2 = N_G^{\typ} \big(u_1, U_2^{\good}\big), 
\qquad \text{where} \qquad 
\big|\overline{B}_2\big| 
\stackrel{\text{Cor.\ref{cor:13.8}}}{\geq}    
\big(\tfrac{1}{6} - 110 \lambda^{1/4}\big)n.  
\end{equation}

For fixed $u_2 \in \overline{B}_2$,
\begin{equation}
\label{eqn:3.18.2019.1:14p}
\deg_G^{\typ}(v_1, A_2) > 0 \text{ for every  $v_1 \in N^{\spec}_{G}(u_2, \hat{U_1}$)},
\end{equation}  
since then $\deg^{\spec}_G(v_1, U^{\good}_2) > 0$ (with $u_2 \in N^{\spec}_G(v_1, U^{\good}_2)$) and 
inclusion-exclusion gives 
\begin{multline*}  
U_2^{\good} \supseteq A_2 \cup N_G^{\typ}\big(v_1, U_2^{\good}\big)
\qquad \implies \qquad 
\big|U_2^{\good}\big| \geq |A_2| + 
\deg_G^{\typ}\big(v_1, U_2^{\good}\big)
- 
\deg_G^{\typ}(v_1, A_2)  \\
\implies \qquad 
\deg_G^{\typ}(v_1, A_2)  
\geq |A_2| + 
\deg_G^{\typ}\big(v_1, U_2^{\good}\big)
- 
\big|U_2^{\good}\big| 
\stackrel{(\ref{eqn:3.18.2019.12:44p})}{>}
\tfrac{n}{11}   
+ 
\deg_G^{\typ}\big(v_1, U_2^{\good}\big)
- 
\big|U_2^{\good}\big|  \\
\stackrel{(\ref{eqn:Clm13.2})}{\geq}    
\tfrac{n}{11} + 
\deg_G^{\typ}\big(v_1, U_2^{\good}\big)
- 
\big(\tfrac{1}{3} + 75 \lambda^{1/4}\big)n  
\stackrel{\text{Cor.\ref{cor:13.8}}}{\geq}    
\tfrac{n}{11} + 
\deg_G^c\big(v_1, U_2^{\good}\big)
- 
3  
- 
\big(\tfrac{1}{3} + 75 \lambda^{1/4}\big)n \\  
\stackrel{(\ref{eqn:Clm13.2})}{\geq}    
\tfrac{n}{11} + \deg_G^c(v_1, U_2) - 3 - \big(\tfrac{1}{3} + 147 \lambda^{1/4}\big)n  
\stackrel{\text{\eqref{eqn:3.14.2019.4:37p}}}{\geq}    
\tfrac{n}{11} + \deg_G^c(v_1) - 6 - \big(\tfrac{1}{3} + 147 \lambda^{1/4}\big)n  \\
\geq 
\tfrac{n}{11} + \delta_c(G)  - 6 - \big(\tfrac{1}{3} + 147 \lambda^{1/4}\big)n  
\stackrel{(\ref{eqn:overarching2})}{\geq}    
\tfrac{n}{11} - 6 - 147 \lambda^{1/4}n  
\geq 
\tfrac{n}{11} - 148 \lambda^{1/4}n  
\stackrel{(\ref{eqn:5.8.2019.1:39p})}{>} 0. 
\end{multline*}  

We can now show that 
\begin{equation}\label{fixedalpha}
  c(\{u_2, v_1\}) = \alpha  \text{ for every } v_1 \in N_G^{\spec}(u_2, \hat{U}_1). 
\end{equation}
Indeed, by \eqref{eqn:3.18.2019.1:14p},
there exists $v_2 \in N_G^{\typ}(v_1, A_2)$. 
Note that $v_2 \in A_2$ implies that $\{u_1, v_2\}$ is a special
edge with color $\alpha$.
Furthemore, 
both $u_2 \in \overline{B}_2$ and $v_2 \in A_2$ are good vertices, 
both $\{u_2, u_1\}$ and $\{v_2, v_1\}$ are typical edges, and
both $\{u_2, v_1\}$ and $\{v_2, u_1\}$ are special edges.  Thus, 
the 4-cycle $(u_1, u_2, v_1, v_2)$
is $2$-special (cf. Definition~\ref{def:jspecial}), 
so 
Observation~\ref{obs:13.9} guarantees that 
$c(\{u_2, v_1\}) = c(\{v_2, u_1\}) = \alpha$.

To extract the desired subset $B_2 \subseteq \overline{B}_2$
from~(\ref{eqn:3.18.2019.1:28p}), we double-count
$$
Z = \left\{\{u_2, v_1\} \in E: \, u_2 \in \overline{B}_2, \, 
v_1 \in N_G^{\spec}(u_2, I_1^{\bad}), \,  
\text{ and }  c(\{v_1, u_2\}) \neq \alpha
\right\}.
$$
For each $u_2 \in \overline{B}_2$, 
Statement (2) of Corollary~\ref{cor:13.7} ensures that $E_G(u_2, U_1)$ admits at least 
$$
\deg_G^c(u_2) - 1 - \big|U_0 \setminus I_0^{\bad}\big|
\stackrel{(\ref{eqn:3.14.2019.2:16p})}{=}    
\deg_G^c(u_2) - 1 - \big|\hat{U}_0\big|
$$  
many special colors for $u_2$.  By \eqref{fixedalpha},
every special edge $\{u_2, v_1\} \in E_G(u_2, \hat{U}_1)$ is colored $c(\{u_2, v_1\}) = \alpha$, 
which is forbidden in $Z$.  Thus, 
$$
|Z| \geq \sum_{u_2 \in \overline{B}_2} \Big(\deg_G^c(u_2) - 2 - \big|\hat{U}_0\big|\Big)    
\stackrel{(\ref{eqn:theo23})}{\geq}    
\sum_{u_2 \in \overline{B}_2} \Big(m - \big|\hat{U}_0\big| \Big)  
\stackrel{(\ref{eqn:3.14.2019.2:16p})}{=}    
\sum_{u_2 \in \overline{B}_2} \Delta_0 
= \Delta_0 \big|\overline{B}_2\big|  
\stackrel{\text{Obs.\ref{obs:13.11}}}{\geq}
\tfrac{1}{2} \big|I_1^{\bad}\big| \big|\overline{B}_2\big|.    
$$
Averaging $|Z|$ over $I_1^{\bad}$, 
we infer the existence of a vertex $v_1 \in I_1^{\bad}$ where 
\begin{equation}
\label{eqn:3.25.2019.7:06p}  
B_2 = 
B_2(v_1) = \left\{u_2 \in N_G^{\spec}\big(v_1, \overline{B}_2\big): \, c(\{u_2, v_1\})
\neq \alpha\right\} 
\end{equation}  
satisfies  
\begin{equation}
\label{eqn:3.25.2019.7:10p} 
|B_2|  
\geq \tfrac{1}{2} \big|\overline{B}_2\big|  
\stackrel{(\ref{eqn:3.18.2019.1:28p})}{\geq}    
\tfrac{1}{2} \big(\tfrac{1}{6} - 110 \lambda^{1/4}\big) n.  
\end{equation}

Consider the sets $A_2 = A_2(u_1)$ and $B_2 = B_2(v_1)$
from~(\ref{eqn:3.18.2019.12:44p})  
and~(\ref{eqn:3.25.2019.7:06p}). 
Since $B_2 \subseteq \overline{B}_2$  
where $A_2 \cap \overline{B}_2 = \emptyset$
from~(\ref{eqn:3.18.2019.1:28p}),   
we infer that $A_2 \cup B_2$
is a disjoint union of size 
$$
|A_2 \cup B_2| = |A_2| + |B_2|  
\stackrel{(\ref{eqn:3.18.2019.12:44p})}{\geq}    
\tfrac{n}{11} + |B_2| 
\stackrel{(\ref{eqn:3.25.2019.7:10p})}{\geq}   
\tfrac{n}{11} + \tfrac{1}{2} \big(\tfrac{1}{6} - 110 \lambda^{1/4}\big)n
= 
\big(\tfrac{23}{132} - 55 \lambda^{1/4}\big)n 
\stackrel{(\ref{eqn:5.8.2019.1:39p})}{>}  
\big(\tfrac{1}{6} + 186 \lambda^{1/4}\big) n, 
$$
which contradicts Corollary~\ref{cor:13.10},   
and concludes the proof of Fact~\ref{fact:13.12}.  
\end{proof}

Fact~\ref{fact:13.12} admits the following easy but useful corollary.

\begin{cor}
\label{cor:13.14}  
Let $\ell \equiv 1$ {\rm (mod 3)}, and fix an amenable element $j \in \mathbb{Z}_3$.  
Fix an integer $\Delta \geq \max\{1, \Delta_j\}$, and fix $W_{j+1} \subseteq U_{j+1}$ of size 
$|W_{j+1}| < n / (100)$.  Then 
$$
\Big|E_G^{\spec}\big(W_{j+1}, U_{j+2}^{\good}\big)\Big|
\leq \tfrac{3}{10} \Delta n, 
$$
where 
$E_G^{\spec}(W_{j+1}, U_{j+2}^{\good})$
includes all $\{w_{j+1}, u_{j+2}\} \in E$ with $w_{j+1} \in W_{j+1}$ and $u_{j+2} \in N_G^{\spec}(w_{j+1}, U_{j+2}^{\good})$.  
\end{cor}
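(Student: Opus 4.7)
My plan is to simply combine the per-vertex bounds of Fact~\ref{fact:13.12} with the amenability information about $\big|I_{j+1}^{\bad}\big|$ from Observation~\ref{obs:13.11}, after splitting $W_{j+1}$ along the partition~(\ref{eqn:3.14.2019.2:15p})--(\ref{eqn:3.14.2019.2:16p}). Concretely, I would write
\[
\big|E_G^{\spec}\big(W_{j+1}, U_{j+2}^{\good}\big)\big|
= \sum_{w \in W_{j+1} \cap \hat{U}_{j+1}} \deg_G^{\spec}\big(w, U_{j+2}^{\good}\big)
+ \sum_{w \in W_{j+1} \cap I_{j+1}^{\bad}} \deg_G^{\spec}\big(w, U_{j+2}^{\good}\big),
\]
and bound the two sums separately.

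For the first sum, the first clause of Fact~\ref{fact:13.12} gives $\deg_G^{\spec}(w, U_{j+2}^{\good}) \leq 2\Delta_j + 5$ for every $w \in \hat{U}_{j+1}$, and the hypothesis $|W_{j+1}| < n/100$ together with $\Delta \geq \max\{1, \Delta_j\}$ (so $2\Delta_j + 5 \leq 7\Delta$) yields a contribution of at most $(n/100)(2\Delta_j + 5) \leq 7\Delta n / 100$.

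For the second sum, the second clause of Fact~\ref{fact:13.12} gives $\deg_G^{\spec}(w, U_{j+2}^{\good}) \leq n/10$ for every $w \in I_{j+1}^{\bad}$, so the contribution is at most $\big|W_{j+1} \cap I_{j+1}^{\bad}\big| \cdot n/10 \leq \big|I_{j+1}^{\bad}\big| \cdot n/10$. Here I would invoke amenability (Conclusion~(2) of Observation~\ref{obs:13.11}), which gives $\big|I_{j+1}^{\bad}\big| \leq 2\Delta_j \leq 2\Delta$, producing a bound of $2\Delta n / 10 = 20\Delta n/100$.

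Adding, the total is at most $(7 + 20)\Delta n / 100 = 27 \Delta n/100 \leq 3\Delta n/10$, as desired. There is no real obstacle here: both clauses of Fact~\ref{fact:13.12} are precisely tailored to the two pieces of the partition $U_{j+1} = \hat{U}_{j+1} \cup I_{j+1}^{\bad}$, and the constants $1/100$ (on $|W_{j+1}|$) and $3/10$ (on the conclusion) are calibrated so the arithmetic works even in the worst case where $\Delta_j = \Delta$ and $|W_{j+1}|$ is near $n/100$; the only thing to check is that the coefficients $2\Delta_j + 5$ and $2\Delta_j$ do in fact fit under $3\Delta n /10$ after weighting by $|W_{j+1}|$ and $n/10$ respectively, which they do with room to spare.
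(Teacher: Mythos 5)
Your proof is correct and follows essentially the same route as the paper: split $W_{j+1}$ along $\hat{U}_{j+1} \cup I_{j+1}^{\bad}$, apply the two clauses of Fact~\ref{fact:13.12}, bound $|I_{j+1}^{\bad}| \leq 2\Delta_j$ via amenability, and arrive at $27\Delta n/100 \leq 3\Delta n/10$. The arithmetic matches the paper's exactly.
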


\begin{proof}[Proof of Corollary~\ref{cor:13.14}]  
Let $\ell \equiv 1$ (mod 3).  Fix an amenable element $j \in \mathbb{Z}_3$, and w.l.o.g.~let $j = 0$.  
Fix an integer $\Delta \geq \max \{1, \Delta_0\}$, and fix $W_1 \subseteq U_1$ of size $|W_1| < n/(100)$.  
Then 
\begin{multline*}  
\Big|E_G^{\spec}\big(W_1, U_2^{\good}\big)\Big|
= 
\sum_{w_1 \in W_1} \deg_G^{\spec}\big(w_1, U_2^{\good}\big)  
= 
\sum_{w_1 \in W_1 \cap \hat{U}_1} \deg_G^{\spec}\big(w_1, U_2^{\good}\big) \\
+
\sum_{w_1 \in W_1 \cap I_1^{\bad}} \deg_G^{\spec}\big(w_1, U_2^{\good}\big) 
\stackrel{\text{Fct.\ref{fact:13.12}}}{\leq}
\big|W_1 \cap \hat{U}_1\big| (2 \Delta_0 + 5)
+ 
\big|W_1 \cap I^{\bad}_1\big| 
\tfrac{n}{10}  \\
\leq 
|W_1| (2 \Delta_0 + 5)
+ 
\big|I^{\bad}_1\big| 
\tfrac{n}{10}  
\stackrel{\text{Obs.\ref{obs:13.11}}}{\leq}    
|W_1| (2 \Delta_0 + 5) +  \tfrac{n}{5} \Delta_0 
\leq 
\tfrac{n}{100} (2 \Delta + 5) + \tfrac{n}{5} \Delta,  
\end{multline*}  
and the quantity above is at most $27 \Delta n / 100$.  
\end{proof}

\subsection{Proof of Lemma~\ref{lem:ext} in the case $\boldsymbol{\ell \equiv 1}$ (mod 3)}  
We now prove Lemma~\ref{lem:ext}  
in the case $\ell \equiv 1$ (mod 3).  
For this case, 
recall from~(\ref{eqn:overarching2})  
that 
the hypotheses of Lemma~\ref{lem:ext} 
assume
\begin{equation}  
\label{eqn:2.23.2019.1:07p}  
\delta^c(G) \geq 
\left\{
\begin{array}{cc}
(n+5)/3  &   \text{in Statement~(1) with $\ell \equiv 1$ (mod 3),}  \\
(n+4)/3  &   \text{in Statement~(2) with $\ell \equiv 1$ (mod 3).}
\end{array}
\right.
\end{equation}  
We begin our work with Statement~(2).

\subsubsection{Statement~(2) of Lemma~\ref{lem:ext}}  
Statement~(2)
of Lemma~\ref{lem:ext} seeks 
to conclude that 
$(G, c)$ admits a properly colored $\ell$-cycle
$C_{\ell}$.   
To prove this, 
we proceed by fixing an amenable 
element $j \in \mathbb{Z}_3$
from Observation~\ref{obs:13.11}.
We first claim that 
\begin{equation}
\label{eqn:3.30.2019.4:42p}  
E_G^{\spec}(U_{j+1}^{\good}, U_{j+2}^{\good})
\neq \emptyset.   
\end{equation}  
To prove~(\ref{eqn:3.30.2019.4:42p}),   
we 
consider the identity
\begin{equation}
\label{eqn:3.310.2019.5:46p}  
\big| E_G^{\spec}\big(U_{j+1}, U_{j+2}^{\good}\big) \big| = 
\big| E_G^{\spec}\big(U_{j+1}^{\bad}, U_{j+2}^{\good}\big) \big| 
+ 
\big| E_G^{\spec}\big(U_{j+1}^{\good}, U_{j+2}^{\good}\big) \big|.      
\end{equation}  
We will use 
Corollary~\ref{cor:13.14}
to bound $| E_G^{\spec}(U_{j+1}^{\bad}, U_{j+2}^{\good})|$, 
and 
we will use 
Corollary~\ref{cor:13.7}  
to bound 
$|E_G^{\spec}(U_{j+1}, U_{j+2}^{\good})|$.  
First, in 
the context of Corollary~\ref{cor:13.14}, 
we set 
$\Delta = 1 + \Delta_j \geq \max\{1, \Delta_j\}$, where we used 
$\Delta_j \geq 0$ 
from the amenability of $j \in \mathbb{Z}_3$.  We also 
set $W_{j+1} = U_{j+1}^{\bad}$, where 
$$
|W_{j+1}| = \big|U_{j+1}^{\bad}\big| 
\stackrel{(\ref{eqn:Clm13.2})}{\leq} 72 \lambda^{1/4} n \stackrel{(\ref{eqn:5.8.2019.1:39p})}{<} \tfrac{n}{100}.   
$$  
Consequently, 
Corollary~\ref{cor:13.14} guarantees 
\begin{equation}
\label{eqn:3.30.2019.4:05p}  
\big|E_G^{\spec}\big(U_{j+1}^{\bad}, U_{j+2}^{\good}\big) \big| \leq \tfrac{3}{10} \Delta n.  
\end{equation}  
Second, Statement~(2) of 
Corollary~\ref{cor:13.7}  
guarantees that every $u_{j+2} \in U_{j+2}^{\good}$ satisfies 
\begin{multline}  
\label{eqn:4.1.2019.11:25a}  
\big|E_G^{\spec}(u_{j+2}, U_{j+1})\big| \geq \deg_G^c(u_{j+2}) - 1 - \big|U_j \setminus I_j^{\bad}\big|
\stackrel{(\ref{eqn:3.14.2019.2:16p})}{=}   
\deg_G^c(u_{j+2}) - 1 - \big|\hat{U}_j\big|  \\
\stackrel{(\ref{eqn:theo23})}{\geq}    
m + 1 - \big|\hat{U}_j\big|  
\stackrel{(\ref{eqn:3.14.2019.2:16p})}{=}  
1 + \Delta_j
= \Delta,
\end{multline}  
and so 
\begin{equation}  
\label{eqn:3.30.2019.4:16p}  
\big| E_G^{\spec}\big(U_{j+1}, U_{j+2}^{\good}\big) \big| = 
\sum_{u_{j+2} \in U_{j+2}^{\good}} \deg_G^{\spec}(u_{j+2}, U_{j+1}) \geq \Delta \big|U_{j+2}^{\good}\big|.  
\end{equation}  
Applying~(\ref{eqn:3.30.2019.4:05p})
and~(\ref{eqn:3.30.2019.4:16p})
to~(\ref{eqn:3.310.2019.5:46p}) yields 
$$
\Delta \big|U_{j+2}^{\good}\big| 
\leq 
\big| E_G^{\spec}\big(U_{j+1}, U_{j+2}^{\good}\big) \big| 
\leq 
\tfrac{3}{10} \Delta n + 
\big| E_G^{\spec}\big(U_{j+1}^{\good}, U_{j+2}^{\good}\big) \big|,  
$$
and so 
$$
\big| E_G^{\spec}\big(U_{j+1}^{\good}, U_{j+2}^{\good}\big) \big| 
\geq 
\Delta \big(\big|U_{j+2}^{\good}\big| - \tfrac{3}{10}n \big)  \\
\stackrel{(\ref{eqn:Clm13.2})}{\geq}  
\Delta n \big(\tfrac{1}{3} - 75 \lambda^{1/4} - \tfrac{3}{10}\big)  
> \Delta n \big(\tfrac{3}{100} - 75 \lambda^{1/4} \big) 
\stackrel{(\ref{eqn:5.8.2019.1:39p})}{>}  0, 
$$
where we used $\Delta = 1 + \Delta_j \geq 1$ 
from the amenability of $j \in \mathbb{Z}_3$.  
This proves~(\ref{eqn:3.30.2019.4:42p}).

To prove Statement~(2) of Lemma~\ref{lem:ext},   
fix an edge $\{u_{j+1}, u_{j+2}\} \in E_G^{\spec}(U_{j+1}^{\good}, U_{j+2}^{\good})$ 
from~(\ref{eqn:3.30.2019.4:42p}), 
where $u_{j+1} \in U_{j+1}^{\good}$ and $u_{j+2} \in U_{j+2}^{\good}$.  
We claim that 
\begin{equation}
\label{eqn:3.30.2019.6:34p}  
E_G^{\spec}\Big(N_G^{\typ}(u_{j+2}, U_{j+1}), 
N_G^{\typ}\big(u_{j+1}, U_{j+2}^{\good}\big)\Big) \neq \emptyset.     
\end{equation}  
If~(\ref{eqn:3.30.2019.6:34p}) holds, then   
it concludes our proof, as follows.  
Fix 
$\{v_{j+1}, v_{j+2}\} \in E$ 
of~(\ref{eqn:3.30.2019.6:34p}),  
where 
$v_{j+1} \in N_G^{\typ}(u_{j+2}, U_{j+1})$ 
and $v_{j+2} \in N_G^{\typ}(u_{j+1}, U_{j+2}^{\good})$.   
We first observe
that $(u_{j+2}, v_{j+1}, v_{j+2}, u_{j+1})$ is a 
$(j+2)$-special 4-cycle (cf.~Definition~\ref{def:jspecial}).  
Indeed, 
$u_{j+2} \in U_{j+2}^{\good}$ is good from~(\ref{eqn:3.30.2019.4:42p}) and   
$v_{j+2} \in U_{j+2}^{\good}$ is good from~(\ref{eqn:3.30.2019.6:34p}).  
The edge $\{u_{j+2}, v_{j+1}\} \in E$ is typical because $v_{j+1} \in N_G^{\typ}(u_{j+2}, U_{j+1})$
from~(\ref{eqn:3.30.2019.6:34p}), and the edge $\{v_{j+2}, u_{j+1}\} \in E$ is typical 
because $v_{j+2} \in N^{\typ}_G(u_{j+1}, U_{j+2}^{\good})$   
from~(\ref{eqn:3.30.2019.6:34p}).
The edge $\{u_{j+2}, u_{j+1}\} \in E$ is special from
from~(\ref{eqn:3.30.2019.4:42p}), 
and   
the edge $\{v_{j+2}, v_{j+1}\} \in E$ is special from~(\ref{eqn:3.30.2019.6:34p}).    
Since the 4-cycle $(u_{j+2}, v_{j+1}, v_{j+2}, u_{j+1})$ is $(j+2)$-special, Observation~\ref{obs:13.9}  
guarantees that its edges 
receive precisely 3-colors,   
where the special edges $(\{u_{j+2}, u_{j+1}\})$
and   
$\{v_{j+2}, v_{j+1}\}$ 
match in color and the typical edges $\{u_{j+2}, v_{j+1}\}$ and $\{v_{j+2}, u_{j+1}\}$ do not.
Thus, $(u_{j+2}, v_{j+1}, v_{j+2}, u_{j+1})$ is a properly colored 4-cycle.  Equivalently, 
$(u_{j+2}, u_{j+1}, v_{j+2}, v_{j+1})$ is a strong properly colored 4-cycle which 
Proposition~\ref{prop:Clm13.5}  
extends to a strong properly colored $\ell$-cycle $C_{\ell}$, as promised by 
Statement~(2)   
of Lemma~\ref{lem:ext}.

To prove~(\ref{eqn:3.30.2019.6:34p}), we proceed similarly to~(\ref{eqn:3.30.2019.4:42p}), 
and   
begin 
by considering the identity
\begin{multline}  
\label{eqn:3.31.2019.6:07p}  
\big|E_G^{\spec}\big(U_{j+1}, N_G^{\typ}\big(u_{j+1}, U_{j+2}^{\good}\big) \big) \big|
= 
\big|E_G^{\spec}\big(U_{j+1} \setminus N_G^{\typ}(u_{j+2}, U_{j+1}),  
N_G^{\typ}\big(u_{j+1}, U_{j+2}^{\good}\big) \big) \big| \\
+ 
\big|E_G^{\spec}\big(N_G^{\typ}(u_{j+2}, U_{j+1}),  
N_G^{\typ}\big(u_{j+1}, U_{j+2}^{\good}\big) \big) \big|.  
\end{multline}  
As before, 
Corollary~\ref{cor:13.14} 
will bound the first summand
of~(\ref{eqn:3.31.2019.6:07p}), 
and Corollary~\ref{cor:13.7} will bound the left hand side 
of~(\ref{eqn:3.31.2019.6:07p}).   
First, we again set $\Delta = 1 + \Delta_j \geq 0$, 
but we now set   
$W_{j+1} = U_{j+1} \setminus N_G^{\typ}(u_{j+2}, U_{j+1})$.    
Since $u_{j+2} \in U_{j+2}^{\good}$ is a good vertex,  
Proposition~\ref{prop:Clm13.3} guarantees
$$
|W_{j+1}| = 
\big|U_{j+1} \setminus N_G^{\typ}(u_{j+2}, U_{j+1})\big|    
\leq 
233 \lambda^{1/4} n \stackrel{(\ref{eqn:5.8.2019.1:39p})}{<} \tfrac{n}{100}.  
$$ 
Consequently, 
Corollary~\ref{cor:13.14} 
guarantees
\begin{multline}  
\label{eqn:3.31.2019.6:18p}  
\big|E_G^{\spec}\big(U_{j+1} \setminus N_G^{\typ}(u_{j+2}, U_{j+1}),  
N_G^{\typ}\big(u_{j+1}, U_{j+2}^{\good}\big) \big) \big|   \\
\leq 
\big|E_G^{\spec}\big(U_{j+1} \setminus N_G^{\typ}(u_{j+2}, U_{j+1}),  
U_{j+2}^{\good}\big) \big) \big| 
\leq
\tfrac{3}{10} \Delta n.
\end{multline}  
Second, and 
identically to~(\ref{eqn:4.1.2019.11:25a})  
and~(\ref{eqn:3.30.2019.4:16p}),   
\begin{multline}  
\label{eqn:3.31.2019.6:57p}  
\big|E_G^{\spec}\big(U_{j+1}, N_G^{\typ}\big(u_{j+1}, U_{j+2}^{\good}\big) \big) \big|  \\
= \sum_{v_{j+2} \in 
N_G^{\typ}(u_{j+1}, U_{j+2}^{\good})}
\deg_G^{\spec}(v_{j+2}, U_{j+1})  
\geq \Delta 
\big|N_G^{\typ}\big(u_{j+1}, U_{j+2}^{\good}\big)\big|. 
\end{multline}  
Applying~(\ref{eqn:3.31.2019.6:18p})   
and~(\ref{eqn:3.31.2019.6:57p})  
to~(\ref{eqn:3.31.2019.6:07p}) yields 
\begin{multline}  
\label{eqn:4.1.2019.12:05p}  
\big|E_G^{\spec}\big(N_G^{\typ}(u_{j+2}, U_{j+1}),  
N_G^{\typ}\big(u_{j+1}, U_{j+2}^{\good}\big) \big) \big|  
\geq 
\Delta
\big(
\big|N_G^{\typ}\big(u_{j+1}, U_{j+2}^{\good}\big)\big| 
- \tfrac{3}{10} n\big)  \\
\stackrel{\text{Cor.\ref{cor:13.8}}}{\geq}    
\Delta
\big(
\deg_G^c\big(u_{j+1}, U_{j+2}^{\good}\big) - 3 - \tfrac{3}{10} n \big)
\stackrel{(\ref{eqn:Clm13.2})}{\geq}    
\Delta
\big(
\big(\tfrac{1}{3} - 76 \lambda^{1/4}\big) n - 3 - \tfrac{3}{10} n \big)  \\
> 
\Delta n \big(\tfrac{3}{100} - 76 \lambda^{1/4} \big) 
\stackrel{(\ref{eqn:5.8.2019.1:39p})}{>} 0, 
\end{multline}  
where we used that $\Delta = 1 + \Delta_j \geq 1$ and that $n$ is sufficiently large.  
This proves~(\ref{eqn:3.30.2019.6:34p}), and completes the proof of Statement~(2) of 
Lemma~\ref{lem:ext}.

\subsubsection{Statement~(1) of Lemma~\ref{lem:ext}}  Statement~(1) of Lemma~\ref{lem:ext}
assumes that $\delta^c(G) \geq (n+5)/3$ and 
seeks to conclude
that $(G, c)$ admits\footnote{Throughout our proof, we have assumed in~(\ref{eqn:overarching1})   
that $(G, c)$ avoids rainbow $\ell$-cycles $C_{\ell}$.  Finding one now shows 
that our assumption~(\ref{eqn:overarching1}) is flawed.}  
a rainbow $\ell$-cycle $C_{\ell}$.   
The argument here is similar to that of the previous subsection, where in fact we build upon
that same argument.  For that, note 
that $\delta^c(G) \geq (n+5)/3 \geq (n+4)/3$
allows 
all conclusions of the previous subsection to
hold
for the amenable element $j \in \mathbb{Z}_3$.    
As before, let 
$\{u_{j+1}, u_{j+2}\} \in E_G^{\spec}(U_{j+1}^{\good}, U_{j+2}^{\good})$ be fixed.  
We first observe that 
\begin{equation}
\label{eqn:3.30.2019.3:36p}  
\Delta_{j+2} \leq - 1,  
\end{equation}  
since 
\begin{multline*}  
\big|\hat{U}_{j+2}\big| \geq \deg_G^c\big(u_{j+1}, \hat{U}_{j+2}\big)
= 
\deg_G^c\big(u_{j+1}, U_{j+2} \setminus I_{j+2}^{\bad}\big) 
\stackrel{\text{Cor.\ref{cor:13.7}}}{\geq}    
\deg_G^c(u_{j+1}) - 1
\stackrel{(\ref{eqn:theo23})}{\geq}    
m + 1  \\
\implies \qquad 
-1 \geq m - 
\big|\hat{U}_{j+2}\big| 
\stackrel{(\ref{eqn:3.14.2019.2:16p})}{=}  \Delta_{j+2}.     
\end{multline*}  
Second, we 
observe that 
\begin{equation}
\label{eqn:4.1.2019.12:39p}  
n \equiv 2 \text{ {\rm (mod 3)}} \qquad \text{or}  \qquad 
\Delta_j \geq 1.
\end{equation}  
To argue~(\ref{eqn:4.1.2019.12:39p}),   
\begin{equation}
\label{eqn:4.1.2019.12:50p}  
\text{{\it we assume, on the contrary, that 
$n \not\equiv 2$ {\rm (mod 3)} and $\Delta_j = 0$.}} 
\end{equation}
From~(\ref{eqn:4.1.2019.12:50p}),   
we will conclude that 
$j+1 \in \mathbb{Z}_3$ is also amenable, whence~(\ref{eqn:3.30.2019.3:36p}) 
also holds for $j+1 \in \mathbb{Z}_3$, 
in which case $\Delta_{j+1+2} = \Delta_j \leq - 1$ contradicts
$\Delta_j = 0$ 
of~(\ref{eqn:4.1.2019.12:50p}).    
To see that $j + 1 \in \mathbb{Z}_3$ is amenable, 
we note 
from~(\ref{eqn:3.14.2019.2:16p})   
that  
\begin{equation}
\label{eqn:4.1.2019.1:41p}  
\big|\hat{U}_j\big|
\stackrel{(\ref{eqn:4.1.2019.12:50p})}{=} m, \quad \text{and} \qquad    
\big|\hat{U}_{j+2}\big| 
\stackrel{(\ref{eqn:3.30.2019.3:36p})}{\geq}    
m + 1
\qquad \implies \qquad 
\big|\hat{U}_{j+1}\big| \leq m, 
\end{equation}  
lest $3m + 2 \leq |\hat{U_0}| + |\hat{U}_1| + |\hat{U_2}| \leq n$
contradicts~(\ref{eqn:4.1.2019.12:50p})  
(recall $m = \lfloor n/3 \rfloor$ from~(\ref{eqn:theo23})).     
Thus, 
\begin{equation}
\label{eqn:4.1.2019.1:34p}  
\Delta_{j+1} 
\stackrel{(\ref{eqn:3.14.2019.2:16p})}{=} 
m - \big|\hat{U}_{j+1}\big| \geq 0    
\end{equation}  
satisfies the first condition of amenability
in Observation~\ref{obs:13.11}.    
Moreover, 
\begin{multline}  
\label{eqn:4.1.2019.1:26p}  
\big|I_j^{\bad}\big|, 
\big|I_{j+2}^{\bad}\big| 
\leq 
\sum_{k \in \mathbb{Z}_3}  
\big|I_k^{\bad}\big|  
\stackrel{(\ref{eqn:3.14.2019.2:15p}), (\ref{eqn:3.14.2019.2:16p})}{=}  
n - 
\sum_{k \in \mathbb{Z}_3}  
\big|\hat{U}_k\big|
\stackrel{(\ref{eqn:4.1.2019.12:50p})}{\leq}    
1 + 
\sum_{k \in \mathbb{Z}_3}  
\Big(m - 
\big|\hat{U}_k\big|\Big)  \\
\stackrel{(\ref{eqn:3.14.2019.2:16p})}{=}  
1 + 
\sum_{k \in \mathbb{Z}_3} 
\Delta_k 
= 
1 + \Delta_j + \Delta_{j+1} + \Delta_{j+2}  
\stackrel{(\ref{eqn:3.30.2019.3:36p})}{\leq}    
\Delta_j + \Delta_{j+1}  
\stackrel{(\ref{eqn:4.1.2019.12:50p})}{=}  
\Delta_{j+1} 
\stackrel{(\ref{eqn:4.1.2019.1:34p})}{\leq}  
2 \Delta_{j+1},      
\end{multline}  
and so $|I_{j+2}^{\bad}| \leq 2 \Delta_{j+1}$ satisfies the second condition of amenability
in Observation~\ref{obs:13.11}. 
Finally, 
$$
|U_{j+3}| = |U_j| 
\stackrel{(\ref{eqn:3.14.2019.2:15p}), (\ref{eqn:3.14.2019.2:16p})}{=}  
\big|\hat{U}_j\big| + \big|I_j^{\bad}\big|
\stackrel{(\ref{eqn:4.1.2019.12:50p})}{=}    
m + 
\big|I_j^{\bad}\big|
\stackrel{(\ref{eqn:4.1.2019.1:26p})}{\leq}    
m + 2 \Delta_{j+1}
\leq 
m + 2 \Delta_{j+1} + 2, 
$$
and so $|U_{j+3}| \leq m + 2 \Delta_{j+1} + 2$ satisfies the third condition of 
amenability in Observation~\ref{obs:13.11}.

The remainder of our proof for 
Statement~(1) of Lemma~\ref{lem:ext} splits into the two cases
of~(\ref{eqn:4.1.2019.12:39p}).    
For these, recall that 
$\{u_{j+1}, u_{j+2}\} \in E_G^{\spec}(U_{j+1}^{\good}, U_{j+2}^{\good})$ was fixed
at the start of this proof, where we now set   
$c(\{u_{j+1}, u_{j+2}\}) = \alpha$ for $\alpha \neq c_{u_{j+2}}$
on account that 
$\{u_{j+1}, u_{j+2}\}$ is a special edge.  \\

\noindent {\bf Case 1 ($n \equiv 2$ {\rm (mod 3)}).}  
We revisit~(\ref{eqn:3.30.2019.6:34p}) by confirming that   
\begin{multline}  
\label{eqn:4.1.2019.11:49a}  
\emptyset \neq 
E_{G, \neg \alpha}^{\spec}\big(N_G^{\typ}(u_{j+2}, U_{j+1}), 
N_G^{\typ}\big(u_{j+1}, U_{j+2}^{\good}\big)\big) \\
= 
\big\{
\{v_{j+1}, v_{j+2}\} \in 
E_G^{\spec}\big(N_G^{\typ}(u_{j+2}, U_{j+1}), 
N_G^{\typ}\big(u_{j+1}, U_{j+2}^{\good}\big)\big) 
: 
c(\{v_{j+1}, v_{j+2}\}) \neq \alpha
\big\}, 
\end{multline}  
where the set above consists of those edges 
of~(\ref{eqn:3.30.2019.6:34p}) which are not colored $\alpha$.  
If true, then 
any $\{v_{j+1}, v_{j+2}\} \in E$  
of~(\ref{eqn:4.1.2019.11:49a})   
gives a strong rainbow 4-cycle 
$(u_{j+2}, u_{j+1}, v_{j+2}, v_{j+1})$ 
(recall Observation~\ref{obs:13.9}) 
which    
Proposition~\ref{prop:Clm13.5}  
extends to a strong rainbow $\ell$-cycle $C_{\ell}$, as promised by
Statement~(1) of Lemma~\ref{lem:ext}.  
To see~(\ref{eqn:4.1.2019.11:49a}),   
we replay the details 
of~(\ref{eqn:3.31.2019.6:07p})--(\ref{eqn:4.1.2019.12:05p}) with the added hypothesis 
$n = 3m + 2$.    
We again have 
\begin{multline}  
\label{eqn:4.1.2019.2:02p}  
\big|E_{G, \neg \alpha}^{\spec}\big(U_{j+1}, N_G^{\typ}\big(u_{j+1}, U_{j+2}^{\good}\big) \big) \big|
= 
\big|E_{G, \neg \alpha}^{\spec}\big(U_{j+1} \setminus N_G^{\typ}(u_{j+2}, U_{j+1}),  
N_G^{\typ}\big(u_{j+1}, U_{j+2}^{\good}\big) \big) \big| \\
+ 
\big|E_{G, \neg \alpha}^{\spec}\big(N_G^{\typ}(u_{j+2}, U_{j+1}),  
N_G^{\typ}\big(u_{j+1}, U_{j+2}^{\good}\big) \big) \big|,   
\end{multline}  
where the left hand side 
and first summand 
of~(\ref{eqn:4.1.2019.2:02p}) are defined analogously   
to~(\ref{eqn:4.1.2019.11:49a}).    
Clearly, 
\begin{multline}  
\label{eqn:4.1.2019.2:05p}  
\big|E_{G, \neg \alpha}^{\spec}\big(U_{j+1} \setminus N_G^{\typ}(u_{j+2}, U_{j+1}),  
N_G^{\typ}\big(u_{j+1}, U_{j+2}^{\good}\big) \big) \big|  \\
\leq 
\big|E_G^{\spec}\big(U_{j+1} \setminus N_G^{\typ}(u_{j+2}, U_{j+1}),  
N_G^{\typ}\big(u_{j+1}, U_{j+2}^{\good}\big) \big) \big| 
\stackrel{(\ref{eqn:3.31.2019.6:18p})}{\leq}
\tfrac{3}{10} \Delta n.  
\end{multline}  
Moreover, 
$\delta^c(G) \geq (n+5)/3$ 
for $n \equiv 2$ (mod 3) ensures 
$\delta^c(G) \geq m+3$, and so 
Statement~(2) of 
Corollary~\ref{cor:13.7} guarantees that every $v_{j+2} \in U_{j+2}^{\good}$ satisfies
\begin{multline}  
\label{eqn:4.1.2019.11:42a}  
\big|E_G^{\spec}(v_{j+2}, U_{j+1})\big| \geq \deg_G^c(v_{j+2}) - 1 - \big|U_j \setminus I_j^{\bad}\big|
\stackrel{(\ref{eqn:3.14.2019.2:16p})}{=}   
\deg_G^c(v_{j+2}) - 1 - \big|\hat{U}_j\big|  \\
\geq 
m + 2 - \big|\hat{U}_j\big|  
\stackrel{(\ref{eqn:3.14.2019.2:16p})}{=}  
2 + \Delta_j
= 1 + \Delta.  
\end{multline}  
Consequently, 
those edges 
$E_{G, \neg \alpha}^{\spec}(v_{j+2}, U_{j+1})$
above not colored $\alpha$ satisfy 
$|E_{G, \neg \alpha}^{\spec}(v_{j+2}, U_{j+1})| \geq \Delta$, 
and so 
similarly to~(\ref{eqn:3.31.2019.6:57p}), we have   
\begin{equation}
\label{eqn:4.1.2019.12:11p}  
\big|
E_{G, \neg \alpha}^{\spec}\big(N_G^{\typ}(u_{j+2}, U_{j+1}), 
N_G^{\typ}\big(u_{j+1}, U_{j+2}^{\good}\big)\big) \big|
\geq \Delta
\big|N_G^{\typ}\big(u_{j+1}, U_{j+2}^{\good}\big)\big|. 
\end{equation}  
Similarly to~(\ref{eqn:4.1.2019.12:05p}),   
we apply~(\ref{eqn:4.1.2019.2:05p})   
and~(\ref{eqn:4.1.2019.12:11p})  
to~(\ref{eqn:4.1.2019.2:02p})    
to infer 
\begin{equation}
\label{eqn:4.1.2019.2:28p}  
\big|
E_{G, \neg \alpha}^{\spec}\big(N_G^{\typ}(u_{j+2}, U_{j+1}), 
N_G^{\typ}\big(u_{j+1}, U_{j+2}^{\good}\big)\big) 
\big|
\geq 
\Delta
\big(
\big|N_G^{\typ}\big(u_{j+1}, U_{j+2}^{\good}\big)\big| 
- \tfrac{3}{10} n\big)  
\stackrel{(\ref{eqn:4.1.2019.12:05p})}{>} 0,   
\end{equation} 
which proves~(\ref{eqn:4.1.2019.11:49a}).  \\

\noindent {\bf Case 2 ($\Delta_j \geq 1$).}  
We again confirm~(\ref{eqn:4.1.2019.11:49a}), in which case any $\{v_{j+1}, v_{j+2}\} \in E(G)$  
of~(\ref{eqn:4.1.2019.11:49a})
gives a strong rainbow 4-cycle $(u_{j+2}, u_{j+1}, v_{j+2}, v_{j+1})$ which 
Proposition~\ref{prop:Clm13.5}  
extends to a strong rainbow $\ell$-cycle $C_{\ell}$, as promised by Statement~(1)
of Lemma~\ref{lem:ext}.  
We again replay the details of~(\ref{eqn:4.1.2019.2:02p})--(\ref{eqn:4.1.2019.2:28p}), 
only this time we set $\Delta = \Delta_j \geq 1$ (as opposed to before, when $\Delta = 1 + \Delta_j$).    
Then~(\ref{eqn:4.1.2019.2:05p}) is updated to say that   
\begin{multline}  
\label{eqn:4.1.2019.3:06p}  
\big|E_{G, \neg \alpha}^{\spec}\big(U_{j+1} \setminus N_G^{\typ}(u_{j+2}, U_{j+1}),  
N_G^{\typ}\big(u_{j+1}, U_{j+2}^{\good}\big) \big) \big|  \\
\leq 
\big|E_G^{\spec}\big(U_{j+1} \setminus N_G^{\typ}(u_{j+2}, U_{j+1}),  
N_G^{\typ}\big(u_{j+1}, U_{j+2}^{\good}\big) \big) \big| 
\stackrel{(\ref{eqn:3.31.2019.6:18p})}{\leq}
\tfrac{3}{10} \Delta_j n,      
\end{multline}  
while~(\ref{eqn:4.1.2019.11:42a})   
is updated to say that each $v_{j+2} \in U_{j+2}^{\good}$ satisfies 
$$
\big|E_G^{\spec}(v_{j+2}, U_{j+1})\big| 
\geq \deg_G^c(v_{j+2}) - 1 - \big|\hat{U}_j\big|
\stackrel{(\ref{eqn:theo23})}{\geq}    
m + 1 - \big|\hat{U}_j\big|  
= 1 + \Delta_j.
$$
Consequently, (\ref{eqn:4.1.2019.12:11p}) is updated to say   
\begin{equation}
\label{eqn:4.1.2019.3:11p}  
\big|
E_{G, \neg \alpha}^{\spec}\big(N_G^{\typ}(u_{j+2}, U_{j+1}), 
N_G^{\typ}\big(u_{j+1}, U_{j+2}^{\good}\big)\big) \big|
\geq \Delta_j   
\big|N_G^{\typ}\big(u_{j+1}, U_{j+2}^{\good}\big)\big|. 
\end{equation}  
We apply~(\ref{eqn:4.1.2019.3:06p})  
and~(\ref{eqn:4.1.2019.3:11p})  
to~(\ref{eqn:4.1.2019.2:02p})    
to infer 
$$
\big|
E_{G, \neg \alpha}^{\spec}\big(N_G^{\typ}(u_{j+2}, U_{j+1}), 
N_G^{\typ}\big(u_{j+1}, U_{j+2}^{\good}\big)\big) 
\big|  
\geq 
\Delta_j
\big(
\big|N_G^{\typ}\big(u_{j+1}, U_{j+2}^{\good}\big)\big| 
- \tfrac{3}{10} n\big)  
\stackrel{(\ref{eqn:4.1.2019.12:05p})}{>} 0,   
$$
where we used $\Delta_j \geq 1$ from Case~2.  
This confirms~(\ref{eqn:4.1.2019.11:49a}), 
and concludes our proof of Lemma~\ref{lem:ext}.

\section*{Appendix:  Proof-sketch for Case~3 from the Introduction}  
Recall the partition $V = U_0 \cup U_1 \cup U_2 \cup \{x\} \cup \{y\}$ from
Case~3 of Section~\ref{sharpness}.  
Let $\vec{G}_1 = (V, \vec{E}_1)$ be the oriented graph whereby 
$(u, v) \in \vec{E}_1$ if, and only if, $(u, v)$ is an element of one of the following sets:  
$$
U_0 \times U_1, \quad 
U_1 \times U_2, \quad 
U_2 \times U_0, \quad 
(U_0 \cup U_2) \times \{x\}, \quad 
\{x\} \times (\{y\} \cup U_1), \quad 
\{y\} \times U_2.  
$$
Let $\vec{G}_2 = (V, \vec{E}_2)$ satisfy $(u, v) \in \vec{E}_2$ if, and only if, 
$(u, v)$ is an element of one of the following sets:
$$
U_0 \times U_1, \quad 
U_1 \times U_2, \quad 
U_2 \times U_0, \quad 
\{x\} \times U_1, \quad 
U_1 \times \{y\}, \quad 
\{y\} \times (\{x\} \cup U_2).  
$$
One can see that neither $\vec{G}_1 = (V, \vec{E}_1)$ nor $\vec{G}_2 = (V, \vec{E}_2)$
admit directed $\ell$-cycles $\vec{C}_{\ell}$ when $\ell \equiv 2$ (mod 3).  
Recalling $(\hat{G}, \hat{c})$ 
from~\eqref{eqn:hardcol}, 
construct 
$\hat{G}_1 \subseteq \hat{G}$ by removing 
all edges between $U_1$ and $y$, and set $\hat{c}_1 = \hat{c}|_{E(\hat{G}_1)}$.  
Construct $\hat{G}_2 \subseteq \hat{G}$ by removing all edges between $U_0 \cup U_2$ and $x$,
and set $\hat{c}_2 = \hat{c}|_{E(\hat{G}_2)}$.
Then $E(\hat{G}) = E(\hat{G}_1) \cup E(\hat{G}_2)$,  
and $(\hat{G}_i, \hat{c}_i)$, $i = 1, 2$, is isomorphic to the edge-colored graph determined by $\vec{G}_i$.   
Therefore, a rainbow $\ell$-cycle of $(\hat{G}, \hat{c})$ can coincide 
entirely 
with 
neither 
$\hat{G}_1$ nor $\hat{G}_2$,
and 
must admit an edge from $E(\hat{G}_1) \setminus E(\hat{G}_2)$ and an edge from $E(\hat{G}_2) \setminus E(\hat{G}_1)$.
But this is impossible, because~\eqref{eqn:hardcol} ensures
that every edge in the symmetric difference $E(\hat{G}_1) \triangle E(\hat{G}_2)$ is assigned
the color $\star$.

\begin{figure}
  \centering
  \begin{subfigure}{.4 \textwidth}
    \centering
      \begin{minipage}[t][42mm]{\textwidth}
        \noindent
        \begin{tikzpicture}
          \node [draw,ellipse,minimum height=20mm, minimum width=10mm, label=above left:{$U_0$}] (X1) {};
          \node [draw,ellipse,minimum height=20mm, minimum width=10mm, label=above left:{$U_1$},right=10mm of X1] (X2) {};
          \node [draw,ellipse,minimum height=20mm, minimum width=10mm, label=above left:{$U_2$},right=10mm of X2] (X3) {};
          \node [vertex, below=5mm of X1,label=left:{$x$}] (nm1) {};
          \node [vertex, below=5mm of X2,label=right:{$y$}]  (n) {};
          \draw [ultra thick,->] (X1) to (X2);
          \draw [ultra thick,->] (X2) to (X3);
          \draw [ultra thick,->,bend right] (X3.north) to (X1.north);
          \draw [ultra thick,->] (X1) to (nm1);
          \draw [ultra thick,->,bend left=40] (X3.south) to (nm1);
          \draw [ultra thick,->] (nm1) to (X2);
          \draw [ultra thick,->] (n) to (X3);
          \draw [thick, ->] (nm1) to (n);
        \end{tikzpicture} 
      \end{minipage}
      \caption{The oriented graph $\vec{G}_1 = (V, \vec{E}_1)$.}
    \label{fig:figG1}
  \end{subfigure}
  \begin{subfigure}{.4 \textwidth}
    \centering
      \begin{minipage}[t][42mm]{\textwidth}
        \noindent
        \begin{tikzpicture}
          \node [draw,ellipse,minimum height=20mm, minimum width=10mm, label=above left:{$U_0$}] (X1) {};
          \node [draw,ellipse,minimum height=20mm, minimum width=10mm, label=above left:{$U_1$},right=10mm of X1] (X2) {};
          \node [draw,ellipse,minimum height=20mm, minimum width=10mm, label=above left:{$U_2$},right=10mm of X2] (X3) {};
          \node [vertex, below=5mm of X2,label=right:{$y$}] (nm1) {};
          \node [vertex, below=5mm of X1,label=left:{$x$}]  (n) {};
          \draw [ultra thick,->] (X1) to (X2);
          \draw [ultra thick,->] (X2) to (X3);
          \draw [ultra thick,->,bend right] (X3.north) to (X1.north);
          \draw [ultra thick,->] (X2) to (nm1);
          \draw [ultra thick,->] (nm1) to (X3);
          \draw [ultra thick,->] (n) to (X2);
          \draw [thick, ->] (nm1) to (n);
        \end{tikzpicture} 
      \end{minipage}
      \caption{The oriented graph $\vec{G}_2 = (V, \vec{E}_2)$.}
    \label{fig:figG2}
  \end{subfigure}
  \caption{Neither $\vec{G}_1$ nor $\vec{G}_2$ admit directed $\ell$-cycles when 
  $\ell \equiv 2 \pmod 3$.}
\end{figure}
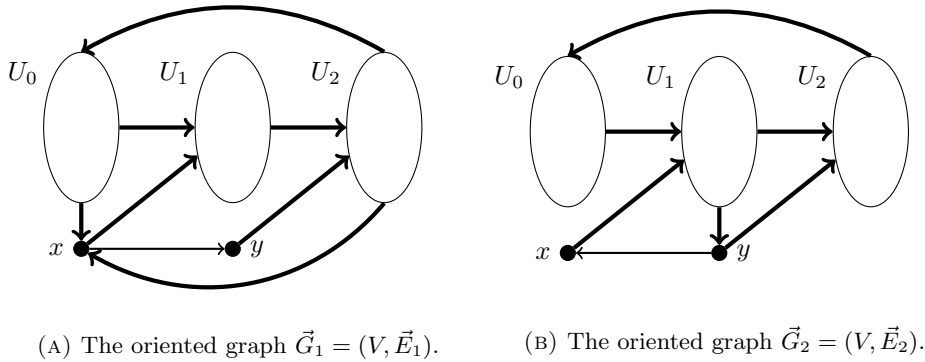

\end{document}